\providecommand{\R}{\mathbb{R}}
\providecommand{\C}{\mathbb{C}}
\providecommand{\N}{\mathbb{N}}
\providecommand{\eps}{\varepsilon}
\providecommand{\om}{\omega}
\newcommand{\supp}{\operatorname{supp}}
\def\longrightharpoonup{\relbar\joinrel\rightharpoonup}
\renewcommand{\leq}{\leqslant}
\renewcommand{\geq}{\geqslant}
\renewcommand{\Re}{{\rm Re}}
\renewcommand{\div}{\operatorname{div}}
\newcommand{\curl}{\operatorname{curl}}
\newcommand{\dist}{\operatorname{dist}}
\newcommand{\Id}{\operatorname{Id}}
\newtheorem{Theorem}{Theorem}[section]
\newtheorem{Corollary}[Theorem]{Corollary}
\newtheorem{Proposition}[Theorem]{Proposition}
\newtheorem{Lemma}[Theorem]{Lemma}
\newtheorem{Remark}[Theorem]{Remark}
\numberwithin{equation}{section}
\begin{document}

\author{Olivier Glass\footnote{CEREMADE, UMR 7534,
Universit\'e Paris-Dauphine \& CNRS, 
Place du Mar\'echal de Lattre de Tassigny,
75775 Paris Cedex 16, France.
}, 
Christophe Lacave\footnote{Univ Paris Diderot, Sorbonne Paris Cit\'e, Institut de Math\'ematiques de Jussieu-Paris Rive Gauche, UMR 7586, CNRS, Sorbonne Universit\'es, UPMC Univ Paris 06, F-75013, Paris, France.
},
Franck Sueur\footnote{Institut de Math\'ematiques de Bordeaux, UMR CNRS 5251,
Universit\'e de Bordeaux, 351 cours
de la Lib\'eration, F33405 Talence Cedex, France. }
}
\date{\today}
\title{On the motion of a small light body immersed in a two dimensional incompressible perfect fluid with vorticity}
\maketitle

\begin{abstract}
In this paper we consider the motion of a rigid body immersed in  a two dimensional unbounded incompressible perfect fluid with vorticity. 
We prove that when the body shrinks to a massless  pointwise particle  with fixed circulation, the ``fluid+rigid body'' system  converges to  the vortex-wave system introduced by  Marchioro and Pulvirenti in \cite{MP}.
This extends both the paper \cite{GLS} where the case of a solid tending to a massive pointwise particle was tackled and the paper \cite{GMS} where the massless case was considered but in a bounded cavity filled with an irrotational fluid. 
\end{abstract}
\tableofcontents
%
%
\section{Introduction}
\label{Intro}
In this paper we consider the motion  of a  rigid body immersed in  a two dimensional  incompressible perfect fluid, when the size of the body converges to $0$. 
Initially the rigid body is assumed to occupy
$$
\mathcal{S}_0^\varepsilon := \varepsilon \mathcal{S}_0,
$$
where $\mathcal{S}_0$ is a simply connected smooth compact subset of $\R^2$
and  $\varepsilon \in (0,1)$.
 
The body moves rigidly so that at times $t$ it occupies a domain $\mathcal{S}^\varepsilon(t)$ which is isometric to $\mathcal{S}_0^\varepsilon$.
We denote 
$$
\mathcal{F}^\varepsilon (t) := \R^2  \setminus \mathcal{S}^\varepsilon(t)
$$
the domain occupied by the fluid  at  time $t$ starting from the initial domain
$$
\mathcal{F}^\varepsilon_{0}  := \R^2 \setminus {\mathcal{S}}^\varepsilon_{0} .
$$ 
The equations modelling the dynamics of the system then read :
\\ 
\\   \textbf{Fluid equations:}
\begin{align}
\displaystyle \frac{\partial u^\varepsilon }{\partial t}+(u^\varepsilon  \cdot\nabla)u^\varepsilon   + \nabla \pi^\varepsilon =0 && \text{for}\ t\in (0,\infty), \ x\in \mathcal{F}^\varepsilon (t), \label{Euler1}\\
\div u^\varepsilon   = 0 && \text{for}\ t\in [0,\infty), \ x\in \mathcal{F}^\varepsilon(t) , \label{Euler2} 
\end{align}
\\  
\\  \textbf{Solid equations:}
\begin{align}
m^\varepsilon (h^\varepsilon)'' (t) &=  \int_{\partial  \mathcal{S}^\varepsilon (t)} \pi^\varepsilon n \, ds &&\text{for}\ t\in (0,\infty),   \label{Solide1} \\ 
\mathcal{J}^\varepsilon (r^\varepsilon)' (t) &=  \int_{\partial  \mathcal{S}^\varepsilon (t)} (x-h^\varepsilon(t))^{\perp} \cdot \pi^\varepsilon n \, ds &&\text{for}\ t\in (0,\infty),   \label{Solide2} 
\end{align}
\\  
\\  \textbf{Boundary conditions:} 
\begin{align}
u^\varepsilon  \cdot n &=   \Big( (h^\varepsilon)'(t) + r^\varepsilon(t)(x-h^\varepsilon(t))^{\perp} \Big)  \cdot n && \text{for}\ t\in [0,\infty),  \  x\in \partial \mathcal{S}^\varepsilon  (t),   \label{Euler3} \\
\lim_{|x|\to \infty} |u^\varepsilon(t,x)| &=0& & \text{for}\ t\in [0,\infty),  
\end{align}
\\  
\\  \textbf{Initial data:}
\begin{gather}
u^\varepsilon |_{t= 0} = u^\varepsilon_0  \quad \text{for} \  x\in  \mathcal{F}^\varepsilon_0 ,  \label{Eulerci2} \\
h^\varepsilon (0)= 0 ,   \quad \ (h^\varepsilon)' (0)=  \ell_0^\varepsilon ,  \quad \theta^\varepsilon(0)=0 ,   \quad r^\varepsilon (0)=  r_0^\varepsilon .  \label{Solideci}
\end{gather}
Here $u^\varepsilon=(u_1^\varepsilon,u_2^\varepsilon)$ and $\pi^\varepsilon$ denote the velocity and pressure fields in the fluid,
$m^\varepsilon >0$ and $\mathcal{J}^\varepsilon >0$ denote respectively the mass and the momentum of inertia of the body.
The fluid is supposed to be  homogeneous, of density $1$ to simplify the notations.

When $x=(x_1,x_2)$ the notation $x^\perp $ stands for $x^\perp =( -x_2 , x_1 )$, 
$n$ denotes  the unit normal vector pointing outside the fluid,  $(h^\varepsilon )'(t)$
is the velocity of the center of mass  $h^\varepsilon (t)$ of the body and $r^\varepsilon(t)$ is the angular velocity. 
Indeed, since $\mathcal{S}^\varepsilon(t)$ is isometric to $\mathcal{S}_{0}^\varepsilon$ there exists an angle $\theta^\varepsilon(t)$ such that, with the notation of the rotation matrix
\begin{equation} \label{NotationRot2D}
R_{\theta^\varepsilon(t)}:=\begin{pmatrix} \cos \theta^\varepsilon (t) & -\sin \theta^\varepsilon(t) \\ \sin \theta^\varepsilon (t) & \cos \theta^\varepsilon(t)\end{pmatrix},
\end{equation}
one has
\begin{equation*}
\mathcal{S}^\varepsilon(t) := \{ h^\varepsilon(t)+ R_{\theta^\varepsilon(t)} x , \ x \in \mathcal{S}_{0}^\varepsilon \} .
\end{equation*}
Furthermore, this angle satisfies
\begin{equation*}
(\theta^\varepsilon)'(t)=r^\varepsilon(t) .
\end{equation*}
In this paper, we will systematically take the convention that the initial position of the center of mass is $0$, that is,
\begin{equation} \label{hen0}
h^{\varepsilon}(0)=0.
\end{equation}
\ \par
\ \par
Equations \eqref{Euler1} and \eqref{Euler2} are the incompressible Euler equations, the condition \eqref{Euler3} means that the boundary is impermeable and Equations \eqref{Solide1}-\eqref{Solide2} are the Newton's balance law for linear and angular momenta. \par 
\ \par
The Cauchy problem for System  \eqref{Euler1}-\eqref{Solideci}  is now well understood.
In particular we have the following result proven in \cite{GLS}, which is the equivalent for the fluid-body system of the celebrated result of Yudovich for the fluid alone \cite{Yudovich}. We recall that the space of log-Lipschitz functions on some domain $X \subset \R^{2}$ is the set of functions $f \in L^{\infty}(X)$ such that
\begin{equation*}
\| f \|_{\mathcal{LL}(X)} := \| f\|_{L^{\infty}(X)} + \sup_{x\not = y} \frac{|f(x)-f(y)|}{|(x-y)(1+ \ln^{-}|x-y|)|} < +\infty.
\end{equation*}
\begin{Theorem} \label{ThmYudo}
For any
$$
u_0^\varepsilon \in C^{0}(\overline{\mathcal{F}^{\varepsilon}_0};\R^{2}), \quad (\ell_0^\varepsilon ,r_0^\varepsilon) \in \R^2 \times \R ,
$$
such that:
\begin{gather*} 
\div u_0^\varepsilon =0 \text{ in } {\mathcal F}_0^\varepsilon \\
w_0^\varepsilon := \curl u_0^\varepsilon  \in L_c^{\infty}(\overline{\mathcal F}_0^\varepsilon), \\
u_0^\varepsilon   \cdot  n = (\ell_0^\varepsilon + r_0^\varepsilon x^{\perp})   \cdot  n \text{ on } \partial \mathcal{S}_0^\varepsilon , \\
\lim_{|x| \rightarrow +\infty} u_{0}^\varepsilon (x) =0,
\end{gather*}
there exists a unique solution $(h^\varepsilon,\theta^\varepsilon,u^\varepsilon)$ of \eqref{Euler1}--\eqref{Solideci} with   
\begin{gather*}
 (h^\varepsilon,\theta^\varepsilon) \in C^2 (\R^+; \R^2 \times \R), \quad 
u^\varepsilon \in L^{\infty}_{loc}(\R^+; \mathcal{LL}({\mathcal F}^\varepsilon (t))) \text{ and }  \\
\partial_{t} u^\varepsilon, \nabla p^\varepsilon \in L^{\infty}_{loc}(\R^{+}; L^{q}({\mathcal F}^\varepsilon(t)))
\text{  for any }
q \in (1,+\infty).
\end{gather*}
Moreover such a solution satisfies some conservation laws which will be recalled in Subsections~\ref{Subsec:COF} and \ref{Subsec:vor}. 
\end{Theorem}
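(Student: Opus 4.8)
The plan is to adapt Yudovich's theory to the coupled fluid--solid system, combining the transport structure of the vorticity with the ODE structure of the rigid motion. The natural first step is to freeze the geometry: since $\mathcal{S}^\varepsilon(t)$ is the image of $\mathcal{S}_0^\varepsilon$ under the rigid displacement $x \mapsto h^\varepsilon(t) + R_{\theta^\varepsilon(t)} x$, I would perform the change of variables carrying the moving domain $\mathcal{F}^\varepsilon(t)$ back to the fixed exterior domain $\mathcal{F}_0^\varepsilon$. This turns the Euler system \eqref{Euler1}--\eqref{Euler3} into an Euler-type system set on a fixed geometry, at the price of lower-order terms depending on $((h^\varepsilon)', r^\varepsilon)$, so that the analysis can be carried out on a time-independent domain.

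The heart of the matter is the velocity reconstruction. Given the vorticity $w^\varepsilon$, the solid velocity $(\ell, r)$, and the circulation $\gamma := \oint_{\partial \mathcal{S}^\varepsilon} u^\varepsilon \cdot \tau \, ds$ around the body, the field $u^\varepsilon$ is determined uniquely as the solution of the div--curl problem $\div u^\varepsilon = 0$, $\curl u^\varepsilon = w^\varepsilon$ in $\mathcal{F}^\varepsilon(t)$, with the impermeability condition \eqref{Euler3}, the prescribed circulation $\gamma$, and the decay at infinity. By linearity this is a superposition of the Biot--Savart field generated by $w^\varepsilon$ (corrected to be tangent to $\partial \mathcal{S}^\varepsilon(t)$ and to decay), of the Kirchhoff-type potentials encoding the translation and rotation of the body, and of the harmonic field carrying the circulation. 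The crucial estimate, the exact analogue of Yudovich's, is that the velocity generated by an $L_c^\infty$ vorticity in such an exterior domain is log-Lipschitz, with $\|u^\varepsilon\|_{\mathcal{LL}}$ controlled by $\|w^\varepsilon\|_{L^\infty}$, $|\ell|$, $|r|$ and $|\gamma|$; this is what feeds the well-posedness of the associated flow and the $L^q$ bounds on $\partial_t u^\varepsilon$ and $\nabla \pi^\varepsilon$.

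I would then exploit the two conservation laws. Kelvin's theorem gives that $\gamma$ is constant in time, while the vorticity is transported by the flow $\Phi^\varepsilon$ of $u^\varepsilon$, so that $\|w^\varepsilon(t)\|_{L^\infty} = \|w_0^\varepsilon\|_{L^\infty}$ and $\supp w^\varepsilon(t)$ stays compact. Because the reconstructed velocity is log-Lipschitz, the flow $\Phi^\varepsilon$ exists and is unique by the Osgood criterion, and the transport equation for $w^\varepsilon$ is meaningful. In parallel, computing the pressure from the momentum equation and inserting it into \eqref{Solide1}--\eqref{Solide2}, Newton's laws can be recast as a system of ODEs for $(h^\varepsilon, \theta^\varepsilon, \ell^\varepsilon, r^\varepsilon)$, in which the pressure force splits into an added-mass contribution (a well-structured coupling with the solid acceleration) and a forcing that depends continuously on the fluid configuration through the decomposition above. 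Existence then follows from a fixed-point scheme: from a candidate trajectory one reconstructs $u^\varepsilon$, transports $w^\varepsilon$ along $\Phi^\varepsilon$, and updates the solid motion via the ODEs; a contraction or Schauder argument on a short interval, together with the a priori bounds from the conservation laws that preclude finite-time blow-up, yields a global solution.

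Uniqueness is the step where I expect the main obstacle. Following Yudovich's strategy, one compares two solutions and estimates in an $L^2$-type norm the difference of the transported vorticities and of the associated velocities, closing a Gronwall inequality whose borderline log-Lipschitz modulus of continuity is absorbed via Osgood's lemma. The difficulty specific to the coupled problem is that this fluid estimate must be run simultaneously with the control of the difference of the solid trajectories and of the two moving boundaries: the velocity reconstruction, the circulation, and the pressure force all depend on the geometry, so the comparison of the domains enters the transport estimate nonlinearly. Organizing these couplings so that a single Gronwall--Osgood argument closes for the full "fluid + solid" difference is the delicate point of the argument.
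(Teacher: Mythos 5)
Before assessing your plan, one meta-point: this paper does not prove Theorem~\ref{ThmYudo} at all. The result is imported from \cite{GLS}, and the only ingredients recalled here are the body-frame reformulation of Subsection~\ref{Subsec:COF}, the velocity decomposition \eqref{vdecomp} via the Kirchhoff potentials and the harmonic field, and the conservation laws (circulation and \eqref{ConsOmega}). Measured against the strategy of \cite{GLS}, your outline has the right architecture and essentially the same ingredients: isometric change of variables to the fixed exterior domain $\mathcal{F}^{\varepsilon}_{0}$, reconstruction of $u^{\varepsilon}$ from $(\omega^{\varepsilon},\ell^{\varepsilon},r^{\varepsilon},\gamma)$ by the div--curl system \eqref{DivCurlSystem}, the Yudovich-type log-Lipschitz bound on the reconstructed velocity, conservation of the circulation and of the $L^{p}$ norms of the vorticity, Newton's laws recast as an ODE after inverting the total mass matrix $\mathcal{M}^{\varepsilon}=\mathcal{M}^{\varepsilon}_{g}+\mathcal{M}^{\varepsilon}_{a}$ (symmetric positive definite, as recalled in Subsection~\ref{NRJ} --- this added-mass structure is exactly what makes your fixed-point scheme well posed), and a Gronwall--Osgood argument for uniqueness.

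The one place where your sketch misjudges the argument is the final paragraph. The obstacle you single out --- that the two solutions inhabit two different moving domains, so that ``the comparison of the domains enters the transport estimate nonlinearly'' --- is precisely what your own first step eliminates, and you do not carry it through. After the change of variables, \emph{both} solutions are posed on the \emph{same} fixed domain $\mathcal{F}^{\varepsilon}_{0}$: the solid unknowns no longer move the geometry but enter only as coefficients, through the drift $v^{\varepsilon}-\ell^{\varepsilon}-r^{\varepsilon}x^{\perp}$ in \eqref{vorty1} and the rotation term $r^{\varepsilon}(v^{\varepsilon})^{\perp}$ in \eqref{Euler11}. The uniqueness estimate is then a joint Gronwall--Osgood bound for the difference of the transported vorticities (or velocities; the paper notes explicitly that either \eqref{Euler1} or \eqref{Euler4} can serve as the starting point) and the difference of the finite-dimensional solid data $(\ell^{\varepsilon},r^{\varepsilon})$, all on a fixed geometry; the unbounded factor $x^{\perp}$ is harmless because the vorticity remains compactly supported under the log-Lipschitz flow. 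Beyond this, bear in mind that your text is a plan rather than a proof: the log-Lipschitz estimate for $K^{\varepsilon}[\omega]$ in the exterior domain, the $L^{q}$ bounds on $\partial_{t}u^{\varepsilon}$ and $\nabla\pi^{\varepsilon}$, and the compactness or contraction property in the fixed-point step are each asserted where \cite{GLS} must do genuine work.
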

There is a slight abuse of notations in $L^{\infty}_{loc}(\R^+;\mathcal{LL}({\mathcal F}^\varepsilon (t)))$ and $L^{\infty}_{loc}(\R^{+};L^{q}({\mathcal F}^\varepsilon(t)))$ since the domain ${\mathcal F}^\varepsilon (t)$ of the $x$-variable actually depends on $t$. By this we refer to functions defined for almost each $t$ as a function in the space $\mathcal{LL}({\mathcal F}^\varepsilon (t))$ (resp. $L^{q}({\mathcal F}^\varepsilon(t))$), and which can be extended to a function in $L^{\infty}_{loc}(\R^{+};\mathcal{LL}(\R^{2}))$ (resp. $L^{\infty}_{loc}(\R^{+};L^q(\R^{2}))$). \par
\ \par
A central fact to obtain Theorem~\ref{ThmYudo} is that in these solutions the fluid vorticity
\begin{equation*}
w^\varepsilon:= \curl  u^\varepsilon =\partial_1 u_2^\varepsilon- \partial_2 u_1^\varepsilon ,
\end{equation*}
satisfies
\begin{equation} \label{Euler4}
\displaystyle \frac{\partial w^\varepsilon }{\partial t}+\div (u^\varepsilon  w^\varepsilon) =0
\qquad \text{for}\ t\in (0,\infty), \ x\in \mathcal{F}^\varepsilon (t).
\end{equation}
Actually, the uniqueness part of Theorem~\ref{ThmYudo} can be established starting either from the velocity equation \eqref{Euler1} or from the vorticity equation \eqref{Euler4}. \par
In this paper we are interested in the asymptotic behavior of $(h^\eps, \theta^\eps, u^\eps, w^\eps)$ when $\varepsilon\to 0^+$.
This issue  depends on the behavior of the data with respect to $\eps$ which we now describe. \par
\ \par
\noindent
\textbf{Mass and momentum of inertia}.
In the paper \cite{GLS} we studied the case where the solid occupying the domain $\mathcal S^\varepsilon(t)$ is assumed to have a mass and a moment of inertia of the form $m^\varepsilon=  m^{1}  \text{ and } \mathcal J^\varepsilon=\varepsilon^2\mathcal J^{1}$,
where $m^{1}>0$  and  $\mathcal J^{1}>0$ are fixed, so that the solid tends to a   \textit{massive pointwise particle}.
The goal of this paper is to study the case where 
\begin{equation} \label{mass-inertie2}
m^\varepsilon= \varepsilon^{\alpha} \,  m^{1}  \text{ and } \mathcal J^\varepsilon= \varepsilon^{\alpha+2} \,  {\mathcal J}^{1},
\end{equation}
where $ \alpha > 0$ and  $m^{1}>0$ and ${\mathcal J}^{1}>0$ are fixed, so that the solid tends to a  \textit{massless pointwise particle}.
The particular case where $ \alpha =2$ corresponds to the case of a fixed solid density whereas 
the case tackled in   \cite{GLS}  corresponded to the case where  $ \alpha = 0$. 
The regime  \eqref{mass-inertie2} was  considered in  \cite{GMS}  in the irrotational case (and when the fluid occupies a bounded domain). The purpose of this paper is to extend it to the case where the vorticity $w^\varepsilon$ in the fluid does not vanish. \par
\ \par
\noindent
\textbf{Vorticity and circulation around the solid}.
We will consider an initial fluid vorticity  $w_0  \in L_c^{\infty}(\R^{2}\setminus\{0\})$ independent of $\varepsilon $
and an initial circulation
$$\gamma := \int_{\partial \mathcal{S}_0^\eps} u^\eps_0\cdot \tau\, ds $$
independent of $\varepsilon $ as well. The fact that we consider $0 \notin \supp w_0$ is connected to \eqref{hen0}: for $\varepsilon>0$ small enough, one has $\supp w_0 \cap \mathcal{S}_0^\varepsilon = \emptyset$. \par
\ \par
\noindent
\textbf{Initial solid velocity}.
We will consider an initial solid velocity $ (\ell_0^\eps , r_{0}^\eps ) $ independent of $\varepsilon $:
$$ (\ell_0^\eps , r_{0}^\eps ) = (\ell_0 , r_{0} )  \in \R^2 \times \R .$$
\ \par
\noindent
\textbf{Initial fluid velocity}.
The initial fluid velocity  $u_0^\eps$ is then defined as the unique log-Lipschitz solution of the div-curl type system:
\begin{equation} \label{UDI}
\left\{ \begin{array}{l}
\div u^\eps_0=0,\ \curl u^\eps_0=w_0^{\varepsilon} \text{ in } {\mathcal F}_{0}^{\varepsilon}, \\
u^\eps_0\cdot n= (\ell_0 + r_{0} x^{\perp}) \cdot n  \text{ on } \partial {\mathcal S}_{0}^{\varepsilon},  \\
\lim_{|x|\to \infty} |u^\eps_0(x)|=0, \ \int_{\partial \mathcal{S}_0^\eps} u^\eps_0\cdot \tau\, ds =\gamma,
\end{array} \right.
\end{equation}
where $w_{0}^{\varepsilon} := w_{0|{\mathcal F}_{0}^{\varepsilon}}$, hence, for $\varepsilon$ small enough (depending on $\dist(\supp w_{0}; 0)$ and the size of $\mathcal{S}_{0}$), we have
\begin{equation} \label{Defw0eps}
w_{0}^{\varepsilon} := w_{0}.
\end{equation}
\ \par
To state the main result, we will use the following notation for the Biot-Savart operator in $\R^{2}$:
\begin{equation*}
K_{\R^{2}}[w](t,x)= K_{\R^{2}}[w(t,\cdot)](x):=\frac{1}{2 \pi} \int_{ \R^{2}} \frac{(x-y)^{\perp}}{|x-y|^{2}} w (t,y) \, dy .
\end{equation*}
Now our goal in this paper is to prove the following theorem.
\begin{Theorem} \label{MR}
Let us be given $\gamma \in \R \setminus \{ 0 \}$, $(\ell_0,r_{0}) \in \R^3$,  $ w_0 $ in $L^\infty_c (\R^2\setminus \{0\})$.
For any $\eps \in (0,1]$, we associate $u^\eps_0$ by \eqref{UDI}-\eqref{Defw0eps} and consider $(h^{\varepsilon}, \theta^\varepsilon, u^\eps)$ the unique  solution of the system \eqref{Euler1}--\eqref{Solideci}. 
Then for any $T>0$, as $\varepsilon \rightarrow 0^{+}$,
\begin{itemize}
\item  $h^\eps $ converges to $h$ weakly-$\star$ in $W^{1,\infty} (0,T;\R^{2})$, 
\item $w^{\varepsilon}$ converges to $w$ in $C^{0} ([0,T]; L^{\infty}(\R^{2})-w\star)$,
\item  $u^{\varepsilon}$ converges to $\displaystyle \tilde{u} + \frac{\gamma}{2\pi} \frac{(x-h(t))^{\perp}}{|x-h(t)|^{2}}$ in $C^{0} ([0,T]; L^{q}_{loc} (\R^{2} ))$ for $q<2$, where $ \tilde{u}(t,x) $ is defined on $[0,T] \times \R^{2}$ by
\begin{equation*}
\tilde{u}(t,x) := K_{\R^{2}}[w(t,\cdot)](x).
\end{equation*}
\end{itemize}
Moreover, $(\tilde u, w, h)$ satisfies:
\\
\\ \textbf{Fluid equation:}
\begin{equation} \label{EulerPoint} 
\frac{\partial w }{\partial t}+ \div
\bigg( \left[ \tilde{u}+ \frac{\gamma}{2\pi} \frac{(x-h(t))^{\perp}}{| x-h(t)|^{2}} \right] w \bigg) = 0  
\ \text{ in } \ [0,T] \times \R^{2}, 
\end{equation}
\\ \textbf{Particle equation:} 
\begin{equation} \label{PointEuler}
h'(t) = \tilde{u}(t,h(t))  \ \text{ in } \ [0,T]  ,
\end{equation}
\\ \textbf{Initial conditions:}
\begin{equation} \label{EP0}
w |_{t= 0}=  w_0 ,\ h(0) = 0.
\end{equation}
\end{Theorem}
\ \par
The above convergences of $w^{\varepsilon}$ and $u^{\varepsilon}$ hold when we have extended these functions by $0$ inside the solid.%
\begin{Remark}
Equation  \eqref{EulerPoint} and the $w$-part of the initial data given in \eqref{EP0} 
hold in the sense that  for any test function $\psi\in C^\infty_c([0,T)\times\R^2)$ we have 
\begin{equation} \label{EqSolFaibleIntro}
\int_0^\infty \int_{\R^2} \psi_t  w \, \, dx\, dt
+\int_0^\infty \int_{\R^2} \nabla_x \psi \cdot \Big( \tilde{u}+ \frac{\gamma}{2\pi} \frac{(x-h(t))^{\perp}}{|x-h(t)|^{2}} \Big) w  \, dx\, dt
+\int_{\R^2} \psi(0,x) w_0 (x) \, dx=0 .
\end{equation}
\end{Remark}
The equations (\ref{EulerPoint})--(\ref{PointEuler})  describe the vortex-wave system introduced by Marchioro and Pulvirenti in  \cite{MP}.
Equation (\ref{EulerPoint}) describes the evolution of the vorticity of the fluid: $w$ is transported by a velocity obtained by the usual Biot-Savart law in the plane, but from a vorticity which is the sum of the fluid vorticity and of a point vortex placed at the (time-dependent) position $h(t)$ where the solid shrinks, with a strength equal to the circulation $\gamma$ around the body. The point vortex is transported only under the influence of the fluid vorticity \eqref{PointEuler}.\par
We recall from Marchioro-Pulvirenti \cite{MP} (Lagrangian formulation) and by Lacave-Miot \cite{CricriLylyne} (Eulerian formulation, i.e.  in our case \eqref{EqSolFaibleIntro}) that in the case of one point vortex $h(t)$ and of an initial vorticity $w_{0}\in L^\infty_{c}(\R^2 \setminus \{ 0 \})$, the vortex-wave system \eqref{EulerPoint}-\eqref{PointEuler} admits a unique solution such that $w\in L^\infty(\R^+,L^1\cap L^\infty(\R^2))$ and $h\in C(\R^+,\R^2)$. Moreover, such a solution has the following property: for any $T>0$, there exists $\rho_{T}>0$ such that
\begin{equation} \label{supportcontrol}
\supp w(t) \subset B(h(t),\rho_{T})\setminus B(h(t),1/\rho_{T}) \quad \forall t\in [0,T].
\end{equation}
\begin{Remark}
Note that  the convergence of $h^{\varepsilon}$ cannot be strong in $W^{1,\infty} (0,T;\R^{2})$, in general, as this would entail that
$$
 \ell_0 = K[w_{0}](0)= -\frac{1}{2 \pi} \int_{ \R^{2}} \frac{y^{\perp}}{| y|^{2}}  w_0  (y) \, dy .
 $$
\end{Remark}
Let us mention the paper \cite{Bjorland} which provides another derivation of the  vortex-wave system (\ref{EulerPoint})--(\ref{PointEuler}) from smooth solutions of the Euler equations alone, without any rigid body but with some concentrated vorticity, following the approach of  \cite{MP-CMP}  for the derivation of the vortex points system from smooth solutions of the Euler equations.\par
In the case of a massive pointwise particle, we have obtained in \cite{GLS} the same theorem except that the particle equation was:
\begin{equation} \label{OldEq}
m^1 h''(t)=\gamma\Big( h'(t)- \tilde{u}(t,h(t))\Big)^\perp  \ \text{ in } \ [0,T] ,
\end{equation}
and that, due to the lack of uniqueness in the limit system, the convergence held only along a subsequence. One can see that in the massive situation, the point vortex is accelerated by a force similar to the Kutta-Joukowski lift force of the irrotational theory. Formally, the massless situation corresponds to the case where $m^1=0$ in \eqref{OldEq}, from which one recovers the point vortex equation \eqref{PointEuler}. Nevertheless, the rigorous analysis is more complicated than in \cite{GLS} because the second-order equation \eqref{Solide1} degenerates to a first order equation \eqref{PointEuler}. For example, we will note in Remark~\ref{RemEstNRJ} that a standard energy estimate does not give anymore that $(h^\varepsilon)'$ is bounded uniformly in $\varepsilon$.
\par
Let us also mention the paper of Silvestre and Takahashi \cite{ST} (and the references therein) of a related problem with a small ball immersed in a 3D viscous fluid.
%
%
%
%
%
%
%
%
%
%
\section{Structure of the proof}  %
\label{Sec:Structure}
In this section, we describe the general structure of the proof of Theorem~\ref{MR}. \par
The basic estimates (energy estimates, estimates coming from the vorticity) for system \eqref{Euler1}-\eqref{Solideci}  will prove insufficient to pass to the limit. We will strengthen these estimates by establishing a so-called {\it modulated energy estimate}. To prove it, we will need to reformulate the equation in a suitable {\it normal form}. This is explained in greater detail below. \par
\ \par
\noindent
{\bf First step. A normal form.} 
First we introduce the solid velocity in the rotated frame:
\begin{equation*}
{\ell}^{\eps} (t)= R_{\theta^\varepsilon(t)}^T \ (h^{\eps})' (t) ,
\end{equation*}
where $R_{\theta^\eps}$ is the rotation of angle $\theta^\varepsilon$, see \eqref{NotationRot2D}.
Then we introduce the following ``modulated velocity'':
\begin{equation} \label{DefLTilde}
\tilde{\ell}^{\varepsilon}(t):= \ell^{\varepsilon}(t) -  K_{\R^{2}}[\omega^{\varepsilon}](t,0)
- \varepsilon D K_{\R^{2}}[\omega^{\varepsilon}](t,0) \cdot \xi,
\end{equation}
where $\xi$ is the conformal center of  $\mathcal{S}_0$ defined below in \eqref{DefXi} and $\omega^{\varepsilon}$ is the vorticity in the rotated frame, that is
\begin{equation} \label{DefOmega}
\omega^{\varepsilon}(t,x):=w^{\varepsilon}(t, R_{\theta^\varepsilon(t)}x+h^{\eps}(t) ) .
\end{equation}
Next we introduce the notation for the modulated unknown:
\begin{equation} \label{DefPtilde}
\tilde{p}^{\varepsilon}:= \begin{pmatrix} \tilde{\ell}^{\varepsilon} \\ \varepsilon r^{\varepsilon} \end{pmatrix}.
\end{equation}
To introduce the normal form, we need a few more notations.
We introduce the inertia matrix:
\begin{equation} \label{DefMG}
{\mathcal M}_{g} := \begin{pmatrix}
	m^1 & 0 & 0 \\
	0 & m^1 & 0 \\
	0 & 0 & {\mathcal J}^1
\end{pmatrix},
\end{equation}
and
\begin{equation} \label{DefB}
{\bf B}:= \begin{pmatrix} \xi^{\perp} \\ -1 \end{pmatrix}.
\end{equation}
We also introduce a bilinear symmetric mapping $\Lambda_{g}: \R^{3} \times \R^{3} \rightarrow \R^{3}$ as follows:
\begin{equation} \label{DefGammag}
\forall {p} \in \R^{3}, \ \  \langle \Lambda_{g}   , {p}, p \rangle = m^{1} r \begin{pmatrix} \ell^{\perp} \\ 0 \end{pmatrix}
 \ \text{ for } p = \begin{pmatrix} \ell \\ r \end{pmatrix}.
\end{equation}
Note that
\begin{equation} \label{AnnulationGammag}
\forall {p} \in \R^{3}, \ \ \ \langle \Lambda_{g}, {p}, p \rangle \cdot p =0.
\end{equation}
We are now in position to describe our normal form.
\begin{Proposition}\label{Pro:NormalForm}
There exist a symmetric positive  matrix ${\mathcal M}_{a} \in S^{+}_{3}(\R)$, depending only on 
${\mathcal S}_{0}$, and a bilinear symmetric mapping $\Lambda_{a}: \R^{3} \times \R^{3} \rightarrow \R^{3}$, depending only on ${\mathcal S}_{0}$, satisfying 
\begin{equation} \label{AnnulationGammaa}
\forall {p} \in \R^{3}, \ \  \langle \Lambda_{a}, {p}, p \rangle \cdot p =0,
\end{equation}
such that the following holds. \par
Let us fix $\rho>1$. There exist $C>0$ and $\varepsilon_{0}\in (0,1]$ such that: if for a given $T>0$ and an $\varepsilon \in (0,\varepsilon_{0}]$ one has for all $t \in [0,T]$:
\begin{equation} \label{CondDistVorticiteSolide}
\supp \omega^{\varepsilon}(t) \subset B(0,\rho)\setminus B(0,1/\rho),
\end{equation}
then there exist
a function $G=G(\varepsilon,t): (0,1) \times [0,T] \rightarrow \R^{3}$ satisfying
\begin{equation} \label{IneqWG}
\left| \int_0^t \tilde{p}^{\varepsilon}(s) \cdot G (\varepsilon,s) \, ds \right| 
\leq \varepsilon C \left( 1 + t + \int_0^t |\tilde{p}^{\varepsilon}(s)|^{2}\, ds \right) ,
\end{equation}
and a function $F=F(\varepsilon,t):(0,1) \times [0,T] \rightarrow \R^{3}$ satisfying
\begin{equation} \label{IneqWNL}
|F(\varepsilon,t)| \leq C \left( 1+ |\tilde{p}^{\varepsilon}(t)| + \varepsilon |\tilde{p}^{\varepsilon}(t)|^{2} \right),
\end{equation}
such that one has on $[0,T]$:
\begin{equation} \label{Eq:NormalForm}
\big[ \varepsilon^\alpha {\mathcal M}_{g} + \varepsilon^2 {\mathcal M}_{a} \big] (\tilde{p}^{\varepsilon})' 
+\langle \varepsilon^{\alpha-1} \Lambda_{g} + \varepsilon \Lambda_{a} , \tilde{p}^{\varepsilon}, \tilde{p}^{\varepsilon} \rangle
= \gamma\, \tilde{p}^{\varepsilon} \times {\bf B}
+ \varepsilon \gamma G(\varepsilon,t) 
+ \varepsilon^{\min(\alpha,2)} F(\varepsilon,t).
\end{equation}
\end{Proposition}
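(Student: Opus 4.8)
The plan is to start from the exact Newton equations \eqref{Solide1}--\eqref{Solide2} written in the body frame, and reorganize them as a finite-dimensional ODE for the modulated momentum $\tilde p^\varepsilon$. First I would rewrite the solid equations in the rotated frame using $\ell^\varepsilon = R_{\theta^\varepsilon}^T (h^\varepsilon)'$ and $\varepsilon r^\varepsilon$; the mass/inertia scaling \eqref{mass-inertie2} produces the prefactor $\varepsilon^\alpha \mathcal M_g$ on the acceleration, while the Coriolis-type cross terms arising from differentiating $R_{\theta^\varepsilon}$ generate exactly the gyroscopic quadratic form $\langle \varepsilon^{\alpha-1}\Lambda_g, \cdot,\cdot\rangle$ with the structure \eqref{DefGammag}. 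The antisymmetry property \eqref{AnnulationGammag} is just the statement that this gyroscopic term does no work. The genuinely new input is the fluid force $\int_{\partial\mathcal S^\varepsilon}\pi^\varepsilon n\,ds$ on the right-hand side: this must be computed and expanded in $\varepsilon$.

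To handle the pressure force I would use the standard decomposition of the fluid velocity into its potential (Kirchhoff) part, its circulatory part carrying the circulation $\gamma$, and the Biot--Savart field generated by $\omega^\varepsilon$. Substituting into the Bernoulli/pressure formula and integrating over $\partial\mathcal S^\varepsilon$, the force splits into three groups. The self-interaction of the Kirchhoff potentials with the solid motion contributes the added-mass matrix $\mathcal M_a$ (symmetric positive, geometry-dependent, hence depending only on $\mathcal S_0$) multiplying $(\tilde p^\varepsilon)'$ at order $\varepsilon^2$, together with the added-mass gyroscopic form $\langle\varepsilon\Lambda_a,\cdot,\cdot\rangle$ satisfying \eqref{AnnulationGammaa} for the same no-work reason. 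The circulation--motion coupling produces the Kutta--Joukowski term $\gamma\,\tilde p^\varepsilon\times\mathbf B$, where $\mathbf B$ as in \eqref{DefB} encodes the conformal center $\xi$; this is precisely why the modulation \eqref{DefLTilde} subtracts $K_{\R^2}[\omega^\varepsilon](t,0)+\varepsilon DK_{\R^2}[\omega^\varepsilon](t,0)\cdot\xi$, namely to absorb the leading and first-order vortex-induced velocity into the unknown so that the remaining coupling is clean.

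The two error terms then collect everything left over. For $F$, I would gather the contributions that are quadratic in the velocity but carry an extra power of $\varepsilon$ (from Taylor-expanding the Biot--Savart kernel of $\omega^\varepsilon$ across the shrinking obstacle, using the support control \eqref{CondDistVorticiteSolide} to bound the kernel and its derivatives uniformly on $B(0,\rho)\setminus B(0,1/\rho)$), yielding the pointwise bound \eqref{IneqWNL}. The term $G$ is subtler: it contains the slowly varying, time-integrated coupling between the solid motion and the evolving vorticity field, and the estimate \eqref{IneqWG} is deliberately stated \emph{after} pairing with $\tilde p^\varepsilon$ and integrating in time, because only in that averaged form does one gain the crucial factor $\varepsilon$. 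I expect this integrated estimate on $\int_0^t \tilde p^\varepsilon\cdot G\,ds$ to be the main obstacle: one cannot bound $G$ pointwise with an $\varepsilon$ gain, so I would integrate by parts in time, transferring a derivative onto $\tilde p^\varepsilon$ and using the equation itself, then control the boundary and remainder terms via the conservation laws for $\omega^\varepsilon$ and the transport estimate \eqref{supportcontrol}, absorbing the resulting $\int_0^t|\tilde p^\varepsilon|^2$ into the right-hand side of \eqref{IneqWG}. The constants $C$ and $\varepsilon_0$ depend on $\rho$ exactly through these kernel bounds on the annulus.
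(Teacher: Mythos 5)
Your overall architecture coincides with the paper's: reformulate Newton's equations in the body frame (Lemma~\ref{ReformulationEqSolide}), compute the pressure force via the decomposition \eqref{allo} of $v^{\varepsilon}$ into Biot--Savart, circulatory and Kirchhoff parts, extract the added mass ${\mathcal M}_{a}$ and the gyroscopic form $\Lambda_{a}$ from the quadratic boundary terms, obtain $\gamma\,\tilde{p}^{\varepsilon}\times{\bf B}$ from the circulation/velocity cross terms $C_{i,b}^{\varepsilon}$, and use the modulation \eqref{DefLTilde} to absorb the vortex-induced velocity. At sketch level this is faithful to the paper, which implements it through Laurent expansions of $\widehat{\nabla\Phi_{i}^{\varepsilon}}$, the Blasius lemma and residue computations, with a number of exact cancellations; your bound on $F$ via harmonicity of $K_{\R^{2}}[\omega^{\varepsilon}]$ on $B(0,1/\rho)$ is also essentially the paper's argument (though you omit the time-derivative of the modulation, Proposition~\ref{labelkoi2}, which contributes $F_{d}$ to $F$).

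The genuine gap is your mechanism for the weakly gyroscopic estimate \eqref{IneqWG}, which as proposed would fail. First, to integrate by parts ``transferring a derivative onto $\tilde{p}^{\varepsilon}$'' you need a time primitive of $G$ that is $O(\varepsilon)$-small; but $G$ given by \eqref{DefG} is merely bounded (Lemma~\ref{LemModulesBornes}), so its primitive grows like $t$ with no $\varepsilon$ gain. Second, ``using the equation itself'' to control $(\tilde{p}^{\varepsilon})'$ costs the inverse of the mass matrix $\varepsilon^{\alpha}{\mathcal M}_{g}+\varepsilon^{2}{\mathcal M}_{a}$ in \eqref{Eq:NormalForm}, i.e.\ a factor $\varepsilon^{-\min(\alpha,2)}$ --- precisely the degeneracy that makes the massless regime hard --- so this route produces negative powers of $\varepsilon$ rather than the required gain. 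The paper's source of the extra $\varepsilon$ is instead an algebraic oscillation: $G$ has only a third component, so $\tilde{p}^{\varepsilon}\cdot G=\varepsilon r^{\varepsilon} G_{3}$; since $D K_{\R^{2}}[w^{\varepsilon}](s,h^{\varepsilon}(s))$ is a traceless symmetric $2\times 2$ matrix, it satisfies $D K_{\R^{2}}[w^{\varepsilon}]\, R_{\theta^{\varepsilon}}=R_{\theta^{\varepsilon}}^{T}\, D K_{\R^{2}}[w^{\varepsilon}]$, so the fast variable enters only through $R_{2\theta^{\varepsilon}}$, and $r^{\varepsilon}R_{2\theta^{\varepsilon}}\xi=\tfrac12\big(R_{2\theta^{\varepsilon}-\frac{\pi}{2}}\xi\big)'$ is an \emph{exact} time derivative because $r^{\varepsilon}=(\theta^{\varepsilon})'$. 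Integrating by parts then places $\partial_{s}$ on the slow factor $D K_{\R^{2}}[w^{\varepsilon}](s,h^{\varepsilon}(s))\cdot\xi$, which is controlled not by the solid equation but by the vorticity transport equation in the \emph{original} frame, $\partial_{s}w^{\varepsilon}=-\div(u^{\varepsilon}w^{\varepsilon})$ --- one must work with $w^{\varepsilon}$ rather than $\omega^{\varepsilon}$, since by \eqref{vorty1} the latter's time derivative carries the singular drift $r^{\varepsilon}x^{\perp}$ --- together with the uniform $L^{1}\cap L^{\infty}$ bound on $u^{\varepsilon}w^{\varepsilon}$ under \eqref{CondDistVorticiteSolide}, harmonicity near $h^{\varepsilon}(s)$, and $|(h^{\varepsilon})'|\le C(1+|\tilde{p}^{\varepsilon}|)$; Young's inequality then yields \eqref{IneqWG}. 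Without this specific structure your integration by parts yields either no factor of $\varepsilon$ or inverse powers of it.
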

We will make ${\mathcal M}_{a}$, $\Lambda_{a}$ and $G$ explicit in the course of the proof, see  \eqref{defMa}, \eqref{DefGammaa}
and \eqref{DefG}. \par
We will refer to the quadratic mappings $\Lambda_{g}$ and $\Lambda_{a}$ satisfying \eqref{AnnulationGammag} and \eqref{AnnulationGammaa} as {\it gyroscopic terms}, to a function $G$ satisfying \eqref{IneqWG} as a {\it weakly gyroscopic term} and to a function $F$ satisfying \eqref{IneqWNL} as a {\it weakly nonlinear term}. \par
\ \par
\noindent
{\bf Second step. Modulated energy estimates.} 
From this normal form, we will be able to deduce the following modulated energy estimate.
\begin{Proposition} \label{Pro:ModulatedEnergy}
Let us fix $\rho>1$ and $\overline{T}>0$. There exist $C>0$ and $\varepsilon_{0}\in (0,1]$ such that the following holds. If for a given $T\in (0,\overline{T}]$ and an $\varepsilon \in (0,\varepsilon_{0}]$ one has that \eqref{CondDistVorticiteSolide} is valid on $[0,T]$, then one has
\begin{equation} \label{EstNRJMod}
|\ell^{\varepsilon}(t)| + \varepsilon|r^{\varepsilon}(t)| \leq C, \ \  \forall t \in [0,T].
\end{equation}
\end{Proposition}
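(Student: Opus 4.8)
The plan is to perform a modulated energy estimate by testing the normal form \eqref{Eq:NormalForm} against $\tilde p^\varepsilon$ itself. Since ${\mathcal M}_g$ and ${\mathcal M}_a$ are constant matrices, introduce the energy
\begin{equation*}
E^\varepsilon(t) := \frac{1}{2}\big[\varepsilon^\alpha {\mathcal M}_g + \varepsilon^2 {\mathcal M}_a\big]\tilde p^\varepsilon(t)\cdot \tilde p^\varepsilon(t),
\end{equation*}
so that the first term of \eqref{Eq:NormalForm}, paired with $\tilde p^\varepsilon$, is exactly $\frac{d}{dt}E^\varepsilon$. The whole point is that every remaining term on the left, and the leading term on the right, is annihilated by this pairing: the two gyroscopic contributions vanish by \eqref{AnnulationGammag} and \eqref{AnnulationGammaa}, while the lift-type term obeys $(\gamma\,\tilde p^\varepsilon\times{\bf B})\cdot\tilde p^\varepsilon=0$ since a cross product in $\R^3$ is orthogonal to its factors. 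Hence, after taking the scalar product with $\tilde p^\varepsilon$, only the two error terms survive and
\begin{equation*}
\frac{d}{dt}E^\varepsilon(t) = \varepsilon\gamma\,G(\varepsilon,t)\cdot\tilde p^\varepsilon(t) + \varepsilon^{\min(\alpha,2)}F(\varepsilon,t)\cdot\tilde p^\varepsilon(t).
\end{equation*}

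Next I would integrate on $[0,t]$ and estimate the right-hand side. The gyroscopic term is handled as a block using \eqref{IneqWG}, which directly bounds $\big|\varepsilon\gamma\int_0^t G\cdot\tilde p^\varepsilon\big|$ by $\varepsilon^2|\gamma|C(1+t+\int_0^t|\tilde p^\varepsilon|^2)$; the nonlinear term is bounded pointwise by \eqref{IneqWNL}, giving $\varepsilon^{\min(\alpha,2)}C\int_0^t(|\tilde p^\varepsilon|+|\tilde p^\varepsilon|^2+\varepsilon|\tilde p^\varepsilon|^3)$. To turn this into a closed inequality for $E^\varepsilon$, observe that since ${\mathcal M}_g$ and ${\mathcal M}_a$ are positive definite and $\varepsilon\in(0,1)$, with $\beta:=\min(\alpha,2)$ one has the two-sided equivalence $c\,\varepsilon^{\beta}|\tilde p^\varepsilon|^2\le E^\varepsilon\le C\,\varepsilon^{\beta}|\tilde p^\varepsilon|^2$. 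Estimating $E^\varepsilon(0)$ requires only that $\tilde p^\varepsilon(0)$ be bounded uniformly in $\varepsilon$ (the data $\ell_0,r_0$ are fixed and the Biot–Savart corrections in \eqref{DefLTilde} are $O(1)$), whence $E^\varepsilon(0)\le C\varepsilon^\beta$. Substituting the equivalence into the surviving integrals rewrites the linear and quadratic contributions as multiples of $\int_0^t E^\varepsilon\,ds$ (the $\varepsilon^{2-\beta}$ and $\varepsilon^{\beta/2}$ prefactors being $\le 1$, and the $|\tilde p^\varepsilon|$ term absorbed by a Young inequality), leading to
\begin{equation*}
E^\varepsilon(t)\le C\varepsilon^\beta + C\int_0^t E^\varepsilon(s)\,ds + C\,\varepsilon^{1-\beta/2}\int_0^t \big(E^\varepsilon(s)\big)^{3/2}\,ds .
\end{equation*}

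\textbf{The main obstacle} is the last, superlinear term $\varepsilon^{1-\beta/2}\int_0^t(E^\varepsilon)^{3/2}$, coming from the cubic $\varepsilon|\tilde p^\varepsilon|^2$ part of the nonlinearity, which forbids a direct linear Gronwall argument. I would resolve it by a continuation (bootstrap) argument: fix $M$ to be chosen, and consider the maximal subinterval of $[0,T]$ on which $E^\varepsilon\le M\varepsilon^\beta$. There $(E^\varepsilon)^{3/2}\le M^{1/2}\varepsilon^{\beta/2}E^\varepsilon$, so $\varepsilon^{1-\beta/2}(E^\varepsilon)^{3/2}\le M^{1/2}\varepsilon\,E^\varepsilon$ and the bad term collapses into the linear one with a coefficient small for $\varepsilon\le\varepsilon_0$. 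Linear Gronwall on $[0,\overline T]$ then yields $E^\varepsilon(t)\le C\varepsilon^\beta e^{C\overline T}$; choosing $M$ larger than this bound makes the estimate strict, so by continuity of $E^\varepsilon$ the subinterval is all of $[0,T]$. This gives $E^\varepsilon(t)\le M\varepsilon^\beta$, hence $|\tilde p^\varepsilon(t)|\le C$ uniformly on $[0,T]$.

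Finally, I would translate the bound on $\tilde p^\varepsilon$ back to the stated quantity. Under \eqref{CondDistVorticiteSolide} the vorticity $\omega^\varepsilon(t)$ is supported in a fixed annulus and its $L^1\cap L^\infty$ norms are conserved by the transport \eqref{Euler4}, so $K_{\R^{2}}[\omega^\varepsilon](t,0)$ and $DK_{\R^{2}}[\omega^\varepsilon](t,0)$ are bounded uniformly in $(t,\varepsilon)$. Then from \eqref{DefLTilde} and \eqref{DefPtilde}, $|\ell^\varepsilon|\le|\tilde p^\varepsilon|+|K_{\R^{2}}[\omega^\varepsilon](t,0)|+\varepsilon|DK_{\R^{2}}[\omega^\varepsilon](t,0)\cdot\xi|\le|\tilde p^\varepsilon|+C$ and $\varepsilon|r^\varepsilon|\le|\tilde p^\varepsilon|$, which yields \eqref{EstNRJMod}.
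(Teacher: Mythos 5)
Your first two steps reproduce the paper's proof of Proposition~\ref{Pro:ModulatedEnergy} exactly: pair \eqref{Eq:NormalForm} with $\tilde{p}^{\varepsilon}$, use the symmetry of $\mathcal{M}_{g}$, $\mathcal{M}_{a}$ and the cancellations \eqref{AnnulationGammag}, \eqref{AnnulationGammaa} and $\tilde{p}^{\varepsilon}\cdot(\tilde{p}^{\varepsilon}\times{\bf B})=0$, then integrate and invoke \eqref{IneqWG}--\eqref{IneqWNL}. Where you genuinely diverge is the treatment of the cubic term $\varepsilon|\tilde{p}^{\varepsilon}|^{3}$: the paper disposes of it by recalling the basic energy bound of Lemma~\ref{LemEstNRJ2}, which together with Lemma~\ref{LemModulesBornes} gives $\varepsilon|\tilde{p}^{\varepsilon}|\leq C$ under \eqref{CondDistVorticiteSolide}, so that $\varepsilon|\tilde{p}^{\varepsilon}|^{3}\leq C|\tilde{p}^{\varepsilon}|^{2}$ and a plain linear Gronwall closes. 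Your bootstrap on the maximal interval where $E^{\varepsilon}\leq M\varepsilon^{\beta}$ is a correct, self-contained alternative: the constant $C$ in your integral inequality is independent of $M$, so one may first run Gronwall under the constraint $M^{1/2}\varepsilon_{0}\leq 1$ to get $E^{\varepsilon}\leq C(1+\overline{T})e^{2C\overline{T}}\varepsilon^{\beta}$, then fix $M$ strictly larger than this constant and finally shrink $\varepsilon_{0}$; continuity of $E^{\varepsilon}$ closes the argument. This buys independence from the conserved-energy input (Proposition~\ref{ProRata}), at the cost of a slightly more delicate choice of constants.

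There is, however, a genuine gap: your two-sided equivalence $c\,\varepsilon^{\beta}|\tilde{p}^{\varepsilon}|^{2}\leq E^{\varepsilon}\leq C\,\varepsilon^{\beta}|\tilde{p}^{\varepsilon}|^{2}$ is false when $\mathcal{S}_{0}$ is a ball. As the paper observes in Subsection~\ref{NRJ}, $\mathcal{M}_{a}$ is the Gram matrix of $(\nabla\Phi^{1}_{i})_{i=1,2,3}$, and the third potential degenerates for a disk (one has $x^{\perp}\cdot n=0$ on a circle, so $\Phi^{1}_{3}\equiv 0$), whence $\mathcal{M}_{a}=\mathrm{diag}(m_{1,1},m_{1,1},0)$. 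For $\alpha>2$ (so $\beta=2$) the third component of $E^{\varepsilon}$ is only $\tfrac{1}{2}\varepsilon^{\alpha}\mathcal{J}^{1}|\varepsilon r^{\varepsilon}|^{2}\ll\varepsilon^{\beta}|\varepsilon r^{\varepsilon}|^{2}$, so the lower bound fails in both places where you use it: converting the error integrals into multiples of $\int_{0}^{t}E^{\varepsilon}$, and converting the final Gronwall bound into $|\tilde{p}^{\varepsilon}|\leq C$. (For $\alpha\leq 2$ your equivalence does hold for every shape, since then $\varepsilon^{\alpha}\mathcal{M}_{g}$ alone is coercive at scale $\varepsilon^{\beta}$.) The paper repairs the degenerate case by a case split: when $\mathcal{S}_{0}$ is a ball, \eqref{Solide2} forces $(r^{\varepsilon})'=0$, so $r^{\varepsilon}\equiv r_{0}$ is trivially bounded, and the energy inequality is then exploited only through $c^{-1}|\tilde{\ell}^{\varepsilon}|^{2}\leq\mathcal{E}_{a}(\tilde{p}^{\varepsilon})\leq c\,|\tilde{\ell}^{\varepsilon}|^{2}$ to bound $\tilde{\ell}^{\varepsilon}$, after which Lemma~\ref{LemModulesBornes} yields \eqref{EstNRJMod}. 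Adding this case split makes your argument complete.
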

This proposition improves the estimate coming from a basic energy argument (see Lemma~\ref{LemEstNRJ2}).
\begin{Remark}
We can track in the proofs of Propositions~\ref{Pro:NormalForm} and \ref{Pro:ModulatedEnergy} that the only constraint on $\varepsilon_{0}$ is to verify:
$$\partial {\mathcal S}_0^{\eps_{0}}\subset \overline{B(0,1/(2\rho))}.$$
\end{Remark}
\ \par
\noindent
{\bf Third step. Local and global passage to the limit.}
In a first time, we obtain the convergence stated in Theorem~\ref{MR} on a small time interval, and only in a second time we obtain this convergence on any time interval. The proof follows the following steps. \par
Since the modulated estimate above require assumptions on the support of the vorticity, taking \eqref{supportcontrol} into account, we set the following definition, given a fixed $\overline{T}>0$:
\begin{equation}\label{T-eps}
T_{\varepsilon}:= \sup\Big\{ \tau \in [0,\overline{T}],\ \forall t\in [0,\tau], \ 
\supp w^\varepsilon(t) \subset B(h^\varepsilon(t),2\rho_{\overline{T}}) \setminus B(h^\varepsilon(t),1/(2\rho_{\overline{T}}))
 \Big\},
\end{equation}
where $\rho_{\overline{T}}$ is defined from \eqref{supportcontrol} with $T=\overline{T}$. \par
As $\supp w_{0} \subset B(0,\rho_{\overline{T}}) \setminus B(0,1/\rho_{\overline{T}})$, we have of course $T_{\varepsilon}>0$. Using Proposition~\ref{Pro:ModulatedEnergy} with $\rho=2\rho_{\overline{T}}$ and $T=T_{\varepsilon}$, we deduce the following.
\begin{Proposition} \label{Pro:TempsMinimal}
Let us fix $\overline{T}>0$. There exists $\varepsilon_{0}>0$ and $\underline{T}>0$ such that
\begin{equation*}
\inf_{\varepsilon \in (0,\varepsilon_{0})} T_{\varepsilon} \geq \underline{T}.
\end{equation*}
\end{Proposition}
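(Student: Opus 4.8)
The plan is to turn the modulated energy bound of Proposition~\ref{Pro:ModulatedEnergy} into a uniform-in-$\varepsilon$ lower bound on the velocity of the solid, and then to use this to control the displacement of the support of $w^{\varepsilon}$ relative to the fluid velocity field, thereby preventing the support from escaping the annulus that defines $T_{\varepsilon}$ before a fixed time $\underline{T}$. The key point is that, by the very definition \eqref{T-eps} of $T_{\varepsilon}$, the support condition \eqref{CondDistVorticiteSolide} holds for $\omega^{\varepsilon}$ (the vorticity in the rotated frame) on all of $[0,T_{\varepsilon}]$ with $\rho=2\rho_{\overline{T}}$, since the map $x\mapsto R_{\theta^{\varepsilon}(t)}^{T}(x-h^{\varepsilon}(t))$ carries $B(h^{\varepsilon}(t),2\rho_{\overline{T}})\setminus B(h^{\varepsilon}(t),1/(2\rho_{\overline{T}}))$ onto $B(0,2\rho_{\overline{T}})\setminus B(0,1/(2\rho_{\overline{T}}))$. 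Hence Proposition~\ref{Pro:ModulatedEnergy} applies on $[0,T_{\varepsilon}]$ and gives a constant $C>0$, independent of $\varepsilon$, with $|\ell^{\varepsilon}(t)|+\varepsilon|r^{\varepsilon}(t)|\le C$ for all $t\in[0,T_{\varepsilon}]$. Since $\ell^{\varepsilon}=R_{\theta^{\varepsilon}}^{T}(h^{\varepsilon})'$ and $R_{\theta^{\varepsilon}}^{T}$ is an isometry, this yields $|(h^{\varepsilon})'(t)|\le C$ uniformly, so $h^{\varepsilon}$ is uniformly Lipschitz on $[0,T_{\varepsilon}]$.

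Next I would estimate the motion of the fluid vorticity support in the frame moving with $h^{\varepsilon}(t)$. The vorticity $w^{\varepsilon}$ is transported by $u^{\varepsilon}$ via \eqref{Euler4}, so a point $x$ in $\supp w^{\varepsilon}(t)$ moves with velocity $u^{\varepsilon}(t,x)$, while the reference point $h^{\varepsilon}(t)$ moves with velocity $(h^{\varepsilon})'(t)$. Thus the relative velocity $u^{\varepsilon}(t,x)-(h^{\varepsilon})'(t)$ governs whether $x-h^{\varepsilon}(t)$ leaves the annulus. Using the uniform bound on $|(h^{\varepsilon})'|$ together with a uniform bound on $u^{\varepsilon}$ on the support of $w^{\varepsilon}$ (away from the solid, where the singular point-vortex part $\tfrac{\gamma}{2\pi}(x-h^{\varepsilon})^{\perp}/|x-h^{\varepsilon}|^{2}$ stays bounded precisely because $\dist(\supp w^{\varepsilon}(t),h^{\varepsilon}(t))\ge 1/(2\rho_{\overline{T}})$ on $[0,T_{\varepsilon}]$, and the regular Biot--Savart part is controlled by $\|w^{\varepsilon}\|_{L^{1}\cap L^{\infty}}$, itself conserved by the transport), one obtains a uniform bound $M$ on the relative speed. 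Consequently, along the flow, the quantity $|x-h^{\varepsilon}(t)|$ can move away from its initial annulus $B(0,\rho_{\overline{T}})\setminus B(0,1/\rho_{\overline{T}})$ at speed at most $M$.

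From this I would conclude by a continuity/bootstrap argument. Set $\underline{T}:=\min\big(\overline{T},\,\rho_{\overline{T}}/M,\,\tfrac{1}{4\rho_{\overline{T}}M}\big)$, chosen so that in time $\underline{T}$ no support point can travel the distance $\rho_{\overline{T}}$ needed to reach the outer radius $2\rho_{\overline{T}}$, nor the distance $1/(2\rho_{\overline{T}})$ needed to reach the inner radius $1/(2\rho_{\overline{T}})$ from the initial annulus. This shows that the strict support inclusion defining $T_{\varepsilon}$ cannot be saturated before time $\underline{T}$, whence $T_{\varepsilon}\ge\underline{T}$ for every $\varepsilon\in(0,\varepsilon_{0})$, giving $\inf_{\varepsilon\in(0,\varepsilon_{0})}T_{\varepsilon}\ge\underline{T}$. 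The main obstacle is the circularity inherent in the definition of $T_{\varepsilon}$: the velocity bound from Proposition~\ref{Pro:ModulatedEnergy} is only available \emph{while} the support condition holds, and the support condition is what we are trying to propagate. This is resolved precisely by applying Proposition~\ref{Pro:ModulatedEnergy} on the closed interval $[0,T_{\varepsilon}]$ (where the condition holds by definition) to get constants uniform in $\varepsilon$, and then using those constants to show the support cannot exit strictly before $\underline{T}$; care is needed to ensure all constants $C$, $M$ and the resulting $\underline{T}$ depend only on $\overline{T}$, $\rho_{\overline{T}}$, $\gamma$, and $\|w_{0}\|_{L^{1}\cap L^{\infty}}$, and not on $\varepsilon$ or on $T_{\varepsilon}$ itself.
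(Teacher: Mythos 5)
Your proof is correct and follows essentially the same route as the paper: you apply Proposition~\ref{Pro:ModulatedEnergy} with $\rho=2\rho_{\overline{T}}$ on $[0,T_{\varepsilon}]$ (where the support condition holds by the definition \eqref{T-eps}, which resolves the apparent circularity exactly as the paper does), deduce uniform bounds on $|(h^{\varepsilon})'|$ and on the velocity transporting $w^{\varepsilon}$ over its support (the content of Lemma~\ref{LeminfVomega}, which you essentially re-derive via the decomposition into the point-vortex part and the regular part), and conclude by a travel-time estimate giving an $\varepsilon$-independent $\underline{T}$. The paper's choice $\underline{T}=\min\big(\tfrac{1}{4C\rho_{\overline{T}}},\tfrac{\rho_{\overline{T}}}{2C}\big)$ matches yours up to the naming of constants.
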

In turn this allows to prove the following local version of Theorem~\ref{MR}.
\begin{Proposition} \label{Pro:CVLocal}
We consider $\underline{T}>0$ such that $\inf_{\varepsilon \in (0,\varepsilon_{0})} T_{\varepsilon} \geq \underline{T}$ for some $\varepsilon_{0}>0$. Then $h^\eps $ converges to $h$ weakly-$\star$ in $W^{1,\infty} (0,\underline{T};\R^{2})$ and  $w^{\varepsilon}$ converges to $w$ in $C^{0} ([0,\underline{T}]; L^{\infty}(\R^{2})-w\star)$, where $(w,h)$ is the solution of the vortex-wave system \eqref{EulerPoint}--\eqref{PointEuler}.
\end{Proposition}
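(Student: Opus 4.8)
\textbf{Proof proposal for Proposition~\ref{Pro:CVLocal}.}

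The plan is to combine the uniform bounds furnished by the modulated energy estimate (Proposition~\ref{Pro:ModulatedEnergy}, valid on $[0,\underline{T}]$ thanks to the hypothesis $\inf_\eps T_\eps \ge \underline{T}$ and the definition \eqref{T-eps}) with weak compactness, and then identify the limit as the unique solution of the vortex-wave system. First I would record the uniform estimates. On $[0,\underline{T}]$ the support condition \eqref{CondDistVorticiteSolide} holds (after translating/rotating into the body frame), so Proposition~\ref{Pro:ModulatedEnergy} gives $|\ell^\eps(t)| \le C$, hence $|(h^\eps)'(t)| = |R_{\theta^\eps(t)} \ell^\eps(t)| \le C$ uniformly. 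Thus $h^\eps$ is bounded in $W^{1,\infty}(0,\underline{T};\R^2)$, and $w^\eps$ is bounded in $L^\infty((0,\underline{T})\times\R^2)$ by the transport structure \eqref{Euler4} (the $L^\infty$ norm of vorticity is conserved, extended by $0$ inside the solid). By Banach--Alaoglu I extract a subsequence with $h^\eps \cvwstar h$ in $W^{1,\infty}$ and $w^\eps \cvwstar w$ in $L^\infty$.

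Second, I would upgrade these weak limits to the continuity-in-time statements. Since $h^\eps$ is bounded in $W^{1,\infty}$, Arzel\`a--Ascoli gives uniform convergence of $h^\eps$ to $h$ in $C^0([0,\underline{T}];\R^2)$ along the subsequence, with $h(0)=0$ from \eqref{Solideci}--\eqref{hen0}. For the vorticity, the transport equation \eqref{Euler4} yields equicontinuity in time of $w^\eps$ as a map into $L^\infty$-weak-$\star$: for a fixed test function $\psi$, the map $t\mapsto \int w^\eps(t)\psi$ has derivative controlled by $\int w^\eps\, u^\eps\cdot\nabla\psi$, which is bounded using the $L^\infty$ bound on $w^\eps$ and the $L^q_{loc}$ control of $u^\eps$ (the singular part $\frac{\gamma}{2\pi}(x-h^\eps)^\perp/|x-h^\eps|^2$ is integrable against $\nabla\psi$ away from nothing, and the support condition \eqref{T-eps} keeps the vorticity at positive distance from the solid). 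This gives $w^\eps \to w$ in $C^0([0,\underline{T}];L^\infty(\R^2)\text{-}w\star)$, and in particular $w|_{t=0}=w_0$.

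Third, I would pass to the limit in the weak vorticity formulation to recover \eqref{EqSolFaibleIntro} and the particle equation \eqref{PointEuler}. The velocity decomposes as $u^\eps = K_{\R^2}[w^\eps] + (\text{correction from the solid})$; on the exterior domain the correction converges to the point-vortex field $\frac{\gamma}{2\pi}(x-h(t))^\perp/|x-h(t)|^2$ because the circulation $\gamma$ is fixed and the solid shrinks to $h(t)$. The term $K_{\R^2}[w^\eps]$ converges strongly in $L^q_{loc}$ ($q<2$) by the compactness of the Biot--Savart operator together with the weak-$\star$ convergence of $w^\eps$ and uniform support control. Multiplying \eqref{Euler4} by $\psi\in C^\infty_c([0,\underline{T})\times\R^2)$, integrating by parts, and using that the product of a weakly-$\star$ convergent $w^\eps$ with a strongly convergent velocity passes to the limit, I obtain \eqref{EqSolFaibleIntro}. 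The particle equation \eqref{PointEuler} requires identifying $\lim (h^\eps)' = \tilde u(t,h(t))$; this is the delicate point, handled by integrating the normal form \eqref{Eq:NormalForm} against $\tilde p^\eps$ and using the gyroscopic cancellations \eqref{AnnulationGammag}--\eqref{AnnulationGammaa} to show $\tilde\ell^\eps \to 0$, whence $\ell^\eps \to K_{\R^2}[w](t,0)$ in the body frame, i.e. $(h^\eps)'\to\tilde u(t,h(t))$.

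Finally, since the vortex-wave system \eqref{EulerPoint}--\eqref{PointEuler} with initial data $w_0\in L^\infty_c(\R^2\setminus\{0\})$ and one point vortex admits a \emph{unique} solution (Lacave--Miot \cite{CricriLylyne}), the limit $(w,h)$ is independent of the extracted subsequence, so the whole family converges. The main obstacle I anticipate is the third step, specifically establishing $\tilde\ell^\eps\to 0$: a crude energy estimate does not even bound $(h^\eps)'$ uniformly (as noted in Remark~\ref{RemEstNRJ}), so one genuinely needs the refined structure of the normal form—the smallness of the weakly gyroscopic term $G$ (estimate \eqref{IneqWG}), the weakly nonlinear control \eqref{IneqWNL}, and the exact antisymmetry of the gyroscopic terms—to close a Gr\"onwall-type argument on the modulated velocity and extract the correct limiting value of the solid velocity.
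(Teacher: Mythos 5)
Your steps 1, 2 and 4 (uniform bounds via Proposition~\ref{Pro:ModulatedEnergy}, weak-$\star$ extraction, temporal equicontinuity of $t\mapsto\int w^\eps\psi$ to upgrade to $C^0([0,\underline{T}];L^\infty\text{-}w\star)$, strong local convergence of the velocity, weak/strong passage in the vorticity formulation, and the final appeal to Lacave--Miot uniqueness to remove the subsequence) are all correct and coincide with the paper's proof. The gap is in your third step, the derivation of the particle equation, and it is a genuine one, not a technicality.

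You propose to integrate the normal form \eqref{Eq:NormalForm} against $\tilde{p}^{\eps}$ and use the cancellations \eqref{AnnulationGammag}--\eqref{AnnulationGammaa} to conclude $\tilde{\ell}^{\eps}\to 0$. But pairing with $\tilde{p}^{\eps}$ is exactly the modulated-energy computation of Proposition~\ref{Pro:ModulatedEnergy}, and since $\tilde{p}^{\eps}\cdot(\tilde{p}^{\eps}\times \mathbf{B})=0$, this pairing annihilates precisely the term $\gamma\,\tilde{p}^{\eps}\times\mathbf{B}$ that carries the limit dynamics: the gyroscopic structure kills every informative term, and a Gronwall argument on the resulting energy can only yield \emph{boundedness} of $\tilde{p}^{\eps}$, never decay. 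Decay is in fact false: $\tilde{\ell}^{\eps}(0)=\ell_0-K_{\R^2}[w_0](0)+\mathcal{O}(\eps)$ is in general a fixed nonzero vector, and strong convergence $\ell^{\eps}\to K_{\R^2}[w](t,h(t))$ would contradict the remark following Theorem~\ref{MR}, which notes that strong $W^{1,\infty}$ convergence of $h^\eps$ would force $\ell_0=K_{\R^2}[w_0](0)$. The correct statement is convergence to zero only in $W^{-1,\infty}(0,\underline{T})$ (allowing oscillations of $\tilde{\ell}^{\eps}$), and the paper obtains it by a different manipulation: one projects the normal form by $P_\flat Q_{\theta^\eps}$ and uses the algebraic identity \eqref{reviens} — resting on $Q_{\theta^\eps}'=r^\eps Q_{\theta^\eps}J_3$ — to absorb \emph{both} gyroscopic terms into an exact time derivative $P_\flat\bigl[\bigl(\eps^\alpha\mathcal{M}_g Q_{\theta^\eps}+\eps^2 Q_{\theta^\eps}\mathcal{M}_a\bigr)\tilde{p}^{\eps}\bigr]'$, which tends to $0$ in $W^{-1,\infty}$ because the bracket is $\mathcal{O}(\eps^{\min(\alpha,2)})$ in $L^\infty$ (while the weakly nonlinear term is handled by \eqref{IneqWNL}). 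This absorption is not optional for a second reason you do not address: for $\alpha<1$ the prefactor $\eps^{\alpha-1}$ of $\Lambda_g$ in \eqref{Eq:NormalForm} blows up, so the gyroscopic terms are not individually negligible and must be recombined exactly. One then unwinds $P_\flat Q_{\theta^\eps}(\tilde{p}^{\eps}\times\mathbf{B})$ via \eqref{DefB} and \eqref{vprod}, notes $\eps r^\eps R_{\theta^\eps}\xi=-\eps(R_{\theta^\eps}\xi^\perp)'\to 0$ in $W^{-1,\infty}$, and concludes $(h^{\eps_n})'-K_{\R^2}[w^{\eps_n}](\cdot,h^{\eps_n})\to 0$ in $W^{-1,\infty}$, the limit being finally identified weakly-$\star$ in $L^\infty$ thanks to the log-Lipschitz bound \eqref{loglip2} and the uniform convergence of $h^{\eps_n}$. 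Your plan as stated would not close.
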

The proof uses a compactness argument (using the estimates above), the uniqueness of the solutions in the limit and Proposition~\ref{Pro:NormalForm}. \par
Finally we obtain Theorem~\ref{MR} by a sort of continuous induction argument. \par
%
%
%
%
%
%
%
%
%
%
%
\section{Basic material}
\label{Sec:Material}
In this section, we introduce basic material which we will use in subsequent sections.
In the whole paper, we will need some arguments of complex analysis (see e.g. Appendix~\ref{complex ana}): for the rest of the paper, we identify $\C$ and $\R^{2}$ through 
\begin{equation*}
(x_{1},x_{2})= x_{1} + i x_{2}=z.
\end{equation*}
The complex conjugate of a complex number $z$ will be classically denoted by $\overline{z}$, but we may also use the notation $z^{*}$ for large expressions. \par
We also use the notation $\widehat f = f_{1}-if_{2}$ for any $f=(f_{1},f_{2})$. The reason of this notation is the following consequence of the Cauchy-Riemann equations: 
\newline\centerline{ $f$ is divergence and curl free if and only if $\widehat f$ is holomorphic.}
%
%
%
%
%

%
%
%
%
\subsection{Equations in the body frame}
\label{Subsec:COF}
First we transfer the equations for the velocity and the vorticity in the body frame (all the details can be found in \cite{GLS}). 
For that we apply the following isometric change of variable:
\begin{equation*}
\left\{
\begin{array}{l}
 v^{\eps} (t,x)=R_{\theta^\eps(t)}^T \, u^\eps(t,R_{\theta^\eps(t)}x+h^{\eps}(t)), \\
 \omega^{\varepsilon}(t,x)=w^{\varepsilon}(t, R_{\theta^\eps(t)}x+h^{\eps}(t) ) = \curl v^{\varepsilon}(t,x)\\
 \tilde{\pi}^\varepsilon (t,x)=\pi^\eps(t,R_{\theta^\eps(t)}x+h^{\eps}(t)), \\
 {\ell}^{\eps} (t)=R_{\theta^\eps(t)}^T \ (h^{\eps})' (t) ,
\end{array}\right.
\end{equation*}
where we recall that the usual two dimensional rotation $R_{\theta^\eps}$ was introduced in \eqref{NotationRot2D}.
The equations  \eqref{Euler1}-\eqref{Solideci}  become
\begin{eqnarray}
\label{Euler11}
\displaystyle \frac{\partial v^{\eps}}{\partial t}
+ \left[(v^{\eps}-\ell^{\eps}-r^{\eps} x^\perp)\cdot\nabla\right]v^{\eps} 
+ r^{\eps} (v^{\eps})^\perp+\nabla \tilde{\pi}^\varepsilon =0 && x\in \mathcal{F}^{\eps}_{0} ,\\
\label{Euler12}
\div v^{\eps} = 0 && x\in \mathcal{F}^{\eps}_{0} , \\
\label{Euler13}
v^{\eps}\cdot n = \left(\ell^{\eps} +r^{\eps} x^\perp\right)\cdot n && x\in \partial \mathcal{S}^{\eps}_0, \\
\label{Solide11}
m^\eps (\ell^{\eps})'(t)=\int_{\partial \mathcal{S}_0^{\eps}} \tilde{\pi}^\varepsilon n \ ds-m^{\eps} r^{\eps} (\ell^{\eps})^\perp & & \\
\label{Solide12}
\mathcal{J}^{\varepsilon} (r^{\eps})'(t)=\int_{\partial \mathcal{S}^{\eps}_0} x^\perp \cdot \tilde{\pi}^\varepsilon n \ ds & &  \\
\label{Euler1ci}
v^{\eps}(0,x)= v^{\eps}_0 (x) && x\in \mathcal{F}^{\eps}_{0} ,\\
\label{Solide1ci}
\ell^{\eps}(0)= \ell_0,\ r^{\eps} (0)= r_0 . 
\end{eqnarray}
Moreover \eqref{Euler11} gives
\begin{equation} \label{vorty1}
\partial_t  \omega^{\eps} + \left[(v^{\eps}-\ell^{\eps}-r^{\eps} x^\perp)\cdot\nabla\right]  \omega^{\eps} =0 \text{ for }
x \in \mathcal{F}^{\eps}_{0} .
\end{equation}
The advantage of this formulation is that the space domain is now fixed. A large part of the analysis will be performed with these equations. \par
\ \par
As mentioned in Theorem~\ref{ThmYudo}, some quantities are conserved along the time, whose the circulation and the mass of the vorticity:
\begin{gather*}
\nonumber
\gamma =  \int_{  \partial \mathcal{S}^{\eps}_0} v^{\eps}  \cdot  \tau \, ds = \int_{  \partial \mathcal{S}^{\eps}(t)} u^{\eps}  \cdot  \tau \, ds
= \int_{  \partial \mathcal{S}^{\eps}_{0}} u^{\eps}_{0}  \cdot  \tau \, ds, \\
%
\beta^{\varepsilon} = \int_{ \mathcal{F}^{\eps}_0} \omega^{\eps}(t,x) \, dx = \int_{\mathcal{F}^{\eps}(t)} w^{\eps}(t,x) \, dx = \int_{\mathcal{F}^{\eps}_{0}} w^{\varepsilon}_{0}(x) \, dx.
\end{gather*}
As $w_{0}$ is assumed to be compactly supported in $\R^2 \setminus \{ 0\}$, we note that $\beta^{\varepsilon}$ is independent of $\eps$ when $\eps$ is small enough. \par
\ \par
In the next subsection, we introduce several velocity fields in the frame attached to the body. These will allow in particular to decompose the velocity field $v^{\varepsilon}$ in a way that clarifies how it is generated from the vorticity, the velocity of the rigid body and the circulation of the flow around the solid (see formulas \eqref{vdecomp} or \eqref{vitedechydro} below).
%
%
%
%
%
%
%
\subsection{Some useful velocity fields}
\label{Sec:GreensFunction}
We regroup the particular velocity fields mentioned above in three paragraphs. We refer to \cite{GLS} for more details. \par
\subsubsection{Harmonic field}
To take the velocity circulation around the body into account, we introduce the following harmonic field: let $H^{\eps}$ the unique solution vanishing at infinity of 
\begin{equation} \label{DefHeps}
\div H^{\eps} = 0 \quad   \text{in }  \mathcal{F}^{\eps}_{0}, \qquad \curl H^{\eps} = 0 \quad   \text{in }   \mathcal{F}^{\eps}_{0}, \qquad
H^{\eps} \cdot n = 0 \quad   \text{on }  \partial \mathcal{S}^{\eps}_0, \qquad
\int_{\partial \mathcal{S}^{\eps}_0 } H^{\eps} \cdot \tau \, ds = 1 .
\end{equation}
We list here a list of properties concerning $H^\varepsilon$ which are established in \cite{GLS}:
\begin{itemize}
\item The vector field $H^{\eps}$ admits a harmonic stream function $ \Psi_{H^{\varepsilon}} (x)$:
\begin{equation} \label{DefPsiH}
H^{\varepsilon} = \nabla^{\perp} \Psi_{H^{\varepsilon}},
\end{equation}
which vanishes on the boundary $ \partial \mathcal{S}^{\eps}_0$, and is equivalent to $\frac{1}{2\pi} \ln |x|$ as $x$ goes to infinity. 
\item We have the following scaling law
\begin{equation} \label{ScalingH}
H^{\varepsilon}(x) = \frac{1}{\varepsilon} H^{1} \left( \frac{x}{\varepsilon}\right).
\end{equation}
\item The function $\widehat{H}^{1}$ is holomorphic (as a function of $z=x_{1}+ix_{2}$), and can be decomposed in Laurent Series (see Remark~\ref{RemCircu}) with:
\begin{equation} \label{HSeriesLaurent}
\widehat{H}^{1}(z) = \frac{1}{2i\pi z} + {\mathcal O}(1/z^{2}) \ \text{ as } z \rightarrow \infty.
\end{equation}
Coming back to the variable $x\in \R^2$, the previous decomposition implies
\begin{equation*}
H^1(x) = {\mathcal O}\left(\frac{1}{|x|}\right) \text{ and } \nabla H^1 = {\mathcal O}\left(\frac{1}{|x|^{2}}\right).
\end{equation*}
\end{itemize}
From the scaling law  and the asymptotic behavior, we deduce the following estimate on the support of $\omega^\varepsilon$.
\begin{Lemma}\label{LemHomega}
Let us fix $\rho>1$. There exists $C>0$ such that the following holds. If for a given $T>0$ and $\eps\in (0,1]$, \eqref{CondDistVorticiteSolide} is valid on $[0,T]$, then one has
\begin{equation*} 
\| H^\varepsilon \|_{L^\infty(\supp \omega^\varepsilon(t))} +  \| \nabla H^\varepsilon \|_{L^\infty(\supp \omega^\varepsilon(t))}  \leq C \quad \forall t\in [0,T].
\end{equation*}
\end{Lemma}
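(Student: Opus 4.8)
The plan is to reduce everything to the fixed profile $H^{1}$ through the scaling law \eqref{ScalingH}, and then to play the decay of $H^{1}$ against the support localization \eqref{CondDistVorticiteSolide}. The key observation is that \eqref{CondDistVorticiteSolide} confines $\supp \omega^{\varepsilon}(t)$ to the fixed annulus $\{1/\rho \le |x| \le \rho\}$, so that after rescaling by $\varepsilon$ the relevant evaluation points of $H^{1}$ are pushed away from the solid and (for small $\varepsilon$) towards infinity, precisely where $H^{1}$ and $\nabla H^{1}$ are small; the factors $\varepsilon^{-1}$ and $\varepsilon^{-2}$ produced by the scaling are then exactly compensated.

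First I would record two elementary facts about the fixed field $H^{1}$ on the fluid domain $\mathcal{F}^{1}_{0} = \R^{2}\setminus \mathcal{S}_{0}$. Since $\mathcal{S}_{0}$ is smooth, $H^{1}$ and $\nabla H^{1}$ are continuous up to $\partial \mathcal{S}_{0}$, hence bounded on every compact subset of $\overline{\mathcal{F}^{1}_{0}}$; on the other hand the Laurent expansion \eqref{HSeriesLaurent} gives $|H^{1}(y)| = \mathcal{O}(1/|y|)$ and $|\nabla H^{1}(y)| = \mathcal{O}(1/|y|^{2})$ as $|y|\to\infty$, say for $|y|\ge R_{0}$. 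Combining the two (continuity on the compact region $\{1/\rho \le |y| \le R_{0}\}\cap \overline{\mathcal{F}^{1}_{0}}$ and decay on $\{|y|\ge R_{0}\}$), the quantities
\[
C_{1} := \sup_{y \in \overline{\mathcal{F}^{1}_{0}},\, |y| \ge 1/\rho} |y|\,|H^{1}(y)|, \qquad C_{2} := \sup_{y \in \overline{\mathcal{F}^{1}_{0}},\, |y| \ge 1/\rho} |y|^{2}\,|\nabla H^{1}(y)|
\]
are finite and depend only on $\mathcal{S}_{0}$ and $\rho$.

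Next, fix $t\in[0,T]$ and $x\in \supp \omega^{\varepsilon}(t)$. By \eqref{CondDistVorticiteSolide} we have $1/\rho \le |x| \le \rho$, and since $\supp \omega^{\varepsilon}(t)\subset \mathcal{F}^{\varepsilon}_{0}$ the rescaled point $y:=x/\varepsilon$ lies in $\mathcal{F}^{1}_{0}$. Moreover $\varepsilon\le 1$ forces $|y| = |x|/\varepsilon \ge |x| \ge 1/\rho$, so $y$ falls in the range over which $C_{1},C_{2}$ were taken. The scaling law \eqref{ScalingH} gives $H^{\varepsilon}(x) = \varepsilon^{-1}H^{1}(y)$ and, after differentiation, $\nabla H^{\varepsilon}(x) = \varepsilon^{-2}(\nabla H^{1})(y)$. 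Using $\varepsilon|y| = |x|$ and $|x|\ge 1/\rho$ we get
\[
|H^{\varepsilon}(x)| = \frac{|H^{1}(y)|}{\varepsilon} \le \frac{C_{1}}{\varepsilon\,|y|} = \frac{C_{1}}{|x|} \le C_{1}\rho, \qquad |\nabla H^{\varepsilon}(x)| = \frac{|\nabla H^{1}(y)|}{\varepsilon^{2}} \le \frac{C_{2}}{\varepsilon^{2}\,|y|^{2}} = \frac{C_{2}}{|x|^{2}} \le C_{2}\rho^{2}.
\]
Taking the supremum over $x\in\supp\omega^{\varepsilon}(t)$ and over $t\in[0,T]$ yields the claim with $C = C_{1}\rho + C_{2}\rho^{2}$.

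The computation is short, and there is no genuine obstacle; the only point requiring some care is to make the bound uniform over the whole range $\varepsilon\in(0,1]$ rather than only for small $\varepsilon$. This is exactly why I phrase the constants as suprema of $|y|^{k}$ times the field, so that the $\varepsilon^{-k}$ factor from the scaling is absorbed against $|y|^{-k}=(\varepsilon/|x|)^{k}$, and why I include the bounded annular region in the supremum, where $H^{1}$ is controlled by continuity up to $\partial\mathcal{S}_{0}$ rather than by the decay at infinity. Had I invoked only the asymptotic bound $|H^{1}(y)|\le C/|y|$ valid for $|y|\ge R_{0}$, I would have been forced to assume $\varepsilon \le 1/(\rho R_{0})$, which would weaken the statement.
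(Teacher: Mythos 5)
Your proof is correct and is precisely the argument the paper intends: the lemma is stated there as an immediate consequence of the scaling law \eqref{ScalingH} and the decay $H^{1}(x)={\mathcal O}(1/|x|)$, $\nabla H^{1}(x)={\mathcal O}(1/|x|^{2})$, with no further detail supplied. Your care in defining $C_{1},C_{2}$ as suprema of $|y|^{k}$ times the field over all of $\overline{\mathcal{F}^{1}_{0}}\cap\{|y|\geq 1/\rho\}$ (using continuity up to $\partial\mathcal{S}_{0}$ on the compact part), so that the bound is uniform over the full range $\varepsilon\in(0,1]$ rather than only for small $\varepsilon$, is exactly the right way to make that one-line deduction rigorous.
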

The harmonic field $H^{1}$ allows to define the following geometric constant, appearing in \eqref{DefB} and known as the conformal center of ${\mathcal S}_{0}$:
\begin{equation} \label{DefXi}
\xi_{1} + i \xi_{2}: = \int_{ \partial  \mathcal{S}_{0}}  z \widehat{H^1}  \, dz = \varepsilon^{-1} \int_{ \partial  \mathcal{S}^{\varepsilon}_{0}}  z \widehat{H^\varepsilon}  \, dz.
\end{equation}
In the same way, we introduce $\eta$ is defined in $\R^{2}\simeq \C$ by
\begin{equation} \label{DefEta}
\eta=\eta_{1}+i\eta_{2}: = \int_{ \partial  \mathcal{S}_{0} } z^2 \widehat{H^1} \, dz
= \eps^{-2}\int_{ \partial  \mathcal{S}^\eps_{0}} z^2 \widehat{H^\eps}  \, dz .
\end{equation}
\subsubsection{Kirchhoff and other related potentials}\label{ParKirchoff}
\par
We will make use of the Kirchhoff potentials which allow to lift the boundary conditions in a harmonic manner: let $\Phi^{\eps}:=(\Phi^{\eps}_{i})_{i=1,2, 3}$ be the solutions of the following Neumann problems: 
\begin{equation} \label{def Phi}
-\Delta \Phi^{\eps}_i = 0 \quad   \text{in } \mathcal{F}^{\eps}_{0} ,\qquad
\Phi^{\eps}_i \longrightarrow 0 \quad  \text{when }  x \rightarrow  \infty, \qquad
\frac{\partial \Phi^{\eps}_i}{\partial n}=K_i  \quad  \text{on }   \partial \mathcal{F}^{\eps}_{0}  ,
\end{equation}
where we set on $\partial {\mathcal F}_{0}^{\varepsilon}$:
\begin{equation} \label{DefK1-3}
(K_{1},\, K_{2}, \, K_{3}) :=(n_1,\, n_2 ,\, x^\perp \cdot n).
\end{equation}
Note that $K_{1}$, $K_{2}$ and $K_{3}$ actually depend on $\varepsilon$. 
Changing variables according to $y=x/\eps$, we see that
\begin{gather} \label{phi-scaling}
\Phi_{i}^\eps(x)= \eps \Phi_{i}^1(x/\eps) \ \text{ for } i=1,2, \qquad
\Phi_{3}^\eps(x)= \eps^{2} \Phi_{3}^1(x/\eps).
\end{gather}
\ \par
Next to design approximations of the velocity field, we will use the following related potentials. Let $\Phi^{\eps}_{4}$ and $\Phi^{\eps}_{5}$ be the unique solution of \eqref{def Phi} where $K_{4}$ and $K_{5}$ are defined on $\partial {\mathcal F}_{0}^{\varepsilon}$ by:
\begin{equation} \label{DefK4-5}
(K_{4},\, K_{5}) :=\left( \begin{pmatrix} -x_{1}\\x_{2} \end{pmatrix} \cdot n, \begin{pmatrix} x_{2}\\x_{1} \end{pmatrix}\cdot n \right).
\end{equation}
The scaling law for $\Phi_{i}$ with $i=4,5$ is the same as for $i=3$:
\begin{gather} \label{phi-scaling2}
\Phi_{i}^\eps(x)= \eps^2 \Phi_{i}^1(x/\eps) \ \text{ for } i=4,5.
\end{gather}
We have from Lemma~\ref{LemmeCircu} that for all $i=1,2,3,4,5$:

\begin{equation} \label{ComportementPhii}
\Phi_{i}^1(x) = {\mathcal O}\left(\frac{1}{|x|}\right) \text{ and }
\nabla \Phi_{i}^1(x) = {\mathcal O}\left(\frac{1}{|x|^{2}}\right) \text{ as } |x| \rightarrow +\infty,
\end{equation}
and consequently that for all $i=1,2,3,4,5$, $\nabla \Phi^{\eps}_i$ belongs to $L^2 (\mathcal{F}^{\eps}_{0})$. \par
From the scaling law  and the asymptotic behavior, we deduce the following estimate on the support of $\omega^\varepsilon$.
\begin{Lemma}\label{LemPhiomega}
Let us fix $\rho>1$. There exists $C>0$ such that the following holds. If for a given $T>0$ and $\eps\in (0,1]$, \eqref{CondDistVorticiteSolide} is valid on $[0,T]$, then one has
\begin{equation*} 
\varepsilon^{-2}\| \nabla \Phi_{1}^\varepsilon \|_{L^\infty(\supp \omega^\varepsilon(t))} + \varepsilon^{-2} \| \nabla \Phi_{2}^\varepsilon \|_{L^\infty(\supp \omega^\varepsilon(t))} + \varepsilon^{-3} \| \nabla \Phi_{3}^\varepsilon \|_{L^\infty(\supp \omega^\varepsilon(t))} \leq C \quad \forall t\in [0,T].
\end{equation*}
\end{Lemma}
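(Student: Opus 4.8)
The plan is to follow the same strategy as for Lemma~\ref{LemHomega}, combining the explicit scaling laws \eqref{phi-scaling} with the decay \eqref{ComportementPhii} and the support localization \eqref{CondDistVorticiteSolide}. Differentiating the scaling relations, one gets for $i=1,2$ that $\nabla \Phi_i^\eps(x) = (\nabla \Phi_i^1)(x/\eps)$, while $\nabla \Phi_3^\eps(x) = \eps\, (\nabla \Phi_3^1)(x/\eps)$. The absence here of the exact cancellation of $\eps$-powers that occurred for $H^\eps$ is precisely what forces the weights $\eps^{-2}$, $\eps^{-2}$, $\eps^{-3}$ in the statement.

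The key step is to upgrade the \emph{asymptotic} estimate \eqref{ComportementPhii} into a bound valid on the whole fluid domain: I claim that for each $i$ there is $C_i>0$, depending only on $\mathcal{S}_{0}$, with
\begin{equation*}
|\nabla \Phi_i^1(y)| \leq \frac{C_i}{|y|^2} \quad \forall y \in \overline{\mathcal{F}^{1}_{0}}.
\end{equation*}
Indeed, since $\mathcal{S}_{0}$ is smooth and the Neumann data $K_i$ from \eqref{DefK1-3} are smooth, elliptic regularity gives $\Phi_i^1 \in C^\infty(\overline{\mathcal{F}^{1}_{0}})$, so the map $y \mapsto |y|^2 |\nabla \Phi_i^1(y)|$ is continuous on $\overline{\mathcal{F}^{1}_{0}}$; it is bounded on the region $\{|y| \leq R_0\}$ by continuity and on $\{|y| \geq R_0\}$ by \eqref{ComportementPhii}, hence bounded throughout.

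Finally I combine these ingredients. Fix $t \in [0,T]$ and $x \in \supp \omega^\eps(t)$. Since the vorticity lives in the fluid, $y := x/\eps \in \mathcal{F}^{1}_{0}$, and \eqref{CondDistVorticiteSolide} gives $|x| \geq 1/\rho$, hence $|y| = |x|/\eps \geq 1/(\rho\eps)$. Plugging into the uniform decay bound, for $i=1,2$,
\begin{equation*}
|\nabla \Phi_i^\eps(x)| = |(\nabla \Phi_i^1)(x/\eps)| \leq \frac{C_i}{|x/\eps|^2} = \frac{C_i \eps^2}{|x|^2} \leq C_i \rho^2 \eps^2,
\end{equation*}
and similarly $|\nabla \Phi_3^\eps(x)| = \eps \,|(\nabla \Phi_3^1)(x/\eps)| \leq C_3 \rho^2 \eps^3$. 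Multiplying by the stated weights and summing yields the claim with $C = (C_1+C_2+C_3)\rho^2$, which is independent of $\eps$, $T$ and $t$.

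Since every step is either the differentiation of an explicit scaling relation, an elliptic-regularity argument, or elementary bookkeeping of $\eps$-powers, no genuine obstacle arises. The only point requiring a little care is that \eqref{ComportementPhii} is an asymptotic statement, so one must pass through the uniform estimate above (valid up to the boundary) rather than invoke the decay directly, and one must keep the $\eps$-powers coming from the scaling law separate from those coming from the support constraint, so that the prescribed weights exactly balance them.
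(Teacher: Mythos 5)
Your proof is correct and follows exactly the route the paper takes: it deduces the lemma from the scaling laws \eqref{phi-scaling}--\eqref{phi-scaling2} together with the decay \eqref{ComportementPhii} and the lower bound $|x|\geq 1/\rho$ from \eqref{CondDistVorticiteSolide} (the paper states this deduction in one line without writing details). Your intermediate step upgrading the asymptotic estimate \eqref{ComportementPhii} to a uniform bound $|\nabla \Phi_i^1(y)| \leq C_i/|y|^2$ on all of $\overline{\mathcal{F}^1_0}$ via elliptic regularity is precisely the point the paper leaves implicit, and it is needed since $\eps\in(0,1]$ does not force $x/\eps$ into the asymptotic regime.
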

\ \par
We introduce the following quantities for $i,j \in \{ 1,2,3,4,5\}$:
\begin{equation} \label{DefMasse}
m^{\varepsilon}_{i,j}:= \int_{{\mathcal F}^{\varepsilon}_{0}} \nabla \Phi^{\varepsilon}_{i} \cdot \nabla \Phi^{\varepsilon}_{j}
= {\eps}^{2 + \delta_{\{i \geq 3\}} + \delta_{\{j \geq 3\}}} \int_{{\mathcal F}_{0}} \nabla \Phi^{1}_{i} \cdot \nabla \Phi^{1}_{j} .
\end{equation}
Of particular importance is the following matrix
\begin{equation} \label{AddedMass}
\mathcal{M}^{\eps}_a 
:=\begin{bmatrix} m^{\varepsilon}_{i,j} \end{bmatrix}_{i,j \in \{1,2,3\}}
= \begin{bmatrix} \displaystyle {\eps}^{2 + \delta_{i,3} + \delta_{j,3}} m^{1}_{i,j} \end{bmatrix}_{i,j \in \{1,2,3\}}
= \eps^2I_\eps \mathcal{M}_a I_\eps ,
\end{equation}
with
\begin{equation} \label{defIeps}
I_{\varepsilon} := \begin{pmatrix}
	1 & 0 & 0 \\
	0 & 1 & 0 \\
	0 & 0 & \varepsilon
\end{pmatrix},
\end{equation}
and
\begin{equation} \label{defMa}
\mathcal{M}_a = \begin{bmatrix} m^{1}_{i,j} \end{bmatrix}_{i,j \in \{1,2,3\}}.
\end{equation}
The matrix $\mathcal{M}^{\eps}_a$ actually encodes the phenomenon of added mass, which, loosely speaking, measures how much the  surrounding fluid resists the acceleration as the body moves through it. The index $a$ refers to ``added''; at the opposite the index $g$ in ${\mathcal  M}_{g}$ and $\Lambda_{g}$ (see \eqref{DefMG} and \eqref{DefGammag}) stands for ``genuine''. \par
We will also use the $2\times2$ restriction of $\mathcal{M}_{a}^\eps$:
\begin{equation} \label{DefMbemol}
\mathcal{M}_{\flat}^\eps = \begin{pmatrix} m_{1,1}^\eps & m_{1,2}^\eps \\ m_{1,2}^\eps & m_{2,2}^\eps \end{pmatrix}.
\end{equation}
\subsubsection{Biot-Savart kernel}

In this paragraph, we recall briefly the elliptic $\div$/$\curl$ system which allows to pass from the vorticity to the velocity field.
We denote by $G^{\eps} (x,y)$ the Green's function for the Laplacian in $\mathcal{F}^{\eps}_{0}$ with Dirichlet boundary conditions, 
and we introduce the kernel 
\begin{equation*}
K^{\eps} (x,y)=\nabla^\perp_{x} G^{\eps} (x,y)
\end{equation*}
of the Biot-Savart operator  $K^{\eps} [\om]$ which therefore acts on $\om^{\eps} \in L^\infty_c (\overline{\mathcal{F}^{\eps}_{0}})$  through the formula 
\begin{equation*}
  K^{\eps}[\om^{\eps}](x)= \int_{\mathcal{F}^{\eps}_{0}}K^{\eps} (x,y) \om^{\eps}(y) \, dy .
\end{equation*}
For $\om^{\eps}\in L^\infty_c (\overline{\mathcal{F}^{\eps}_{0}})$, we recall that $K^{\eps}[\om^{\eps}]$ is in $\mathcal{LL}(\mathcal{F}^{\eps}_{0})$, divergence-free, tangent to the boundary, square integrable (namely $K^{\eps}[\om^{\eps}](x) = {\mathcal O}( \frac{1}{|x|^{2}}) \ \text{ as } x \rightarrow \infty$) and such that $\curl K^{\eps}[\om^{\eps}]=\om^{\eps}$.
Moreover its circulation around $\partial \mathcal{S}^{\eps}_0$ is given by
\begin{equation*}
\int_{\partial \mathcal{S}^{\eps}_0  }   K^{\eps} [ \omega^{\eps}] \cdot {\tau} \, ds = -  \int_{\mathcal{F}^{\eps}_{0}  }  \omega^{\eps} \, dx ,
\end{equation*}
where $\tau=-n^\perp$ is the tangent unit vector field on $\partial {\mathcal S}^{\varepsilon}_{0}$.
\ \par
As in the introduction, we denote by $K_{\R^{2}}$ the Biot-Savart operator associated to the full plane, that is the operator which maps a vorticity $\omega$ to the velocity
\begin{equation*} 
K_{\R^{2}} \lbrack \omega \rbrack (x) :=  \frac1{2\pi}\int_{ \R^{2}} \frac{(x-y)^\perp}{|x-y|^2}  \omega (y) \,dy .
\end{equation*}
For  $\om\in L^\infty_{c} (\R^2)$, $K_{\R^{2}}  [\om]$ is bounded, continuous, divergence-free and such that $\curl K_{\R^{2}} [\om]=\om$.
Moreover, there exists $C>0$ such that
\begin{equation} \label{loglip}
\| K_{\R^{2}} [\om]  \|_{\mathcal{LL} (\R^2)} \leq  C (\| \om  \|_{L^\infty (\R^2)} +  \| \om  \|_{L^1 (\R^2)}), 
\end{equation}
and
\begin{equation} \nonumber
K_{\R^{2}} [\om](x) = {\mathcal O}\left( \frac{1}{|x|}\right) \ \text{ as } x \rightarrow \infty.
\end{equation}
We will also use several times the fact that $K_{\R^{2}}$ commutes with translations and obeys to the following rule with rotations in the plane:
\begin{equation*}
K_{\R^{2}} [ \omega \circ R_{\theta}] (x) = R_{\theta}^{T} \, K_{\R^{2}} [ \omega ] (R_{\theta} x) .
\end{equation*}
Now, for $\om^{\eps}$ in $L^\infty_c (\mathcal{F}^{\eps}_{0})$, $\ell^{\eps}$ in $\R^2$, $r^{\eps}$ and $\gamma$ in $\R$, there exists a unique vector field $v^{\eps}$ verifying:
\begin{equation} \label{DivCurlSystem}
\begin{aligned}
 \div v^{\eps} = 0     \text{ in }  \mathcal{F}^{\eps}_{0} , \quad
 \curl v^{\eps}  = \omega^{\eps}   \text{ in }   \mathcal{F}^{\eps}_{0}  , \quad
 v^{\eps} \cdot n = \left(\ell^{\eps}+r^{\eps} x^\perp\right)\cdot n     \text{ on }  \partial \mathcal{S}^{\eps}_0  , \\
 \int_{ \partial \mathcal{S}^{\eps}_0} v^{\eps}  \cdot  \tau \, ds=  \gamma , \quad
 \lim_{x\to \infty} v^{\eps}(x) = 0, 
\end{aligned}\end{equation}
and it is given by the following Biot-Savart law:
\begin{equation} \label{vdecomp}
v^{\eps} = K^{\eps} [\omega^{\eps} ] + (\gamma + \beta^{\eps} )  H^{\eps} + \ell_1^{\eps} \nabla \Phi^{\eps}_1 + \ell_2^{\eps} \nabla \Phi^{\eps}_2
+ r^{\eps} \nabla \Phi^{\eps}_3 ,
\end{equation}
with
\begin{equation*} 
\beta^{\eps} :=   \int_{\mathcal{F}^{\eps}_{0}  }  \omega^{\eps} \, dx .
\end{equation*}
We also introduce the so-called hydrodynamic Biot-Savart kernel $K_{H}^{\varepsilon}$:
\begin{equation*}
K_{H}^{\eps}[\om^{\eps}](x) :=
K^\eps[\om^{\eps}](x) +  \beta^{\eps} H^{\varepsilon}(x).
\end{equation*}
and consequently
\begin{eqnarray*}
\div K^\eps_H [\omega^{\eps}]= 0  \text{ in }  \mathcal{F}^{\eps}_{0}, \quad
\curl K^\eps_H [\omega^{\eps}]  = \omega^{\eps}  \text{ in }   \mathcal{F}^{\eps}_{0}, \quad
K^\eps_H [\omega^{\eps}] \cdot n = 0 \text{ on }   \partial \mathcal{S}^{\eps}_0, \quad
\int_{  \partial \mathcal{S}^{\eps}_0} K^\eps_H [\omega^{\eps}]  \cdot  \tau\, ds  = 0 .
\end{eqnarray*}
Hence another possibility for the decomposition of $v^{\eps}$ is
\begin{equation} \label{vitedechydro}
v^{\eps} =  K^{\eps}_H [\omega^{\eps}] + \gamma H^\eps + \ell_1^{\eps} \nabla \Phi^\eps_1
+ \ell_2^{\eps} \nabla \Phi^\eps_2 + r^{\eps} \nabla \Phi^{\varepsilon}_{3}  .
\end{equation}
We also mention the fact (see \cite{GLS}) that
\begin{equation*}
K_{H}^{\eps}[\om](x)= \int_{\mathcal{F}^{\eps}_{0}} \nabla^{\perp}_{x} G_{H}^{\eps} (x,y) \om(y) \, dy,
\end{equation*}
where the hydrodynamic Green function $G_{H}^{\varepsilon}$ is given as follows:
\begin{equation} \label{GepsHydro}
G^{\eps}_{H} (x,y) := G^{\eps} (x,y) +  \Psi_{H^\eps} (x) +  \Psi_{H^\eps} (y) ,
\end{equation}
with $\Psi_{H^\eps}$ defined in \eqref{DefPsiH}.
Finally we also introduce the part of $v^{\eps}$ without circulation:
\begin{equation} \label{allo}
\tilde{v}^{\eps}: = v^{\eps} - \gamma H^{\eps}
=  K_{H}^{\eps}[\omega^\eps] + \ell^{\eps}_1 \nabla \Phi^\eps_1
+ \ell^{\eps}_2 \nabla \Phi^\eps_2 + r^{\varepsilon} \nabla \Phi_{3}^{\varepsilon}.
\end{equation}
%
%
%
%
%
%
%
%
%
%
%
%
%
\section{First a priori estimates}
%
%

Let $\ell_{0} \in \R^{2}$, $r_{0} \in \R^{2}$, $\gamma \in \R$ and $w_{0}\in L^\infty_{c}(\R^2\setminus\{0\})$ be given. We associate the initial velocity field $u_{0}^{\varepsilon}$ in ${\mathcal F}_{0}^{\varepsilon}$ by \eqref{DivCurlSystem} (restricting in particular $w_{0}$ to ${\mathcal F}_{0}^{\varepsilon}$) and consider the resulting solution $(h^{\varepsilon},\theta^{\varepsilon},u^{\varepsilon})$ given by Theorem~\ref{ThmYudo}. By the change of variable above we obtain the corresponding solution $(\ell^\eps, r^\eps, v^\eps)$ of \eqref{Euler11}-\eqref{Solide1ci}. The goal of this section is to derive basic a priori bounds on the solutions $(\ell^\eps, r^\eps, v^\eps)$ and on the vorticity $\omega^{\varepsilon}$.
\subsection{Vorticity}\label{Subsec:vor}
Due to \cite{GS}, the generalized enstrophies are conserved when time proceeds, in particular, we have for any $t>0$ and any $p \in [1,\infty]$,
\begin{equation} \label{ConsOmega}
\| \omega^{\eps} (t, \cdot) \|_{L^{p}(\mathcal{F}^{\eps}_{0})} 
= \| w^{\varepsilon}_{0} \|_{ L^{p}( \mathcal{F}^{\eps}_{0} )}
\leq \| w_{0} \|_{ L^{p}( \R^{2})}.
\end{equation}
Extending $\omega^{\eps}$ by  $0$ inside ${\mathcal S}^{\varepsilon}_0$, we deduce from \eqref{loglip} and \eqref{ConsOmega} that
\begin{equation} \label{loglip2}
\| K_{\R^{2}} [\omega^{\eps} (t, \cdot)]  \|_{\mathcal{LL} (\R^2)} \text{ is bounded independently of $t$ and of $\eps$}.
\end{equation}
\subsection{Energy}
\label{NRJ}
Let us introduce the total mass matrix 
\begin{equation} \label{InertieMatrix}
\mathcal{M}^{\eps} :=   \mathcal{M}^{\varepsilon}_g + \mathcal{M}^{\eps}_a 
\ \text{ where } \ 
\mathcal{M}^{\varepsilon}_g := \begin{bmatrix} m^\eps \Id_2 & 0 \\ 0 & \mathcal{J}^{\varepsilon} \end{bmatrix}.
\end{equation}
Observe that 
\begin{equation}
\label{raoul}
\mathcal{M}^{\varepsilon}_g
= \eps^{\alpha}I_\eps \mathcal{M}_g I_\eps ,
\end{equation}
where $I_\eps$ is defined in  \eqref{defIeps} and  $\mathcal{M}_g$ in \eqref{DefMG}.
We recall that the added mass matrix $\mathcal{M}^{\eps}_a$ was defined in \eqref{AddedMass}. 
The matrix $\mathcal{M}^{\eps}$ is symmetric and positive definite. \par
Using this matrix, one can deduce the following conserved quantity, where we recall that the functions $\Psi_{H^{\varepsilon}}$ and $G^{\varepsilon}_{H}$ were respectively defined in \eqref{DefPsiH} and \eqref{GepsHydro}.
\begin{Proposition} \label{PropKirchoffSueur}
The following quantity is conserved along the motion:
\begin{eqnarray*}
2 \mathcal{H}^{\eps} = (p^{\varepsilon})^{T} \mathcal{M}^{\varepsilon} p^{\varepsilon}
- \int_{\mathcal{F}^{\varepsilon}_0 \times  \mathcal{F}^{\varepsilon}_0 }  G^{\varepsilon}_{H} (x,y) \omega^{\varepsilon} (x) \omega^{\varepsilon} (y) \, dx \, dy
-  2  \gamma \int_{\mathcal{F}^{\varepsilon}_0  } \omega^{\varepsilon} (x)  \Psi_{H^{\varepsilon}} (x) \, dx ,
\end{eqnarray*}
where  
\begin{equation*}
p^{\varepsilon} := \begin{pmatrix} \ell^{\varepsilon} \\ r^{\varepsilon} \end{pmatrix}.
\end{equation*}
\end{Proposition}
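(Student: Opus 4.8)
My plan is to recognize $2\mathcal{H}^{\varepsilon}$ as the total kinetic energy of the fluid--solid system, renormalized by subtracting the (time-independent, logarithmically divergent) self-energy $\gamma^{2}\int_{\mathcal{F}_{0}^{\varepsilon}}|H^{\varepsilon}|^{2}$ of the circulatory field. Accordingly I would split the proof into an algebraic identification and a conservation statement. For the identification, I work in the body frame, use the decomposition \eqref{vitedechydro} and set $\tilde{v}^{\varepsilon}=v^{\varepsilon}-\gamma H^{\varepsilon}$ as in \eqref{allo}; the goal is
\[ 2\mathcal{H}^{\varepsilon} = m^{\varepsilon}|\ell^{\varepsilon}|^{2} + \mathcal{J}^{\varepsilon}(r^{\varepsilon})^{2} + \int_{\mathcal{F}_{0}^{\varepsilon}}\Big( |\tilde{v}^{\varepsilon}|^{2} + 2\gamma\, \tilde{v}^{\varepsilon}\cdot H^{\varepsilon} \Big)\,dx , \]
which is exactly twice the renormalized energy. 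The three ingredients are all integrations by parts using that $K_{H}^{\varepsilon}[\omega^{\varepsilon}]$ and $H^{\varepsilon}$ are divergence free and tangent to $\partial \mathcal{S}_{0}^{\varepsilon}$ while the $\nabla \Phi_{i}^{\varepsilon}$ are gradients: first, the potential fields are $L^{2}$-orthogonal to both $K_{H}^{\varepsilon}[\omega^{\varepsilon}]$ and $H^{\varepsilon}$, so the only surviving quadratic contribution of the potential parts is $\sum_{i,j}m^{\varepsilon}_{i,j}p^{\varepsilon}_{i}p^{\varepsilon}_{j}=(p^{\varepsilon})^{T}\mathcal{M}_{a}^{\varepsilon}p^{\varepsilon}$ by \eqref{AddedMass}; second, writing $K_{H}^{\varepsilon}[\omega^{\varepsilon}]=\nabla^{\perp}\psi^{\varepsilon}$ with $\psi^{\varepsilon}=\int_{\mathcal{F}_{0}^{\varepsilon}}G_{H}^{\varepsilon}(\cdot,y)\omega^{\varepsilon}(y)\,dy$ and integrating by parts gives $\int |K_{H}^{\varepsilon}[\omega^{\varepsilon}]|^{2}=-\int\int G_{H}^{\varepsilon}\omega^{\varepsilon}\omega^{\varepsilon}$, the boundary and far-field terms vanishing since $\psi^{\varepsilon}$ is constant on $\partial \mathcal{S}_{0}^{\varepsilon}$ and $K_{H}^{\varepsilon}[\omega^{\varepsilon}]=\mathcal{O}(1/|x|^{2})$; third, using $\Delta \psi^{\varepsilon}=\omega^{\varepsilon}$, the harmonicity of $\Psi_{H^{\varepsilon}}$ and $\Psi_{H^{\varepsilon}}|_{\partial \mathcal{S}_{0}^{\varepsilon}}=0$ from \eqref{DefPsiH}, one gets $\int K_{H}^{\varepsilon}[\omega^{\varepsilon}]\cdot H^{\varepsilon}=-\int\omega^{\varepsilon}\Psi_{H^{\varepsilon}}$. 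Collecting these together with $\mathcal{M}^{\varepsilon}=\mathcal{M}_{g}^{\varepsilon}+\mathcal{M}_{a}^{\varepsilon}$ of \eqref{InertieMatrix} yields the announced formula.

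For the conservation, I would exploit that in the body frame the geometry is frozen, so $\mathcal{M}^{\varepsilon}$, $G_{H}^{\varepsilon}$ and $\Psi_{H^{\varepsilon}}$ are time-independent. Differentiating and using the symmetry of $G_{H}^{\varepsilon}$ gives
\[ \frac{d}{dt}\big( 2\mathcal{H}^{\varepsilon} \big) = 2(p^{\varepsilon})^{T}\mathcal{M}^{\varepsilon}(p^{\varepsilon})' - 2\int_{\mathcal{F}_{0}^{\varepsilon}} \Psi^{\varepsilon}\, \partial_{t}\omega^{\varepsilon}\, dx , \]
where $\Psi^{\varepsilon}:=\psi^{\varepsilon}+\gamma\Psi_{H^{\varepsilon}}$ is the stream function of $K_{H}^{\varepsilon}[\omega^{\varepsilon}]+\gamma H^{\varepsilon}$. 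I then insert the vorticity transport \eqref{vorty1} in divergence form, $\partial_{t}\omega^{\varepsilon}=-\div[(v^{\varepsilon}-\ell^{\varepsilon}-r^{\varepsilon}x^{\perp})\omega^{\varepsilon}]$ (the advecting field being divergence free), and integrate by parts; since $\omega^{\varepsilon}$ is compactly supported away from $\partial \mathcal{S}_{0}^{\varepsilon}$ no boundary term survives, leaving the fluid flux $-2\int_{\mathcal{F}_{0}^{\varepsilon}}\nabla\Psi^{\varepsilon}\cdot(v^{\varepsilon}-\ell^{\varepsilon}-r^{\varepsilon}x^{\perp})\,\omega^{\varepsilon}\,dx$. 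The remaining task is to match this against the solid term $2(p^{\varepsilon})^{T}\mathcal{M}^{\varepsilon}(p^{\varepsilon})'$. For that I feed in Newton's equations \eqref{Solide11}--\eqref{Solide12} and compute the boundary pressure force $\int_{\partial \mathcal{S}_{0}^{\varepsilon}}\tilde{\pi}^{\varepsilon}n$ by the classical Kirchhoff method: $\tilde{\pi}^{\varepsilon}$ is recovered from \eqref{Euler11} (it solves a Neumann problem obtained by taking the divergence), and inserting the decomposition \eqref{vitedechydro} of $v^{\varepsilon}$ together with the Kirchhoff potentials $\Phi_{i}^{\varepsilon}$ expresses the force as $\mathcal{M}_{a}^{\varepsilon}(p^{\varepsilon})'$ (the added-mass part) plus gyroscopic quadratic terms plus the vorticity and circulation contributions. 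Moving the added-mass part to the left turns $\mathcal{M}_{g}^{\varepsilon}(p^{\varepsilon})'$ into $\mathcal{M}^{\varepsilon}(p^{\varepsilon})'$, and the arrangement is precisely such that the solid and fluid fluxes cancel; the quadratic terms are energy-neutral because of the property displayed in \eqref{AnnulationGammag} (contracting $m^{\varepsilon}r^{\varepsilon}(\ell^{\varepsilon})^{\perp}$ with $\ell^{\varepsilon}$ gives $0$), so $\frac{d}{dt}(2\mathcal{H}^{\varepsilon})=0$.

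The hard part will be exactly this last matching, i.e. the Kirchhoff computation of the boundary pressure force and the verification that its added-mass and gyroscopic pieces exactly balance the fluid energy flux; this is the bookkeeping-heavy core and I would carry it out following the lines of \cite{GLS}. A secondary but necessary technical point is the justification of the integrations by parts at infinity in the identification step in the presence of the non-$L^{2}$ field $\gamma H^{\varepsilon}$: I would truncate to $\mathcal{F}_{0}^{\varepsilon}\cap B(0,R)$, use the decays $H^{1}=\mathcal{O}(1/|x|)$, $\nabla\Phi_{i}^{1}=\mathcal{O}(1/|x|^{2})$ from \eqref{ComportementPhii} and $K_{H}^{\varepsilon}[\omega^{\varepsilon}]=\mathcal{O}(1/|x|^{2})$, and let $R\to\infty$; the only genuinely divergent contribution $\gamma^{2}\int|H^{\varepsilon}|^{2}$ is discarded by renormalization and, being independent of time, does not affect conservation.
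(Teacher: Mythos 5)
Your proposal is correct, but note first that the paper contains no in-text proof to compare with: it delegates Proposition~\ref{PropKirchoffSueur} entirely to \cite{GLS}. Judged on its own, your first step (the identification of $2\mathcal{H}^{\eps}$ with twice the renormalized kinetic energy) is complete and accurate: the $L^2$-orthogonality of the $\nabla\Phi_i^{\eps}$ to $K_H^{\eps}[\omega^{\eps}]$ and $H^{\eps}$, the identity $\int_{\mathcal{F}_0^{\eps}}|K_H^{\eps}[\omega^{\eps}]|^2=-\iint G_H^{\eps}\,\omega^{\eps}\omega^{\eps}$ (the constant boundary value of the stream function is indeed harmless because $K_H^{\eps}[\omega^{\eps}]$ has zero circulation around $\partial\mathcal{S}_0^{\eps}$), the identity $\int K_H^{\eps}[\omega^{\eps}]\cdot H^{\eps}=-\int\omega^{\eps}\Psi_{H^{\eps}}$, and the truncation at infinity are all sound; so is the differentiation formula $\frac{d}{dt}(2\mathcal{H}^{\eps})=2(p^{\eps})^T\mathcal{M}^{\eps}(p^{\eps})'-2\int\Psi^{\eps}\,\partial_t\omega^{\eps}$ with your $\Psi^{\eps}:=\psi^{\eps}+\gamma\Psi_{H^{\eps}}$, since the body frame freezes $G_H^{\eps}$, $\Psi_{H^{\eps}}$ and $\mathcal{M}^{\eps}$.

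Where your route genuinely differs is the closing step. You propose to match the fluid flux $-2\int\nabla\Psi^{\eps}\cdot(v^{\eps}-\ell^{\eps}-r^{\eps}x^{\perp})\,\omega^{\eps}$ against the solid power via the Kirchhoff decomposition of the pressure force into added-mass, gyroscopic, vorticity and circulation pieces (in the spirit of Lemma~\ref{ReformulationEqSolide}). This can be made to work, but as written the decisive cancellation is asserted rather than proved, and verifying it requires converting the boundary integrals $C^{\eps}_{i,a},C^{\eps}_{i,b},C^{\eps}_{i,c}$ back into bulk integrals --- precisely the bookkeeping you were hoping to defer. There is a much shorter closing that avoids the normal-form machinery altogether: contract \eqref{Solide11}--\eqref{Solide12} with $p^{\eps}$ (the term $-m^{\eps}r^{\eps}(\ell^{\eps})^{\perp}$ is annihilated by $\ell^{\eps}$), giving
\begin{equation*}
\frac{d}{dt}\,\frac12\,(p^{\eps})^T\mathcal{M}_g^{\eps}\,p^{\eps}
=\int_{\partial\mathcal{S}_0^{\eps}}\tilde\pi^{\eps}\,(\ell^{\eps}+r^{\eps}x^{\perp})\cdot n\,ds ,
\end{equation*}
and on the fluid side use that $H^{\eps}$ is time-independent in the body frame, so $\partial_t\tilde v^{\eps}=\partial_t v^{\eps}$ and
$\frac{d}{dt}\,\frac12\int\big(|\tilde v^{\eps}|^2+2\gamma\,\tilde v^{\eps}\cdot H^{\eps}\big)=\int v^{\eps}\cdot\partial_t v^{\eps}$.
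Inserting \eqref{Euler11}, the term $r^{\eps}(v^{\eps})^{\perp}$ dies pointwise, the convection term is the pure flux $\frac12\oint(v^{\eps}-\ell^{\eps}-r^{\eps}x^{\perp})\cdot n\,|v^{\eps}|^2$, which vanishes on $\partial\mathcal{S}_0^{\eps}$ by \eqref{Euler13} and on large circles centered at the origin because $x^{\perp}\cdot x=0$ and $|v^{\eps}|=O(|x|^{-1})$, while the pressure term yields exactly $-\int_{\partial\mathcal{S}_0^{\eps}}\tilde\pi^{\eps}(\ell^{\eps}+r^{\eps}x^{\perp})\cdot n\,ds$ (the flux of $\tilde\pi^{\eps}v^{\eps}$ at infinity being controlled using $\partial_t u^{\eps},\nabla\pi^{\eps}\in L^q$ from Theorem~\ref{ThmYudo}); the two powers cancel and $\frac{d}{dt}\mathcal{H}^{\eps}=0$. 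Your vorticity-based variant does buy one thing --- it never touches the pressure at infinity --- but it pays for it with the Kirchhoff matching, which, if you keep that route, must be written out rather than invoked.
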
 
The proof of Proposition~\ref{PropKirchoffSueur} is given in \cite{GLS}. Therein, we can also find the following consequence: \par
\begin{Proposition}\label{ProRata}
Let $\rho_{w_0}>0$ such that $\supp w_0 \subset B(0, \rho_{w_0})$.
One has the following estimate for some constant $C=C ( m^{1}, {\mathcal J}^{1}, \| w_{0} \|_{L^{1} \cap L^{\infty}} , |\ell_{0}|, |r_{0}|,|\gamma|, \rho_{w_{0}})$, depending only on these values and the geometry for $\varepsilon=1$:
\begin{equation} \label{EstNRJ0}
(p^{\varepsilon})^{T} \mathcal{M}^{\varepsilon} p^{\varepsilon}
\leq C [1 + \ln (\rho^{\varepsilon}(t))],
\end{equation}
where
\begin{equation*}
\rho^{\varepsilon}(t) := \rho_{\omega^{\varepsilon}(t,\cdot)} =\inf \{ d >1 \ / \  \supp \omega^{\varepsilon}(t,\cdot) \subset B(0,d) \}.
\end{equation*}
\end{Proposition}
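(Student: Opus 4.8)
The goal is to prove the energy bound \eqref{EstNRJ0}, namely that the quadratic form $(p^\varepsilon)^T \mathcal{M}^\varepsilon p^\varepsilon$ is controlled by $C[1+\ln(\rho^\varepsilon(t))]$, starting from the conserved quantity $\mathcal{H}^\varepsilon$ of Proposition~\ref{PropKirchoffSueur}. The plan is to exploit that $2\mathcal{H}^\varepsilon$ is constant in time, so that
\begin{equation*}
(p^\varepsilon)^T \mathcal{M}^\varepsilon p^\varepsilon = 2\mathcal{H}^\varepsilon + \int_{\mathcal{F}^\varepsilon_0 \times \mathcal{F}^\varepsilon_0} G^\varepsilon_H(x,y)\omega^\varepsilon(x)\omega^\varepsilon(y)\,dx\,dy + 2\gamma \int_{\mathcal{F}^\varepsilon_0} \omega^\varepsilon(x)\Psi_{H^\varepsilon}(x)\,dx.
\end{equation*}
Here $2\mathcal{H}^\varepsilon$ is fixed by the initial data, so it contributes a constant depending only on the listed parameters (using \eqref{ConsOmega} to bound the enstrophy terms at $t=0$ and boundedness of the initial solid velocity). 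It therefore suffices to bound the two integral terms on the right in terms of $\ln(\rho^\varepsilon(t))$, uniformly in $\varepsilon$.

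First I would treat the simpler term $2\gamma\int \omega^\varepsilon \Psi_{H^\varepsilon}\,dx$. Since $\Psi_{H^\varepsilon}(x) = \Psi_{H^1}(x/\varepsilon)$ up to the additive normalization (by the scaling law \eqref{ScalingH} for $H^\varepsilon=\nabla^\perp\Psi_{H^\varepsilon}$), and $\Psi_{H^1}(x)\sim \frac{1}{2\pi}\ln|x|$ at infinity, one has $\Psi_{H^\varepsilon}(x)\sim \frac{1}{2\pi}\ln(|x|/\varepsilon)$ for $x$ away from the body. Integrating against $\omega^\varepsilon(x)$, whose support lies in $B(0,\rho^\varepsilon(t))$ and whose $L^1$ and $L^\infty$ norms are controlled by $\|w_0\|_{L^1\cap L^\infty}$ via \eqref{ConsOmega}, gives a bound by $C(1+\ln\rho^\varepsilon(t)+|\ln\varepsilon|)$. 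The $|\ln\varepsilon|$ must be handled: on the support of $\omega^\varepsilon$ the distance to the body is bounded below (using that $\supp\omega^\varepsilon$ stays away from $0$ at scale $1/\rho^\varepsilon$), so after subtracting the known logarithmic singularity the remaining contribution is in fact uniformly bounded, leaving only the genuine $\ln\rho^\varepsilon(t)$ growth from the outer part of the support.

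The main term is the double integral $\int\int G^\varepsilon_H(x,y)\omega^\varepsilon(x)\omega^\varepsilon(y)$. Using \eqref{GepsHydro}, split $G^\varepsilon_H = G^\varepsilon(x,y) + \Psi_{H^\varepsilon}(x) + \Psi_{H^\varepsilon}(y)$; the $\Psi_{H^\varepsilon}$ pieces are handled as above (now paired with $\beta^\varepsilon=\int\omega^\varepsilon$, again controlled). For the Dirichlet Green function $G^\varepsilon(x,y)$, the standard estimate is $G^\varepsilon(x,y) = \frac{1}{2\pi}\ln|x-y| + \text{(bounded/harmonic correction)}$, so the singular part contributes $\frac{1}{2\pi}\int\int \ln|x-y|\,\omega^\varepsilon(x)\omega^\varepsilon(y)$. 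Since $|x-y|\le 2\rho^\varepsilon(t)$ on the support, the positive logarithmic part is bounded by $C\ln\rho^\varepsilon(t)$, while the negative (near-diagonal) part is integrable because $\ln|x-y|$ is locally integrable in two dimensions and $\omega^\varepsilon$ is bounded in $L^1\cap L^\infty$.

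The main obstacle is controlling the correction term in the Dirichlet Green function $G^\varepsilon(x,y)-\frac{1}{2\pi}\ln|x-y|$ uniformly in $\varepsilon$. This harmonic correction reflects the image-charge contribution from the boundary $\partial\mathcal{S}^\varepsilon_0$, and one must show it stays uniformly bounded on the support of $\omega^\varepsilon$ as $\varepsilon\to 0$. I would exploit the scaling $G^\varepsilon(x,y)=G^1(x/\varepsilon,y/\varepsilon)+\frac{1}{2\pi}\ln\varepsilon$ together with the fact that $\supp\omega^\varepsilon$ is at distance $\gtrsim 1/\rho^\varepsilon$ from $0$ — hence at distance $\gtrsim 1/(\varepsilon\rho^\varepsilon)\to\infty$ from the rescaled body — so that in rescaled variables the points are far from $\partial\mathcal{S}_0$ where the correction to the free Green function decays. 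This confirms that all $\varepsilon$-dependence collapses to the explicit $\frac{1}{2\pi}\ln\varepsilon$ constants which cancel against those arising from $\Psi_{H^\varepsilon}$, leaving a net bound of the form $C[1+\ln\rho^\varepsilon(t)]$. Since the full argument is carried out in \cite{GLS}, I would cite it for the detailed estimates while emphasizing the above structure.
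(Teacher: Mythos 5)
Your overall strategy --- rearranging the conserved quantity of Proposition~\ref{PropKirchoffSueur} and bounding the potential terms logarithmically in the support radius, with \eqref{ConsOmega} controlling the vorticity norms --- is indeed the route of \cite{GLS}, which is all the paper itself offers here (it proves nothing beyond the citation). But two steps of your sketch are genuinely wrong as written. First, the $\ln\varepsilon$ bookkeeping: $2\mathcal{H}^{\varepsilon}$ is \emph{not} bounded uniformly in $\varepsilon$. From \eqref{ScalingH} one has $\Psi_{H^{\varepsilon}}(x)=\Psi_{H^{1}}(x/\varepsilon)=\frac{1}{2\pi}\ln(|x|/\varepsilon)+\mathcal{O}(1)$, while the Dirichlet Green function scales \emph{without} your additive constant, $G^{\varepsilon}(x,y)=G^{1}(x/\varepsilon,y/\varepsilon)$ (an added $\frac{1}{2\pi}\ln\varepsilon$ would violate the boundary condition), and its far-field correction does not decay: $G^{1}(X,Y)-\frac{1}{2\pi}\ln|X-Y|\sim-\frac{1}{2\pi}\ln(|X|\,|Y|)$ as $X,Y\to\infty$; only the combination $G^{\varepsilon}_{H}$ of \eqref{GepsHydro} has a bounded remainder, $G^{\varepsilon}_{H}(x,y)=\frac{1}{2\pi}\ln(|x-y|/\varepsilon)+\mathcal{O}(1)$ away from the body. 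Summing all contributions, the potential part of the energy carries the divergent term $\frac{|\ln\varepsilon|}{2\pi}\,\beta^{\varepsilon}(\beta^{\varepsilon}+2\gamma)$ at \emph{each fixed time}, so your claimed cancellation of the $\frac{1}{2\pi}\ln\varepsilon$ constants ``against those arising from $\Psi_{H^{\varepsilon}}$'' fails whenever $\beta^{\varepsilon}(\beta^{\varepsilon}+2\gamma)\neq 0$, and with it your opening assertion that $2\mathcal{H}^{\varepsilon}$ contributes an $\varepsilon$-independent constant. The correct mechanism is conservation: $\beta^{\varepsilon}$ and $\gamma$ are constants of the motion, so these $|\ln\varepsilon|$ terms have time-independent coefficients and cancel in the \emph{difference} between the potential terms at time $t$ and time $0$ inside the conserved Hamiltonian; one must keep that difference rather than bound $\mathcal{H}^{\varepsilon}$ itself by a constant.

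Second, you twice invoke that $\supp\omega^{\varepsilon}(t)$ stays at distance $\gtrsim 1/\rho^{\varepsilon}(t)$ from the body, but Proposition~\ref{ProRata} carries no such hypothesis: $\rho^{\varepsilon}(t)$ controls only the \emph{outer} radius of the support, and no analogue of \eqref{CondDistVorticiteSolide} is assumed (indeed \eqref{EstNRJ0} is exactly what the paper uses before any separation of the vorticity from the solid has been secured). Consequently your far-field-only treatment of the Green-function correction, ``at distance $\gtrsim 1/(\varepsilon\rho^{\varepsilon})$ from the rescaled body'', leaves untreated precisely the regime where vorticity approaches $\partial\mathcal{S}^{\varepsilon}_{0}$: there the image-charge part of $G^{\varepsilon}_{H}$ (for the disk, the term $-\frac{1}{2\pi}\ln|X-Y^{*}|$ with $Y^{*}=Y/|Y|^{2}$) is pointwise unbounded along the reflected diagonal. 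It is log-integrable, so the estimate does survive, but establishing this requires two-sided, uniform-in-$\varepsilon$ bounds on $G^{\varepsilon}_{H}$ valid up to the boundary (via the conformal map/Kelvin transform, as in \cite{GLS} and \cite{ift_lop_euler}), integrated against $\omega^{\varepsilon}\otimes\omega^{\varepsilon}\in L^{1}\cap L^{\infty}$ --- a step your sketch replaces by an unavailable separation assumption. (For the term $2\gamma\int\omega^{\varepsilon}\Psi_{H^{\varepsilon}}$ no separation is needed at all: $\Psi_{H^{\varepsilon}}$ vanishes on the boundary and $\Psi_{H^{1}}-\frac{1}{2\pi}\ln^{+}|\cdot|$ is bounded on $\overline{\mathcal{F}_{0}}$, so only the genuine $\ln\rho^{\varepsilon}(t)$ growth and the conserved $|\ln\varepsilon|$ constant appear.)
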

Now using \eqref{AddedMass} and \eqref{raoul}, the estimate \eqref{EstNRJ0} can be rewritten as
\begin{equation*}
\eps^{\alpha} (p^{\varepsilon})^{T} I_\eps \mathcal{M}_g I_\eps p^{\varepsilon} + \eps^{2} (p^{\varepsilon})^{T} I_\eps \mathcal{M}_a I_\eps p^{\varepsilon} \leq C [1 + \ln (\rho^{\varepsilon}(t))].
\end{equation*}
Now there are two possibilities. The first possibility is that ${\mathcal S}_{0}$ is not a ball, and in that case, $\mathcal{M}_a$ is positive definite as a Gram matrix associated to a free family of vectors (the third one degenerates when ${\mathcal S}_{0}$ is a ball!). Hence we deduce that
\begin{equation} \label{EstNRJ}
|\varepsilon^{\min(1,\alpha/2)} \ell^{\varepsilon}(t) | + |\varepsilon^{1+\min(1,\alpha/2)} r^{\varepsilon}(t)| \leq C [1 + \ln (\rho^{\varepsilon}(t))].
\end{equation}
The second possibility is that ${\mathcal S}_{0}$ is a ball. But in this situation we infer from \eqref{Solide2} that $r^{\varepsilon}(t)$ is constant over time (and in particular is trivially bounded). Moreover, using that in this case the restriction of $\mathcal{M}_a$ to the first two coordinates is $m_{11} \Id$, we see that \eqref{EstNRJ} is also valid in this case. 
\begin{Remark}\label{RemEstNRJ}
In \cite{GLS}, we could deduce by a Gronwall argument that $\rho^{\varepsilon}$ was bounded on $[0,T]$ uniformly in $\varepsilon$.
This estimate cannot be straightforwardly deduced here, because of the powers of $\varepsilon$ on the left hand side.
For $\alpha=2$, for instance, we have $|\varepsilon \ell^{\varepsilon}(t) | + |\varepsilon^2 r^{\varepsilon}(t)|$ in \eqref{EstNRJ}, instead of $| \ell^{\varepsilon}(t) | + |\varepsilon r^{\varepsilon}(t)|$ in \cite{GLS}. 
\end{Remark}
As a byproduct of \eqref{EstNRJ}, we will use throughout the next section the following estimate on $(\ell^\varepsilon,r^\varepsilon)$:
\begin{Lemma} \label{LemEstNRJ2}
Let us fix $\rho>1$. There exists $C>0$ such that the following holds. If for a given $T>0$ and $\eps\in (0,1]$, \eqref{CondDistVorticiteSolide} is valid on $[0,T]$, then one has
\begin{equation*}
|\varepsilon \ell^{\varepsilon}(t) | + |\varepsilon^2 r^{\varepsilon}(t)| \leq C  \quad \forall t\in [0,T].
\end{equation*}
\end{Lemma}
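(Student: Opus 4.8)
The plan is to combine the energy estimate \eqref{EstNRJ} with the support hypothesis \eqref{CondDistVorticiteSolide} to control the logarithmic term, then simply match powers of $\varepsilon$. First I would observe that \eqref{CondDistVorticiteSolide}, namely $\supp \omega^{\varepsilon}(t) \subset B(0,\rho)\setminus B(0,1/\rho)$, gives a uniform control on the radius $\rho^{\varepsilon}(t)$ introduced in Proposition~\ref{ProRata}: indeed the inclusion forces $\supp \omega^{\varepsilon}(t,\cdot) \subset B(0,\rho)$, so that $\rho^{\varepsilon}(t) \le \rho$ for all $t \in [0,T]$. Consequently the right-hand side of \eqref{EstNRJ} is bounded by a constant depending only on $\rho$ (and on the fixed data $m^{1},{\mathcal J}^{1},\|w_{0}\|_{L^{1}\cap L^{\infty}},|\ell_{0}|,|r_{0}|,|\gamma|,\rho_{w_{0}}$), uniformly in $t \in [0,T]$ and in $\varepsilon \in (0,1]$:
\begin{equation*}
|\varepsilon^{\min(1,\alpha/2)} \ell^{\varepsilon}(t)| + |\varepsilon^{1+\min(1,\alpha/2)} r^{\varepsilon}(t)| \le C[1+\ln \rho] =: C'.
\end{equation*}

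Next I would convert this into the claimed estimate by adjusting the powers of $\varepsilon$. The point is that the exponents appearing in the lemma, namely $1$ for $\ell^{\varepsilon}$ and $2$ for $r^{\varepsilon}$, are at least as large as those in \eqref{EstNRJ}, namely $\min(1,\alpha/2)$ and $1+\min(1,\alpha/2)$. Since $\min(1,\alpha/2) \le 1$ and $\varepsilon \in (0,1]$, one has $\varepsilon^{1} \le \varepsilon^{\min(1,\alpha/2)}$ and $\varepsilon^{2} \le \varepsilon^{1+\min(1,\alpha/2)}$, whence
\begin{equation*}
|\varepsilon \ell^{\varepsilon}(t)| = \varepsilon^{1-\min(1,\alpha/2)} |\varepsilon^{\min(1,\alpha/2)}\ell^{\varepsilon}(t)| \le |\varepsilon^{\min(1,\alpha/2)}\ell^{\varepsilon}(t)| \le C',
\end{equation*}
and similarly $|\varepsilon^{2} r^{\varepsilon}(t)| \le |\varepsilon^{1+\min(1,\alpha/2)} r^{\varepsilon}(t)| \le C'$. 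Adding these gives the conclusion with a constant $C$ depending only on $\rho$ and the fixed data, as required.

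I do not expect any genuine obstacle here, since the lemma is explicitly described in the text as ``a byproduct of \eqref{EstNRJ}''; the only thing to verify carefully is that the support hypothesis is what bounds the a priori unknown quantity $\rho^{\varepsilon}(t)$, thereby closing the estimate that could not be closed by a Gronwall argument (as emphasized in Remark~\ref{RemEstNRJ}). The mild subtlety to keep in mind is the exceptional case where ${\mathcal S}_{0}$ is a ball, where \eqref{EstNRJ} was justified separately using the conservation of $r^{\varepsilon}$ from \eqref{Solide2} and the fact that the restriction of $\mathcal{M}_{a}$ to the first two coordinates is $m_{11}\,\Id$; but since \eqref{EstNRJ} was shown to hold in that case as well, the power-matching argument above applies verbatim and no separate treatment is needed in the proof of the lemma itself.
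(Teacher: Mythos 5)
Your proof is correct and takes exactly the route the paper intends: the lemma is presented there as a direct byproduct of \eqref{EstNRJ}, with the support hypothesis \eqref{CondDistVorticiteSolide} forcing $\rho^{\varepsilon}(t)\leq\rho$ so that the logarithmic right-hand side is uniformly bounded, and the observation that $\min(1,\alpha/2)\leq 1$ and $1+\min(1,\alpha/2)\leq 2$ with $\varepsilon\in(0,1]$ converting the exponents as you describe. Your closing remark on the ball case is also consistent with the paper, which established \eqref{EstNRJ} in that degenerate situation separately, so no additional argument is needed in the lemma itself.
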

\subsection{Basic velocity estimates}
\label{APF}
We can deduce from Lemma~\ref{LemEstNRJ2} and from the results of Subsection~\ref{Sec:GreensFunction} new estimates on the fluid velocity, as long as the solid stays away from the vorticity and that the support of the vorticity remains bounded. \par
\par
First, let us recall the following lemma  (see \cite[Theorem 4.1]{ift_lop_euler}):
\begin{Lemma} \label{ift_lop}
There exists a constant $C>0$ which depends only on the shape of the solid for $\varepsilon=1$ such that for any $ \omega$ smooth enough,
\begin{equation*} 
\|  K^{\eps}_H [\omega ]  \|_{L^{\infty} ( \mathcal{F}^{\eps}_{0}) }  \leqslant C  \|    \omega \|^{1/2}_{ L^{1} ( \mathcal{F}^{\eps}_{0} )}   \|    \omega \|^{1/2}_{ L^{\infty} ( \mathcal{F}^{\eps}_{0} )} .
\end{equation*}
\end{Lemma}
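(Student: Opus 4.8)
The plan is to reduce to the fixed domain $\eps=1$ by scaling, and then to establish the estimate there by a direct splitting of the Biot--Savart integral.

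First I would record the scaling law for the hydrodynamic kernel. From \eqref{GepsHydro} together with the scaling law \eqref{ScalingH} for $H^\eps$ (which gives $\Psi_{H^\eps}(x)=\Psi_{H^1}(x/\eps)$ up to an additive constant) and the scale invariance of the Dirichlet Green's function of the Laplacian in two dimensions ($G^\eps(x,y)=G^1(x/\eps,y/\eps)$ modulo an additive constant), one gets $\nabla_x^\perp G_H^\eps(x,y)=\eps^{-1}(\nabla^\perp G_H^1)(x/\eps,y/\eps)$, where on the right the gradient is taken in the first slot of the full hydrodynamic Green's function. Setting $\tilde{\omega}(y):=\omega(\eps y)$ and changing variables $y=\eps y'$ then yields $K_H^\eps[\omega](x)=\eps\, K_H^1[\tilde{\omega}](x/\eps)$. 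Since $\|\tilde{\omega}\|_{L^\infty(\mathcal{F}_0)}=\|\omega\|_{L^\infty(\mathcal{F}_0^\eps)}$ and $\|\tilde{\omega}\|_{L^1(\mathcal{F}_0)}=\eps^{-2}\|\omega\|_{L^1(\mathcal{F}_0^\eps)}$, the factor $\eps$ is exactly cancelled and the inequality for general $\eps$ follows from the one at $\eps=1$ with the same constant. This is precisely why $C$ depends only on the shape of the solid for $\eps=1$.

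It remains to prove the bound on the fixed domain $\mathcal{F}_0=\mathcal{F}_0^1$. The key ingredient is the pointwise kernel estimate
\[
|\nabla_x^\perp G_H^1(x,y)|\le \frac{C}{|x-y|},\qquad x,y\in\mathcal{F}_0,\ x\neq y,
\]
with $C$ depending only on $\mathcal{S}_0$. Granting this, I would split the Biot--Savart integral at a radius $R>0$:
\begin{equation*}
|K_H^1[\omega](x)|\le \int_{\mathcal{F}_0\cap\{|x-y|<R\}}\frac{C\,|\omega(y)|}{|x-y|}\,dy+\int_{\mathcal{F}_0\cap\{|x-y|\ge R\}}\frac{C\,|\omega(y)|}{|x-y|}\,dy\le C\big(R\,\|\omega\|_{L^\infty(\mathcal{F}_0)}+R^{-1}\|\omega\|_{L^1(\mathcal{F}_0)}\big),
\end{equation*}
where the first term uses $\int_{|x-y|<R}|x-y|^{-1}\,dy=2\pi R$ and the second uses $|x-y|^{-1}\le R^{-1}$ on the far region. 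Optimizing by choosing $R=(\|\omega\|_{L^1(\mathcal{F}_0)}/\|\omega\|_{L^\infty(\mathcal{F}_0)})^{1/2}$ yields the claimed interpolation inequality.

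The main obstacle is the uniform kernel estimate. Away from the boundary and from infinity, $\nabla_x^\perp G_H^1$ behaves like the free kernel $\tfrac{1}{2\pi}(x-y)^\perp/|x-y|^2$, which already satisfies the bound; the delicate regions are a neighbourhood of $\partial\mathcal{S}_0$, where image contributions appear, and the behaviour near infinity. Here I would use the complex-analytic description of the exterior domain (cf. Appendix~\ref{complex ana}): writing $G_H^1$ through the conformal map $\mathcal{T}$ sending $\mathcal{F}_0$ to the exterior of the unit disk, the singular part is $\tfrac{1}{2\pi}\ln|\mathcal{T}(x)-\mathcal{T}(y)|$ while the boundary correction extends smoothly across $\partial\mathcal{S}_0$; the bound then follows from the uniform boundedness and nondegeneracy of $\mathcal{T}'$ on $\overline{\mathcal{F}_0}$ and its prescribed behaviour at infinity. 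Since the statement is exactly \cite[Theorem~4.1]{ift_lop_euler}, one may alternatively simply invoke that reference rather than reproducing the kernel analysis.
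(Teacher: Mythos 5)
Your proof is correct, but it is worth knowing that the paper does not actually prove this lemma: it is imported verbatim from \cite[Theorem 4.1]{ift_lop_euler}, which is exactly the alternative you mention in your final sentence. What you have written out is, in substance, the proof of that cited result, so your write-up replaces the citation by its content. All three steps check out. The scaling bookkeeping is right: in two dimensions $G^{\eps}(x,y)=G^{1}(x/\eps,y/\eps)$ exactly (both sides solve the same Dirichlet problem for $\delta_y$, the Jacobian factor $\eps^{2}$ from the change of variables cancelling the $\eps^{-2}$ from the Laplacian), and $\Psi_{H^{\eps}}(x)=\Psi_{H^{1}}(x/\eps)$ up to the harmless constant $-\tfrac{1}{2\pi}\ln\eps$, which disappears under $\nabla_x^{\perp}$; hence $K_H^{\eps}[\omega](x)=\eps\, K_H^{1}[\omega(\eps\,\cdot)](x/\eps)$ and the prefactor $\eps$ is cancelled by $\|\omega(\eps\,\cdot)\|_{L^{1}}^{1/2}=\eps^{-1}\|\omega\|_{L^{1}}^{1/2}$, so the constant is indeed scale-invariant and depends only on $\mathcal{S}_0$. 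The splitting at radius $R$ with the optimization $R=\bigl(\|\omega\|_{L^{1}}/\|\omega\|_{L^{\infty}}\bigr)^{1/2}$ is the standard interpolation and is fine (the degenerate case $\|\omega\|_{L^\infty}=0$ being trivial). The only step with genuine content is the uniform kernel bound $|\nabla_x^{\perp}G_H^{1}(x,y)|\le C/|x-y|$, and your sketch via the Riemann map $\mathcal{T}$ onto the exterior of the unit disk, with $\mathcal{T}'$ bounded above and below on $\overline{\mathcal{F}_0}$ and affine behaviour at infinity, is precisely how \cite{ift_lop_euler} establishes it; for the disk it reduces to checking that the image and circulation corrections combine as $\bigl|\tfrac{x^{\perp}}{|x|^{2}}-\tfrac{(x-y^{*})^{\perp}}{|x-y^{*}|^{2}}\bigr|\le C/|x-y|$ with $y^{*}=y/|y|^{2}$, which holds by the mean value theorem since $|y^{*}|\le 1$. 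Note finally that the reference is itself set in the shrinking-obstacle framework, so even your scaling reduction is implicit there.
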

Combining with the conservation laws \eqref{ConsOmega} we obtain that for any $t>0$,
\begin{equation*}
\|K^{\eps}_H [\omega ] (t,\cdot)  \|_{L^{\infty}(\mathcal{F}^{\eps}_{0}) }
\leqslant C  \| w_{0} \|^{1/{2}}_{ L^{1} ( \R^{2} )} \| w_{0} \|^{1/2}_{ L^{\infty} ( \R^{2} )} .
\end{equation*}
Together with Lemmas~\ref{LemHomega}, \ref{LemPhiomega} and \ref{LemEstNRJ2}, we deduce from the decompositions \eqref{vitedechydro} and \eqref{allo} the next velocity estimates.
\begin{Lemma}\label{LeminfVomega}
Let us fix $\rho>1$. There exists $C>0$ such that the following holds. If for a given $T>0$ and $\eps\in (0,1]$, \eqref{CondDistVorticiteSolide} is valid on $[0,T]$, then one has
\begin{equation*} 
\| {v}^{\eps} (t,\cdot) \|_{L^{\infty}(\supp \omega^\varepsilon(t)) }
+ \| \tilde{v}^{\eps} (t,\cdot) \|_{L^{\infty}(\supp \omega^\varepsilon(t)) }
\leq C \quad \forall t\in [0,T].
\end{equation*}
\end{Lemma}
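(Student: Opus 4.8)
The plan is to establish Lemma~\ref{LeminfVomega} by directly estimating each term in the velocity decompositions \eqref{vitedechydro} and \eqref{allo}, restricted to the support of $\omega^{\varepsilon}(t)$, under the standing assumption \eqref{CondDistVorticiteSolide}. Recall that $\tilde v^{\eps} = K_H^\eps[\omega^\eps] + \ell_1^\eps \nabla\Phi_1^\eps + \ell_2^\eps \nabla\Phi_2^\eps + r^\eps \nabla\Phi_3^\eps$ and $v^\eps = \tilde v^\eps + \gamma H^\eps$, so it suffices to bound these five contributions uniformly on $\supp\omega^\eps(t)$.

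First I would handle the hydrodynamic term $K_H^\eps[\omega^\eps]$: the lemma of Iftimie--Lopes Filho (Lemma~\ref{ift_lop}) combined with the conservation of the $L^1$ and $L^\infty$ norms of the vorticity \eqref{ConsOmega} gives $\|K_H^\eps[\omega^\eps](t,\cdot)\|_{L^\infty(\mathcal F_0^\eps)} \le C\|w_0\|_{L^1}^{1/2}\|w_0\|_{L^\infty}^{1/2}$, uniformly in $t$ and $\eps$, which is exactly the preceding display in the excerpt. Note that this bound is even on all of $\mathcal F_0^\eps$, not just the support. The circulation term $\gamma H^\eps$ is controlled on $\supp\omega^\eps(t)$ by Lemma~\ref{LemHomega}, which yields $\|H^\eps\|_{L^\infty(\supp\omega^\eps(t))}\le C$ under \eqref{CondDistVorticiteSolide}; multiplying by the fixed constant $\gamma$ keeps this uniform.

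The remaining three terms are the potential contributions $\ell_i^\eps \nabla\Phi_i^\eps$ for $i=1,2$ and $r^\eps \nabla\Phi_3^\eps$. Here I would pair the gradient estimates of Lemma~\ref{LemPhiomega} with the energy bound of Lemma~\ref{LemEstNRJ2}. Precisely, Lemma~\ref{LemPhiomega} gives $\|\nabla\Phi_i^\eps\|_{L^\infty(\supp\omega^\eps(t))}\le C\eps^{2}$ for $i=1,2$ and $\|\nabla\Phi_3^\eps\|_{L^\infty(\supp\omega^\eps(t))}\le C\eps^{3}$, while Lemma~\ref{LemEstNRJ2} gives $|\eps\ell^\eps(t)|\le C$ and $|\eps^2 r^\eps(t)|\le C$. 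The crucial observation is that the powers of $\eps$ match favourably: for $i=1,2$ one has $|\ell_i^\eps|\,\|\nabla\Phi_i^\eps\|_{L^\infty(\supp\omega^\eps)} \le (C\eps^{-1})(C\eps^{2}) = C\eps \le C$, and for the rotational term $|r^\eps|\,\|\nabla\Phi_3^\eps\|_{L^\infty(\supp\omega^\eps)}\le (C\eps^{-2})(C\eps^{3}) = C\eps \le C$. Summing the five bounds yields the estimate on $\|v^\eps\|_{L^\infty(\supp\omega^\eps(t))}$; dropping the $\gamma H^\eps$ term gives the corresponding bound for $\|\tilde v^\eps\|_{L^\infty(\supp\omega^\eps(t))}$, and the lemma follows.

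The argument is essentially a bookkeeping of scalings, so there is no serious analytic obstacle once the component estimates are in hand; the only point requiring genuine care is verifying that the degenerate (and only $\eps$-weighted) energy bound of Lemma~\ref{LemEstNRJ2} is strong enough to absorb the growth of $\ell^\eps$ and $r^\eps$. The potential worry is that Lemma~\ref{LemEstNRJ2} controls only $\eps\ell^\eps$ and $\eps^2 r^\eps$ rather than $\ell^\eps$ and $r^\eps$ themselves (cf. Remark~\ref{RemEstNRJ}), but the extra decay supplied by the Kirchhoff potential gradients on the far-away vortex support — which live at distance $\ge 1/\rho$ from the shrinking solid — is precisely calibrated to compensate, so that each product remains $O(\eps)$ and hence uniformly bounded. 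I would therefore present the proof as the short chain of inequalities above, emphasising the cancellation of $\eps$-powers as the one substantive step.
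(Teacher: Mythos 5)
Your proof is correct and follows essentially the same route as the paper, which deduces Lemma~\ref{LeminfVomega} from exactly the decompositions \eqref{vitedechydro} and \eqref{allo} combined with Lemma~\ref{ift_lop} and \eqref{ConsOmega} for the term $K^{\varepsilon}_{H}[\omega^{\varepsilon}]$, Lemma~\ref{LemHomega} for $\gamma H^{\varepsilon}$, and Lemmas~\ref{LemPhiomega} and \ref{LemEstNRJ2} for the potential terms. Your explicit bookkeeping, $|\ell^{\varepsilon}_{i}|\,\|\nabla\Phi^{\varepsilon}_{i}\|_{L^{\infty}(\supp\omega^{\varepsilon})}\leq C\varepsilon$ and $|r^{\varepsilon}|\,\|\nabla\Phi^{\varepsilon}_{3}\|_{L^{\infty}(\supp\omega^{\varepsilon})}\leq C\varepsilon$, is precisely the cancellation of $\varepsilon$-powers the paper leaves implicit.
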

\subsection{Estimates related to the modulation terms}
Due to the definition of the modulated velocity $\tilde \ell^\eps$  \eqref{DefLTilde}, we are interested in estimating the terms $K_{\R^2}[\omega^\eps](t,0)$, $D K_{\R^2}[\omega^\eps](t,0) \cdot \xi$ and their time derivatives. \par
\ \par
Regarding the modulation terms themselves, we have the following.
\begin{Lemma} \label{LemModulesBornes}
Let us fix $\rho>1$. There exists $C>0$ such that the following holds. If for a given $T>0$ and $\eps\in (0,1]$, \eqref{CondDistVorticiteSolide} is valid on $[0,T]$, then one has
\begin{equation*}
\| K_{\R^2}[\omega^\eps](t,0) \|_{L^{\infty}(0,T)} + \| D K_{\R^2}[\omega^\eps](t,0)   \|_{L^{\infty}(0,T)} \leq C.
\end{equation*}
\end{Lemma}
\begin{proof}
The statement concerning the first term is actually included in \eqref{loglip2}. Concerning the second one, we use that $K_{\R^{2}}[\omega^{\eps}]$ is harmonic on $B(0,1/\rho)$, so its $C^1$  norm at $0$ can be estimated by the $L^\infty$ norm on $B(0,1/\rho)$.
\end{proof}
\ \par
We now turn an estimate relative to the time derivatives of the modulation terms. 
We will use the elementary formula (that we already used to obtain \eqref{Solide11}):
\begin{equation} \label{DeriveeRotation}
(R_{\theta^\eps(t)}^T)' = -r^\eps(t) R_{\theta^\eps(t)}^T J_{2} \ \text{ and } \ 
(R_{\theta^\eps(t)})' = r^\eps(t) J_{2} R_{\theta^\eps(t)} \ \text{ with } \ 
J_{2} := 
\begin{pmatrix}
 0 & -1 \\
 1 & 0
\end{pmatrix},
\end{equation}
and we deduce from ${\ell}^{\eps} (t)=R_{\theta^\eps(t)}^T \ (h^{\eps})' (t)$ that
\begin{equation*}
({\ell}^{\eps})' (t)=-r^\eps R_{\theta^\eps(t)}^T \ ((h^{\eps})')^{\perp} (t) + R_{\theta^\eps(t)}^T \ (h^{\eps})'' (t)= -r^\eps ({\ell}^{\eps})^{\perp} (t) + R_{\theta^\eps(t)}^T \ (h^{\eps})'' (t).
\end{equation*}
We have the following statement concerning the time derivatives of the modulation terms.
\begin{Proposition} \label{labelkoi2}
Let us fix $\rho>1$. There exist $C>0$ and $\varepsilon_{0}\in (0,1]$ such that the following holds. If for a given $T>0$ and $\eps\in (0,\varepsilon_{0}]$, \eqref{CondDistVorticiteSolide} is valid on $[0,T]$, then one has
\begin{equation}\label{K0'}
\left(K_{\R^2}[\omega^\eps](t,0) + \eps D K_{\R^2}[\omega^\eps](t,0) \cdot \xi \right)' =  -r^\eps(t)  (K_{\R^2}[\omega^\eps](t,0))^{\perp}  + F_{d}(\eps,t)
\end{equation}
where $F_{d}$ is weakly nonlinear in the sense of \eqref{IneqWNL}.
\end{Proposition}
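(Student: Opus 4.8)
The plan is to differentiate the two modulation terms directly, using the linearity of $K_{\R^2}$ in the vorticity together with the transport equation \eqref{vorty1}. Since the drift $v^\eps-\ell^\eps-r^\eps y^\perp$ is divergence free, for any $\phi$ that is $C^1$ on a neighborhood of the annular support of $\omega^\eps$ (guaranteed by \eqref{CondDistVorticiteSolide}) one has the Lagrangian identity
\begin{equation*}
\frac{d}{dt}\int_{\R^2}\phi\,\omega^\eps\,dy=\int_{\R^2}\nabla\phi\cdot\big(v^\eps-\ell^\eps-r^\eps y^\perp\big)\,\omega^\eps\,dy.
\end{equation*}
Applying this with $\phi$ equal to the components of the Biot-Savart kernel $y\mapsto \frac{1}{2\pi}\frac{(-y)^\perp}{|y|^2}$ (smooth away from $0$, hence smooth on $\supp\omega^\eps$) gives $\frac{d}{dt}K_{\R^2}[\omega^\eps](t,0)$, and with $\phi$ equal to the first $x$-derivatives of that kernel contracted against $\xi$ gives $\frac{d}{dt}\big(DK_{\R^2}[\omega^\eps](t,0)\cdot\xi\big)$. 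I would first record that these test functions and their gradients are bounded on the annulus $B(0,\rho)\setminus B(0,1/\rho)$, so all integrals are finite and differentiation under the integral sign is justified by the regularity in Theorem~\ref{ThmYudo}.

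The key structural step is the isolation of the main term from the rotational part $-r^\eps y^\perp$ of the drift. Using the rotation-equivariance of $K_{\R^2}$ recalled in Subsection~\ref{Sec:GreensFunction}, namely $K_{\R^2}[\omega\circ R_\theta](0)=R_\theta^T K_{\R^2}[\omega](0)$, differentiation in $\theta$ at $\theta=0$ yields $K_{\R^2}[y^\perp\cdot\nabla\omega^\eps](0)=-\big(K_{\R^2}[\omega^\eps](0)\big)^\perp$. Carrying this through the Lagrangian identity, the $-r^\eps y^\perp$ piece of the drift contributes exactly $-r^\eps\big(K_{\R^2}[\omega^\eps](t,0)\big)^\perp$ to $\frac{d}{dt}K_{\R^2}[\omega^\eps](t,0)$, which is the announced main term of \eqref{K0'}. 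The analogous rotational contribution to $\eps\,\frac{d}{dt}\big(DK_{\R^2}[\omega^\eps](0)\cdot\xi\big)$ is of size $O(\eps|r^\eps|)$ and is therefore pushed into the remainder.

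It then remains to show that everything else is weakly nonlinear, i.e. bounded by $C(1+|\tilde p^\eps(t)|)$ (which implies \eqref{IneqWNL}). Here I would substitute the decomposition \eqref{vitedechydro} for $v^\eps$ and estimate each piece on $\supp\omega^\eps$: the terms $K_H^\eps[\omega^\eps]$ and $\gamma H^\eps$ are $O(1)$ by Lemmas~\ref{ift_lop} and \ref{LemHomega}; the terms $\ell_{1,2}^\eps\nabla\Phi_{1,2}^\eps$ and $r^\eps\nabla\Phi_3^\eps$ are $O(\eps^2|\ell^\eps|)$ and $O(\eps^3|r^\eps|)$ by Lemma~\ref{LemPhiomega}; and the constant drift $-\ell^\eps$ gives an $O(|\ell^\eps|)$ contribution. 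The decisive bookkeeping is to convert these bounds into the variable $\tilde p^\eps$: from the definition \eqref{DefLTilde} and Lemma~\ref{LemModulesBornes} one has $|\ell^\eps|\le|\tilde\ell^\eps|+C\le C(1+|\tilde p^\eps|)$, while $\eps|r^\eps|\le|\tilde p^\eps|$ directly, and Lemma~\ref{LemEstNRJ2} controls the crude growth; combining gives $\eps^2|\ell^\eps|,\ \eps^3|r^\eps|\le C(1+|\tilde p^\eps|)$. The factor $\eps$ in front of the $DK\cdot\xi$ term makes every one of its contributions gain a power of $\eps$, so that (with the second-order kernel bounds on the annulus) it is entirely weakly nonlinear. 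I expect the main obstacle to be precisely this last accounting: matching the various powers of $\eps$ against the potentially large factors $|\ell^\eps|$ and $|r^\eps|$, and re-expressing them through $\tilde p^\eps$ so that the remainder $F_d$ fits the bound \eqref{IneqWNL}; by contrast the rotation identity producing the main term is a clean, closed computation.
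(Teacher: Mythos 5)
Your proof is correct, and it takes a genuinely different route from the paper's. The paper deliberately avoids $\partial_t\omega^{\eps}$ --- it remarks that the factor $r^{\eps}x^{\perp}$ in \eqref{vorty1} makes it singular as $\eps\to 0^{+}$ --- and instead transfers to the original frame via \eqref{KomKw}--\eqref{nablaK}: there the main term $-r^{\eps}(K_{\R^{2}}[\omega^{\eps}](t,0))^{\perp}$ arises from differentiating the rotation matrix (formula \eqref{DeriveeRotation}), the transport enters only through $\partial_t w^{\eps}=-\div(u^{\eps}w^{\eps})$, and the remainder is controlled by the boundedness of $u^{\eps}w^{\eps}$ in $L^{1}\cap L^{\infty}$ together with the harmonicity of $K_{\R^{2}}[\partial_t w^{\eps}]$ near $h^{\eps}(t)$. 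You instead stay in the body frame, differentiate the moments $\int\phi\,\omega^{\eps}$ directly through the (divergence-free) transport \eqref{vorty1}, and --- this is the essential point --- you do not estimate the a priori dangerous drift contribution $-r^{\eps}y^{\perp}$ but identify it exactly: your rotation-equivariance identity, in integrated form $\int\nabla\phi_i\cdot y^{\perp}\,\omega^{\eps}\,dy=(K_{\R^{2}}[\omega^{\eps}](t,0))^{\perp}_i$ (which checks out by a direct computation with the kernel, legitimate since $\supp\omega^{\eps}$ stays in the annulus of \eqref{CondDistVorticiteSolide}), turns this term into precisely the main term of \eqref{K0'}. This is the same cancellation the paper produces through the frame change, read through the chain rule. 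What your route buys: no frame change at all, $\xi$ stays constant (the paper must carry $R_{\theta^{\eps}}\xi$ through a second rotation computation for the $\eps\,DK_{\R^{2}}[\omega^{\eps}](t,0)\cdot\xi$ part), and the harmonicity argument for $K_{\R^{2}}[\partial_t w^{\eps}]$ is replaced by the more elementary bound of $\int\nabla\phi\cdot(v^{\eps}-\ell^{\eps})\omega^{\eps}$ using Lemma~\ref{LeminfVomega} and kernel-derivative bounds on the annulus. What the paper's route buys: the lab-frame identities and the bounds on $K_{\R^{2}}[\partial_t w^{\eps}]$ and $D K_{\R^{2}}[w^{\eps}]$ near $h^{\eps}(t)$ are reused verbatim later (e.g., in proving that $G$ is weakly gyroscopic and in Step 5 of Subsection~\ref{Subsec:LocalPTL}), so the same machinery does double duty. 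Your final bookkeeping --- $|\ell^{\eps}|\le|\tilde{\ell}^{\eps}|+C$ by \eqref{DefLTilde} and Lemma~\ref{LemModulesBornes}, $\eps|r^{\eps}|\le|\tilde{p}^{\eps}|$, and Lemma~\ref{LemEstNRJ2} for the residual $\eps^{2}|\ell^{\eps}|$, $\eps^{3}|r^{\eps}|$ terms --- coincides with the paper's concluding step and yields a remainder bounded by $C(1+|\tilde{p}^{\eps}(t)|)$, which is even stronger than what \eqref{IneqWNL} requires.
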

\begin{proof}
An important fact in the proof is that $\partial_{t} \omega^\eps$ becomes singular as $\varepsilon \rightarrow 0^{+}$ (see the factor $r^\eps x^\perp$ in \eqref{vorty1}); as a consequence, we will rather consider $\partial_{t} w^\eps$ which behaves less singularly. \par
From \eqref{DefOmega}, we easily see that 
\begin{equation}\label{KomKw}
 K_{\R^2}[\omega^\eps](t,x)=R_{\theta^\eps (t)}^T K_{\R^2}[w^\eps](t,R_{\theta^\eps(t)}x+h^{\eps}(t))
\end{equation}
and
\begin{equation} \label{nablaK}
D K_{\R^2}[\omega^\eps](t,x) \cdot (\cdot) = R_{\theta^\eps (t)}^T \, D K_{\R^2}[w^\eps](t,R_{\theta^\eps(t)}x+h^{\eps}(t)) \cdot R_{\theta^\eps(t)} (\cdot).
\end{equation}
Now, using \eqref{DeriveeRotation}, we compute the time derivative of $K_{\R^2}[\omega^\eps](t,0)$:
\begin{align*}
K_{\R^2}[\omega^\eps](t,0)' =& -r^\eps(t) R_{\theta^\eps (t)}^T (K_{\R^2}[w^\eps](t,h^{\eps}(t)))^{\perp} + R_{\theta^\eps (t)}^T K_{\R^2}[\partial_{t} w^\eps](t,h^{\eps}(t)) \\
&+ R_{\theta^\eps (t)}^T D K_{\R^2}[w^\eps](t,h^{\eps}(t)) \cdot (h^\eps)'(t)\\
=& -r^\eps(t)  (K_{\R^2}[\omega^\eps](t,0))^{\perp} + R_{\theta^\eps (t)}^T K_{\R^2}[\partial_{t} w^\eps](t,h^{\eps}(t))
+ D K_{\R^2}[\omega^\eps](t,0) \cdot \ell^\eps(t),
\end{align*}
thanks to \eqref{KomKw}-\eqref{nablaK}. \par
For the time derivative of $D K_{\R^2}[\omega^\eps](t,0) \cdot \xi$, we get from \eqref{DeriveeRotation} that
\begin{align*}
(D K_{\R^2}[\omega^\eps](t,0) \cdot \xi )' =& -r^\eps(t) (D K_{\R^2}[\omega^\eps](t,0) \cdot \xi)^\perp    
+   R_{\theta^\eps (t)}^T D K_{\R^2}[\partial_{t} w^\eps](t,h^{\eps}(t)) \cdot R_{\theta^\eps(t)}  \xi \\
& +   R_{\theta^\eps (t)}^T D^2 K_{\R^2}[ w^\eps](t,h^{\eps}(t)) : \Big( (h^{\eps})'(t) \otimes R_{\theta^\eps(t)} \xi\Big)
 + r^\eps(t) D K_{\R^2}[\omega^\eps](t,0) \cdot \xi^\perp.
\end{align*}
Putting these two computations together, we obtain \eqref{K0'} with
\begin{align*}
F_{d}(\eps, t):=&R_{\theta^\eps (t)}^T K_{\R^2}[\partial_{t} w^\eps](t,h^{\eps}(t)) + D K_{\R^2}[\omega^\eps](t,0) \cdot \ell^\eps(t) \\
&-\eps r^\eps(t) (D K_{\R^2}[\omega^\eps](t,0) \cdot \xi)^\perp    
+   \eps R_{\theta^\eps (t)}^T D K_{\R^2}[\partial_{t} w^\eps](t,h^{\eps}(t))R_{\theta^\eps(t)} \cdot \xi \\
& +   \eps R_{\theta^\eps (t)}^T D^2 K_{\R^2}[ w^\eps](t,h^{\eps}(t)) : \Big( (h^{\eps})'(t) \otimes R_{\theta^\eps(t)} \xi\Big)
 +\eps r^\eps(t) D K_{\R^2}[\omega^\eps](t,0) \cdot \xi^\perp,
\end{align*}
which we now prove to be weakly nonlinear. \par
As in Lemma~\ref{LemModulesBornes}, we see that, by harmonicity of $K_{\R^{2}}[\omega^{\eps}]$ on $B(0,1/\rho)$ and its $L^\infty$ estimate \eqref{loglip2}, there exists $C$ such that
\begin{gather*}
 \left| D K_{\R^2}[\omega^\eps](t,0) \cdot \ell^\eps(t) -\eps r^\eps(t) (D K_{\R^2}[\omega^\eps](t,0) \cdot \xi)^\perp  \right | \leq C( |\ell^\eps(t)| + \eps |r^\eps(t)|)\\
\left| \eps R_{\theta^\eps (t)}^T D^2 K_{\R^2}[ w^\eps](t,h^{\eps}(t)) : \Big( (h^{\eps})'(t) \otimes R_{\theta^\eps(t)} \xi\Big)
 +\eps r^\eps(t) D K_{\R^2}[\omega^\eps](t,0) \cdot \xi^\perp \right| \leq C(\eps |\ell^\eps(t)|  + \eps |r^\eps(t)|).
\end{gather*}
Since we assume \eqref{CondDistVorticiteSolide} to be valid on $[0,T]$, we deduce that $u^\eps w^\eps$ is compactly supported in $B(h^\eps(t),\rho)\setminus B(h^\eps(t),1/\rho)$.
Hence we can infer from \eqref{ConsOmega} and Lemma~\ref{LeminfVomega} that $u^\eps w^\eps$ is bounded in $L^1\cap L^\infty(\R^2)$ uniformly in $\eps$. Therefore, using the vorticity equation $\partial_{t} w^\eps =- \div (u^\eps w^\eps)$, we deduce that $K_{\R^2}[\partial_{t} w^\eps ]$ is uniformly bounded and harmonic around $h^\eps(t)$. This gives that
\begin{equation*}
\left| R_{\theta^\eps (t)}^T K_{\R^2}[\partial_{t} w^\eps](t,h^{\eps}(t)) \right | \leq C
 \ \text{ and } \ 
\left|  \eps R_{\theta^\eps (t)}^T D K_{\R^2}[\partial_{t} w^\eps](t,h^{\eps}(t)) \cdot R_{\theta^\eps(t)} \xi \right|
 \leq C \eps.
\end{equation*}
Hence one obtains an estimate for $F_{d}$ of the form:
\begin{equation*}
|F_{d}(\varepsilon,t)| \leq C \left( 1+ | \ell^{\varepsilon}(t)| + |\varepsilon {r}^{\varepsilon}(t)| \right).
\end{equation*}
Putting the ``modulated velocity''  $\tilde{\ell}^{\varepsilon}$ (see \eqref{DefLTilde}) in the right hand side instead of $\ell^{\varepsilon}(t)$, taking Lemma~\ref{LemModulesBornes} into account, this gives that $F_{d}$ is weakly nonlinear.
\end{proof}
%
%
%
%
%
%
%
%
%
%
%
%
\subsection{Approximation of the velocity}
For the computation of the pressure force, it will useful to approximate the velocity vector field $\tilde{v}^{\eps}$ (introduced in \eqref{allo}) along the solid boundary.
As $K_{\R^{2}}[\omega^{\eps}]=\nabla^\perp \psi[\omega^{\eps}]$ with $\Delta \psi =0$ in the neighborhood of the solid (namely on $B(0,1/\rho)$, see \eqref{CondDistVorticiteSolide}), then  $D K_{\R^{2}}[\omega^{\eps}](t,0)$ is of the form 
\begin{equation} \label{a-et-b}
D K_{\R^{2}}[\omega^{\eps}](t,0) = 
\begin{pmatrix} -a^\varepsilon & b^\varepsilon \\ b^\varepsilon & a^\varepsilon \end{pmatrix}
 \text{ for some } a^{\varepsilon}, b^{\varepsilon} \in \R,
\end{equation}
where we have by Lemma~\ref{LemModulesBornes}:
\begin{equation*}
\|a^\varepsilon \|_{L^\infty(0,T)}+\|b^\varepsilon \|_{L^\infty(0,T)}
 \leq C.
\end{equation*}
Note that in particular we have
\begin{equation*}
D K_{\R^{2}}[\omega^{\eps}](t,0) \cdot x= a^\eps \begin{pmatrix} -x_{1}\\x_{2} \end{pmatrix} + b^\eps \begin{pmatrix} x_{2}\\x_{1} \end{pmatrix}.
\end{equation*}
Therefore, reminding \eqref{allo}, we introduce 
\begin{equation}\label{Defvd2}
v^{\eps}_{\#}  (x):= K_{\R^{2}}[\omega^{\eps}](t,0) +  D K_{\R^{2}}[\omega^{\eps}](t,0) \cdot x 
+ \sum_{i=1}^2 (\ell^{\eps} - K_{\R^{2}}[\omega^{\eps}](t,0) )_{i} \nabla \Phi^{\eps}_{i}(x) - a^\eps \nabla \Phi^{\eps}_{4}(x)-b^\eps \nabla \Phi^{\eps}_{5}(x).
\end{equation}
The following proposition allows to obtain a good approximation of the fluid velocity on $\partial \mathcal{S}_0^\varepsilon$.
\begin{Proposition} \label{Pr}
The vector field  $v^{\eps}_{\#}-\ell^\eps$ is tangent to the boundary. Moreover, for fixed $\rho>1$, there exist $C>0$ and $\varepsilon_{0}\in (0,1]$ such that the following holds. If for a given $T>0$ and $\eps\in (0,\varepsilon_{0}]$, the inclusion \eqref{CondDistVorticiteSolide} is valid on $[0,T]$, then one has 
\begin{equation*} 
\| v^{\eps}_{\#} + r^{\eps} \nabla \Phi^{\eps}_{3} - \tilde{v}^{\eps} \|_{L^{\infty}(0,T;L^{2}(\partial {\mathcal S}_0^{\eps}))} \leq C\eps^{5/2}.
\end{equation*}
\end{Proposition}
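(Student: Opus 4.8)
The plan is to prove the two assertions separately: the \emph{exact} tangency of $v^{\eps}_{\#}-\ell^{\eps}$, which is a pointwise computation on $\partial\mathcal{S}_0^{\eps}$, and the $L^2$ bound, which combines a second-order Taylor expansion of $K_{\R^2}[\omega^{\eps}]$ near $0$ with a scale-invariant elliptic trace estimate. For the tangency, set $c:=K_{\R^2}[\omega^{\eps}](t,0)$ and use the Neumann conditions \eqref{def Phi}, \eqref{DefK1-3}, \eqref{DefK4-5}, namely $\nabla\Phi_i^{\eps}\cdot n=K_i$ on $\partial\mathcal{S}_0^{\eps}$. Then $\sum_{i=1,2}c_i\,\nabla\Phi_i^{\eps}\cdot n=c\cdot n$, while the splitting of $DK_{\R^2}[\omega^{\eps}](t,0)\cdot x$ displayed just after \eqref{a-et-b} gives $(a^{\eps}\nabla\Phi_4^{\eps}+b^{\eps}\nabla\Phi_5^{\eps})\cdot n=(DK_{\R^2}[\omega^{\eps}](t,0)\cdot x)\cdot n$. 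Inserting these into \eqref{Defvd2}, every term built from $c$ and from $DK_{\R^2}[\omega^{\eps}](t,0)$ cancels against its Kirchhoff counterpart and one is left with $v^{\eps}_{\#}\cdot n=\ell^{\eps}\cdot n$ on $\partial\mathcal{S}_0^{\eps}$, i.e. $v^{\eps}_{\#}-\ell^{\eps}$ is tangent.

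For the estimate, I would first subtract, using \eqref{allo}, and observe that the $r^{\eps}\nabla\Phi_3^{\eps}$ and the $\sum_{i=1,2}\ell_i^{\eps}\nabla\Phi_i^{\eps}$ contributions cancel, leaving
\begin{equation*}
E:=v^{\eps}_{\#}+r^{\eps}\nabla\Phi_3^{\eps}-\tilde v^{\eps}
= L-\sum_{i=1,2}c_i\,\nabla\Phi_i^{\eps}-a^{\eps}\nabla\Phi_4^{\eps}-b^{\eps}\nabla\Phi_5^{\eps}-K_H^{\eps}[\omega^{\eps}],
\end{equation*}
where $L(x):=c+DK_{\R^2}[\omega^{\eps}](t,0)\cdot x$ is the affine (hence divergence- and curl-free, by the Cauchy--Riemann characterization recalled in Section~\ref{Sec:Material}) part of the Taylor expansion of $K_{\R^2}[\omega^{\eps}]$ at $0$. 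Introduce $\mathcal{D}:=K_H^{\eps}[\omega^{\eps}]-K_{\R^2}[\omega^{\eps}]$, which is divergence- and curl-free in all of $\mathcal{F}_0^{\eps}$ because both fields have curl $\omega^{\eps}$ there, has zero circulation around the body (the leading circulatory $O(1/|x|)$ parts of $K_H^{\eps}[\omega^{\eps}]$ and $K_{\R^2}[\omega^{\eps}]$ coincide and cancel, so in fact $\mathcal{D}=O(1/|x|^2)$ at infinity) and normal trace $-K_{\R^2}[\omega^{\eps}]\cdot n$ on $\partial\mathcal{S}_0^{\eps}$. I then split $E=\big(L-K_{\R^2}[\omega^{\eps}]\big)+\Xi$ with $\Xi:=-\sum_{i=1,2}c_i\nabla\Phi_i^{\eps}-a^{\eps}\nabla\Phi_4^{\eps}-b^{\eps}\nabla\Phi_5^{\eps}-\mathcal{D}$.

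The first piece is a Taylor remainder: by \eqref{CondDistVorticiteSolide} the field $K_{\R^2}[\omega^{\eps}]$ is harmonic on $B(0,1/\rho)$ and bounded there by \eqref{loglip2}, so interior elliptic estimates bound $D^2K_{\R^2}[\omega^{\eps}]$ on $B(0,1/(2\rho))\supset\partial\mathcal{S}_0^{\eps}$; Taylor's formula gives $|K_{\R^2}[\omega^{\eps}](x)-L(x)|\le C|x|^2\le C\eps^2$ on $\partial\mathcal{S}_0^{\eps}$, and since the boundary has length $\sim\eps$, $\|L-K_{\R^2}[\omega^{\eps}]\|_{L^2(\partial\mathcal{S}_0^{\eps})}\le C\eps^{5/2}$. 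For the second piece, $\Xi$ is divergence- and curl-free in $\mathcal{F}_0^{\eps}$, has zero circulation and decays at infinity (each constituent does, using \eqref{ComportementPhii}), and its normal trace is exactly $\Xi\cdot n=(K_{\R^2}[\omega^{\eps}]-L)\cdot n$, because the normal traces of $\sum c_i\nabla\Phi_i^{\eps}+a^{\eps}\nabla\Phi_4^{\eps}+b^{\eps}\nabla\Phi_5^{\eps}$ and of $L$ coincide while $\mathcal{D}\cdot n=-K_{\R^2}[\omega^{\eps}]\cdot n$; hence $\|\Xi\cdot n\|_{L^2(\partial\mathcal{S}_0^{\eps})}\le C\eps^{5/2}$ by the bound just obtained. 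It remains to control the full boundary trace of $\Xi$ by its normal trace. Writing $\Xi=\nabla^{\perp}\psi$ with $\psi$ harmonic and single-valued (zero circulation), specifying $\Xi\cdot n=\partial_\tau\psi$ fixes $\psi|_{\partial\mathcal{S}_0^{\eps}}$ up to a constant, and the tangential trace $\Xi\cdot\tau=-\partial_n\psi$ is its Dirichlet-to-Neumann image; on the fixed geometry $\mathcal{S}_0$ one has the standard estimate $\|\Xi\|_{L^2(\partial\mathcal{S}_0)}\le C\|\Xi\cdot n\|_{L^2(\partial\mathcal{S}_0)}$, and rescaling $y=x/\eps$—under which both sides carry the same factor $\eps^{1/2}$—transfers it to $\partial\mathcal{S}_0^{\eps}$ with an $\eps$-independent constant. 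Thus $\|\Xi\|_{L^2(\partial\mathcal{S}_0^{\eps})}\le C\eps^{5/2}$, and adding the two contributions yields the claim.

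The step I expect to be the main obstacle is this last one: establishing the Dirichlet-to-Neumann (tangential-trace by normal-trace) estimate for the exterior harmonic field $\Xi$ with the correct decay and zero-circulation normalization, and, above all, verifying that the constant is genuinely independent of $\eps$. The cleanest route is the rescaling to the fixed domain $\mathcal{S}_0$, which is licit precisely because the constraint $\partial\mathcal{S}_0^{\eps_0}\subset\overline{B(0,1/(2\rho))}$ keeps $\supp\omega^{\eps}$ uniformly away from the shrinking body; one must also confirm that the zero-circulation and $O(1/|x|^2)$ decay of $\mathcal{D}$ survive the scaling so that the single-valuedness of $\psi$ and the solvability of the underlying div–curl problem are preserved.
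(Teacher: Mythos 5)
Your proof is correct and, once the notation is unwound, it essentially coincides with the paper's: your field $\Xi$ is exactly $\check{v}^{\eps}-\tilde{v}^{\eps}$ with $\check{v}^{\eps}$ as in \eqref{DefCheckV2} (since $\mathcal{D}=K_{H}^{\eps}[\omega^{\eps}]-K_{\R^{2}}[\omega^{\eps}]$), your splitting $E=(L-K_{\R^{2}}[\omega^{\eps}])+\Xi$ is the paper's splitting through $\check{v}^{\eps}$, and your Taylor-remainder bound is \eqref{harmonic2} followed by the same integration over the boundary of length $O(\eps)$; the tangency computation, which the paper states without detail, is also right. The one place where you genuinely deviate is the elliptic step: the paper observes that $\check{v}^{\eps}-\tilde{v}^{\eps}=\nabla\chi$ (curl-free with zero circulation, hence a single-valued potential) and applies the Neumann-problem estimate of Lemma~\ref{LemmeElliptiqueDeBase} after dilation, so the normal data $g^{\eps}$ enters directly and the boundary trace comes from $\|\chi\|_{H^{3/2}({\mathcal F}_{0})}\le C\|g\|_{L^{2}(\partial{\mathcal S}_{0})}$; you instead pass to the harmonic conjugate, writing $\Xi=\nabla^{\perp}\psi$ and invoking the $H^{1}(\partial{\mathcal S}_{0})$-Dirichlet-to-$L^{2}(\partial{\mathcal S}_{0})$-Neumann bound. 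These are equivalent formulations of the same estimate (conjugation swaps $\partial_{n}$ and $\partial_{\tau}$), and your scale-invariance remark is exactly the paper's dilation argument, so nothing is lost --- but note one mislabel: single-valuedness of the stream function $\psi$ in the exterior domain is governed by the \emph{flux} $\int_{\partial{\mathcal S}_{0}^{\eps}}\Xi\cdot n\,ds$, not by the circulation (zero circulation is what makes the paper's potential $\chi$ single-valued). The flux does vanish here: $K_{\R^{2}}[\omega^{\eps}]$ and $L$ are divergence-free on $B(0,1/\rho)\supset{\mathcal S}_{0}^{\eps}$ because $\supp\omega^{\eps}$ avoids that ball by \eqref{CondDistVorticiteSolide}, so $\int_{\partial{\mathcal S}_{0}^{\eps}}(K_{\R^{2}}[\omega^{\eps}]-L)\cdot n\,ds=0$, while $K_{H}^{\eps}[\omega^{\eps}]$ and the $\nabla\Phi_{i}^{\eps}$ have zero flux by construction --- and this is precisely the compatibility condition $\int g\,ds=0$ required in Lemma~\ref{LemmeElliptiqueDeBase}, so the slip is harmless once stated correctly.
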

\begin{proof}
The proof mimics the proof of Proposition 7 in  \cite{GLS} with the  more accurate approximation $v^{\eps}_{\#} $ of $ \tilde{v}^{\eps} - r^{\eps} \nabla \Phi^{\eps}_{3}$. 
Let us shortly sketch the proof for sake of completeness. We first introduce 
\begin{equation} \label{DefCheckV2}
\check{v}^{\eps}  := K_{\R^{2}}[\omega^{\eps}] + \sum_{i=1}^2 (\ell^{\eps} - K_{\R^{2}}[\omega^{\eps}](t,0) )_{i} \nabla \Phi^{\eps}_{i} + r^{\eps} \nabla \Phi^{\eps}_{3} - a^\eps \nabla \Phi^{\eps}_{4}-b^\eps\nabla \Phi^{\eps}_{5} ,
\end{equation}
where $a^\eps$ and $b^\eps$ come from \eqref{a-et-b}. We observe that 
\begin{equation} \label{CheckvTildev}
\left\{ \begin{array}{l} 
\curl( \check{v}^{\eps} - \tilde{v}^{\eps}) =0,  \quad   \text{for}  \ x\in  \mathcal{F}^{\eps}_{0} , \\ 
\div( \check{v}^{\eps} - \tilde{v}^{\eps}) =0,  \quad  \text{for}  \ x\in  \mathcal{F}^{\eps}_{0}  , \\
\int_{\partial {\mathcal S}_0^{\eps}} ( \check{v}^{\eps} - \tilde{v}^{\eps})\cdot \tau \, ds =0, \\
( \check{v}^{\eps} - \tilde{v}^{\eps}) \cdot n =  g^{\eps} , \quad   \text{for}  \ x\in \partial \mathcal{S}^{\eps}_0  , \\
\check{v}^{\eps} - \tilde{v}^{\eps} \rightarrow 0 \quad  \text{as}  \ x \rightarrow  \infty ,
\end{array} \right.
\end{equation}
with
\begin{equation*}
g^\eps := (\check{v}^{\eps} - \tilde{v}^{\eps}) \cdot n =  \Big(K_{\R^{2}}[\omega^{\eps}]  - K_{\R^{2}}[\omega^{\eps}](t,0)
-  D K_{\R^{2}}[\omega^{\eps}](t,0) \cdot x\Big)\cdot n . 
\end{equation*}
As in Lemma~\ref{LemModulesBornes}, we note that $K_{\R^{2}}[\omega^{\eps}]$ is harmonic on $B(0,1/\rho)$, so its $C^2$ norm on some smaller ball can be estimated by the $L^\infty$ norm on $B(0,1/\rho)$. Since \eqref{loglip2} provides in particular a uniform $L^{\infty}$ bound, this yields that for all $\varepsilon \in (0,\varepsilon_{0}]$:
\begin{equation} \label{harmonic2}
\| K_{\R^{2}}[\omega^{\eps}](t,\cdot) - K_{\R^{2}}[\omega^{\eps}](t,0) - D K_{\R^{2}}[\omega^{\eps}](t,0) \cdot (\cdot)\|_{L^{\infty}(0,T;L^{\infty}(\partial {\mathcal S}_0^{\eps}))} \leq C \eps^2,
\end{equation}
where $\varepsilon_{0} >0$ is chosen such that $\partial {\mathcal S}_0^{\eps_{0}}\subset \overline{B(0,1/(2\rho))}$.
\par
Now we will use the following classical lemma (see for instance \cite{Kikuchi83}).
\begin{Lemma} \label{LemmeElliptiqueDeBase}
There exists $C>0$ such that for any $g$ in $L^2 ( \partial \mathcal{S}_0 )$ satisfying 
\begin{equation*} 
\int_{\partial {\mathcal S}_0} g(s) ds = 0,
\end{equation*}
there is a unique solution $\Psi$ in $H^\frac{3}{2} ( \mathcal{F}_{0} )$ of 
\begin{equation*} 
\left\{ \begin{array}{l} 
\Delta \Psi = 0 ,  \quad   \text{for}  \ x\in  \mathcal{F}_{0} , \\ 
\partial_n  \Psi = g ,  \quad  \text{for}  \ x\in \partial \mathcal{S}_0   , \\
 \Psi \rightarrow 0 \quad  \text{as}  \ x \rightarrow  \infty .
\end{array} \right.
\end{equation*}
and 
\begin{equation*} 
 \| \Psi \|_{H^{3/2}({\mathcal F}_0)} \leq C \|  g  \|_{L^{2}( \partial  {\mathcal S}_0)} .
\end{equation*}
\end{Lemma}
The first and third equations of \eqref{CheckvTildev} imply that $\check{v}^{\eps} - \tilde{v}^{\eps}$ is a gradient, say $\check{v}^{\eps} - \tilde{v}^{\eps} = \nabla \chi$. 
Hence with a dilatation argument we can apply Lemma~\ref{LemmeElliptiqueDeBase} to $\chi$.
Using also the estimate \eqref{harmonic2} on $g^\eps$, we deduce the following lemma:
\begin{Lemma} \label{LemEstCheckvTildev2}
For fixed $\rho>1$, there exist $C>0$ and $\varepsilon_{0}\in (0,1]$ such that the following holds. If for a given $T>0$ and $\eps\in (0,\varepsilon_{0}]$, the inclusion \eqref{CondDistVorticiteSolide} is valid on $[0,T]$, then one has 
\begin{equation*} 
\| \check{v}^{\eps} - \tilde{v}^{\eps} \|_{L^{\infty}(0,T;L^{2}(\partial {\mathcal S}_0^{\eps}))} +
\eps^{-1/2}\| \check{v}^{\eps} - \tilde{v}^{\eps} \|_{L^{\infty}(0,T ; L^2({\mathcal F}_0^{\eps}))}
=  \mathcal{O}(\eps^{5/2}),
\end{equation*}
where $\tilde{v}^{\eps}$ and $\check{v}^{\eps}$ are defined in \eqref{allo} and \eqref{DefCheckV2}.
\end{Lemma}
To finish the proof of Proposition~\ref{Pr}, it remains to estimate
\begin{equation*}
v^{\eps}_{\#} + r^{\eps} \nabla \Phi^{\eps}_{3} - \check{v}^{\varepsilon}
= K_{\R^{2}}[\omega^{\eps}](t,0) +  D K_{\R^{2}}[\omega^{\eps}](t,0) \cdot x - K_{\R^{2}}[\omega^{\eps}](t,x)
\end{equation*}
on $\partial {\mathcal S}_{0}^{\varepsilon}$. Using again the harmonicity of $K_{\R^{2}}[\omega^{\eps}]$ in $B(0,1/\rho)$, we infer easily that
\begin{eqnarray*}
\| v^{\eps}_{\#} + r^{\eps} \nabla \Phi^{\eps}_{3} - \check{v}^{\varepsilon} \|_{L^{\infty}(0,T;L^{\infty}(\partial {\mathcal S}^{\varepsilon}_{0}))}
& \leq & C \varepsilon^{2} \| \nabla^{2} K_{\R^{2}}[\omega^{\eps}] \|_{L^{\infty}({\mathcal S}^{\varepsilon}_{0})} \\
& \leq & C \varepsilon^{2} \| K_{\R^{2}}[\omega^{\eps}] \|_{L^{\infty}(B(0,1/\rho))} ,
\end{eqnarray*}
for $\varepsilon\leq \varepsilon_{0}$. With \eqref{loglip2}, integrating over $\partial {\mathcal S}_{0}^{\varepsilon}$, we reach the conclusion.
\end{proof} 
%
%
%
%
%
%
%
%
%
%
\section{Normal form. Proof of Proposition~\ref{Pro:NormalForm}}
\label{Sec:NormalForm}
The goal of this section is to establish Proposition~\ref{Pro:NormalForm}. \par
The following notations will be used in this section: $|\mathcal{S}^\eps_{0} |$ is the Lebesgue measure of $\mathcal{S}^\eps_{0}$,
$x_{G}^\eps$ is the position of the geometrical center of $\mathcal{S}^\eps_{0}$ (which can be different of the center of mass $0$ if the solid is not homogenous):
\begin{equation} \label{DefXg}
x_{G}^\eps := \frac{1}{|{\mathcal S}_{0}^{\varepsilon}|} \int_{{\mathcal S}_{0}^{\varepsilon}} x \, dx=\varepsilon x_{G} .
\end{equation}
The following formula for the vector product will be useful later on in some computations:
\begin{equation} \label{vprod}
\forall  p_a := (\ell_a, \omega_a), \ \forall p_b := ({\ell}_b, \omega_b) \ \text{ in  } \  \R^{2}  \times  \R , \quad 
p_a \times  {p}_b = (  \, \omega_a  \;  {\ell}^{\perp}_b -    {\omega}_b \;   \ell^{\perp}_a ,  \ell^{\perp}_a \cdot {\ell}_b) .
\end{equation}
We will frequently use the complex variable and the correspondence between $\R^{2}$ and $\C$ as described at the beginning of Section~\ref{Sec:Material}. The proofs of many technical lemmas of complex analysis used in this section are given in Appendix~\ref{complex ana}. \par
\subsection{Decomposition of the pressure}
We first reformulate the main solid equations \eqref{Solide11}-\eqref{Solide12}. Recall that $\tilde{v}^{\varepsilon}$ and ${\mathcal M}^{\varepsilon}$ were introduced in \eqref{allo} and \eqref{InertieMatrix} respectively.
\begin{Lemma} \label{ReformulationEqSolide}
Equations \eqref{Solide11}-\eqref{Solide12} can be rewritten in the form
\begin{equation} \label{EqSolideBis}
{\mathcal M}^{\varepsilon} \begin{pmatrix} \ell^{\varepsilon} \\ r^{\varepsilon }\end{pmatrix}'
= - (B_i^\eps)_{i=1,2,3} - (C_i^\eps)_{i=1,2,3} - \begin{pmatrix} m^\varepsilon r^{\varepsilon} (\ell^{\varepsilon})^{\perp} \\ 0\end{pmatrix},
\end{equation}
where for $i=1,2,3$, 
\begin{equation}
\label{DefBi}
B_i^\eps := \int_{\mathcal{F}^\eps_{0}} \omega^{\eps}[ v^{\eps}-\ell^{\eps}-r^{\eps} x^\perp ]^\perp \cdot \nabla \Phi^\eps_i (x) \, dx, 
\end{equation}
and 
\begin{equation*}
C_i^\eps := C_{i,a}^{\varepsilon} + C_{i,b}^{\varepsilon} + C_{i,c}^{\varepsilon}, 
\end{equation*}
with
\begin{eqnarray}
\label{Cia}
C_{i,a}^\eps &:=& \frac{1}{2} \int_{\partial  \mathcal{S}^\eps_{0}  } |\tilde{v}^{\eps}|^2  K_i \, ds
- \int_{\partial  \mathcal{S}^\eps_{0}  } (\ell^{\eps} + r^{\eps} x^\perp)\cdot \tilde{v}^{\eps}  K_i \, ds, \\ 
\label{Cib}
C_{i,b}^\eps &:=& \gamma  \int_{\partial  \mathcal{S}^\eps_{0}  } (\tilde{v}^{\eps} - (\ell^{\eps} + r^{\eps} x^\perp))\cdot  H^{\eps}  K_i \, ds, \\
\label{Cic}
C_{i,c}^\eps &:=& \frac{\gamma^2}{2}   \int_{\partial  \mathcal{S}^\eps_{0} } |H^{\eps}|^2  K_i \, ds.
\end{eqnarray}
\end{Lemma}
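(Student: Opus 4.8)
The plan is to compute the pressure force and torque on the right-hand sides of \eqref{Solide11}--\eqref{Solide12} by means of the Kirchhoff potentials $\Phi^{\eps}_i$, and then to move the part of the pressure proportional to the solid acceleration across to the left-hand side, so that the genuine inertia matrix $\mathcal{M}^{\varepsilon}_g$ gets completed into the total mass matrix $\mathcal{M}^{\varepsilon}=\mathcal{M}^{\varepsilon}_g+\mathcal{M}^{\eps}_a$ of \eqref{InertieMatrix}. First I would note that, with the notation \eqref{DefK1-3}, the three scalar quantities appearing in \eqref{Solide11}--\eqref{Solide12} are exactly $\int_{\partial\mathcal{S}^{\eps}_0}\tilde{\pi}^\varepsilon K_i\,ds$ for $i=1,2,3$. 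Since $\partial_n\Phi^{\eps}_i=K_i$ on $\partial\mathcal{S}^{\eps}_0$, I would convert each of these into a volume integral, using Green's identity together with $\Delta\Phi^{\eps}_i=0$ and the decay \eqref{ComportementPhii}, to get
\[
\int_{\partial\mathcal{S}^{\eps}_0}\tilde{\pi}^\varepsilon K_i\,ds=\int_{\mathcal{F}^{\eps}_0}\nabla\tilde{\pi}^\varepsilon\cdot\nabla\Phi^{\eps}_i\,dx .
\]

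Next I would substitute the momentum equation \eqref{Euler11} for $\nabla\tilde{\pi}^\varepsilon$. Setting $a:=v^{\eps}-\ell^{\eps}-r^{\eps}x^\perp$, which is divergence-free and, by \eqref{Euler13}, tangent to $\partial\mathcal{S}^{\eps}_0$, the key algebraic step is the planar identity
\[
\big[(a\cdot\nabla)v^{\eps}\big]+r^{\eps}(v^{\eps})^\perp=\nabla\Big(\tfrac12|v^{\eps}|^2-(\ell^{\eps}+r^{\eps}x^\perp)\cdot v^{\eps}\Big)+\omega^{\eps}a^\perp ,
\]
which I would derive from $(v\cdot\nabla)v=\tfrac12\nabla|v|^2+(\curl v)v^\perp$ and the analogous expansion of $((\ell^{\eps}+r^{\eps}x^\perp)\cdot\nabla)v^{\eps}$, the rotational term $r^{\eps}(v^{\eps})^\perp$ being precisely what cancels the two $r^{\eps}(v^{\eps})^\perp$ contributions. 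Using $\nabla\tilde{\pi}^\varepsilon=-\partial_t v^{\eps}-\big[(a\cdot\nabla)v^{\eps}\big]-r^{\eps}(v^{\eps})^\perp$ and testing against $\nabla\Phi^{\eps}_i$, the vorticity term gives directly $-B^{\eps}_i$ as in \eqref{DefBi}, while the gradient (Bernoulli) term integrates by parts back to the boundary; inserting $v^{\eps}=\tilde{v}^{\eps}+\gamma H^{\eps}$ from \eqref{allo} and expanding the square then reproduces exactly the three families $C^{\eps}_{i,a},C^{\eps}_{i,b},C^{\eps}_{i,c}$ of \eqref{Cia}--\eqref{Cic}, i.e. $-C^{\eps}_i$.

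It then remains to treat $-\int_{\mathcal{F}^{\eps}_0}\partial_t v^{\eps}\cdot\nabla\Phi^{\eps}_i\,dx$. Here I would use the decomposition \eqref{vitedechydro} and differentiate in time: as $H^{\eps}$, $\gamma$ and the potentials $\Phi^{\eps}_j$ are time-independent, only $(\ell^{\eps})'$, $(r^{\eps})'$ and $\partial_t K^{\eps}_H[\omega^{\eps}]$ remain. The potential part of $\int_{\mathcal{F}^{\eps}_0}\partial_t v^{\eps}\cdot\nabla\Phi^{\eps}_i\,dx$ equals $[\mathcal{M}^{\eps}_a(p^{\eps})']_i$ by \eqref{DefMasse}--\eqref{AddedMass}, and transferring the resulting $-\mathcal{M}^{\eps}_a(p^{\eps})'$ to the left-hand side turns $\mathcal{M}^{\varepsilon}_g$ into $\mathcal{M}^{\varepsilon}$. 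The crucial observation is that the remaining term vanishes: $K^{\eps}_H[\omega^{\eps}]$ is divergence-free and tangent to $\partial\mathcal{S}^{\eps}_0$, hence so is $\partial_t K^{\eps}_H[\omega^{\eps}]=K^{\eps}_H[\partial_t\omega^{\eps}]$ (the domain being frozen in the body frame), so it is $L^2$-orthogonal to every gradient $\nabla\Phi^{\eps}_i$ and $\int_{\mathcal{F}^{\eps}_0}\partial_t K^{\eps}_H[\omega^{\eps}]\cdot\nabla\Phi^{\eps}_i\,dx=0$. Collecting all contributions, together with the genuine inertia term $-(m^\varepsilon r^{\varepsilon}(\ell^{\varepsilon})^{\perp},0)$ already present in \eqref{Solide11}, yields \eqref{EqSolideBis}.

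The main obstacle is not the algebra but the rigorous justification of the integrations by parts, since $v^{\eps}$ is only log-Lipschitz and $\nabla p^{\eps}$ merely $L^q$, and since the circulation part $\gamma H^{\eps}$ decays only like $1/|x|$ at infinity. I would control the boundary-at-infinity contributions using the decay rates recalled for $H^{\eps}$, $\Phi^{\eps}_i$ and $K^{\eps}[\omega^{\eps}]$ in Subsection~\ref{Sec:GreensFunction}, arguing on the truncated domain $\mathcal{F}^{\eps}_0\cap B(0,R)$ and letting $R\to\infty$, and I would invoke the regularity furnished by Theorem~\ref{ThmYudo} (with a standard mollification if needed) to legitimate the pointwise use of \eqref{Euler11} and of the identity for $(a\cdot\nabla)v^{\eps}$. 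The identification of $\partial_t K^{\eps}_H[\omega^{\eps}]$ as again divergence-free and tangent is the conceptual heart of the argument, since it is precisely what eliminates the only term not already of the form $B^{\eps}_i$ or $C^{\eps}_i$.
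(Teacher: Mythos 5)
Your proposal is correct and follows essentially the same route as the paper: the pointwise rewriting of \eqref{Euler11} via the identity \eqref{vect} into $\partial_t v^\eps + \omega^\eps[v^\eps-\ell^\eps-r^\eps x^\perp]^\perp + \nabla(\mathcal{Q}^\eps+\tilde{\pi}^\eps)=0$, the testing against $\nabla\Phi_i^\eps$, and the resulting split into $B_i^\eps$, $C_i^\eps$ and the added-mass contribution are exactly the paper's steps. The only (harmless) variation is your treatment of $\int_{\mathcal{F}_0^\eps}\partial_t v^\eps\cdot\nabla\Phi_i^\eps\,dx$: you use the decomposition \eqref{vitedechydro} together with the $L^2$-orthogonality of the divergence-free, tangent field $K_H^\eps[\partial_t\omega^\eps]$ to the gradients $\nabla\Phi_i^\eps$, whereas the paper applies Green's formula and differentiates the boundary condition $v^\eps\cdot n=(\ell^\eps+r^\eps x^\perp)\cdot n$ in time --- two phrasings of the same computation.
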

\begin{proof}
The proof, which we reproduce for the sake of self-containedness, is mainly the same as in \cite{GLS} (though the decomposition is a bit different here). Using the following equality for two vector fields $a$ and $b$ in a domain of the plane:
\begin{equation} \label{vect}
\nabla(a\cdot b)=a\cdot \nabla b + b \cdot \nabla a - (a^\perp \curl b + b^\perp \curl a), 
\end{equation}
the equation \eqref{Euler11} can be written as
\begin{equation*}
\frac{\partial v^{\eps}}{\partial t}+
[ v^{\eps}-\ell^{\eps}-r^{\eps} x^\perp ]^\perp \omega^{\eps}
+ \nabla \frac{1}{2} (v^{\eps})^2 
- \nabla ( (\ell^{\eps} + r^{\eps} x^\perp)\cdot v^{\eps} )
+ \nabla \tilde{\pi}^{\eps} =0 .
\end{equation*}
Plugging the decomposition \eqref{allo} into the previous equation, we find
\begin{gather*}
\frac{\partial v^{\eps}}{\partial t}+ [ v^{\eps}-\ell^{\eps}-r^{\eps} x^\perp ]^\perp \omega^{\eps}
+ \nabla ({\mathcal Q}^{\eps}  + \tilde{\pi}^{\eps} )= 0 , \\
{\mathcal Q}^{\eps} :=  \frac{1}{2} |\tilde{v}^{\eps}|^2  + \gamma  (\tilde{v}^{\eps} 
- (\ell^{\eps} + r^{\eps} x^\perp))\cdot  H^{\eps} + \frac{1}{2} \gamma^2 |H^{\eps} |^2 
- (\ell^{\eps} + r^{\eps} x^\perp)\cdot \tilde{v}^{\eps}  .
\end{gather*}
We use this equation do deduce the force/torque acting on the body:
\begin{equation*}
\left( \int_{ \partial \mathcal{S}^\eps_0} \tilde{\pi}^\eps n \, ds, \
\int_{ \partial \mathcal{S}^\eps_0} \tilde{\pi}^\eps x^{\perp} \cdot n \,ds \right) = 
\left( \int_{ \mathcal{F}^\eps_0} \nabla \tilde{\pi}^\eps \cdot \nabla \Phi_{i}^{\varepsilon} \, dx \right)_{i=1,2,3} .
\end{equation*}
One can check that the above integration by parts is licit by using the compact support of $\omega$ and the decay properties of $\tilde{v}^{\varepsilon}$, $H^{\varepsilon}$ and $\nabla \Phi^{\varepsilon}_{i}$ (see \cite{GLS} for more details). Using Green's formula and the boundary condition, the contribution of $\frac{\partial v^{\eps}}{\partial t}$ is
\begin{equation*}
\Big(\int_{  \mathcal{F}^\eps_{0}  }  \partial_{t}  v^\eps  \cdot  \nabla   \Phi^\eps_i (x) \, dx\Big)_{i=1,2,3} = {\mathcal M}_{a}^{\varepsilon} \begin{pmatrix} \ell^{\varepsilon} \\ r^{\varepsilon }\end{pmatrix}', 
\end{equation*}
and one obtains the result.
\end{proof}
\ \par
In the next subsections, we expand the various terms $B^{\varepsilon}_{i}$ and $C^{\varepsilon}_{i}$ up to order $2$ (respectively $3$) in $\varepsilon$ for $i=1,2$ (resp. $3$). Then in Subsection~\ref{Subsec:Regroupement}, we regroup these terms and get to the normal form. We will take advantages of {\it cancellations} when performing this merging. \par
%
%
%
%
%
%
%
%
\subsection{Expansion of $B_i^\eps$}
We begin by developing $B_i^\eps$ under the assumptions of Proposition~\ref{Pro:NormalForm}. \par
\begin{Proposition} \label{LimBi}
Let $\rho>1$ be fixed. There exists $C>0$ such that if for a given $T>0$ and an $\varepsilon \in (0,1]$, \eqref{CondDistVorticiteSolide} is satisfied for all $t \in [0,T]$, then one has:
\begin{equation*}
\begin{split}
\left\| 
\begin{pmatrix}B_{1}^{\varepsilon} \\B_{2}^{\varepsilon} \end{pmatrix}  \right.
& - r^\eps(\mathcal{M}_{\flat}^\eps + |\mathcal{S}^\eps_{0}| {\rm I}_{2}) (K_{\R^{2}}[\omega^{\eps}](t,0) )^\perp 
\\
&
+ \ell_{1}^\eps 
\begin{pmatrix} 
-(m_{1,1}^{\eps}+ |\mathcal{S}^\eps_{0} |) a^\eps +m_{1,2}^{\eps} b^\eps\\
 -m_{2,1}^{\eps} a^\eps +(m_{2,2}^{\eps}+ |\mathcal{S}^\eps_{0} |)  b^\eps
 \end{pmatrix} 
+ \ell_{2}^\eps 
\begin{pmatrix} 
 m_{1,2}^{\eps} a^\eps +(m_{1,1}^{\eps}+ |\mathcal{S}^\eps_{0} |)  b^\eps \\
(m_{2,2}^{\eps}+ |\mathcal{S}^\eps_{0} |) a^\eps +m_{2,1}^{\eps} b^\eps
 \end{pmatrix}
  \\
&\left.
+ 2r^\eps a^\eps 
\begin{pmatrix} 
m_{1,5}^{\eps}+ |\mathcal{S}^\eps_{0} | x_{G,2}^\eps  \\
m_{2,5}^{\eps}+ |\mathcal{S}^\eps_{0} | x_{G,1}^\eps
 \end{pmatrix} 
+ 2r^\eps b^\eps 
\begin{pmatrix} 
-m_{1,4}^{\eps}+ |\mathcal{S}^\eps_{0} | x_{G,1}^\eps  \\
-m_{2,4}^{\eps}- |\mathcal{S}^\eps_{0} | x_{G,2}^\eps
 \end{pmatrix}
\right\|_{L^\infty(0,T)}
\leq C \varepsilon^{2} 
\end{split}
\end{equation*}
and
\begin{equation*} 
\begin{split}
\Bigg\| B_{3}^{\varepsilon}  - r^\eps \Big(\begin{pmatrix} m_{3,2}^{\eps} \\-m_{3,1}^{\eps} \end{pmatrix}  
 +& |\mathcal{S}^\eps_{0} |x_{G}^\eps\Big)  \cdot K_{\R^{2}}[\omega^{\eps}](t,0)\\
&+ \ell_{1}^\eps 
\Big( (-m_{3,1}^{\eps}+ |\mathcal{S}^\eps_{0} |x_{G,2}^\eps  ) a^\eps  +(m_{3,2}^{\eps}+ |\mathcal{S}^\eps_{0} |x_{G,1}^\eps )  b^\eps\Big)\\
&+ \ell_{2}^\eps 
\Big( (m_{3,2}^{\eps}+ |\mathcal{S}^\eps_{0} |x_{G,1}^\eps  ) a^\eps  +(m_{3,1}^{\eps}- |\mathcal{S}^\eps_{0} |x_{G,2}^\eps )  b^\eps\Big)\\
&+ 2r^\eps a^\eps (m_{3,5}^{\eps}+m_{6}^{\eps})
- 2r^\eps b^\eps (m_{3,4}^{\eps}+m_{7}^{\eps})
 \Bigg\|_{L^\infty(0,T)}  
\leq C \varepsilon^{3},
 \end{split}
\end{equation*}
where
\begin{itemize}
\item $m_{i,j}^{\eps}$ is defined in \eqref{DefMasse},
\item $\mathcal{M}_{\flat}^\eps$ is defined in \eqref{DefMbemol} as the $2\times2$ restriction of $\mathcal{M}_{a}^\eps$,
\item $a^{\varepsilon}$ and $b^{\varepsilon}$ are defined in \eqref{a-et-b} as coefficients in $D K_{\R^{2}}[\omega^{\eps}](t,0)$,
\item $m_6^{\eps}=\int_{ \mathcal{S}^\eps_{0}} (x_1^2-x_{2}^2 ) \, dx= \eps^4 m_{6}^1$ and $m_{7}^{\eps}=2\int_{ \mathcal{S}^\eps_{0}} x_{1}x_2  \, dx= \eps^4 m_{7}^1$.
\end{itemize}
\end{Proposition}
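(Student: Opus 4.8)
The plan is to insert the Biot--Savart decomposition \eqref{vdecomp} of $v^\eps$ into the definition \eqref{DefBi} of $B_i^\eps$, so that the relative velocity $v^\eps-\ell^\eps-r^\eps x^\perp$ in the integrand becomes an explicit sum of the fields $K^\eps[\omega^\eps]$, $(\gamma+\beta^\eps)H^\eps$, $\ell_1^\eps\nabla\Phi_1^\eps+\ell_2^\eps\nabla\Phi_2^\eps+r^\eps\nabla\Phi_3^\eps$, $-\ell^\eps$ and $-r^\eps x^\perp$. Since \eqref{CondDistVorticiteSolide} confines the whole integral to the fixed annulus $B(0,\rho)\setminus B(0,1/\rho)$, the first task is a size count there. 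Using $|\eps\ell^\eps|+|\eps^2 r^\eps|\le C$ (Lemma~\ref{LemEstNRJ2}) together with Lemmas~\ref{LemHomega}, \ref{LemPhiomega}, \ref{LeminfVomega}, each factor acquires an $\eps$-order: $K^\eps[\omega^\eps]$ and $H^\eps$ are $O(1)$, while $\nabla\Phi_1^\eps,\nabla\Phi_2^\eps=O(\eps^2)$ and $\nabla\Phi_3^\eps=O(\eps^3)$, and the rigid part gives $-\ell^\eps=O(\eps^{-1})$, $-r^\eps x^\perp=O(\eps^{-2})$. For $i=1,2$, where the target error is $O(\eps^2)$ and the test factor $\nabla\Phi_i^\eps$ is $O(\eps^2)$, only the $O(\eps^{-2})$ and $O(\eps^{-1})$ pieces of the relative velocity, namely $-r^\eps x^\perp$ and $-\ell^\eps$, survive outside the remainder; the genuinely fluid pieces $K^\eps[\omega^\eps]$ and $H^\eps$, being $O(1)$, are absorbed into the $O(\eps^2)$ error. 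For $i=3$ the bookkeeping is identical with one extra power of $\eps$ and an error $O(\eps^3)$.

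Next I would expand the two surviving fields on the annulus. By \eqref{ScalingH}, \eqref{phi-scaling}, \eqref{phi-scaling2} one has $\nabla\Phi_i^\eps(x)=\nabla\Phi_i^1(x/\eps)$ for $i=1,2$ and $\nabla\Phi_3^\eps(x)=\eps\,\nabla\Phi_3^1(x/\eps)$, so on a fixed annulus $x/\eps$ is large and the decay \eqref{ComportementPhii} upgrades to a full Laurent expansion at infinity of the holomorphic functions $\widehat{\nabla\Phi_i^1}$. The leading Laurent coefficients are geometric and, through the complex-analytic identities of the Appendix, are exactly the combinations of the added masses $m_{i,j}^\eps$ of \eqref{DefMasse}, the area $|\mathcal{S}_0^\eps|$, the geometric centre $x_G^\eps$ of \eqref{DefXg}, and the moments $m_6^\eps,m_7^\eps$ of the statement. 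After substitution the two dominant contributions are $r^\eps\int\omega^\eps\,x\cdot\nabla\Phi_i^\eps\,dx$ (from $[-r^\eps x^\perp]^\perp=r^\eps x$) and $-\int\omega^\eps(\ell^\eps)^\perp\cdot\nabla\Phi_i^\eps\,dx$. The crux is that the angular profiles of these leading (and, paired with $-r^\eps x^\perp$, subleading) dipole fields match the kernels of $K_{\R^2}[\,\cdot\,](t,0)$ and of $DK_{\R^2}[\,\cdot\,](t,0)$, so each spatial integral against $\omega^\eps$ collapses to $K_{\R^2}[\omega^\eps](t,0)$ or to the coefficients $a^\eps,b^\eps$ of \eqref{a-et-b}.

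It then remains to read off the coefficients. I expect the pairing of $-r^\eps x^\perp$ with the leading dipole of $\nabla\Phi_i^\eps$ to produce the $O(1)$ head term $r^\eps(\mathcal{M}_{\flat}^\eps+|\mathcal{S}_0^\eps|{\rm I}_2)(K_{\R^2}[\omega^\eps](t,0))^\perp$, the classical ``added-mass plus displaced-area'' polarisation of a Kirchhoff dipole; the pairing of $-\ell^\eps$ with the same leading dipole to yield the $O(\eps)$ terms linear in $\ell_1^\eps,\ell_2^\eps$ and in $a^\eps,b^\eps$; and the pairing of $-r^\eps x^\perp$ with the subleading dipole to give the remaining $O(\eps)$ terms $2r^\eps a^\eps(\cdots)+2r^\eps b^\eps(\cdots)$ involving $m_{i,4}^\eps,m_{i,5}^\eps,x_G^\eps$ (and $m_6^\eps,m_7^\eps$ for $i=3$). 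For $B_3^\eps$ one carries the expansion one order further, which explains both the error $O(\eps^3)$ and the appearance of the extra moments. The errors are controlled by multiplying the $L^\infty$ Laurent remainders on the annulus by the uniform $L^1\cap L^\infty$ bound on $\omega^\eps$ (from \eqref{ConsOmega} and \eqref{loglip2}) and by the uniform bounds on $K_{\R^2}[\omega^\eps](t,0)$, $a^\eps$, $b^\eps$ (Lemma~\ref{LemModulesBornes}).

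The main obstacle is the exact computation of the leading and subleading Laurent coefficients of $\widehat{\nabla\Phi_i^1}$ and their identification with the prescribed combinations of $m_{i,j}^\eps$, $|\mathcal{S}_0^\eps|$, $x_G^\eps$, $m_6^\eps$, $m_7^\eps$: this is a purely complex-analytic, geometry-dependent computation, resting on the boundary representations \eqref{DefXi}, \eqref{DefEta} and on the reciprocity relations between the potentials $\Phi_1,\dots,\Phi_5$, and it is the source of the delicate cancellations that make the vector structure of the statement (the various $\perp$'s and the precise matrices) come out correctly. By comparison, the order counting and the remainder estimates are routine once the a priori bounds recalled above are in hand.
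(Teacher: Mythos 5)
Your proposal is correct and takes essentially the same route as the paper: the paper likewise splits the integrand of \eqref{DefBi} into the pieces paired with $v^{\varepsilon}$, $-\ell^{\varepsilon}$ and $-r^{\varepsilon}x^{\perp}$ (bounding the $v^{\varepsilon}$-piece wholesale by Lemma~\ref{LeminfVomega} instead of term-by-term through \eqref{vdecomp}, an equivalent bookkeeping), then expands $\nabla \Phi_{i}^{\varepsilon}$ in Laurent series on the annulus with coefficients computed by the complex-analytic Lemmas~\ref{LemmeCircu}, \ref{lemzphi} and \ref{lemz2phi}, and identifies the resulting angular moments of $\omega^{\varepsilon}$ with $K_{\R^{2}}[\omega^{\varepsilon}](t,0)$ and the coefficients $a^{\varepsilon},b^{\varepsilon}$ of \eqref{a-et-b}, exactly as you outline. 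The only slip is descriptive: for $B_{3}^{\varepsilon}$ the paper carries the Laurent expansion to the same order $1/|x|^{3}$ as for $i=1,2$ rather than ``one order further,'' the extra power of $\varepsilon$ in the error coming from the scaling $\Phi_{3}^{\varepsilon}(x)=\varepsilon^{2}\Phi_{3}^{1}(x/\varepsilon)$ of \eqref{phi-scaling} — which matches your earlier, correct remark that the $i=3$ bookkeeping is identical with one extra power of $\varepsilon$.
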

\begin{proof}
We decompose $B_i^\varepsilon$ as follows:
\begin{equation*}\begin{split}
B_{i}^\eps &= \int_{  \mathcal{F}^\eps_{0}  } \omega^{\eps} (v^{\eps})^\perp \cdot  \nabla   \Phi^\eps_i (x) \, dx -\int_{  \mathcal{F}^\eps_{0}  } \omega^{\eps}( \ell^{\eps} )^\perp \cdot  \nabla   \Phi^\eps_i (x) \, dx + \int_{  \mathcal{F}^\eps_{0}  } \omega^{\eps} r^\eps x \cdot  \nabla   \Phi^\eps_i (x) \, dx\\
&=: B_{a,i}^\eps + B_{b,i}^\eps + B_{c,i}^\eps.
\end{split}\end{equation*}
\ \par
\noindent
$\bullet$
According to \eqref{ConsOmega}, Lemmas~\ref{LemPhiomega} and \ref{LeminfVomega}, we estimate the first term for any $t\in [0,T]$:
\begin{eqnarray*}
|B_{a,i}^\eps(t)|&\leq&  \| \omega^{\eps}\|_{L^{1}(\mathcal{F}^{\eps}_{0})}  \| v^{\eps} \|_{L^{\infty}(\supp \omega^\varepsilon(t)) }
\|  \nabla   \Phi^\eps_i  \|_{L^{\infty}(\supp \omega^\varepsilon(t)) } \\
& \leq & C\eps^{2+ \delta_{3,i}}\| w_{0} \|_{ L^{1}(\R^{2})}.
\end{eqnarray*}
\ \par
\noindent
$\bullet$
Concerning the computation of $B_{b,i}^\eps$, we use that the property \eqref{CondDistVorticiteSolide} is valid during $[0,T]$, that is, the support of $\omega^\eps$ is included in $B(0,\rho)\setminus B(0,1/\rho)$ for any $t\in [0,T]$. Taking the scaling law \eqref{phi-scaling} into account, we are naturally led to study the asymptotic behavior of $\Phi_{i}^1(z)$ as $z \rightarrow \infty$.
Thanks to Lemmas~\ref{LemmeCircu}, \ref{lemzphi} and \ref{lemz2phi}, we can write the first terms in the Laurent series:
\begin{equation*}
\widehat{\nabla \Phi_{1}^1}(z)  = \frac{-m_{1,2}^{1}+i(m_{1,1}^{1}+|\mathcal{S}_{0} |)}{2i\pi z^2}
+ \frac{-2(m_{1,5}	 +  |\mathcal{S}_{0} |x_{G,2} )+2i(-m_{1,4}^{1} +  |\mathcal{S}_{0} |x_{G,1} )}{2i\pi z^3} +\mathcal{O}\Big(\frac1{z^4}\Big),
\end{equation*}
\begin{equation*}
\widehat{\nabla \Phi_{2}^1}(z)  = \frac{-(m_{2,2}^{1}+|\mathcal{S}_{0} |)+i m_{2,1}^{1}}{2i\pi z^2} 
+\frac{-2(m_{2,5}^{1} +  |\mathcal{S}_{0} |x_{G,1} )-2i(m_{2,4}^{1} +  |\mathcal{S}_{0} |x_{G,2} )}{2i\pi z^3}+\mathcal{O}\Big(\frac1{z^4}\Big),
\end{equation*}
\begin{equation*}
\widehat{\nabla \Phi_{3}^1}(z) =  \frac{-(m_{3,2}^{1}+|\mathcal{S}_{0} |x_{G,1}) + i (m_{3,1}^{1} - |\mathcal{S}_{0}| x_{G,2})}{2i\pi z^2}
+ \frac{-2(m_{3,5}^{1} +m_{6}^{1}  )-2i (m_{3,4}^{1} + m_{7}^{1} )}{2i\pi z^3} + \mathcal{O}\Big(\frac1{z^4}\Big).
\end{equation*}
Using the notation for polar coordinates
\begin{equation} \label{CoordPolaires}
y = \begin{pmatrix} y_{1} \\ y_{2} \end{pmatrix} = |y|  \begin{pmatrix} \cos \theta \\ \sin \theta \end{pmatrix}  \ \text{ and } \ 
z=y_{1}+iy_{2}= |y|(\cos \theta +i \sin \theta),
\end{equation} 
we note that 
\begin{equation*}
\dfrac1{z^k}=\dfrac1{|y|^k}\widehat{\begin{pmatrix}\cos k\theta \\ \sin k\theta\end{pmatrix}} \ \text{ and } \ \dfrac{i}{z^k}=\dfrac1{|y|^k}\widehat{\begin{pmatrix}\sin k\theta \\ -\cos k\theta\end{pmatrix}}. 
\end{equation*}
Hence, we use this expansion with $y=x/\eps$ together with the scaling laws of $\Phi_{i}^\eps$ (see \eqref{phi-scaling}) and of $m_{i,j}^{\eps}$ to obtain:
\begin{eqnarray*}
\nabla \Phi_{1}^\eps(x)  &=&
\frac{m_{1,2}^{\eps}}{2\pi |x|^2}{\begin{pmatrix}\sin 2\theta \\ -\cos 2\theta\end{pmatrix}}
+\frac{m_{1,1}^{\eps}+|\mathcal{S}_{0}^\eps |}{2\pi |x|^2}{\begin{pmatrix}\cos 2\theta \\ \sin 2\theta\end{pmatrix}}\\
&&+\frac{2(m_{1,5}^{\eps} +  |\mathcal{S}_{0}^\eps |x_{G,2}^\eps )}{2\pi |x|^3}{\begin{pmatrix}\sin 3\theta \\ -\cos 3\theta\end{pmatrix}}
+\frac{2(-m_{1,4}^{\eps} +  |\mathcal{S}_{0}^\eps |x_{G,1}^\eps )}{2\pi |x|^3}{\begin{pmatrix}\cos 3\theta \\ \sin 3\theta\end{pmatrix}}
 +\mathcal{O}\Big(\frac{\eps^4}{|x|^4}\Big),
 \end{eqnarray*}
\begin{eqnarray*}
\nabla \Phi_{2}^\eps(x)   &=&
\frac{m_{2,2}^{\eps}+|\mathcal{S}_{0}^\eps |}{2\pi |x|^2}{\begin{pmatrix}\sin 2\theta \\ -\cos 2\theta\end{pmatrix}}
+ \frac{m_{2,1}^{\eps}}{2\pi |x|^2}{\begin{pmatrix}\cos 2\theta \\ \sin 2\theta\end{pmatrix}}\\
&&+ \frac{2(m_{2,5}^{\eps} +  |\mathcal{S}_{0}^\eps |x_{G,1}^\eps )}{2\pi |x|^3}{\begin{pmatrix}\sin 3\theta \\ -\cos 3\theta\end{pmatrix}}
-\frac{2(m_{2,4}^{\eps} +  |\mathcal{S}_{0}^\eps |x_{G,2}^\eps )}{2\pi |x|^3}{\begin{pmatrix}\cos 3\theta \\ \sin 3\theta\end{pmatrix}}
 +\mathcal{O}\Big(\frac{\eps^4}{|x|^4}\Big),
 \end{eqnarray*}
and
\begin{eqnarray*}
\nabla \Phi_{3}^\eps(x) &=&
\frac{m_{3,2}^{\eps}+|\mathcal{S}_{0}^\eps |x_{G,1}^\eps}{2\pi |x|^2}{\begin{pmatrix}\sin 2\theta \\ -\cos 2\theta\end{pmatrix}}
+\frac{m_{3,1}^{\eps}-|\mathcal{S}_{0}^\eps |x_{G,2}^\eps}{2\pi |x|^2}{\begin{pmatrix}\cos 2\theta \\ \sin 2\theta\end{pmatrix}}\\
&&+\frac{2(m_{3,5}^{\eps} +m_{6}^{\eps}  )}{2\pi |x|^3}{\begin{pmatrix}\sin 3\theta \\ -\cos 3\theta\end{pmatrix}}
-\frac{2(m_{3,4}^{\eps} +m_{7}^{\eps} ) }{2\pi |x|^3}{\begin{pmatrix}\cos 3\theta \\ \sin 3\theta\end{pmatrix}}
 +\mathcal{O}\Big(\frac{\eps^5}{|x|^4}\Big).
\end{eqnarray*}
For the sake of simplicity of notations, we will denote during this proof
\begin{equation} \label{frakK}
(\mathfrak{K}_{i})_{i=1,2}=K_{\R^2}[\omega^\varepsilon](t,0) =\frac1{2\pi} \int_{\R^2} \frac{-y^\perp}{|y|^2}\omega^\varepsilon(t,y)\, dy.
\end{equation}
Hence we identify the components of $K_{\R^2}[\omega^\varepsilon](t,0)$ as follows:
\begin{equation*}
\mathfrak{K}_{1}=\frac1{2\pi} \int_{\R^2} \frac{\sin \theta}{|y|}\omega^\varepsilon(t,y)\, dy, \quad 
\mathfrak{K}_{2}=\frac1{2\pi} \int_{\R^2} \frac{-\cos \theta}{|y|}\omega^\varepsilon(t,y)\, dy,
\end{equation*}
where we used again the notation \eqref{CoordPolaires} for $y$. With $a^\eps$ and $b^\eps$ defined in \eqref{a-et-b}, we have
\begin{equation*}
a^\varepsilon = \partial_{2} K_{\R^2}[\omega^\varepsilon]_{2}(t,0)=-\frac1{2\pi} \int_{\R^2} \frac{2y_{1}y_{2}}{|y|^4}\omega^\varepsilon(t,y)\, dy=-\frac1{2\pi} \int_{\R^2} \frac{\sin 2\theta}{|y|^2}\omega^\varepsilon(t,y)\, dy
\end{equation*}
and
\begin{equation*}
b^\varepsilon = \partial_{1} K_{\R^2}[\omega^\varepsilon]_{2}(t,0)=\frac1{2\pi} \int_{\R^2} \frac{|y|^2-2y_{1}^2}{|y|^4}\omega^\varepsilon(t,y)\, dy=-\frac1{2\pi} \int_{\R^2} \frac{\cos 2\theta}{|y|^2}\omega^\varepsilon(t,y)\, dy.
\end{equation*}
\ \par
Now we use the expansion of $\nabla \Phi_{1}^\eps(x)$ at order one to compute $B_{b,1}^\eps$:
\begin{align*} 
B_{b,1}^\eps(t) &= - \int_{  \mathcal{F}^\eps_{0}  } \omega^{\eps}( \ell^{\eps} )^\perp \cdot  \nabla   \Phi^\eps_i (x) \, dx
= \int_{  \mathcal{F}^\eps_{0}  } \omega^{\eps} \begin{pmatrix} \ell_{2}^\eps\\ - \ell_{1}^\eps \end{pmatrix}
 \cdot    \Big[ \frac{m_{1,2}^{\eps}}{2\pi |x|^2}{\begin{pmatrix}\sin 2\theta \\ -\cos 2\theta\end{pmatrix}}
+\frac{m_{1,1}^{\eps}+|\mathcal{S}_{0}^\eps |}{2\pi |x|^2}{\begin{pmatrix}\cos 2\theta \\ \sin 2\theta\end{pmatrix}} +\mathcal{O}\Big(\frac{\eps^3}{|x|^3}\Big) \Big] \, dx\\
&= \ell_{2}^\eps \Big(-m_{1,2}^{\eps} a^\eps - (m_{1,1}^{\eps}+|\mathcal{S}_{0}^\eps |) b^\eps\Big) 
+ \ell_{1}^\eps \Big(-m_{1,2}^{\eps} b^\eps + (m_{1,1}^{\eps}+|\mathcal{S}_{0}^\eps |) a^\eps\Big)+\tilde R_{b,1}(t)
\end{align*}
where 
\begin{equation*}
|\tilde R_{b,1}^\eps(t)| \leq \eps^3 \|\omega^\eps \|_{L^1} |\ell^\eps | C\rho^3\leq C \eps^2,
\end{equation*}
for all $t\in [0,T]$.
\par
In the same way, we obtain
\begin{equation*} \begin{split}
B_{b,2}^\eps(t) =&
\ell_{2}^\eps \Big(-(m_{2,2}^{\eps}+|\mathcal{S}_{0}^\eps |) a^\eps - m_{2,1}^{\eps} b^\eps\Big) 
+ \ell_{1}^\eps \Big(-(m_{2,2}^{\eps}+|\mathcal{S}_{0}^\eps |) b^\eps + m_{2,1}^{\eps} a^\eps\Big)+\tilde R_{b,2}(t)
\end{split} \end{equation*}
and
\begin{multline*} 
B_{b,3}^\eps(t) =
\ell_{2}^\eps \Big(-(m_{3,2}^{\eps}+|\mathcal{S}_{0}^\eps |x_{G,1}^\eps) a^\eps - (m_{3,1}^{\eps}-|\mathcal{S}_{0}^\eps |x_{G,2}^\eps) b^\eps\Big) \\ 
+ \ell_{1}^\eps \Big(-(m_{3,2}^{\eps}+|\mathcal{S}_{0}^\eps |x_{G,1}^\eps) b^\eps + (m_{3,1}^{\eps}-|\mathcal{S}_{0}^\eps |x_{G,2}^\eps) a^\eps\Big)
+\tilde R_{b,3}(t)
\end{multline*}
where 
\begin{equation*}
|\tilde R_{b,2}^\eps(t)| \leq C \eps^2 \ \text{ and } \
|\tilde R_{b,3}^\eps(t)| \leq C \eps^3 \ \text{ for all }  \ t \in [0,T].
\end{equation*}
\ \par
\noindent
$\bullet$
Concerning $B_{c,i}^\eps$, we use the second-order expansion of $\nabla \Phi_{i}^\eps$ to get:
\begin{equation*} \begin{split}
B_{c,1}^\eps(t) =& \int_{  \mathcal{F}^\eps_{0}  } \omega^{\eps} r^\eps x \cdot  \nabla   \Phi^\eps_1 (x) \, dx\\
=& \int_{  \mathcal{F}^\eps_{0}  } \omega^{\eps} r^\eps |x| \begin{pmatrix}\cos\theta \\ \sin\theta \end{pmatrix} \cdot
\Big[
\frac{m_{1,2}^{\eps}}{2\pi |x|^2}{\begin{pmatrix}\sin 2\theta \\ -\cos 2\theta\end{pmatrix}}
+\frac{m_{1,1}^{\eps}+|\mathcal{S}_{0}^\eps |}{2\pi |x|^2}{\begin{pmatrix}\cos 2\theta \\ \sin 2\theta\end{pmatrix}}
\Big]\, dx\\
&+ \int_{  \mathcal{F}^\eps_{0}  } \omega^{\eps} r^\eps |x| \begin{pmatrix}\cos\theta \\ \sin\theta \end{pmatrix} \cdot
\Big[
\frac{2(m_{1,5}^{\eps} +  |\mathcal{S}_{0}^\eps |x_{G,2}^\eps )}{2\pi |x|^3}{\begin{pmatrix}\sin 3\theta \\ -\cos 3\theta\end{pmatrix}}
+ \frac{2(-m_{1,4}^{\eps} +  |\mathcal{S}_{0}^\eps |x_{G,1}^\eps )}{2\pi |x|^3}{\begin{pmatrix}\cos 3\theta \\ \sin 3\theta\end{pmatrix}}
+ \mathcal{O}\Big(\frac{\eps^4}{|x|^4}\Big)
\Big]\, dx ,
\end{split} \end{equation*}
which is simplified as follows:
\begin{equation*}\begin{split}
B_{c,1}^\eps(t)
=& \int_{  \mathcal{F}^\eps_{0}  } \omega^{\eps} r^\eps 
\Big[
\frac{m_{1,2}^{\eps}}{2\pi |x|}\sin \theta
+\frac{m_{1,1}^{\eps}+|\mathcal{S}_{0}^\eps |}{2\pi |x|}\cos\theta
\Big]\, dx\\
&+ \int_{  \mathcal{F}^\eps_{0}  } \omega^{\eps} r^\eps 
\Big[
\frac{2(m_{1,5}^{\eps} +  |\mathcal{S}_{0}^\eps |x_{G,2}^\eps )}{2\pi |x|^2}\sin 2\theta
+\frac{2(-m_{1,4}^{\eps} +  |\mathcal{S}_{0}^\eps |x_{G,1}^\eps )}{2\pi |x|^2}\cos 2\theta
+\mathcal{O}\Big(\frac{\eps^4}{|x|^4}\Big)
\Big]\, dx\\
=& r^\eps 
\Big[
m_{1,2}^{\eps} \mathfrak{K}_{1}
-(m_{1,1}^{\eps}+|\mathcal{S}_{0}^\eps |) \mathfrak{K}_{2}\Big]\\
&+ 2 r^\eps 
\Big[
-(m_{1,5}^{\eps} +  |\mathcal{S}_{0}^\eps |x_{G,2}^\eps )a^\eps
-(-m_{1,4}^{\eps} +  |\mathcal{S}_{0}^\eps |x_{G,1}^\eps )b^\eps
\Big]
+\tilde R_{c,1}(t)
\end{split}\end{equation*}
where
\begin{equation*}
|\tilde R_{c,1}^\eps(t)| \leq \eps^4 \|\omega^\eps \|_{L^1} |r^\eps | C\rho^4 \leq C \eps^2,
\end{equation*}
for all $t\in [0,T]$. \par
In the same way, we obtain
\begin{equation*}
B_{c,2}^\eps(t) 
= r^\eps \Big[ (m_{2,2}^{\eps}+|\mathcal{S}_{0}^\eps |) \mathfrak{K}_{1} -m_{2,1}^{\eps} \mathfrak{K}_{2}\Big] 
+ 2 r^\eps  \Big[ -(m_{2,5}^{\eps} +  |\mathcal{S}_{0}^\eps |x_{G,1}^\eps )a^\eps +(m_{2,4}^{\eps} +  |\mathcal{S}_{0}^\eps |x_{G,2}^\eps )b^\eps \Big]
+\tilde R_{c,2}(t) 
\end{equation*}
and
\begin{equation*}
B_{c,3}^\eps(t) = r^\eps  \Big[ (m_{3,2}^{\eps}+|\mathcal{S}_{0}^\eps |x_{G,1}^\eps)\mathfrak{K}_{1} -(m_{3,1}^{\eps}-|\mathcal{S}_{0}^\eps |x_{G,2}^\eps) \mathfrak{K}_{2}\Big]\\
+ 2 r^\eps  \Big[ -(m_{3,5}^{\eps} +m_{6}^{\eps} )a^\eps +(m_{3,4}^{\eps} +m_{7}^{\eps} )b^\eps \Big]
+\tilde R_{c,3}(t)
\end{equation*}
where 
\begin{equation*}
|\tilde R_{c,2}^\eps(t)| \leq C \eps^2
\ \text{ and } \ 
|\tilde R_{c,3}^\eps(t)| \leq C \eps^3 \ \text{ for all } \ t\in [0,T]. 
\end{equation*}
Now we end the proof summing and noticing that
\begin{equation*}
\begin{pmatrix} 
m_{1,2}^{\eps} \mathfrak{K}_{1}
- (m_{1,1}^{\eps}+|\mathcal{S}_{0}^\eps |) \mathfrak{K}_{2} \\
(m_{2,2}^{\eps}+|\mathcal{S}_{0}^\eps |) \mathfrak{K}_{1}
- m_{2,1}^{\eps} \mathfrak{K}_{2} 
\end{pmatrix} 
=  (\mathcal{M}_{\flat}^\eps + |\mathcal{S}^\eps_{0} | {\rm I}_{2})(K_{\R^{2}}[\omega^{\eps}](t,0)  )^\perp 
\end{equation*}
and
\begin{equation*}
(m_{3,2}^{\eps}+|\mathcal{S}_{0}^\eps |x_{G,1}^\eps) \mathfrak{K}_{1}
- (m_{3,1}^{\eps}-|\mathcal{S}_{0}^\eps |x_{G,2}^\eps) \mathfrak{K}_{2}
= \Big(\begin{pmatrix} m_{3,2}^{\eps} \\ -m_{3,1}^{\eps} \end{pmatrix} 
+ |\mathcal{S}^\eps_{0} |x_{G}^\eps\Big) \cdot K_{\R^{2}}[\omega^{\eps}](t,0) .
\end{equation*}
\end{proof}
%
%
%
%
%
%
%
%
\subsection{Expansion of  $C_{i}^\eps$}
Concerning the term $C_{i}^\eps$, we use again complex analysis. \par
\ \par
\noindent
{\bf 1. The $C_{i,a}^\eps$ term.} We first tackle the terms $C_{i,a}^\eps$ (still under the assumptions of Proposition~\ref{Pro:NormalForm}). \par
\begin{Proposition} \label{PropCiad}
Let $\rho>1$ be fixed. There exist $C>0$ and $\varepsilon_{0}\in (0,1]$ such that if for a given $T>0$ and an $\varepsilon \in (0,\varepsilon_{0}]$, \eqref{CondDistVorticiteSolide} is satisfied for all $t \in [0,T]$, then one has:
\begin{equation*} 
\begin{split}
\Bigg\|  \begin{pmatrix}C_{1,a}^{\varepsilon} \\C_{2,a}^{\varepsilon} \end{pmatrix} 
& - (r^\eps)^2  \begin{pmatrix}-m_{3,2}^{\eps}\\m_{3,1}^{\eps}\end{pmatrix} 
 +r^\eps\Big(\mathcal{M}_{\flat}^\eps (K_{\R^{2}}[\omega^{\eps}](t,0)-\ell^{\eps}) + |\mathcal{S}^\eps_{0} | K_{\R^{2}}[\omega^{\eps}](t,0)    \Big)^\perp
 \\
&
+ \ell_{1}^\eps 
\begin{pmatrix} 
(m_{1,1}^{\eps}+ |\mathcal{S}^\eps_{0} |) a^\eps -m_{1,2}^{\eps} b^\eps\\
 -m_{1,2}^{\eps} a^\eps -(m_{1,1}^{\eps}+ |\mathcal{S}^\eps_{0} |)  b^\eps
 \end{pmatrix} 
+ \ell_{2}^\eps 
\begin{pmatrix} 
 m_{2,1}^{\eps} a^\eps -(m_{2,2}^{\eps}+ |\mathcal{S}^\eps_{0} |)  b^\eps \\
-(m_{2,2}^{\eps}+ |\mathcal{S}^\eps_{0} |) a^\eps -m_{2,1}^{\eps} b^\eps
 \end{pmatrix}
  \\
&- a^\eps r^\eps
\begin{pmatrix}
 -m_{3,1}^{\eps}+m_{4,2}^{\eps} + 2 |\mathcal{S}^\eps_{0} |x_{G,2}^\eps\\
 m_{3,2}^{\eps}-m_{4,1}^{\eps} +2 |\mathcal{S}^\eps_{0} |x_{G,1}^\eps
\end{pmatrix}
-b^\eps r^\eps
\begin{pmatrix}
 m_{3,2}^{\eps}+m_{5,2}^{\eps} + 2 |\mathcal{S}^\eps_{0} |x_{G,1}^\eps\\
 m_{3,1}^{\eps}-m_{5,1}^{\eps} - 2 |\mathcal{S}^\eps_{0} |x_{G,2}^\eps
\end{pmatrix}
 \Bigg\|_{L^\infty(0,T)}  
 \leq C \varepsilon^{2}
 \end{split}
\end{equation*}
and
\begin{equation*} 
\begin{split}
\Big\|C_{3,a}^{\varepsilon} &
-(K_{\R^{2}}[\omega^{\eps}](t,0)-\ell^\eps)^\perp \mathcal{M}_{\flat}^\eps (K_{\R^{2}}[\omega^{\eps}](t,0)-\ell^\eps)\\
&-r^\eps({K_{\R^{2}}[\omega^{\eps}](t,0)-\ell^{\eps} }) \cdot \begin{pmatrix} -m_{3,2}^{\eps} \\m_{3,1}^{\eps} \end{pmatrix} 
+r^\eps  |\mathcal{S}^\eps_{0}| K_{\R^{2}}[\omega^{\eps}](t,0) \cdot  x_{G}^\eps\\
&- \ell_{1}^\eps \Big( a^\eps ( -m_{4,2}^{\eps}+|\mathcal{S}^\eps_{0} |x_{G,2}^\eps+2 m_{1,5}^{\eps}) + b^\eps (   -m_{5,2}^{\eps}+|\mathcal{S}^\eps_{0} |x_{G,1}^\eps-2m_{1,4}^{\eps}) \Big)\\
&- \ell_{2}^\eps \Big( a^\eps (m_{4,1}^{\eps}+|\mathcal{S}^\eps_{0} |x_{G,1}^\eps+ 2m_{2,5}^{\eps}) + b^\eps (m_{5,1}^{\eps}-|\mathcal{S}^\eps_{0} |x_{G,2}^\eps-2m_{2,4}^{\eps})\Big)\\
& - 2 r^\eps a^\eps (m_{3,5}^{\eps}+m_{6}^{\eps} ) + 2 r^\eps b^\eps (m_{3,4}^{\eps}+m_{7}^{\eps} )
\Big\|_{L^\infty(0,T)}
 \leq C \varepsilon^{3}.
 \end{split}
\end{equation*}
\end{Proposition}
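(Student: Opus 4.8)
The plan is to express each $C_{i,a}^\eps$ as an explicit boundary integral built from the approximation $v^\eps_\# + r^\eps\nabla\Phi_3^\eps$ of $\tilde v^\eps$ supplied by Proposition~\ref{Pr}, and then to evaluate that integral by residue calculus in the complex plane. A convenient first reduction exploits the no-penetration condition satisfied by $\tilde v^\eps$: since $\tilde v^\eps$ (see \eqref{allo}) has the same normal trace $(\ell^\eps + r^\eps x^\perp)\cdot n$ as the rigid velocity on $\partial\mathcal{S}_0^\eps$, the field $\tilde v^\eps - \ell^\eps - r^\eps x^\perp$ is tangent to the boundary, and completing the square in \eqref{Cia} gives
\begin{equation*}
C_{i,a}^\eps = \tfrac12\int_{\partial\mathcal{S}_0^\eps}|\tilde v^\eps - \ell^\eps - r^\eps x^\perp|^2 K_i\,ds - \tfrac12\int_{\partial\mathcal{S}_0^\eps}|\ell^\eps + r^\eps x^\perp|^2 K_i\,ds .
\end{equation*}
The second integral is elementary (it reduces to geometric moments of $\mathcal{S}_0^\eps$, using $\int_{\partial\mathcal{S}_0^\eps}K_i\,ds = 0$), so the heart of the matter is the first one.

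Next I would replace $\tilde v^\eps - \ell^\eps - r^\eps x^\perp$ by $(v^\eps_\# + r^\eps\nabla\Phi_3^\eps) - \ell^\eps - r^\eps x^\perp$, which is again tangent to $\partial\mathcal{S}_0^\eps$ (by Proposition~\ref{Pr} together with $\nabla\Phi_3^\eps\cdot n = K_3 = x^\perp\cdot n$), the difference being $-R^\eps$ with $\|R^\eps\|_{L^\infty(0,T;L^2(\partial\mathcal{S}_0^\eps))}\leq C\eps^{5/2}$. Writing the resulting error as $-\tfrac12\int R^\eps\cdot(\cdots)K_i\,ds$ and applying Cauchy--Schwarz, I would bound it using $\|K_i\|_{L^\infty(\partial\mathcal{S}_0^\eps)}\leq C$ for $i=1,2$ (resp. $\leq C\eps$ for $i=3$), the scalings \eqref{phi-scaling}--\eqref{phi-scaling2}, Lemma~\ref{LemModulesBornes}, and the a priori bounds $|\eps\ell^\eps| + |\eps^2 r^\eps|\leq C$ of Lemma~\ref{LemEstNRJ2}; these give $\|\tilde v^\eps - \ell^\eps - r^\eps x^\perp\|_{L^2(\partial\mathcal{S}_0^\eps)} = O(\eps^{-1/2})$ and hence an error $O(\eps^2)$ for $i=1,2$ and $O(\eps^3)$ for $i=3$, matching the claimed precision.

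It then remains to expand the explicit integral $\tfrac12\int_{\partial\mathcal{S}_0^\eps}|v^\eps_\# + r^\eps\nabla\Phi_3^\eps - \ell^\eps - r^\eps x^\perp|^2 K_i\,ds$. Using the complex dictionary of Section~\ref{Sec:Material} ($z = x_1 + ix_2$, $\widehat f = f_1 - if_2$) and the fact that the argument is tangent, I would recast this as a contour integral of products of the holomorphic functions $\widehat{\nabla\Phi_j^\eps}$ against the Taylor data of $\widehat{K_{\R^2}[\omega^\eps]}$ at $0$, and evaluate it by residue calculus via the complex-analysis lemmas of Appendix~\ref{complex ana}. The required Laurent coefficients of $\widehat{\nabla\Phi_j^1}$ are exactly those already computed in the proof of Proposition~\ref{LimBi}; the zeroth- and first-order data of $K_{\R^2}[\omega^\eps]$ at $0$ are $K_{\R^2}[\omega^\eps](t,0)$ and the coefficients $a^\eps,b^\eps$ of \eqref{frakK}, \eqref{a-et-b} (legitimate because $K_{\R^2}[\omega^\eps]$ is harmonic on $B(0,1/\rho)$, cf. \eqref{harmonic2}). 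After extracting residues, the surviving pairings reassemble into $\mathcal{M}_{\flat}^\eps$, the coefficients $m_{i,j}^\eps$, $|\mathcal{S}_0^\eps|$, $x_G^\eps$ and $m_6^\eps,m_7^\eps$, yielding the two stated identities; throughout, terms of order beyond $\eps^2$ (resp. $\eps^3$) are discarded using Lemma~\ref{LemEstNRJ2} exactly as for the remainders $\tilde R_{b,i}$, $\tilde R_{c,i}$ in Proposition~\ref{LimBi}.

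The main obstacle will be the bookkeeping of the numerous bilinear pairings together with the identification of the cancellations that the statement encodes. In particular, for $i=1,2$ the pure quadratic-in-$\ell^\eps$ contributions must disappear: this stems from the fact that each $\widehat{\nabla\Phi_j^\eps}$ has a vanishing $1/z$ coefficient (zero circulation of the Kirchhoff fields), so the relevant residue of the squared velocity vanishes; for $i=3$ the extra weight $K_3 = x^\perp\cdot n$ shifts the residue to the nonzero $1/z^2$ level and precisely produces the quadratic form $(K_{\R^2}[\omega^\eps](t,0)-\ell^\eps)^\perp\mathcal{M}_{\flat}^\eps(K_{\R^2}[\omega^\eps](t,0)-\ell^\eps)$. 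Getting all signs and orders of these residues right, and matching them against the degenerating bounds $|\ell^\eps| = O(\eps^{-1})$ and $|r^\eps| = O(\eps^{-2})$ so that each gained power of $\eps$ is correctly absorbed, is the delicate part of the argument.
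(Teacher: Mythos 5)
Your proposal is correct and follows essentially the same route as the paper: completing the square in \eqref{Cia}, substituting the approximation $v^\eps_\# + r^\eps\nabla\Phi_3^\eps$ from Proposition~\ref{Pr} with the $\mathcal{O}(\eps^{5/2})$ remainder absorbed by Cauchy--Schwarz together with the scaling laws and Lemma~\ref{LemEstNRJ2}, and evaluating the surviving tangent-field pairings via the Blasius lemma and the Laurent/moment computations of Appendix~\ref{complex ana} (this is precisely the paper's decomposition into the terms $D_{i,a},\dots,D_{i,f}$ in \eqref{DefD}, including your correct identification of the zero-circulation cancellation for $i=1,2$ and of the quadratic form $\mathcal{M}_\flat^\eps$ at the $1/z^2$ level for $i=3$). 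The only detail worth noting is that the pieces involving $\widehat{x^\perp}=i\bar z$ are not purely holomorphic and so are not handled by residues alone but by the Stokes-type moment identities (Lemma~\ref{compuC}, Remark~\ref{remzphi}, Lemma~\ref{LemzH}) and, for $D_{3,d}$, a direct divergence computation giving exactly zero --- all of which your appeal to the appendix lemmas covers.
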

\begin{proof}
In this proof, we use the second-order approximation \eqref{Defvd2} of $\tilde{v}^{\varepsilon}$ and write:
\begin{equation} \label{Ciaddecompo2}
\tilde{v}^{\eps} = v^{\eps}_{\#} + r^{\eps} \nabla \Phi^{\eps}_{3} + R^\eps 
\ \text{ with } \ 
R^\eps:=\tilde{v}^{\eps}- v^{\eps}_{\#} - r^{\eps} \nabla \Phi^{\eps}_{3} .
\end{equation}
We note that by Proposition~\ref{Pr} one has
\begin{equation} \label{EstRepsilon}
\|R^\eps \|_{L^{\infty}(0,T;L^{2}(\partial {\mathcal S}_0^{\eps}))} = \mathcal{O}(\varepsilon^{5/2}) .
\end{equation}
\par
We start with the following observation:
\begin{eqnarray*}
C_{i,a}^{\varepsilon} &=& \frac{1}{2} \int_{\partial  \mathcal{S}^\eps_{0}  } |\tilde{v}^{\eps}|^2  K_i \, ds- \int_{\partial  \mathcal{S}^\eps_{0}  } (\ell^{\eps} + r^{\eps} x^\perp)\cdot \tilde{v}^{\eps}  K_i \, ds \\
&=&\frac{1}{2} \int_{\partial  \mathcal{S}^\eps_{0}  } |\tilde{v}^{\eps}-(\ell^{\eps} + r^{\eps} x^\perp)|^2  K_i \, ds - \frac{1}{2} \int_{\partial  \mathcal{S}^\eps_{0}  } |\ell^{\eps} + r^{\eps} x^\perp|^2  K_i \, ds .
\end{eqnarray*}
Replacing $\tilde{v}^{\eps}$ in this expression with the decomposition \eqref{Ciaddecompo2}, we compute
\begin{eqnarray} \nonumber
C_{i,a}^{\varepsilon} 
&=&\frac{1}{2} \int_{\partial  \mathcal{S}^\eps_{0}  } |R^\eps +(v^{\eps}_{\#}-\ell^{\eps}) +r^{\eps} (\nabla \Phi^{\eps}_{3} -x^\perp)|^2  K_i \, ds - \frac{1}{2} \int_{\partial  \mathcal{S}^\eps_{0}  } |\ell^{\eps} + r^{\eps} x^\perp|^2  K_i \, ds\\
\nonumber
&=&\frac{1}{2} \int_{\partial  \mathcal{S}^\eps_{0}  } |R^{\eps}|^2  K_i \, ds + 
\int_{\partial  \mathcal{S}^\eps_{0}  }  R^{\eps} \cdot\Big( (v^{\eps}_{\#}-\ell^{\eps}) +r^{\eps} (\nabla \Phi^{\eps}_{3} -x^\perp) \Big)  K_i \, ds +  \frac{1}{2} \int_{\partial  \mathcal{S}^\eps_{0}  } |v^{\eps}_{\#} -\ell^{\eps} |^2  K_i \, ds \\
\nonumber
&&+ \frac{1}{2} \int_{\partial  \mathcal{S}^\eps_{0}  } | r^{\eps}(\nabla \Phi^{\eps}_{3} -x^\perp) |^2  K_i \, ds
+ \int_{\partial  \mathcal{S}^\eps_{0}  }  r^{\eps}(v^{\eps}_{\#}-\ell^{\eps}) \cdot (\nabla \Phi^{\eps}_{3} -x^\perp)  K_i \, ds
- \frac{1}{2} \int_{\partial  \mathcal{S}^\eps_{0}  } |\ell^{\eps} + r^{\eps} x^\perp|^2  K_i \, ds \\
\label{DefD}
&=:& D_{i,a} + D_{i,b}+D_{i,c}+D_{i,d}+D_{i,e}+D_{i,f}.
\end{eqnarray}
We now analyze the various terms. We recall that $a^{\varepsilon}$ and $b^{\varepsilon}$ are defined in \eqref{a-et-b} and notice that
\begin{equation*} 
\widehat{(D K_{\R^{2}}[\omega^{\eps}](t,0) \cdot x)}
= \widehat{ \left(a^\eps \begin{pmatrix} -x_{1}\\x_{2} \end{pmatrix} + b^\eps \begin{pmatrix} x_{2}\\x_{1} \end{pmatrix}\right) }
= -(a^\eps+i b^\eps) z.
\end{equation*}
Now with the notation \eqref{frakK} for $K_{\R^{2}}[\omega^{\eps}](t,0)$, we can write $\widehat{v^{\eps}_{\#} -\ell^{\eps}}$ using \eqref{Defvd2} as follows:
\begin{equation} \label{v-l}
\widehat{v^{\eps}_{\#} -\ell^{\eps}}(z) = (\mathfrak{K}_{1}-\ell_{1}^\eps) - i(\mathfrak{K}_{2}-\ell_{2}^\eps) -(a^\eps+i b^\eps) z
+ (\ell_{1}^\eps - \mathfrak{K}_{1}) \widehat{\nabla \Phi^{\eps}_{1}} +(\ell_{2}^\eps - \mathfrak{K}_{2}) \widehat{\nabla \Phi^{\eps}_{2}}
- a^\eps \widehat{\nabla \Phi^{\eps}_{4}} - b^\eps \widehat{\nabla \Phi^{\eps}_{5}} .
\end{equation}
\ \par
\noindent
$\bullet$
The first term in \eqref{DefD} satisfies obviously $\|D_{i,a}\|_{L^\infty(0,T)}=\mathcal{O}(\eps^{5+\delta_{3,i}} )=o(\eps^{2+\delta_{3,i}})$.
The second one is of order $\mathcal{O}(\eps^{2+\delta_{3,i}})$, because using \eqref{phi-scaling}, Lemma~\ref{LemEstNRJ2} and the definition \eqref{Defvd2} of $v^{\eps}_{\#}$ we see that
\begin{equation*}
|D_{i,b}(t)|
\leq \int_{\partial \mathcal{S}^\eps_{0} } |R^{\eps}| \Big(|v^{\eps}_{\#}|+|\ell^{\eps}| + |r^{\eps}| (|\nabla \Phi^{\eps}_{3}| +|x|) \Big)|K_{i}| \, ds
\leq C \eps^{5/2} (1 + |\ell^{\eps}| +\eps |r^{\eps}|) \eps^{\delta_{3,i}} \sqrt{\eps}
\leq C \eps^{2+\delta_{3,i}}.
\end{equation*}
\ \par
\noindent
$\bullet$
We now turn to the third term.
As $v^{\eps}_{\#}-\ell^{\eps}$ is tangent to the boundary, we can apply the Blasius lemma (see Lemma~\ref{blasius}) and then \eqref{v-l}, Cauchy's residue theorem, Lemma~\ref{LemmeCircu} and Lemma~\ref{lemzphi} to obtain:
\begin{equation*}\begin{split}
\Big(D_{i,c}\Big)_{i=1,2} =&\frac12 \int_{\partial  \mathcal{S}^\eps_{0}  } |v^{\eps}_{\#} -\ell^{\eps} |^2  n \, ds = \frac{i}2 \left( \int_{ \partial  \mathcal{S}^\eps_{0}} (\widehat{v^{\eps}_{\#} -\ell^{\eps}})^2 \, dz \right)^*\\
=& - i \left( (a^\eps+i b^\eps) \int_{ \partial  \mathcal{S}^\eps_{0}} z\Big((\ell_{1}^\eps - \mathfrak{K}_{1}) \widehat{\nabla \Phi^{\eps}_{1}}+(\ell_{2}^\eps - \mathfrak{K}_{2}) \widehat{\nabla \Phi^{\eps}_{2}} -a^\eps \widehat{\nabla \Phi^{\eps}_{4}} -b^\eps \widehat{\nabla \Phi^{\eps}_{5}} \Big) \, dz \right)^*\\
=& - i \Bigg( (a^\eps+i b^\eps) \Big((\ell_{1}^\eps - \mathfrak{K}_{1}) (   -m_{1,2}^{\eps}+i(m_{1,1}^{\eps}+|\mathcal{S}^\eps_{0} |)  )+(\ell_{2}^\eps - \mathfrak{K}_{2}) ( -(m_{2,2}^{\eps}+|\mathcal{S}^\eps_{0} |)+im_{2,1}^{\eps} ) \\
&\quad  -a^\eps (-(m_{4,2}^{\eps}+|\mathcal{S}^\eps_{0} |x_{G,2}^\eps)+i(m_{4,1}^{\eps}-|\mathcal{S}^\eps_{0} |x_{G,1}^\eps)) -b^\eps (-(m_{5,2}^{\eps}+|\mathcal{S}^\eps_{0} |x_{G,1}^\eps)+i(m_{5,1}^{\eps}+|\mathcal{S}^\eps_{0} |x_{G,2}^\eps)) \Big)  \Bigg)^*
\end{split}
\end{equation*}
Going back to a vector notation, we get that
\begin{align*}
\Big(D_{i,c}\Big)_{i=1,2}=&
a^\eps 
\begin{pmatrix}
-(\ell_{1}^\eps - \mathfrak{K}_{1})(m_{1,1}^{\eps}+|\mathcal{S}^\eps_{0} |) -(\ell_{2}^\eps - \mathfrak{K}_{2}) m_{2,1}^{\eps} + a^\eps(m_{4,1}^{\eps}-|\mathcal{S}^\eps_{0} |x_{G,1}^\eps) +b^\eps (m_{5,1}^{\eps}+|\mathcal{S}^\eps_{0} |x_{G,2}^\eps)\\
(\ell_{1}^\eps - \mathfrak{K}_{1}) m_{1,2}^{\eps} +(\ell_{2}^\eps - \mathfrak{K}_{2}) (m_{2,2}^{\eps}+|\mathcal{S}^\eps_{0} |) -a^\eps (m_{4,2}^{\eps}+|\mathcal{S}^\eps_{0} |x_{G,2}^\eps) -b^\eps (m_{5,2}^{\eps}+|\mathcal{S}^\eps_{0} |x_{G,1}^\eps)
\end{pmatrix}\\
&+
b^\eps 
\begin{pmatrix}
(\ell_{1}^\eps - \mathfrak{K}_{1}) m_{1,2}^{\eps} +(\ell_{2}^\eps - \mathfrak{K}_{2}) (m_{2,2}^{\eps}+|\mathcal{S}^\eps_{0} |) -a^\eps (m_{4,2}^{\eps}+|\mathcal{S}^\eps_{0} |x_{G,2}^\eps) -b^\eps (m_{5,2}^{\eps}+|\mathcal{S}^\eps_{0} |x_{G,1}^\eps)\\
(\ell_{1}^\eps - \mathfrak{K}_{1})(m_{1,1}^{\eps}+|\mathcal{S}^\eps_{0} |) +(\ell_{2}^\eps - \mathfrak{K}_{2}) m_{2,1}^{\eps}- a^\eps(m_{4,1}^{\eps}-|\mathcal{S}^\eps_{0} |x_{G,1}^\eps) -b^\eps (m_{5,1}^{\eps}+|\mathcal{S}^\eps_{0} |x_{G,2}^\eps)
\end{pmatrix}\\
=&
a^\eps 
\begin{pmatrix}
-\ell_{1}^\eps (m_{1,1}^{\eps}+|\mathcal{S}^\eps_{0} |) -\ell_{2}^\eps  m_{2,1}^{\eps} \\
\ell_{1}^\eps m_{1,2}^{\eps} +\ell_{2}^\eps (m_{2,2}^{\eps}+|\mathcal{S}^\eps_{0} |) 
\end{pmatrix}
+
b^\eps 
\begin{pmatrix}
\ell_{1}^\eps  m_{1,2}^{\eps} +\ell_{2}^\eps (m_{2,2}^{\eps}+|\mathcal{S}^\eps_{0} |) \\
\ell_{1}^\eps (m_{1,1}^{\eps}+|\mathcal{S}^\eps_{0} |) +\ell_{2}^\eps m_{2,1}^{\eps}
\end{pmatrix} +\mathcal{O}(\eps^2).
\end{align*}
Now using Lemma~\ref{lemz2phi}, we proceed in the same way for $i=3$:
\begin{equation*}\begin{split}
D_{3,c}=&\frac12\int_{\partial  \mathcal{S}^\eps_{0}  } |v^{\eps}_{\#,I} -\ell^{\eps} |^2  K_3 \, ds=\frac12\Re \left( \int_{ \partial  \mathcal{S}^\eps_{0}} z (\widehat{v^{\eps}_{\#,I} -\ell^{\eps}})^2 \, dz \right) \\
=& \Re \Bigg( \Big[(\mathfrak{K}_{1}-\ell_{1}^\eps)-i(\mathfrak{K}_{2}-\ell_{2}^\eps)\Big]
\int_{ \partial  \mathcal{S}^\eps_{0}} z\Big((\ell_{1}^\eps - \mathfrak{K}_{1}) \widehat{\nabla \Phi^{\eps}_{1}}+(\ell_{2}^\eps - \mathfrak{K}_{2}) \widehat{\nabla \Phi^{\eps}_{2}} -a^\eps \widehat{\nabla \Phi^{\eps}_{4}} -b^\eps \widehat{\nabla \Phi^{\eps}_{5}} \Big) \, dz \\
&\quad - (a^\eps +i b^\eps)
\int_{ \partial  \mathcal{S}^\eps_{0}} z^2\Big((\ell_{1}^\eps - \mathfrak{K}_{1}) \widehat{\nabla \Phi^{\eps}_{1}}+(\ell_{2}^\eps - \mathfrak{K}_{2}) \widehat{\nabla \Phi^{\eps}_{2}} -a^\eps \widehat{\nabla \Phi^{\eps}_{4}} -b^\eps \widehat{\nabla \Phi^{\eps}_{5}} \Big) \, dz \Bigg)
\end{split}\end{equation*}
so that
\begin{equation*}\begin{split}
D_{3,c}
=&(\mathfrak{K}_{1}-\ell_{1}^\eps)\Big[
-(\ell_{1}^\eps - \mathfrak{K}_{1})m_{1,2}^{\eps}-(\ell_{2}^\eps - \mathfrak{K}_{2})(m_{2,2}^{\eps}+|\mathcal{S}^\eps_{0} |)+a^\eps (m_{4,2}^{\eps}+|\mathcal{S}^\eps_{0} |x_{G,2}^\eps) +b^\eps (m_{5,2}^{\eps}+|\mathcal{S}^\eps_{0} |x_{G,1}^\eps) \Big]\\
&+(\mathfrak{K}_{2}-\ell_{2}^\eps)\Big[
(\ell_{1}^\eps - \mathfrak{K}_{1})(m_{1,1}^{\eps}+|\mathcal{S}^\eps_{0}|) +(\ell_{2}^\eps - \mathfrak{K}_{2})m_{2,1}^{\eps}-a^\eps (m_{4,1}^{\eps}-|\mathcal{S}^\eps_{0} |x_{G,1}^\eps) -b^\eps (m_{5,1}^{\eps}+|\mathcal{S}^\eps_{0} |x_{G,2}^\eps) \Big]\\
&+2a^\eps \Big[
(\ell_{1}^\eps - \mathfrak{K}_{1})(m_{1,5}^{\eps} +  |\mathcal{S}^\eps_{0} |x_{G,2}^\eps ) +(\ell_{2}^\eps - \mathfrak{K}_{2})(m_{2,5}^{\eps} +  |\mathcal{S}^\eps_{0} |x_{G,1}^\eps )-a^\eps m_{4,5}^{\eps} -b^\eps ( m_{5,5}^{\eps}+ m_{8}^{\eps}) \Big]\\
&+2b^\eps \Big[
(\ell_{1}^\eps - \mathfrak{K}_{1})(-m_{1,4}^{\eps} +  |\mathcal{S}^\eps_{0} |x_{G,1}^\eps ) -(\ell_{2}^\eps - \mathfrak{K}_{2})(m_{2,4}^{\eps} +  |\mathcal{S}^\eps_{0} |x_{G,2}^\eps )+a^\eps ( m_{4,4}^{\eps}+m_{8}^{\eps}) +b^\eps m_{5,4}^{\eps} \Big].
\end{split}\end{equation*}
This can finally be simplified as follows:
\begin{equation*}\begin{split}
D_{3,c}
=& (K_{\R^{2}}[\omega^{\eps}](t,0)-\ell^\eps)^\perp \mathcal{M}_{\flat}^\eps (K_{\R^{2}}[\omega^{\eps}](t,0)-\ell^\eps)\\
&+ \ell_{1}^\eps \Big( a^\eps ( -m_{4,2}^{\eps}+|\mathcal{S}^\eps_{0} |x_{G,2}^\eps+2 m_{1,5}^{\eps}) + b^\eps (   -m_{5,2}^{\eps}+|\mathcal{S}^\eps_{0} |x_{G,1}^\eps-2m_{1,4}^{\eps}) \Big)\\
&+ \ell_{2}^\eps \Big( a^\eps (m_{4,1}^{\eps}+|\mathcal{S}^\eps_{0} |x_{G,1}^\eps+ 2m_{2,5}^{\eps}) + b^\eps (m_{5,1}^{\eps}-|\mathcal{S}^\eps_{0} |x_{G,2}^\eps-2m_{2,4}^{\eps})\Big)+\mathcal{O}(\eps^3).
\end{split}\end{equation*}\par
\ \par
\noindent
$\bullet$
We now turn to the fourth term in \eqref{DefD}. As $\nabla \Phi^{\eps}_{3} -x^\perp$  is tangent to the boundary, we can write for $i=1,2$ by Lemma~\ref{blasius} and by Cauchy's residue theorem:
\begin{align*}
\Big(D_{i,d}\Big)_{i=1,2} 
= \frac{(r^{\eps})^2}{2} \int_{\partial  \mathcal{S}^\eps_{0}  } | \nabla \Phi^{\eps}_{3} -x^\perp |^2  n \, ds
&=  \frac{i (r^{\eps})^2}2 \left( \int_{ \partial  \mathcal{S}^\eps_{0}} (\widehat{\nabla \Phi^{\eps}_{3} -x^\perp})^2 \, dz \right)^*
\\ &= \frac{i (r^{\eps})^2}2 \left( \int_{ \partial  \mathcal{S}^\eps_{0}} 2i\bar{z} \widehat{\nabla \Phi^{\eps}_{3} } \, dz 
- \int_{ \partial  \mathcal{S}^\eps_{0}} \bar{z}^2  \, dz\right)^*,
\end{align*}
where we have noted that $-\widehat{x^\perp}=i\bar{z}$. Thanks to Remark~\ref{remzphi} and \eqref{compu1}, we deduce that:
\begin{eqnarray*}
\Big(D_{i,d}\Big)_{i=1,2}
&=&i (r^{\eps})^2\Big(   i(-m_{3,2}^{\eps}+|\mathcal{S}^\eps_{0} |x_{G,1}^\eps)+(m_{3,1}^{\eps}+|\mathcal{S}^\eps_{0} |x_{G,2}^\eps)-2( |\mathcal{S}^\eps_{0} |x_{G,2}^\eps+i|\mathcal{S}^\eps_{0} |x_{G,1}^\eps )\Big)^*\\
&=&(r^{\eps})^2\Big( (-m_{3,2}^{\eps}-|\mathcal{S}^\eps_{0} |x_{G,1}^\eps)+i(m_{3,1}^{\eps}-|\mathcal{S}^\eps_{0} |x_{G,2}^\eps)\Big)\\
&=& (r^\eps)^2 \Big( \begin{pmatrix}-m_{3,2}^{\eps}\\m_{3,1}^{\eps}\end{pmatrix}- |\mathcal{S}^\eps_{0} |x_{G}^\eps\Big).
\end{eqnarray*}
For $i=3$, we have that:
\begin{equation*}\begin{split}
D_{3,d}&=\int_{\partial  \mathcal{S}^\eps_{0}  } |\nabla \Phi^{\eps}_{3} -x^\perp |^2  (  x^\perp \cdot n) \, ds= \int_{\partial  \mathcal{S}^\eps_{0}  } |\nabla \Phi^{\eps}_{3} |^2  x^\perp \cdot n \, ds - 2\int_{\partial  \mathcal{S}^\eps_{0}  } (\nabla \Phi^{\eps}_{3} \cdot x^\perp  )(  x^\perp \cdot n) \, ds +\int_{\partial  \mathcal{S}^\eps_{0}  } |x |^2  x^\perp \cdot n \, ds\\
&= \int_{  \mathcal{F}^\eps_{0}  } \div(|\nabla \Phi^{\eps}_{3} |^2  x^\perp) \, dx - 2\int_{  \mathcal{F}^\eps_{0}  }\nabla (\nabla \Phi^{\eps}_{3} \cdot x^\perp )  \cdot\nabla  \Phi^{\eps}_{3}\, dx -\int_{  \mathcal{S}^\eps_{0}  } \div(|x |^2  x^\perp) \, dx,
\end{split}\end{equation*}
where there is no boundary term at infinity because $\nabla \Phi_{3}(x)=\mathcal{O}(1/|x|^2)$ as $|x| \rightarrow +\infty$. Next we use the general relation \eqref{vect}
to obtain that
\begin{equation*}
\nabla(\nabla \Phi^{\eps}_{3} \cdot x^\perp )  \cdot\nabla  \Phi^{\eps}_{3} = \Big[(\nabla \Phi^{\eps}_{3} \cdot \nabla)x^\perp+(x^\perp\cdot \nabla)\nabla \Phi^{\eps}_{3}\Big] \cdot\nabla  \Phi^{\eps}_{3} = -(\nabla  \Phi^{\eps}_{3})^\perp\cdot \nabla  \Phi^{\eps}_{3}+\frac12 (x^\perp \cdot \nabla) |\nabla  \Phi^{\eps}_{3} |^2= \frac12 \div (x^\perp |\nabla  \Phi^{\eps}_{3} |^2).
\end{equation*}
Hence
\begin{eqnarray*}
D_{3,d}=-\int_{  \mathcal{S}^\eps_{0}  } \div(|x |^2  x^\perp) \, dx=-\int_{  \mathcal{S}^\eps_{0}  } (-2x_{1}x_{2}+2x_{1}x_{2}) \, dx=0.
\end{eqnarray*}
\ \par
\noindent
$\bullet$
Concerning the fifth term, we use again the Blasius lemma together with \eqref{v-l} and Cauchy's residue theorem:
\begin{equation*}\begin{split}
\Big( D_{i,e}\Big)_{i=1,2} 
&=  \int_{\partial  \mathcal{S}^\eps_{0}  }  r^{\eps}(v^{\eps}_{\#}-\ell^{\eps}) \cdot (\nabla \Phi^{\eps}_{3} -x^\perp)  n \, ds
= i r^{\eps}\left( \int_{ \partial  \mathcal{S}^\eps_{0}} (\widehat{v^{\eps}_{\#} -\ell^{\eps}})(  \widehat{\nabla \Phi^{\eps}_{3}} 
+ i \bar{z}) \, dz \right)^* \\
&=i r^{\eps}\Bigg( -\int_{ \partial  \mathcal{S}^\eps_{0}}(a^\eps+ib^\eps)z \widehat{\nabla \Phi^{\eps}_{3}} \, dz 
+ i\Big((\mathfrak{K}_{1}-\ell_{1}^\eps)-i(\mathfrak{K}_{2}-\ell_{2}^\eps)\Big)\int_{ \partial  \mathcal{S}^\eps_{0}}  \bar{z} \, dz 
-i(a^\eps+i b^\eps) \int_{ \partial  \mathcal{S}^\eps_{0}}  |z|^2 \, dz \\
&\quad+i(\ell_{1}^\eps - \mathfrak{K}_{1}) \int_{ \partial  \mathcal{S}^\eps_{0}} \widehat{\nabla \Phi^{\eps}_{1} } \bar{z} \, dz
+i(\ell_{2}^\eps - \mathfrak{K}_{2}) \int_{ \partial  \mathcal{S}^\eps_{0}} \widehat{\nabla \Phi^{\eps}_{2} } \bar{z} \, dz
-ia^\eps \int_{ \partial  \mathcal{S}^\eps_{0}} \widehat{\nabla \Phi^{\eps}_{4} } \bar{z} \, dz 
-ib^\eps \int_{ \partial  \mathcal{S}^\eps_{0}} \widehat{\nabla \Phi^{\eps}_{5} } \bar{z} \, dz
 \Bigg)^*.
\end{split}
\end{equation*}
Therefore, it suffices to write the values obtained in Lemma~\ref{lemzphi}, Remark~\ref{remzphi} and \eqref{compu2}-\eqref{compu3} to get:
\begin{equation*}\begin{split}
\Big( D_{i,e}\Big)_{i=1,2} 
=& r^\eps \Big[
-a^\eps (m_{3,1}^{\eps}-|\mathcal{S}^\eps_{0} |x_{G,2}^\eps) + b^\eps (m_{3,2}^{\eps}+|\mathcal{S}^\eps_{0} |x_{G,1}^\eps) +(\mathfrak{K}_{2}-\ell_{2}^\eps) 2 |\mathcal{S}^\eps_{0} | +a^\eps 2 |\mathcal{S}^\eps_{0} |x_{G,2}^\eps+b^\eps 2 |\mathcal{S}^\eps_{0} |x_{G,1}^\eps
\\
&\quad -(\ell_{1}^\eps - \mathfrak{K}_{1})m_{1,2}^{\eps} + (\ell_{2}^\eps - \mathfrak{K}_{2}) (-m_{2,2}^{\eps}+|\mathcal{S}^\eps_{0} |)-a^\eps (-m_{4,2}^{\eps}+|\mathcal{S}^\eps_{0} |x_{G,2}^\eps) -b^\eps(-m_{5,2}^{\eps}+|\mathcal{S}^\eps_{0} |x_{G,1}^\eps) \Big]
\\
&+ ir^\eps \Big[
a^\eps(m_{3,2}^{\eps}+|\mathcal{S}^\eps_{0} |x_{G,1}^\eps)+ b^\eps (m_{3,1}^{\eps}-|\mathcal{S}^\eps_{0} |x_{G,2}^\eps)-(\mathfrak{K}_{1}-\ell_{1}^\eps)2 |\mathcal{S}^\eps_{0} | + a^\eps 2 |\mathcal{S}^\eps_{0} |x_{G,1}^\eps - b^\eps 2 |\mathcal{S}^\eps_{0} |x_{G,2}^\eps
\\
&\quad -(\ell_{1}^\eps - \mathfrak{K}_{1}) (-m_{1,1}^{\eps}+|\mathcal{S}^\eps_{0} |)  +(\ell_{2}^\eps - \mathfrak{K}_{2})m_{2,1}^{\eps} - a^\eps(m_{4,1}^{\eps}+|\mathcal{S}^\eps_{0} |x_{G,1}^\eps) + b^\eps(-m_{5,1}^{\eps}+|\mathcal{S}^\eps_{0} |x_{G,2}^\eps) \Big],
\end{split}\end{equation*}
which can be simplified as
\begin{equation*}\begin{split}
\Big( D_{i,e}\Big)_{i=1,2} 
= r^\eps\Bigg[&-(\mathcal{M}_{\flat}^\eps + |\mathcal{S}^\eps_{0} | {\rm I}_{2})(K_{\R^{2}}[\omega^{\eps}](t,0) -\ell^{\eps} )  \Big)^\perp\\
&+ a^\eps
\begin{pmatrix}
 -m_{3,1}^{\eps}+m_{4,2}^{\eps} + 2 |\mathcal{S}^\eps_{0} |x_{G,2}^\eps\\
 m_{3,2}^{\eps}-m_{4,1}^{\eps} +2 |\mathcal{S}^\eps_{0} |x_{G,1}^\eps
\end{pmatrix}
+b^\eps
\begin{pmatrix}
 m_{3,2}^{\eps}+m_{5,2}^{\eps} + 2 |\mathcal{S}^\eps_{0} |x_{G,1}^\eps\\
 m_{3,1}^{\eps}-m_{5,1}^{\eps} - 2 |\mathcal{S}^\eps_{0} |x_{G,2}^\eps
\end{pmatrix} \Bigg]
.
\end{split}\end{equation*}
\par
For $i=3$,  Lemma~\ref{blasius}, \eqref{v-l} and Cauchy's residue theorem imply that
\begin{equation*}\begin{split}
D_{3,e} &= \int_{\partial  \mathcal{S}^\eps_{0}  }  r^{\eps}(v^{\eps}_{\#}-\ell^{\eps}) \cdot (\nabla \Phi^{\eps}_{3} -x^\perp)  K_3 \, ds
= r^\eps\Re \left( \int_{ \partial  \mathcal{S}^\eps_{0}} z (\widehat{v^{\eps}_{\#} -\ell^{\eps}})(  \widehat{\nabla \Phi^{\eps}_{3}} + i \bar{z})  \, dz \right) \\
&= r^\eps\Re\Bigg[ \Big((\mathfrak{K}_{1}-\ell_{1}^\eps)-i(\mathfrak{K}_{2}-\ell_{2}^\eps)\Big) \int_{ \partial  \mathcal{S}^\eps_{0}} (z \widehat{\nabla \Phi^{\eps}_{3}} + i|z|^2)\, dz 
-(a^\eps + i b^\eps) \int_{ \partial  \mathcal{S}^\eps_{0}} (z^2 \widehat{\nabla \Phi^{\eps}_{3}} + iz |z|^2)\, dz \\
& \ + (\ell_{1}^\eps - \mathfrak{K}_{1}) i \int_{ \partial  \mathcal{S}^\eps_{0}} \widehat{\nabla \Phi^{\eps}_{1}} |z|^2\, dz
+(\ell_{2}^\eps - \mathfrak{K}_{2})  i \int_{ \partial  \mathcal{S}^\eps_{0}} \widehat{\nabla \Phi^{\eps}_{2}} |z|^2\, dz 
-a^\eps  i \int_{ \partial  \mathcal{S}^\eps_{0}} \widehat{\nabla \Phi^{\eps}_{4}} |z|^2\, dz 
-b^\eps  i \int_{ \partial  \mathcal{S}^\eps_{0}} \widehat{\nabla \Phi^{\eps}_{5}} |z|^2\, dz
  \Bigg].
\end{split}
\end{equation*}
Hence, we obtain from \eqref{compu3}-\eqref{compu4} and Lemmas~\ref{lemzphi}, \ref{lem|z|2phi} and \ref{lemz2phi}:
\begin{equation*}\begin{split}
D_{3,e} &= r^\eps\Bigg[ -(\mathfrak{K}_{1}-\ell_{1}^\eps) (m_{3,2}^{\eps}+|\mathcal{S}^\eps_{0} |x_{G,1}^\eps + 2 |\mathcal{S}^\eps_{0} |x_{G,1}^\eps)
+(\mathfrak{K}_{2}-\ell_{2}^\eps) (m_{3,1}^{\eps}-|\mathcal{S}^\eps_{0} |x_{G,2}^\eps -2 |\mathcal{S}^\eps_{0} |x_{G,2}^\eps)\\
& \quad \quad \quad + a^\eps 2(m_{3,5}^{\eps} +m_{6}^{\eps}  +m_{6}^{\eps} )
-b^\eps 2(m_{3,4}^{\eps} +m_{7}^{\eps}  +m_{7}^{\eps})
-(\ell_{1}^\eps - \mathfrak{K}_{1})2|\mathcal{S}^\eps_{0} |x_{G,1}^\eps\\
& \quad \quad \quad -(\ell_{2}^\eps - \mathfrak{K}_{2})2|\mathcal{S}^\eps_{0} |x_{G,2}^\eps 
 - a^\eps 2m_6^{\eps} + b^\eps 2m_7^{\eps} \Bigg]\\
& = r^\eps({K_{\R^{2}}[\omega^{\eps}](t,0)-\ell^{\eps} }) \cdot \Big(\begin{pmatrix} -m_{3,2}^{\eps} \\m_{3,1}^{\eps} \end{pmatrix} -|\mathcal{S}^\eps_{0} |x_{G}^\eps\Big) + 2 r^\eps a^\eps (m_{3,5}^{\eps}+m_{6}^{\eps} ) - 2 r^\eps b^\eps (m_{3,4}^{\eps}+m_{7}^{\eps} ).
\end{split}\end{equation*}
\ \par
\noindent
$\bullet$
Finally, for the last term we write
\begin{equation*}
\int_{\partial  \mathcal{S}^\eps_{0}  } |\ell^{\eps} + r^{\eps} x^\perp|^2  K_i \, ds 
= |\ell^{\eps}|^2  \int_{\partial  \mathcal{S}^\eps_{0}  } K_i \, ds 
- 2 \ell^{\eps}_1 r^\eps \int_{\partial  \mathcal{S}^\eps_{0}  } x_2  K_i \, ds 
+2 \ell^{\eps}_2 r^\eps \int_{\partial  \mathcal{S}^\eps_{0}  } x_1  K_i \, ds 
+ (r^{\eps})^2  \int_{\partial  \mathcal{S}^\eps_{0}  } |x|^2 K_i \, ds,
\end{equation*}
where the above integrals are computed in Lemma~\ref{CalculIntegralesStandard}. Then we check easily that
\begin{equation*}
\Big( D_{i,f}\Big)_{i=1,2} 
=\Big(- \frac{1}{2} \int_{\partial  \mathcal{S}^\eps_{0}  } |\ell^{\eps} + r^{\eps} x^\perp|^2  K_i \, ds\Big)_{i=1,2}
=-r^\eps (\ell^\eps)^\perp |\mathcal{S}^\eps_{0} | + (r^\eps)^2x_{G}^\eps |\mathcal{S}^\eps_{0} |
\end{equation*}
and
\begin{equation*}
D_{3,f}=- \frac{1}{2} \int_{\partial  \mathcal{S}^\eps_{0}  } |\ell^{\eps} + r^{\eps} x^\perp|^2  K_3 \, ds
=-r^\eps (\ell^\eps\cdot x_{G}^\eps) |\mathcal{S}^\eps_{0} |.
\end{equation*}
This ends the proof of Proposition~\ref{PropCiad}.
\end{proof}
\ \par
\noindent
{\bf 2. The $C_{i,c}^\eps$ term.} We turn to $C_{i,c}^{\varepsilon}$ introduced in \eqref{Cic}. Here, we deduce from Lemma~\ref{blasius}, \eqref{HSeriesLaurent} and Cauchy's Residue Theorem that $C^{\varepsilon}_{1,c}=C^{\varepsilon}_{2,c}=C^{\varepsilon}_{3,c}=0$. Note in particular that $\int_{\partial  \mathcal{S}^\eps_{0} } z (\widehat{H^{\eps}})^2  \, dz=-i/(2\pi) $ is purely imaginary.
\par
\ \par
\noindent
{\bf 3. The $C_{i,b}^\eps$ term.} We finally turn to the term $C^{\varepsilon}_{i,b}$ in \eqref{Cib}. Let us prove the following.
\begin{Proposition} \label{PropCib}
Let $\rho>1$ be fixed. There exist $C>0$ and $\varepsilon_{0}\in (0,1]$ such that if for a given $T>0$ and an $\varepsilon \in (0,\varepsilon_{0}]$, \eqref{CondDistVorticiteSolide} is satisfied for all $t \in [0,T]$, then one has:
\begin{equation*} 
\Big\|  \begin{pmatrix}C_{1,b}^{\varepsilon} \\C_{2,b}^{\varepsilon} \end{pmatrix}
-  \gamma ( K_{\R^2}[\omega^\eps] (t,0) -  \ell^{\eps} )^\perp 
-\gamma \varepsilon r^{\varepsilon} \xi
-\gamma\varepsilon  \Big( D K_{\R^2}[\omega^\eps](t,0) \cdot \xi \Big)^\perp
 \Big\|_{L^\infty(0,T)} \leq C \eps^2,
\end{equation*}
and 
\begin{equation*} 
\Big\| C_{3,b}^{\varepsilon} - \gamma \varepsilon \, \xi \cdot ( K_{\R^2}[\omega^\eps] (t,0) -  \ell^{\eps} )
-\gamma \eps^2 (-a^\eps \eta_{1} +b^\eps \eta_{2})
  \Big\|_{L^\infty(0,T)}  \leq C \eps^3,
\end{equation*}
where $\xi$ was defined in \eqref{DefXi} and $\eta$ was defined in \eqref{DefEta}.
\end{Proposition}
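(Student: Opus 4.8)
The plan is to follow the complex-analytic strategy already used for $B^\eps_i$ and $C^\eps_{i,a}$ in Propositions~\ref{LimBi} and \ref{PropCiad}. Write $V^\eps:=\tilde v^\eps-(\ell^\eps+r^\eps x^\perp)$. Since $\tilde v^\eps\cdot n=(\ell^\eps+r^\eps x^\perp)\cdot n$ and $H^\eps\cdot n=0$ on $\partial\mathcal{S}^\eps_0$, both $V^\eps$ and $H^\eps$ are tangent to the boundary, so the Blasius lemma (Lemma~\ref{blasius}) applies to the bilinear pairing and gives
\begin{equation*}
\begin{pmatrix}C_{1,b}^\eps\\C_{2,b}^\eps\end{pmatrix}=\gamma\, i\Big(\int_{\partial\mathcal{S}^\eps_0}\widehat{V^\eps}\,\widehat{H^\eps}\,dz\Big)^*,\qquad
C_{3,b}^\eps=\gamma\,\Re\Big(\int_{\partial\mathcal{S}^\eps_0}z\,\widehat{V^\eps}\,\widehat{H^\eps}\,dz\Big).
\end{equation*}
I then insert the approximation of Proposition~\ref{Pr}, namely $V^\eps=(v^\eps_\#-\ell^\eps)+r^\eps(\nabla\Phi^\eps_3-x^\perp)+R^\eps$ with $\|R^\eps\|_{L^\infty(0,T;L^2(\partial\mathcal{S}^\eps_0))}=\mathcal{O}(\eps^{5/2})$, and treat the three contributions separately.

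For the leading contribution I rescale $z=\eps\zeta$, $\zeta\in\partial\mathcal{S}_0$, and use the scaling laws \eqref{ScalingH}, \eqref{phi-scaling}, \eqref{phi-scaling2} together with \eqref{v-l} to obtain the \emph{exact} splitting $\widehat{v^\eps_\#-\ell^\eps}(\eps\zeta)=A(\zeta)+\eps B(\zeta)$, where $A$ collects the constant term $\widehat{K_{\R^2}[\omega^\eps](t,0)-\ell^\eps}$ and the $\widehat{\nabla\Phi^1_1},\widehat{\nabla\Phi^1_2}$ terms, and $B=-(a^\eps+ib^\eps)\zeta-a^\eps\widehat{\nabla\Phi^1_4}-b^\eps\widehat{\nabla\Phi^1_5}$, while $\widehat{H^\eps}(\eps\zeta)=\eps^{-1}\widehat{H^1}(\zeta)$. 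Evaluating the resulting contour integrals by Cauchy's residue theorem via $\int_{\partial\mathcal{S}_0}\widehat{H^1}\,d\zeta=1$, $\int_{\partial\mathcal{S}_0}\zeta\,\widehat{H^1}\,d\zeta=\xi$, $\int_{\partial\mathcal{S}_0}\zeta^2\,\widehat{H^1}\,d\zeta=\eta$ (see \eqref{DefXi}--\eqref{DefEta}), and noting that each $\int_{\partial\mathcal{S}_0}\zeta^k\widehat{\nabla\Phi^1_i}\,\widehat{H^1}\,d\zeta$ with $k\le 1$ vanishes (the integrand is holomorphic in $\R^2\setminus\mathcal{S}_0$ and $\mathcal{O}(1/\zeta^2)$ at infinity by \eqref{ComportementPhii}, hence has zero contour integral), I obtain the leading term $\gamma(K_{\R^2}[\omega^\eps](t,0)-\ell^\eps)^\perp$ and the correction $\gamma\eps(DK_{\R^2}[\omega^\eps](t,0)\cdot\xi)^\perp$ for the first two components, and $\gamma\eps\,\xi\cdot(K_{\R^2}[\omega^\eps](t,0)-\ell^\eps)+\gamma\eps^2(-a^\eps\eta_1+b^\eps\eta_2)$ for the third.

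The rotational contribution uses $\widehat{\nabla\Phi^\eps_3-x^\perp}=\widehat{\nabla\Phi^\eps_3}+i\bar z$; after rescaling the holomorphic pieces $\int_{\partial\mathcal{S}_0}\zeta^k\widehat{\nabla\Phi^1_3}\,\widehat{H^1}\,d\zeta$ ($k=0,1$) vanish as above and only $\bar\zeta$-integrals survive. For the first two components the identity $\int_{\partial\mathcal{S}_0}\bar\zeta\,\widehat{H^1}\,d\zeta=\overline{\xi}$ produces exactly the term $\gamma\eps r^\eps\xi$; for the third component the surviving integral is $i\int_{\partial\mathcal{S}_0}|\zeta|^2\,\widehat{H^1}\,d\zeta$, which is purely imaginary because $\int_{\partial\mathcal{S}_0}|\zeta|^2\,\widehat{H^1}\,d\zeta$ is real, so its real part---and hence the whole $r^\eps$ contribution to $C^\eps_{3,b}$---vanishes, which explains the absence of an $r^\eps$ term in the second estimate. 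The remainder $R^\eps$ is finally absorbed by Cauchy--Schwarz on $\partial\mathcal{S}^\eps_0$, using $\|H^\eps\|_{L^2(\partial\mathcal{S}^\eps_0)}=\mathcal{O}(\eps^{-1/2})$ and $\|zH^\eps\|_{L^2(\partial\mathcal{S}^\eps_0)}=\mathcal{O}(\eps^{1/2})$ coming from \eqref{ScalingH}, giving the errors $\mathcal{O}(\eps^2)$ and $\mathcal{O}(\eps^3)$; the bound $\eps^2|r^\eps|\le C$ of Lemma~\ref{LemEstNRJ2} keeps the $r^\eps$ terms at the right order throughout.

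The hard part will be the two mixed identities pairing the antiholomorphic $\bar\zeta$ and $|\zeta|^2$ with the holomorphic $\widehat{H^1}$, which are not plain residue computations. I would prove them through the exterior conformal map $T$ sending $\R^2\setminus\mathcal{S}_0$ onto the exterior of the unit disc, with $T(z)=\lambda z+\mu+\mathcal{O}(1/z)$ and $\widehat{H^1}=\frac{1}{2i\pi}T'/T$, so that $\xi=-\mu/\lambda$: writing $\bar\zeta=\overline{T^{-1}(T(\zeta))}$ and substituting $w=T(\zeta)$ turns $\int_{\partial\mathcal{S}_0}\bar\zeta\,\widehat{H^1}\,d\zeta$ into a residue at $w=0$ equal to $\overline{\xi}$, and turns $\int_{\partial\mathcal{S}_0}|\zeta|^2\,\widehat{H^1}\,d\zeta$ into $\frac{1}{2\pi}\int_0^{2\pi}|T^{-1}(e^{i\phi})|^2\,d\phi$, which is manifestly real. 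These are the only genuinely new ingredients beyond the machinery already developed for $B^\eps_i$ and $C^\eps_{i,a}$; the rest is bookkeeping of powers of $\eps$.
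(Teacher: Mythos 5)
Your proposal is correct and, in its main architecture, coincides with the paper's proof: the same reduction via Proposition~\ref{Pr} and the decomposition $\tilde v^\eps = v^\eps_{\#} + r^\eps\nabla\Phi^\eps_3 + R^\eps$, the same application of the Blasius lemma to the two tangent fields, the same residue computations killing every $\zeta^k\,\widehat{\nabla\Phi^1_i}\,\widehat{H^1}$ pairing (the relevant products are $\mathcal{O}(1/\zeta^2)$) and extracting $1$, $\eps\xi$, $\eps^2\eta$ from the moments of $\widehat{H^\eps}$, and the same $\mathcal{O}(\eps^{5/2})$ absorption of $R^\eps$, your Cauchy--Schwarz bookkeeping with $\|H^\eps\|_{L^2(\partial\mathcal{S}^\eps_0)}=\mathcal{O}(\eps^{-1/2})$ reproducing exactly the paper's bound $C\eps^{5/2}\,\eps^{-1}\,\eps^{\delta_{3,i}}\sqrt{\eps}$.

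The one genuine divergence is your treatment of the two mixed integrals $\int_{\partial\mathcal{S}_0}\bar z\,\widehat{H^1}\,dz$ and $\int_{\partial\mathcal{S}_0}|z|^2\,\widehat{H^1}\,dz$, which you correctly identify as the only steps not reachable by plain residues. You prove them through the exterior conformal map $T$ and the representation $\widehat{H^1}=\frac{1}{2i\pi}T'/T$; this works (your identities $\xi=-\mu/\lambda$ and $\int_{\partial\mathcal{S}_0}\bar\zeta\,\widehat{H^1}\,d\zeta=\overline{\xi}$ check out against the Laurent expansion of $T^{-1}$, using $\bar w=1/w$ on the unit circle), but it is heavier than what the paper does: Lemma~\ref{LemzH} obtains both facts elementarily from the flux/circulation formula (Lemma~\ref{lem-flux-circu}, Corollary~\ref{cor}, Remark~\ref{Rembarz}) and the single property $H^1\cdot n=0$ on $\partial\mathcal{S}_0$. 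Indeed, with vanishing flux one gets $\int_{\partial\mathcal{S}_0} z\,\widehat{H^1}\,dz=\int_{\partial\mathcal{S}_0}(x_1+ix_2)(H^1\cdot\tau)\,ds$ and $\int_{\partial\mathcal{S}_0}\bar z\,\widehat{H^1}\,dz=\int_{\partial\mathcal{S}_0}(x_1-ix_2)(H^1\cdot\tau)\,ds$, which are complex conjugates since $H^1\cdot\tau$ is real, while $\int_{\partial\mathcal{S}_0}|z|^2\,\widehat{H^1}\,dz=\int_{\partial\mathcal{S}_0}|x|^2(H^1\cdot\tau)\,ds$ is manifestly real. The paper's route requires no boundary regularity of $T$, no convergence of the series of $T^{-1}$ up to $|w|=1$, and no orientation bookkeeping in the substitution $w=T(\zeta)$ (which, for the $|\zeta|^2$ integral, must anyway be justified as a change of parametrization of a line integral, the integrand being non-holomorphic); your route, in exchange, yields explicit formulas such as $\xi=-\mu/\lambda$ and exhibits $\int_{\partial\mathcal{S}_0}|z|^2\widehat{H^1}\,dz$ as the mean of $|T^{-1}|^2$ on the unit circle, which is more information than the proposition needs. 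One minor point: your appeal to Lemma~\ref{LemEstNRJ2} is superfluous here, since every $r^\eps$-dependent contribution is computed exactly by residues and appears verbatim in the modulated statement; only the $R^\eps$ term produces an error, and its bound from Proposition~\ref{Pr} is independent of $r^\eps$.
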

\begin{proof}
As for $C^\eps_{i,a}$, we consider the approximation \eqref{Defvd2} of $\tilde{v}^{\varepsilon}$ and use again the decomposition \eqref{Ciaddecompo2}
where $R^{\varepsilon}$ satisfies \eqref{EstRepsilon}.
Putting this decomposition in the definition of $C^\eps_{i,b}$, we obtain:
\begin{equation} \label{DecompCib}
C_{i,b}^\eps =  \gamma  \int_{\partial  \mathcal{S}^\eps_{0}  } R^\eps \cdot  H^{\eps}  K_i \, ds  
+ \gamma  \int_{\partial  \mathcal{S}^\eps_{0}  } (v^{\eps}_{\#}  - \ell^{\eps} )\cdot  H^{\eps}  K_i \, ds
+ \gamma  \int_{\partial  \mathcal{S}^\eps_{0}  }r^{\eps} (\nabla \Phi^{\eps}_{3} - x^\perp)\cdot  H^{\eps}  K_i \, ds.
\end{equation}
%
%
%
Thanks to the scaling law \eqref{ScalingH} for $H^\eps$, the first term is of order $\mathcal{O}(\eps^{2+\delta_{3,i}} )$:
\begin{equation*}
|\gamma|\int_{\partial  \mathcal{S}^\eps_{0}  }  |R^{\eps}| |H^{\eps}| |K_{i}| \, ds
\leq C\eps^{5/2} \eps^{-1} \eps^{\delta_{3,i}}\sqrt\eps\leq C \eps^{2+\delta_{3,i}}.
\end{equation*}
%
%
Concerning the second term in \eqref{DecompCib}, as $v^{\eps}_{\#}-\ell^{\eps}$ and $H^{\eps}$ are tangent to the boundary, we can apply the Blasius lemma (see Lemma~\ref{blasius}). Then we compute by \eqref{v-l}, Cauchy's residue theorem and \eqref{HSeriesLaurent}:
\begin{equation*}\begin{split}
\left(\gamma \int_{\partial  \mathcal{S}^\eps_{0}  }(v^{\eps}_{\#}  - \ell^{\eps} )\cdot  H^{\eps}   K_i \, ds\right)_{i=1,2}  &= \gamma\int_{\partial  \mathcal{S}^\eps_{0}  }(v^{\eps}_{\#}  - \ell^{\eps} )\cdot  H^{\eps}   n \, ds = i\gamma \left( \int_{ \partial  \mathcal{S}^\eps_{0}} \widehat{(v^{\eps}_{\#} -\ell^{\eps})}\widehat{H^\eps} \, dz \right)^*\\
&= i\gamma \Bigg( \Big((\mathfrak{K}_{1}-\ell_{1}^\eps)-i(\mathfrak{K}_{2}-\ell_{2}^\eps)\Big) \int_{ \partial  \mathcal{S}^\eps_{0}}\widehat{H^\eps} \, dz  - (a^\eps+i b^\eps) \int_{ \partial  \mathcal{S}^\eps_{0}} z \widehat{H^\eps} \, dz \Bigg)^*\\
&= i\gamma \Bigg( \Big((\mathfrak{K}_{1}-\ell_{1}^\eps)-i(\mathfrak{K}_{2}-\ell_{2}^\eps)\Big)   - (a^\eps+i b^\eps) \eps(\xi_{1}+i\xi_{2}) \Bigg)^*\\
&= \gamma (K_{\R^{2}}[\omega^{\eps}](t,0)-\ell^\eps)^\perp +\gamma\varepsilon  \Big(  \begin{pmatrix} -a^\eps &b^\eps \\ b^\eps & a^\eps \end{pmatrix}  \xi \Big)^\perp,
\end{split}
\end{equation*}
where we have used the notation \eqref{frakK} and the relation \eqref{DefXi}. \par
For $i=3$, we compute by Lemma~\ref{blasius} and Cauchy's residue theorem that
\begin{equation*}\begin{split}
\gamma\int_{\partial  \mathcal{S}^\eps_{0}  }(v^{\eps}_{\#}  - \ell^{\eps} )\cdot  H^{\eps}   K_3 \, ds
& = \gamma \, \Re \left( \int_{ \partial  \mathcal{S}^\eps_{0}} z (\widehat{v^{\eps}_{\#} -\ell^{\eps}})\widehat{H^\eps}  \, dz \right) \\
& = \gamma \, \Re \left(  \Big((\mathfrak{K}_{1}-\ell_{1}^\eps)-i(\mathfrak{K}_{2}-\ell_{2}^\eps)\Big) \int_{ \partial  \mathcal{S}^\eps_{0}}z\widehat{H^\eps} \, dz  
 - (a^\eps+i b^\eps) \int_{ \partial  \mathcal{S}^\eps_{0}} z^2 \widehat{H^\eps} \, dz\right) \\
& = \gamma \, \Re \left(  \Big((\mathfrak{K}_{1}-\ell_{1}^\eps)-i(\mathfrak{K}_{2}-\ell_{2}^\eps)\Big) \eps(\xi_{1}+i\xi_{2})
 - (a^\eps+i b^\eps) \eps^2(\eta_{1}+i\eta_{2}) \right) \\
& = \gamma \, \eps( K_{\R^2}[\omega^\eps] (t,0) -  \ell^{\eps} )\cdot \xi + \gamma \eps^2 (-a^\eps \eta_{1} +b^\eps \eta_{2}).
\end{split}\end{equation*}
\ \par
For the last term, we use that $\nabla \Phi^{\eps}_{3} -x^\perp$ and $H^\eps$ are tangent to the boundary, and obtain with Lemma~\ref{blasius} and Cauchy's residue theorem:
\begin{eqnarray*}
\left( \gamma r^{\eps}  \int_{\partial  \mathcal{S}^\eps_{0}  } (\nabla \Phi^{\eps}_{3} - x^\perp)\cdot  H^{\eps}   K_i \, ds\right)_{i=1,2}
&=&  i \gamma r^{\eps}  \left( \int_{ \partial  \mathcal{S}^\eps_{0}} (\widehat{\nabla \Phi^{\eps}_{3} -x^\perp})\widehat{H^\eps} \, dz \right)^*
=  i \gamma r^{\eps}  \left(  i\int_{ \partial  \mathcal{S}^\eps_{0}} \bar{z}\widehat{H^\eps}  \, dz\right)^*\\
&=& \gamma r^{\eps}  \left(  \int_{ \partial  \mathcal{S}^\eps_{0}} \bar{z}\widehat{H^\eps}  \, dz\right)^*= \gamma r^{\eps}\eps  \left(  \int_{ \partial  \mathcal{S}_{0}} \bar{z}\widehat{H^1}  \, dz\right)^* = \gamma \eps r^\eps \xi,
\end{eqnarray*}
where we have used that $-\widehat{x^\perp}=i\bar{z}$ and Lemma \ref{LemzH}. \par
For $i=3$, we have that:
\begin{equation*}
\gamma r^{\eps}  \int_{\partial  \mathcal{S}^\eps_{0}  } (\nabla \Phi^{\eps}_{3} - x^\perp)\cdot  H^{\eps}  K_3 \, ds 
= \gamma  r^{\eps} \Re \left( \int_{ \partial  \mathcal{S}^\eps_{0}} z (\widehat{\nabla \Phi^{\eps}_{3} -x^\perp})\widehat{H^\eps}  \, dz \right) 
=  \gamma  r^{\eps} \Re \left( i \int_{ \partial  \mathcal{S}^\eps_{0}} z \bar{z}\widehat{H^\eps}  \, dz \right)=0,
\end{equation*}
because of Lemma \ref{LemzH}. 
This ends the proof of Proposition~\ref{PropCib}.
\end{proof} 
\subsection{Conclusion}
\label{Subsec:Regroupement}
In this subsection, we gather all the previous results established in this section into a single proposition.
We begin by recalling and introducing several notations. \par
Let $\underline{\Lambda}: \R^{3} \times \R^{3} \rightarrow \R^{3}$ be the symmetric bilinear mapping that satisfies
\begin{equation} \label{DefGammaSouligne}
\langle \underline{\Lambda}, p , p \rangle = \begin{pmatrix} r ({\mathcal M}_{\flat}^{1} \ell)^{\perp} \\ \ell^{\perp} \cdot {\mathcal M}_{\flat}^{1} \ell \end{pmatrix} \ \ \text{ for all } p = \begin{pmatrix} \ell \\ r \end{pmatrix} \in \R^{3},
\end{equation}
where we recall that $\mathcal{M}^{\varepsilon}_{\flat}$, the $2\times2$ restriction of $\mathcal{M}_{a}^\eps$, was defined in \eqref{DefMbemol}.
We also let
\begin{equation*} 
\hat{p}^{\varepsilon} := \begin{pmatrix} \ell^{\varepsilon} \\ \varepsilon r^{\varepsilon} \end{pmatrix}, \ \ \ 
\check{p}^{\varepsilon} := \begin{pmatrix} \ell^{\varepsilon} - K_{\R^{2}}[\omega^{\varepsilon}](t,0) \\ \varepsilon r^{\varepsilon} \end{pmatrix}.
\end{equation*}
We introduce the following vectors relying on the quantities $m^{1}_{i,j}$ defined in \eqref{DefMasse}:
\begin{equation} \label{VectMass}
{\mu}^{1} := \begin{pmatrix} m_{1,3}^{1} \\ m_{2,3}^{1}  \\ 0 \end{pmatrix}, \ 
\hat{\mu}^{1} := \begin{pmatrix} 2m_{2,5}^{1}- m_{3,2}^{1} + m_{1,4}^{1} \\ -2m_{1,5}^{1} -m_{3,1}^{1}+ m_{4,2}^{1} \\ 0 \end{pmatrix}
\ \text{ and } \
\check{\mu}^{1} := \begin{pmatrix} -2m_{2,4}^{1}-m_{3,1}^{1} + m_{5,1}^{1} \\ 2m_{1,4}^{1}+ m_{3,2}^{1} + m_{5,2}^{1} \\ 0 \end{pmatrix}.
\end{equation}
Recalling that $a^{\varepsilon}$ and $b^{\varepsilon}$ were defined in \eqref{a-et-b}, $\xi=(\xi_{1}, \xi_{2})$ in \eqref{DefXi} and $\eta=(\eta_{1}, \eta_{2})$ in \eqref{DefEta}, we let
\begin{equation} \label{DefFSouligne}
\underline{F}(\varepsilon,t) = \hat{p}^{\varepsilon} \times (a^{\varepsilon} \hat{\mu}^{1} + b^{\varepsilon} \check{\mu}^{1})
- \begin{pmatrix} \Big( (m_{1,1}^{1}-m_{2,2}^{1})b^\eps +2 m_{1,2}^{1} a^\eps\Big) (\ell^\eps)^\perp \\ 0 \end{pmatrix}
\end{equation}
and
\begin{equation} \label{DefG}
G(\varepsilon,t) = \begin{pmatrix}
	0 \\ 0 \\ \xi \cdot (D K_{\R^2}[\omega^\varepsilon](t,0) \cdot \xi) + a^{\varepsilon} \eta_{1} - b^{\varepsilon} \eta_{2}
\end{pmatrix}.
\end{equation}
Finally, we recall that $I_{\varepsilon}$ is defined in \eqref{defIeps} and that ${\bf B}$ is defined in \eqref{DefB}.  \par
The previous subsections result into the following proposition.
\begin{Proposition} \label{PropositionPression2}
Under the same assumptions as Proposition~\ref{Pro:NormalForm}, the pressure force/torque can be written as follows:
\begin{multline*}
- I_{\varepsilon}^{-1} ( B_{i}^\eps + C_{i}^\eps)_{i=1,2,3} = \gamma \tilde{p}^{\varepsilon} \times {\bf B} \\
+ \varepsilon \Big[ -(\varepsilon r^{\varepsilon}) \begin{pmatrix} {\mathcal M}_{\flat}^{1} (K_{\R^2}[\omega^\varepsilon](t,0))^{\perp} \\ 0 \end{pmatrix} -(\varepsilon r^{\varepsilon}) \hat{p}^{\varepsilon} \times \mu^{1} + \gamma G(\varepsilon,t) 
- \langle \underline{\Lambda}, \check{p}^{\varepsilon}, \check{p}^{\varepsilon} \rangle \Big]
+ \varepsilon^{2} \Big[ \underline{F}(\varepsilon,t) + R^{\varepsilon} \Big],
\end{multline*}
with
\begin{equation*} 
\| R^\eps\|_{L^{\infty}(0,T)}  \leq C.
\end{equation*}
\end{Proposition}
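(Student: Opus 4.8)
The plan is to assemble the expansions of $B_i^\eps$ and $C_i^\eps$ obtained in Propositions~\ref{LimBi}, \ref{PropCiad}, \ref{PropCib} (together with the vanishing of $C_{i,c}^\eps$) into the single compact formula of Proposition~\ref{PropositionPression2}. Concretely, I would write
\begin{equation*}
-I_\eps^{-1}(B_i^\eps+C_i^\eps)_{i=1,2,3}
= -I_\eps^{-1}\Big[(B_i^\eps)_{i} + (C_{i,a}^\eps)_{i} + (C_{i,b}^\eps)_{i}\Big] + I_\eps^{-1}(R_i^\eps)_i,
\end{equation*}
where the $(R_i^\eps)$ collect all the remainders, which by the three propositions are $\mathcal{O}(\eps^2)$ for $i=1,2$ and $\mathcal{O}(\eps^3)$ for $i=3$; after multiplying by $I_\eps^{-1}=\mathrm{diag}(1,1,\eps^{-1})$ this gives a contribution of size $\eps^2$ in the third slot matching the claimed $\eps^2 R^\eps$ with $\|R^\eps\|_{L^\infty(0,T)}\le C$.

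**Organizing the algebra.** The substance is bookkeeping: substitute the explicit leading terms from the three propositions, use the scaling laws $m_{i,j}^\eps=\eps^{2+\delta_{\{i\ge3\}}+\delta_{\{j\ge3\}}}m_{i,j}^1$ from \eqref{DefMasse}, $|\mathcal{S}_0^\eps|=\eps^2|\mathcal{S}_0|$, $x_G^\eps=\eps x_G$, $m_6^\eps=\eps^4 m_6^1$, $m_7^\eps=\eps^4 m_7^1$, and then read off the common factor $\eps$ (and $\eps^2$ for the higher-order block). I would first isolate the terms that produce the leading gyroscopic/Kutta--Joukowski force $\gamma\tilde p^\eps\times\mathbf B$: these come from the $\gamma(K_{\R^2}[\omega^\eps](t,0)-\ell^\eps)^\perp$, $\gamma\eps r^\eps\xi$, $\gamma\eps(DK_{\R^2}[\omega^\eps](t,0)\cdot\xi)^\perp$ pieces of $(C_{1,b}^\eps,C_{2,b}^\eps)$ and the $\gamma\eps\,\xi\cdot(K_{\R^2}[\omega^\eps](t,0)-\ell^\eps)$ piece of $C_{3,b}^\eps$ (Proposition~\ref{PropCib}). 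Using the definitions \eqref{DefLTilde}, \eqref{DefPtilde} of $\tilde\ell^\eps$, $\tilde p^\eps$ and \eqref{DefB} of $\mathbf B$, together with the product formula \eqref{vprod}, one checks that exactly these terms reassemble into $\gamma\tilde p^\eps\times\mathbf B$; the $DK\cdot\xi$ and $\eps r^\eps\xi$ modulation terms are precisely what turns the bare $K_{\R^2}[\omega^\eps](t,0)$ into the modulated velocity $\tilde\ell^\eps$ and simultaneously supplies the $\eps r^\eps$ entries of $\tilde p^\eps$.

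**Matching the remaining blocks.** Next I would collect the quadratic-in-$(\ell^\eps,r^\eps)$ contributions. The $-\tfrac12\int|\ell^\eps+r^\eps x^\perp|^2 K_i$ term $D_{i,f}$ and the leading $(\mathcal M_\flat^\eps+|\mathcal S_0^\eps|\mathrm I_2)$ terms of $B_i^\eps$ and $C_{i,a}^\eps$ combine; using \eqref{DefGammaSouligne} these should reproduce $-\langle\underline\Lambda,\check p^\eps,\check p^\eps\rangle$ once $\ell^\eps$ is replaced by $\ell^\eps-K_{\R^2}[\omega^\eps](t,0)$ (the first two components of $\check p^\eps$). The $a^\eps,b^\eps$-linear terms carrying $m_{1,4}^1,\dots,m_{5,5}^1$ should gather into $\hat p^\eps\times(a^\eps\hat\mu^1+b^\eps\check\mu^1)$ via \eqref{VectMass}--\eqref{DefFSouligne}, the genuinely-$\gamma$ higher corrections $\eps^2\gamma(-a^\eps\eta_1+b^\eps\eta_2)$ and $\xi\cdot(DK\cdot\xi)$ into $\gamma G(\eps,t)$ via \eqref{DefG}, and the $m_{i,3}^\eps$-weighted terms into the $-(\eps r^\eps)\hat p^\eps\times\mu^1$ and $-(\eps r^\eps)\big({\mathcal M}_\flat^1(K_{\R^2}[\omega^\eps](t,0))^\perp,0\big)^T$ pieces. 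The main obstacle is entirely computational: verifying that every $a^\eps,b^\eps,\ell^\eps,r^\eps$ coefficient cancels or coalesces correctly across the three propositions, since the expansions were deliberately written in non-symmetrized form and the cancellations (e.g. the $D_{3,d}=0$ identity, the purely imaginary residue giving $C_{i,c}^\eps=0$, and the symmetry relations among the $m_{i,j}^1$) must all be invoked in the right combination. I would therefore carry out the matching coordinate-by-coordinate, treating $i=1,2$ and $i=3$ separately because of the extra factor of $\eps$ from $I_\eps^{-1}$, and relegate the lengthy verification of the coefficient identities to the display-by-display comparison, absorbing all genuinely $\mathcal O(1)$ (after the $\eps^2$ factor) leftovers into $R^\eps$.
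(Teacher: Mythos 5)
Your proposal is correct and follows essentially the same route as the paper, which offers no separate argument for Proposition~\ref{PropositionPression2} beyond the remark that ``the previous subsections result into'' it: the proof is exactly the regrouping you describe of Propositions~\ref{LimBi}, \ref{PropCiad} and \ref{PropCib} together with $C_{i,c}^{\varepsilon}=0$, using the scaling laws for $m_{i,j}^{\varepsilon}$, $|\mathcal{S}_0^{\varepsilon}|$, $x_G^{\varepsilon}$, $m_6^{\varepsilon}$, $m_7^{\varepsilon}$ and the definitions \eqref{DefLTilde}, \eqref{DefGammaSouligne}--\eqref{DefG}. Your order count through $I_{\varepsilon}^{-1}$ (remainders $\mathcal{O}(\varepsilon^2)$ for $i=1,2$ and $\mathcal{O}(\varepsilon^3)$ for $i=3$, hence $\varepsilon^2 R^{\varepsilon}$ with $R^{\varepsilon}$ bounded) and your identification of which pieces of $C_{i,b}^{\varepsilon}$ reassemble into $\gamma\,\tilde{p}^{\varepsilon}\times\mathbf{B}$ and $\gamma G$ are exactly right.
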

\ \par
We are now in position to prove the main statement of a normal form, that is Proposition~\ref{Pro:NormalForm}, where $\Lambda_{a}$ is defined as the bilinear symmetric mapping such that
\begin{equation} \label{DefGammaa}
\langle \Lambda_a , p, p \rangle : = \langle \underline{\Lambda} , p, p \rangle 
+ r p \times \mu^1 \ \ \text{ for all } p = \begin{pmatrix} \ell \\ r \end{pmatrix} \in \R^{3},
\end{equation}
where we recall that $\underline{\Lambda}$ and $\mu_{1}$ were defined in \eqref{DefGammaSouligne} and \eqref{VectMass}.
We check easily that $\Lambda_{a}$ satisfies \eqref{AnnulationGammaa}. We recall also that $\Lambda_{g}$ was defined in \eqref{DefGammag}.\par
\begin{proof}[Proof of Proposition~\ref{Pro:NormalForm}]
We start from \eqref{EqSolideBis} that we reformulate as 
\begin{equation*}
\big[ \varepsilon^\alpha {\mathcal M}_{g} + \varepsilon^2 {\mathcal M}_{a} \big]
(\hat{p}^{\varepsilon} )'
= -  I_{\varepsilon}^{-1}  (B_i^\eps + C_i^\eps)_{i=1,2,3} - \varepsilon^\alpha  \begin{pmatrix} m^1 r^{\varepsilon} (\ell^{\varepsilon})^{\perp} \\ 0\end{pmatrix} .
\end{equation*}
Then we use that according to  \eqref{DefLTilde} and Proposition~\ref{labelkoi2} we have
\begin{equation*} 
 ( \tilde{\ell}^{\varepsilon})'(t) = (\ell^{\varepsilon} )' (t) + r^\varepsilon(t)  (K_{\R^2}[\omega^\varepsilon](t,0) )^\perp - F_d (\varepsilon,t)  , 
\end{equation*}
with $F_d$ weakly nonlinear in the sense of \eqref{IneqWNL}. Thus with Proposition~\ref{PropositionPression2} we deduce
\begin{align*}
\big[ \varepsilon^\alpha {\mathcal M}_{g} &+ \varepsilon^2 {\mathcal M}_{a} \big] ( \tilde{p}^{\varepsilon}) ' \\
&=  \gamma \tilde{p}^{\varepsilon} \times {\bf B} 
+ \varepsilon \Big[
-(\varepsilon r^\varepsilon) \hat{p}^{\varepsilon} \times \mu^{1} + \gamma G(\varepsilon,t) 
- \langle \underline{\Lambda}, \check{p}^{\varepsilon}, \check{p}^{\varepsilon} \rangle \Big]
- \varepsilon^\alpha  \begin{pmatrix} m^1 r^{\varepsilon} (\ell^{\varepsilon})^{\perp} \\ 0\end{pmatrix} \\
& \quad + \varepsilon^\alpha m^1 \begin{pmatrix}  r^{\varepsilon} (K_{\R^2}[\omega^\varepsilon](t,0) )^\perp  \\ 0\end{pmatrix}
+ \varepsilon^2 r^\varepsilon \begin{pmatrix} 0 \\ -m_{3,1}^{1} (K_{\R^2}[\omega^\varepsilon](t,0) )_{2} +m_{3,2}^{1} (K_{\R^2}[\omega^\varepsilon](t,0) )_{1} \end{pmatrix} \\
& \quad + \varepsilon^{\min(\alpha,2)}   \overline{F}(\varepsilon,t) +  \varepsilon^{2} R_{\varepsilon} \\
&=  \gamma \tilde{p}^{\varepsilon} \times {\bf B} 
+ \varepsilon \Big[
- (\varepsilon r^\varepsilon) \check{p}^{\varepsilon} \times \mu^{1} + \gamma G(\varepsilon,t) 
- \langle \underline{\Lambda}, \check{p}^{\varepsilon}, \check{p}^{\varepsilon} \rangle \Big]
- \varepsilon^{\alpha-1} \langle \Lambda_{g}, \check{p}^{\varepsilon},\check{p}^{\varepsilon} \rangle
+ \varepsilon^{\min(\alpha,2)}   \overline{F}(\varepsilon,t),
\end{align*}
where
\begin{equation*}
\overline{F}(\varepsilon,t)= \underline{F}(\varepsilon,t) 
- \left( \varepsilon^{\alpha- \min(\alpha,2)} {\mathcal M}_{g} + \varepsilon^{2- \min(\alpha,2)} {\mathcal M}_{a} \right)  F_d (\varepsilon,t)
+ \varepsilon^{2 - \min(\alpha,2)} R^{\varepsilon},
\end{equation*}
and where $\Lambda_{g}$ is defined in \eqref{DefGammag}.
Note that \eqref{DefGammaSouligne} and \eqref{DefGammaa} allow to simplify a bit the right hand side as follows:
\begin{equation*}
\big[ \varepsilon^\alpha {\mathcal M}_{g} + \varepsilon^2 {\mathcal M}_{a} \big] ( \tilde{p}^{\varepsilon}) ' 
=  \gamma \tilde{p}^{\varepsilon} \times {\bf B} 
+ \varepsilon \Big[ \gamma G(\varepsilon,t) - \langle \Lambda_{a}, \check{p}^{\varepsilon}, \check{p}^{\varepsilon} \rangle \Big]
- \varepsilon^{\alpha-1} \langle \Lambda_{g}, \check{p}^{\varepsilon},\check{p}^{\varepsilon} \rangle
+ \varepsilon^{\min(\alpha,2)}   \overline{F}(\varepsilon,t).
\end{equation*}
Now we modify the right hand side in order to make the modulated unknown $\tilde{p}^{\varepsilon}$ appear, instead of $\check{p}^{\varepsilon}$. This yields
\begin{equation} \label{Enfin}
\big[ \varepsilon^\alpha {\mathcal M}_{g} + \varepsilon^2 {\mathcal M}_{a} \big] ( \tilde{p}^{\varepsilon}) ' 
+ \eps^{\alpha - 1} \langle  \Lambda_g ,  \tilde{p}^{\varepsilon} ,  \tilde{p}^{\varepsilon} \rangle
+ \eps \langle  \Lambda_a ,  \tilde{p}^{\varepsilon} ,  \tilde{p}^{\varepsilon} \rangle
= \gamma \tilde{p}^{\varepsilon} \times {\bf B} 
+ \varepsilon  \gamma G(\varepsilon,t) 
+ \varepsilon^{\min(\alpha,2)} {F}(\varepsilon,t) ,
\end{equation}
where 
\begin{equation*} 
{F}(\varepsilon,t) = \overline{F}(\varepsilon,t) + \hat{F}(\varepsilon,t),
\end{equation*}
with
\begin{multline*}
\hat{F}= \varepsilon^{\alpha - \min(\alpha,2)}
\Big(- 2 \langle  \Lambda_g ,  D K_{\R^{2}}[\omega^{\varepsilon}](t,0) \cdot \xi ,  \tilde{p}^{\varepsilon} \rangle
+ \varepsilon \langle  \Lambda_g , D K_{\R^{2}}[\omega^{\varepsilon}](t,0) \cdot \xi , D K_{\R^{2}}[\omega^{\varepsilon}](t,0) \cdot \xi \rangle \Big) \\
+ \varepsilon^{2 - \min(\alpha,2)}  \Big( - 2 \langle  \Lambda_a , D K_{\R^{2}}[\omega^{\varepsilon}](t,0) \cdot \xi ,  \tilde{p}^{\varepsilon} \rangle 
+ \varepsilon \langle \Lambda_a , D K_{\R^{2}}[\omega^{\varepsilon}](t,0) \cdot \xi , D K_{\R^{2}}[\omega^{\varepsilon}](t,0) \cdot \xi\rangle \Big).
\end{multline*}
It remains to check that $F$ is weakly nonlinear in the sense of \eqref{IneqWNL} and that $G$ is weakly gyroscopic in the sense of \eqref{IneqWG}. \par
\begin{Lemma}
The term $F$ is weakly nonlinear and the term $G$ is weakly gyroscopic.
\end{Lemma}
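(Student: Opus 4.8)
The plan is to treat the two claims separately, since $F$ requires only pointwise bounds whereas $G$ genuinely needs a gyroscopic integration by parts.

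For $F=\overline{F}+\hat{F}$ I would just verify that each summand obeys \eqref{IneqWNL}, the building blocks being all at hand. By Lemma~\ref{LemModulesBornes} the quantities $a^{\varepsilon},b^{\varepsilon}$ and $DK_{\R^{2}}[\omega^{\varepsilon}](t,0)\cdot\xi$ are bounded uniformly in $(\varepsilon,t)$; from \eqref{DefLTilde} together with Lemma~\ref{LemModulesBornes} one has $|\ell^{\varepsilon}|\le|\tilde{p}^{\varepsilon}|+C$, hence $|\hat{p}^{\varepsilon}|,|\check{p}^{\varepsilon}|\le C(1+|\tilde{p}^{\varepsilon}|)$; and since $\alpha-\min(\alpha,2)\ge0$ and $2-\min(\alpha,2)\ge0$, every prefactor $\varepsilon^{\alpha-\min(\alpha,2)}$, $\varepsilon^{2-\min(\alpha,2)}$, $\varepsilon$ is $\le1$. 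Then $\underline{F}$ of \eqref{DefFSouligne} is bilinear in $\hat{p}^{\varepsilon}$ and in the bounded vector $a^{\varepsilon}\hat{\mu}^{1}+b^{\varepsilon}\check{\mu}^{1}$ plus a term linear in $\ell^{\varepsilon}$, so $|\underline{F}|\le C(1+|\tilde{p}^{\varepsilon}|)$; the term $(\varepsilon^{\alpha-\min(\alpha,2)}\mathcal{M}_{g}+\varepsilon^{2-\min(\alpha,2)}\mathcal{M}_{a})F_{d}$ is a bounded matrix applied to the weakly nonlinear $F_{d}$ of Proposition~\ref{labelkoi2}; the term $\varepsilon^{2-\min(\alpha,2)}R^{\varepsilon}$ is bounded by Proposition~\ref{PropositionPression2}; and in $\hat{F}$ each of the four brackets evaluates the fixed form $\Lambda_{g}$ or $\Lambda_{a}$ on the bounded argument $DK_{\R^{2}}[\omega^{\varepsilon}](t,0)\cdot\xi$ and on either $\tilde{p}^{\varepsilon}$ (giving $\le C|\tilde{p}^{\varepsilon}|$) or again $DK_{\R^{2}}[\omega^{\varepsilon}](t,0)\cdot\xi$ (giving $\le C$). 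Summing, $|F(\varepsilon,t)|\le C(1+|\tilde{p}^{\varepsilon}(t)|)$, which is of the form \eqref{IneqWNL}.

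The statement for $G$ is the delicate one. Since $G$ in \eqref{DefG} has only a third component and the third component of $\tilde{p}^{\varepsilon}$ is $\varepsilon r^{\varepsilon}$, we have $\tilde{p}^{\varepsilon}\cdot G=\varepsilon r^{\varepsilon}G_{3}$ with $G_{3}=c_{1}a^{\varepsilon}+c_{2}b^{\varepsilon}$, where $c_{1}=\xi_{2}^{2}-\xi_{1}^{2}+\eta_{1}$ and $c_{2}=2\xi_{1}\xi_{2}-\eta_{2}$ are found by expanding $\xi\cdot(DK_{\R^{2}}[\omega^{\varepsilon}](t,0)\cdot\xi)$ through \eqref{a-et-b}. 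A naive pointwise bound fails here, yielding only $|\int_{0}^{t}\tilde{p}^{\varepsilon}\cdot G|\le C\int_{0}^{t}|\tilde{p}^{\varepsilon}|$ without the crucial prefactor $\varepsilon$ of \eqref{IneqWG}; the extra $\varepsilon$ must come from a cancellation in time. I would extract it as follows. Write $\kappa^{\varepsilon}:=a^{\varepsilon}+ib^{\varepsilon}$ for the symbol of $DK_{\R^{2}}[\omega^{\varepsilon}](t,0)$, and let $\lambda^{\varepsilon}:=A^{\varepsilon}+iB^{\varepsilon}$ be the analogous complex symbol of the symmetric trace-free matrix $DK_{\R^{2}}[w^{\varepsilon}](t,h^{\varepsilon}(t))$ in the fixed frame (trace-free because $w^{\varepsilon}$ vanishes near $h^{\varepsilon}(t)$ by \eqref{CondDistVorticiteSolide}). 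The transformation rule \eqref{nablaK} for this second-order tensor under $R_{\theta^{\varepsilon}}$ then reads $\kappa^{\varepsilon}=e^{2i\theta^{\varepsilon}}\lambda^{\varepsilon}$. As $r^{\varepsilon}=(\theta^{\varepsilon})'$, one has $r^{\varepsilon}e^{2i\theta^{\varepsilon}}=\frac{1}{2i}(e^{2i\theta^{\varepsilon}})'$, so with $c=c_{1}+ic_{2}$,
\begin{equation*}
\int_{0}^{t}\tilde{p}^{\varepsilon}\cdot G\,ds=\varepsilon\,\Re\Big[\bar{c}\int_{0}^{t}r^{\varepsilon}e^{2i\theta^{\varepsilon}}\lambda^{\varepsilon}\,ds\Big]=\frac{\varepsilon}{2}\,\Im\Big[\bar{c}\int_{0}^{t}(e^{2i\theta^{\varepsilon}})'\lambda^{\varepsilon}\,ds\Big],
\end{equation*}
and integration by parts produces a boundary term $[e^{2i\theta^{\varepsilon}}\lambda^{\varepsilon}]_{0}^{t}$, bounded by $2\sup|\lambda^{\varepsilon}|\le C$ via Lemma~\ref{LemModulesBornes}, minus $\int_{0}^{t}e^{2i\theta^{\varepsilon}}(\lambda^{\varepsilon})'\,ds$.

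It remains to control $(\lambda^{\varepsilon})'$, which is exactly the computation in the proof of Proposition~\ref{labelkoi2}: differentiating $DK_{\R^{2}}[w^{\varepsilon}](t,h^{\varepsilon}(t))$ in $t$ gives $DK_{\R^{2}}[\partial_{t}w^{\varepsilon}](t,h^{\varepsilon})$, uniformly bounded since $\partial_{t}w^{\varepsilon}=-\div(u^{\varepsilon}w^{\varepsilon})$ with $u^{\varepsilon}w^{\varepsilon}$ bounded in $L^{1}\cap L^{\infty}$ and $K_{\R^{2}}[\partial_{t}w^{\varepsilon}]$ harmonic near $h^{\varepsilon}$, plus $D^{2}K_{\R^{2}}[w^{\varepsilon}](t,h^{\varepsilon})\cdot(h^{\varepsilon})'$, bounded by $C|\ell^{\varepsilon}|$ because $|(h^{\varepsilon})'|=|\ell^{\varepsilon}|$ and $D^{2}K_{\R^{2}}[w^{\varepsilon}]$ is controlled by \eqref{loglip2} through harmonicity. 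Hence $|(\lambda^{\varepsilon})'|\le C(1+|\ell^{\varepsilon}|)\le C(1+|\tilde{p}^{\varepsilon}|)$, so $\int_{0}^{t}|(\lambda^{\varepsilon})'|\,ds\le C(t+\int_{0}^{t}|\tilde{p}^{\varepsilon}|^{2}\,ds)$ after using $|\tilde{p}^{\varepsilon}|\le1+|\tilde{p}^{\varepsilon}|^{2}$. Combining boundary and interior contributions gives $|\int_{0}^{t}\tilde{p}^{\varepsilon}\cdot G\,ds|\le\varepsilon C(1+t+\int_{0}^{t}|\tilde{p}^{\varepsilon}|^{2}\,ds)$, i.e.\ \eqref{IneqWG}. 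The main obstacle is precisely this last claim: recognizing that the shape symbol $a^{\varepsilon}+ib^{\varepsilon}$ rotates at twice the body's angular rate while $r^{\varepsilon}$ is that angular rate, so that $r^{\varepsilon}\kappa^{\varepsilon}$ is, up to a controlled remainder, a pure time derivative; everything else is bookkeeping with the uniform bounds already in place.
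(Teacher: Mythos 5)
Your proof is correct and takes essentially the same route as the paper: the $F$-part is the same bookkeeping with Lemma~\ref{LemModulesBornes}, Proposition~\ref{labelkoi2} and Proposition~\ref{PropositionPression2}, while your complex-symbol identity $\kappa^{\varepsilon}=e^{2i\theta^{\varepsilon}}\lambda^{\varepsilon}$ is precisely the paper's observation that the traceless symmetric matrix $DK_{\R^2}[w^{\varepsilon}](s,h^{\varepsilon}(s))$ anti-commutes with rotations, so that $r^{\varepsilon}$ times the rotating factor is a pure time derivative, followed by the same integration by parts and the same control of $\partial_{s}\big(DK_{\R^2}[w^{\varepsilon}](s,h^{\varepsilon}(s))\big)$ via the vorticity equation and harmonicity. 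The only cosmetic difference is that you merge the two components of $G$ into a single complex constant $c$, where the paper treats the terms $\xi\cdot(DK_{\R^2}[\omega^{\varepsilon}](t,0)\cdot\xi)$ and $e_{1}\cdot(DK_{\R^2}[\omega^{\varepsilon}](t,0)\cdot\eta)$ separately.
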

\begin{proof}

We begin by proving that the term $G$ that we made explicit in \eqref{DefG} is weakly gyroscopic, that is, satisfies \eqref{IneqWG}. We can rewrite $G$ as follows:
\begin{equation} \label{DefG2}
G(\varepsilon,t) = \begin{pmatrix}
 	0 \\ 0 \\ \xi \cdot (D K_{\R^2}[\omega^\varepsilon](t,0) \cdot \xi)
 	-e_{1}\cdot  (D K_{\R^2}[\omega^\varepsilon](t,0) \cdot \eta)
 	\end{pmatrix},
\end{equation}
where we denote by $(e_{1},e_{2})$ the canonical basis of $\R^{2}$. That $e_{1}$ appears in the formula \eqref{DefG2} does not mean that it is a privileged direction, recall \eqref{a-et-b}.
Using \eqref{nablaK} and \eqref{a-et-b} we have
\begin{multline*} 
\int_{0}^{t} \tilde{p}^{\varepsilon}(s) \cdot G(\varepsilon,s) \, ds 
= \varepsilon \int_{0}^{t} r^{\varepsilon}(s) \Big( R_{\theta^{\varepsilon}(s)}\xi \Big)\cdot \Big( D K_{\R^2} [w^{\varepsilon}](s,h^{\varepsilon}(s)) \cdot R_{\theta^{\varepsilon}(s)}\xi \Big) \, ds \\
- \varepsilon \int_{0}^{t} r^{\varepsilon}(s) \Big(R_{\theta^{\varepsilon}(s)} e_{1}\Big) \cdot \Big(D K_{\R^2}  [w^{\varepsilon}](s,h^{\varepsilon}(s)) \cdot R_{\theta^{\varepsilon}(s)}\eta\Big) \, ds .
\end{multline*}
Let us focus on the first term on the right hand side, that we denote $I_{1}$, the second one being treated likewise. 
Since the function $K_{\R^2}  [w^{\varepsilon}](s,\cdot)$ is the orthogonal gradient of a harmonic function in the neighborhood of $h^{\varepsilon}(s)$, the matrix $D K_{\R^2}  [w^{\varepsilon}](s,h^{\varepsilon}(s))$, as for \eqref{a-et-b}, is a $2 \times 2$ traceless symmetric matrix. It is hence the composition of a linear homothety and an axial orthogonal symmetry. It follows that
\begin{equation*}
D K_{\R^2}  [w^{\varepsilon}](s,h^{\varepsilon}(s)) R_{\theta^{\varepsilon}(s)} 
= R_{-\theta^{\varepsilon}(s)} D K_{\R^2}  [w^{\varepsilon}](s,h^{\varepsilon}(s)) 
= R_{\theta^{\varepsilon}(s)}^{T} D K_{\R^2}  [w^{\varepsilon}](s,h^{\varepsilon}(s)) .
\end{equation*}
With $r^{\varepsilon}=(\theta^{\varepsilon})'$, we deduce that
\begin{eqnarray*}
I_{1}
&=& \varepsilon \int_{0}^{t} r^{\varepsilon}(s) \Big( R_{2\theta^{\varepsilon}(s)}\xi \Big) \cdot \Big( D K_{\R^2} [w^{\varepsilon}](s,h^{\varepsilon}(s)) \cdot \xi \Big) \, ds \\
&=& \frac{\varepsilon}{2} \left[ \Big( R_{2\theta^{\varepsilon}(s) - \frac{\pi}{2}}\xi \Big) \cdot \Big( D K_{\R^2}  [w^{\varepsilon}](s,h^{\varepsilon}(s)) \cdot \xi \Big)\right]_{0}^{t}
- \frac{\varepsilon}{2} \int_{0}^{t}  \Big( R_{2\theta^{\varepsilon}(s) - \frac{\pi}{2}} \xi \Big) \cdot \Big( \partial_{s}[ D K_{\R^2}  [w^{\varepsilon}](s,h^{\varepsilon}(s)) \cdot \xi]\Big) \, ds \\
&=& {\mathcal O}(\varepsilon) 
- \frac{\varepsilon}{2} \int_{0}^{t}  \Big( R_{2\theta^{\varepsilon}(s) - \frac{\pi}{2}} \xi \Big) \cdot \Big( D K_{\R^2} [\partial_{s} w^{\varepsilon}](s,h^{\varepsilon}(s)) \cdot \xi \Big) \, ds 
\\
&&- \frac{\varepsilon}{2} \int_{0}^{t}  \Big( R_{2\theta^{\varepsilon}(s) - \frac{\pi}{2}} \xi \Big) \cdot\Big( D^{2} K_{\R^2}  [w^{\varepsilon}](s,h^{\varepsilon}(s)) : (h^{\varepsilon})' \otimes \xi \Big)\, ds. 
\end{eqnarray*}
Using that $\partial_{s} w^\varepsilon = -\div (u^\varepsilon w^\varepsilon)$, the a priori estimates given by Lemma~\ref{LeminfVomega} on $u^\varepsilon$ and by \eqref{ConsOmega} on $w^\varepsilon$, and the harmonicity of $K_{\R^2}[u^\varepsilon w^\varepsilon]$ in $B(h^{\varepsilon}(s), 1/\rho)$, we see that the first integral is bounded by $Ct$, the second one by $C \int_{0}^{t} (1+ |\tilde{p}^{\varepsilon}|)$, which gives the result with Young's inequality. \par
\ \par
The fact that $F$ is weakly nonlinear can be seen when following the lines leading to \eqref{Enfin}: it is enough to see that $\overline{F}$ and $\hat{F}$ are weakly nonlinear.
Concerning $\hat{F}$, the result follows easily from the fact that $D K_{\R^2}[\omega^\eps](t,0)$ is bounded uniformly in $\varepsilon$ (see Lemma~\ref{LemModulesBornes}). Concerning $\overline{F}$ this comes from Proposition~\ref{PropositionPression2} and the fact that $\underline{F}$ and $F_{d}$ are themselves weakly nonlinear. For $\underline{F}$ this is visible from the expression \eqref{DefFSouligne}, the coefficients $a^{\varepsilon}$ and $b^{\varepsilon}$ being related to $D K_{\R^2}[\omega^\eps](t,0)$ by \eqref{a-et-b} and Lemma~\ref{LemModulesBornes} again; for $F_{d}$ this follows from Proposition~\ref{labelkoi2}.
\end{proof}

This ends the proof of Proposition~\ref{Pro:NormalForm}.
\end{proof}
\section{Modulated energy and passage to the limit}
\label{Sec:Passage}
\subsection{Modulated energy. Proof of Proposition~\ref{Pro:ModulatedEnergy}.}
\label{Subsec:ModulatedEnergy}
Let us fix $\rho>1$ and $\overline{T}>0$.  
Let $C>0$ and $\varepsilon_{0}>0$ be the constants obtained by Proposition~\ref{Pro:NormalForm}. 
Let $T \in (0,\overline{T}]$ and $\varepsilon \in (0,\varepsilon_{0}]$ such that \eqref{CondDistVorticiteSolide} is valid on $[0,T]$.
Then according to Proposition~\ref{Pro:NormalForm}, \eqref{Eq:NormalForm} is valid on $[0,T]$. 
Now to get the modulated energy estimate, we multiply \eqref{Eq:NormalForm} by $\tilde{p}^{\varepsilon}$; several simplifications occur in the process.
On the one hand, we use the symmetry of the matrices ${\mathcal M}_{g}$ (defined in \eqref{DefMG}) and ${\mathcal M}_{a}$ (defined in \eqref{defMa}) to observe the following:
\begin{equation*}
\tilde{p}^{\varepsilon} \cdot \big[ \varepsilon^\alpha {\mathcal M}_{g} + \varepsilon^2 {\mathcal M}_{a} \big] (\tilde{p}^{\varepsilon})'  
 = \big[ \varepsilon^\alpha {\mathcal E}_{g} (\tilde{p}^{\varepsilon}) + \varepsilon^2 {\mathcal E}_{a}  (\tilde{p}^{\varepsilon})  \big] ' ,
\end{equation*}
where we define, for any $p \in \R^3$,
\begin{equation*}
{\mathcal E}_{g} (p) := \frac{1}{2} p \cdot {\mathcal M}_{g} p \ \text{ and } \
{\mathcal E}_{a} (p) := \frac{1}{2} p \cdot {\mathcal M}_{a} p .
\end{equation*}
On the other hand, we have
\begin{equation*}
\tilde{p}^{\varepsilon} \cdot  \langle \varepsilon^{\alpha-1} \Lambda_{g} 
+ \varepsilon \Lambda_{a} , \tilde{p}^{\varepsilon}, \tilde{p}^{\varepsilon} \rangle =  0,
\end{equation*}
since the quadratic mappings $\Lambda_{g}$ and $\Lambda_{a}$ satisfy \eqref{AnnulationGammag} and \eqref{AnnulationGammaa} respectively; moreover, of course,
\begin{equation*}
\tilde{p}^{\varepsilon} \cdot  \big(\tilde{p}^{\varepsilon} \times {\bf B}  \big)= 0.
\end{equation*}
Thus we obtain on $[0,T]$, 
\begin{equation*}
\big[ \varepsilon^\alpha {\mathcal E}_{g} (\tilde{p}^{\varepsilon}) + \varepsilon^2 {\mathcal E}_{a}  (\tilde{p}^{\varepsilon})  \big]' (t) 
=  \varepsilon \gamma  \tilde{p}^{\varepsilon} \cdot G(\varepsilon,t) 
+ \varepsilon^{\min(\alpha,2)}\,  \tilde{p}^{\varepsilon} \cdot F(\varepsilon,t).
\end{equation*}
Next, integrating in time, we obtain for $t \in [0,T]$, 
\begin{equation*}
\big[ \varepsilon^\alpha {\mathcal E}_{g} (\tilde{p}^{\varepsilon}) + \varepsilon^2 {\mathcal E}_{a}  (\tilde{p}^{\varepsilon})  \big] (t) 
\leq \big[ \varepsilon^\alpha {\mathcal E}_{g} + \varepsilon^2 {\mathcal E}_{a} \big] (\tilde{p}^{\varepsilon}) (0) 
+ \varepsilon | \gamma |  \left| \int_0^t \tilde{p}^{\varepsilon}(s) \cdot G (\varepsilon,s)  \, ds \right| 
+  \varepsilon^{\min(\alpha,2)}\, \int_0^t \ \left| \tilde{p}^{\varepsilon}(s)  \right|  \cdot  \left|F(\varepsilon,s)  \right|  \, ds .
\end{equation*}
Moreover, from the asymptotic behaviour of the initial data we infer that 
\begin{equation*}
\big[ \varepsilon^\alpha {\mathcal E}_{g} + \varepsilon^2 {\mathcal E}_{a} \big] (\tilde{p}^{\varepsilon}) (0)  \leq C  \varepsilon^{\min(\alpha,2)} .
\end{equation*}
Now let us assume in a first time that ${\mathcal S}_{0}$ is not a ball. Then there exists $c>0$ depending only on $m^{1}$, ${\mathcal J}^{1}$ and ${\mathcal S}_{0}$ such that for any $p \in \R^3$,
\begin{equation*}
c^{-1} \, | p |^{2} \leq {\mathcal E}_{g} (p)  \leq  c  \, | p |^{2}  \, \text{ and }
c^{-1} \, | p |^{2} \leq {\mathcal E}_{a} (p)  \leq  c  \, | p |^{2} .
\end{equation*}
This is due to the fact, as we mentioned in Subsection~\ref{NRJ} that in this case ${\mathcal M}_{a}$ is a Gram matrix associated to a free family of vectors.
Therefore for  $t \in [0,T]$, 
\begin{equation*}
\left| \tilde{p}^{\varepsilon}(t)  \right|^{2} \leq C \Big(1+t+ \int_{0}^{t} ( \left| \tilde{p}^{\varepsilon}(s)  \right|^2 + \eps   \left| \tilde{p}^{\varepsilon}(s)   \right|^3 )  \, ds\Big) .
\end{equation*}
Then it suffices to use Lemma~\ref{LemEstNRJ2} and Gronwall's lemma on
\begin{equation*}
f(t):=\left| \tilde{p}^{\varepsilon}(t)  \right|^{2}  \leq C(1+\overline{T})+ 2C \int_{0}^t f(s)\, ds
\end{equation*}
to establish that on $[0,T]$,
\begin{equation*}
|\tilde{p}^{\varepsilon}(t)| \leq \sqrt{C(1+\overline{T})}e^{C \overline{T}}.
\end{equation*}
Finally it remains to use the a priori bounds on the modulation terms given in Lemma~\ref{LemModulesBornes} to deduce that \eqref{EstNRJMod} is valid. \par

In the degenerate case where ${\mathcal S}_{0}$ is a ball, then we recall that $r^{\varepsilon}(t)$ is constant over time (as seen from \eqref{Solide2}) and in particular bounded; moreover ${\mathcal M}_{a}$ is of the form 
\begin{equation*}
{\mathcal M}_{a}=\begin{pmatrix}
	m_{1,1} & 0 & 0 \\
	0 & m_{1,1} & 0 \\
	0 & 0 & 0
\end{pmatrix}.
\end{equation*}
Hence we can reason analogously as before using
\begin{equation*}
c^{-1} \, | \tilde{\ell}^{\varepsilon} |^{2} \leq {\mathcal E}_{a} (\tilde{p}^{\varepsilon})  \leq  c  \, | \tilde{\ell}^{\varepsilon} |^{2} ,
\end{equation*}
to deduce the boundedness of $\tilde{\ell}^{\varepsilon}$. Again we use the a priori bounds on the modulation terms of Lemma~\ref{LemModulesBornes} to conclude. This ends the proof of Proposition~\ref{Pro:ModulatedEnergy}. \qed
%
%
%
%
%
%
%
\subsection{Local passage to the limit. Proof of Propositions~\ref{Pro:TempsMinimal} and \ref{Pro:CVLocal}.}
\label{Subsec:LocalPTL}
In this section, we pass to the limit in the equation. To that purpose, we will come back at several stages to the original frame. In particular we now define  the inertia matrices in the original frame:
\begin{equation} \label{Mtheta}
\mathcal{M}^\eps_{a,\theta^\eps(t)} := Q_{\theta^\eps(t)}\mathcal{M}^\eps_{a} Q_{\theta^\eps(t)}^T
\qquad \text{and}  \qquad
\mathcal{M}^\eps_{\theta^\eps(t)} := Q_{\theta^\eps(t)}\mathcal{M}^\eps Q_{\theta^\eps(t)}^T =  \mathcal{M}^\eps_{g}
+ \mathcal{M}^\eps_{a,\theta^\eps(t)}
\end{equation}
where we recall that the added mass matrix $\mathcal{M}^\eps_{a}$ and the total mass matrix $\mathcal{M}^\eps$ in the solid frame are defined in \eqref{AddedMass} and \eqref{InertieMatrix} respectively and have constant coefficients with respect to time. In the above definition,  $Q_{\theta^\varepsilon}$ is defined as the rotation matrix of angle $\theta^\eps$ acting on the first two coordinates:
\begin{equation*} 
Q_{\theta^\eps}:=\begin{pmatrix}
\cos \theta^\eps & -\sin \theta^\eps & 0\\
\sin\theta^\eps & \cos\theta^\eps & 0\\
0 &0 &1
\end{pmatrix} .
\end{equation*} \par
We begin by establishing Proposition~\ref{Pro:TempsMinimal}. 
\begin{proof}[Proof of Proposition~\ref{Pro:TempsMinimal}]
We fix $\overline{T}>0$, which allows to define $T_{\varepsilon}$ according to \eqref{T-eps}.
Hence, we can apply Proposition~\ref{Pro:ModulatedEnergy} with $\rho = 2 \rho_{\overline{T}}$ and $\overline{T}$: there exist $C>0$ and $\varepsilon_{0}\in (0,1]$ such that for each $\varepsilon\in (0,\varepsilon_{0}]$, \eqref{EstNRJMod} applies on the interval $[0,T_{\varepsilon}]$, and in particular $|(h^{\varepsilon})'| \leq C$. On the other side, $w^\varepsilon$ is transported by the fluid velocity which is bounded uniformly on the vorticity support (see Lemma~\ref{LeminfVomega}) ; let us say, it is bounded by $C$ during $[0,T_{\varepsilon}]$. Then Proposition~\ref{Pro:TempsMinimal} follows immediately with for instance
\begin{equation*}
\underline{T}=\min \Big(\frac1{4C\rho_{\overline{T}}},\frac{\rho_{\overline{T}}}{2C}).
\end{equation*}
\end{proof}
Now we can turn to the proof of Proposition~\ref{Pro:CVLocal}.
\begin{proof}[Proof of Proposition~\ref{Pro:CVLocal}]
We proceed in several successive steps. \par
\ \par
\noindent
{\bf 1.} {\it Compactness for the solid velocity.} 
It is clear that on $[0,\underline{T}]$, the family $(h^{\varepsilon})$ is weakly-$\star$ relatively compact in $W^{1,\infty}([0,\underline{T}])$; hence we can pick a converging subsequence along indices $(\varepsilon_{n})_{n \in \N}$, say
\begin{equation} \label{ConvergenceH}
h^{\varepsilon_{n}} \longrightharpoonup h \text{ in } W^{1,\infty}([0,\underline{T}])-w\star \text{ as } n \rightarrow +\infty.
\end{equation}
\ \par
\noindent
{\bf 2.} {\it Compactness for the fluid velocity.} Let us now obtain some compactness for the fluid vorticity in the original frame, and for the velocity that it generates via the Biot-Savart law. We will obtain a convergence along a subsequence of $(\varepsilon_{n})$; to simplify the notations we will still call it $(\varepsilon_{n})$. \par
We extend ${w}^{\varepsilon} (t,\cdot)$ by $0$ inside ${\mathcal S}^{\varepsilon} (t)$.
Using the a priori estimate \eqref{ConsOmega}, we deduce that, up to a subsequence of $(\varepsilon_{n})$, one has, for some ${w} \in  L^{\infty}((0,\underline{T}) \times \R^2)$:
\begin{equation} \label{CVTildeW}
{w}^{\varepsilon_{n}} \longrightharpoonup {w} \ \text{ in } L^{\infty}((0,\underline{T}) \times \R^2) - w\star \ \text{ as } n \rightarrow +\infty.
\end{equation}
It follows moreover from \eqref{loglip2} that $K_{\R^{2}}[ w^{\eps}]$ is bounded in $L^{\infty}(0,\underline{T};{\mathcal{LL}}(\R^{2}))$. Since
\begin{equation} \label{vorty1Dprime}
\partial_t  w^{\eps} +  \div (u^{\eps} w^{\eps}) =0 \text{ in } {\mathcal D}'((0,\underline{T}) \times \R^{2}),
\end{equation}
we deduce from \eqref{ConsOmega} and Lemma~\ref{LeminfVomega} that $\partial_t  w^{\eps}$ is bounded in $L^{\infty}(0,\underline{T};W^{-1,p}(\R^{2}))$ for $p < +\infty$.
Hence we deduce by \cite[Appendix C]{lions} that the convergence \eqref{CVTildeW} can be improved into
\begin{equation*} 
{w}^{\varepsilon_{n}} \longrightarrow {w} \text{ in } C^{0}([0,\underline{T}];  L^{\infty}(\R^2)-w\star) \ \text{ as } n \rightarrow +\infty  .
\end{equation*}
Actually, \cite[Appendix C]{lions} considers only the compactness of a sequence in $C^{0}([0,\underline{T}];X-w)$ for $X$ a reflexive separable Banach space. However, the generalization to $C^{0}([0,\underline{T}]; L^{\infty}(\R^2)-w\star)$ is straightforward using the separability of $L^{1}(\R^{2})$.
Now since $K_{\R^{2}}$ is a compact operator form $L^{p}(\R^{2})$ to $L^{\infty}_{loc}(\R^{2})$ for $p>2$ we deduce that
\begin{equation} \nonumber
K_{\R^{2}} [{w}^{\varepsilon_{n}}] \longrightarrow K_{\R^{2}} [{w}] \ \text{ in } C^{0}([0,\underline{T}]; L^{\infty}_{loc}(\R^{2}))  \ \text{ as } n \rightarrow +\infty.
\end{equation}
{\bf 3.} {\it Convergence of $u^{\varepsilon} $.} We use Lemma~\ref{LemEstCheckvTildev2}, $\| \nabla \Phi_{i}^\varepsilon \|_{L^2(\mathcal{F}_{0}^\varepsilon)}=\mathcal{O}(\varepsilon^{1+\delta_{i\geq 3}})$ (see \eqref{phi-scaling}--\eqref{phi-scaling2}) and the uniform estimates on $\ell^{\varepsilon}$, $\varepsilon r^{\varepsilon}$, $K_{\R^{2}} [\omega^{\varepsilon}](t,0)$ and $D K_{\R^{2}} [\omega^{\varepsilon}](t,0)$ (see Proposition~\ref{Pro:ModulatedEnergy} and Lemma~\ref{LemModulesBornes}) to deduce that
\begin{equation*} 
 K_{\R^{2}}[\omega^{\varepsilon}]  - \tilde{v}^{\eps} \longrightarrow 0 \ \text{ in } L^{\infty}(0,\underline{T};L^{2}(\R^{2})) \text{ as } \varepsilon \rightarrow 0^{+}.
\end{equation*}
Here we have extended (for instance) $\tilde{v}^{\varepsilon}$ inside ${\mathcal S}_{0}^{\varepsilon}$ by $\ell^{\varepsilon} + r^\varepsilon x^\perp$.
Coming back to original variables, and recalling the notation \eqref{NotationRot2D}, we define $\tilde{u}^{\eps}$ by the relation
\begin{equation*} 
\tilde{u}^{\eps} (t,x) := R_{\theta^{\varepsilon}} \, \tilde{v}^\eps(t,R_{\theta^{\varepsilon}}^{T}(x-h^{\eps}(t))) ,
\end{equation*}
that is, $\tilde{u}^{\varepsilon}$ is the ``circulation-free'' part of $u^{\varepsilon}$:
\begin{equation} \nonumber
\tilde{u}^{\eps} = u^{\eps}- \gamma R_{\theta^{\varepsilon}} \, H^{\varepsilon}(R_{\theta^{\varepsilon}}^{T} (x-h^{\varepsilon}(t))) .
\end{equation}
We infer that
\begin{equation} \nonumber
K_{\R^{2}}[w^{\eps}]  - \tilde{u}^{\varepsilon} \longrightarrow 0 \ \text{ in } L^{\infty}(0,\underline{T};L^{2}(\R^{2})) \text{ as } \varepsilon \rightarrow 0^{+} ,
\end{equation}
where we have extended $\tilde{u}^{\varepsilon}$ in ${\mathcal S}^{\varepsilon}(t)$ by $(h^{\varepsilon})' + r^\varepsilon (x -h^{\varepsilon}(t))^{\perp}$. \par
Now we use the fact that for $p<2$, one has
\begin{equation} \label{CVHeps}
H^{\varepsilon_{n}}(\cdot) \longrightarrow H(\cdot) \ \text{ in } L^{p}_{loc}(\R^{2}) \ \text{ as } n \rightarrow +\infty,
\end{equation}
(see \cite[Lemma 3.11]{Cricri} and \cite[Lemma 4.2]{ift_lop_euler}), where $H$ is defined as
\begin{equation*}
H(x):= \frac{x^{\perp}}{2 \pi |x|^{2}}.
\end{equation*}
This involves that, as seen in \cite{GLS}, one has due to \eqref{ConvergenceH}:
\begin{equation} \label{CVWS1}
R_{\theta^{\varepsilon_{n}}} H^{\varepsilon_{n}}((R_{\theta^{\varepsilon_{n}}})^{T} (\cdot-h^{\varepsilon_{n}}(t))) \longrightarrow H(\cdot - h(t)) \ \text{ in } L^{\infty}(0,\underline{T};L^{p}_{loc}(\R^{2})) \ \text{ as } n \rightarrow +\infty,
\end{equation}
where we extend $H^{\varepsilon}$ by $0$ inside ${\mathcal F}^{\varepsilon}_{0}$. This convergence follows from \eqref{CVHeps} and a change of variable, noting that $H$ is invariant by rotation. \par
Gathering the above convergences, we obtain for $p<2$
\begin{equation*} 
{u}^{\varepsilon_{n}}(x) \longrightarrow K_{\R^{2}} [{w}] +\gamma H(\cdot - h(t)) 
\ \text{ in } L^{\infty}(0,\underline{T};L^{p}_{loc}(\R^{2})) \ \text{ as } n \rightarrow +\infty.
\end{equation*}
%
\noindent
{\bf 4.} {\it Fluid equation in the limit.}
Let us show that  $u$ and $w$  satisfy  \  \eqref{EqSolFaibleIntro}.
Since ${u}^{\varepsilon}$ and ${w}^{\varepsilon}$ satisfy \eqref{vorty1Dprime}, it can be easily seen that for any test function $\psi\in C^\infty_c([0,\underline{T})\times\R^2)$,
\begin{equation} \label{SolFEps}
\int_0^\infty\int_{\R^2} \psi_t  {w}^{\varepsilon_{n}} \, dx\, dt
+ \int_0^\infty \int_{\R^2} \nabla \psi \cdot {u}^{\varepsilon_{n}}  {w}^{\varepsilon_{n}} \, dx\, dt 
+ \int_{\R^2} \psi(0,x)  w_0(x) \, dx =0.
\end{equation}
Here we extend $u^{\varepsilon_{n}}$ and $w^{\varepsilon_{n}}$ inside ${\mathcal S}_{0}^{\varepsilon}$ as before; in particular $w^{\varepsilon_{n}}$ is extended by $0$. Then one can check the convergence as $n \rightarrow +\infty$ of each term in \eqref{SolFEps}: the convergence of the first one is a direct consequence of \eqref{CVTildeW} and the second one is a matter of weak/strong convergence. This allows to get \eqref{EulerPoint} on $(0,\underline{T})$. \par
\ \par
\noindent
{\bf 5.} {\it Solid equation in the limit.}
We will rely on the normal form  \eqref{Eq:NormalForm}. More precisely we first infer from \eqref{Eq:NormalForm}, using the form \eqref{DefG} of $G(\varepsilon,t)$, that 
\begin{equation} \label{AlmostThere}
\gamma
P_\flat Q_{\theta^\eps} (   \tilde{p}^{\varepsilon} \times {\bf B} )
= P_\flat Q_{\theta^\eps} \Big\{ \big[ \varepsilon^\alpha {\mathcal M}_{g} + \varepsilon^2 {\mathcal M}_{a} \big] (\tilde{p}^{\varepsilon})' 
+ \langle \varepsilon^{\alpha-1} \Lambda_{g} + \varepsilon \Lambda_{a} , \tilde{p}^{\varepsilon}, \tilde{p}^{\varepsilon} \rangle \Big\}
- \varepsilon^{\min(\alpha,2)} P_\flat Q_{\theta^\eps}  F(\varepsilon,t) ,
\end{equation}
where we define $P_{\flat}$ as the projection from $\R^{3}$ to $\R^{2}$ on the two first coordinates. \par
Now we claim to have the following identity
\begin{equation} \label{reviens}
 P_{\flat} Q_{\theta^\eps} \Big\{
\left( \eps^\alpha \mathcal{M}_{g}  + \eps^2  \mathcal{M}_{a}\right) (\tilde p^\eps)'
+\eps^{\alpha-1}\langle \Lambda_{g},\tilde p^\eps,\tilde p^\eps\rangle 
+ \eps \langle \Lambda_{a},\tilde p^\eps,\tilde p^\eps\rangle 
\Big\}
= P_{\flat} \left[ \left( \eps^\alpha \mathcal{M}_{g} Q_{\theta^\eps} 
+ \eps^2 Q_{\theta^\eps} \mathcal{M}_{a} \right) \tilde{p}^{\varepsilon} \right]',
\end{equation}
whose proof we temporarily postpone. This allows to simplify \eqref{AlmostThere} in the form
\begin{equation*}
\gamma P_\flat Q_{\theta^\eps} (   \tilde{p}^{\varepsilon} \times {\bf B} )
=  P_{\flat} \left[ \left( \eps^\alpha \mathcal{M}_{g} Q_{\theta^\eps} + \eps^2 Q_{\theta^\eps} \mathcal{M}_{a} \right) \tilde{p}^{\varepsilon} \right]'
- \varepsilon^{\min(\alpha,2)} P_\flat Q_{\theta^\eps}  F(\varepsilon,t) .
\end{equation*}
Now we see that terms in the right hand side converge to $0$ in $W^{-1,\infty}([0,\underline{T}])$. For the last term, the convergence is clear (and even strong in $L^{\infty}$), due to \eqref{IneqWNL}. The first term in the right hand side is composed of time-derivatives of bounded terms in $L^{\infty}$ (according to Proposition~\ref{Pro:ModulatedEnergy} and Lemma~\ref{LemModulesBornes}) multiplied by positive powers of $\varepsilon$. Hence this term converges also to $0$ in $W^{-1,\infty}$. \par
Hence at this stage we know that $P_\flat Q_{\theta^\eps_{n}} ( \tilde{p}^{\varepsilon_{n}} \times {\bf B} )$ converges to $0$ in $W^{-1,\infty}$. Now we have from the definition of ${\bf B}$ \eqref{DefB} and relations \eqref{KomKw}--\eqref{nablaK} and \eqref{vprod} that
\begin{equation*}
P_\flat  Q_{\theta^\eps} (   \tilde{p}^{\varepsilon} \times {\bf B} ) = 
- \varepsilon r^{\varepsilon} R_{\theta^\eps} \xi +
\Big((h^{\varepsilon} )' - K_{\R^{2}} [ w^{\varepsilon}] ( t,h^{\varepsilon} ) - \eps D K_{\R^{2}}[w^{\varepsilon}](t,h^{\varepsilon}) \cdot R_{\theta^\varepsilon(t)}\xi \Big)^{\perp}
 .
\end{equation*}
Using \eqref{DeriveeRotation} and Lemma~\ref{LemModulesBornes}, we observe that as $\varepsilon \rightarrow 0^{+}$,
\begin{equation*}
\varepsilon r^{\varepsilon} R_{\theta^{\eps}} \xi = -\varepsilon (R_{\theta^{\eps}} \xi^{\perp})' \longrightarrow 0 \text{ in } W^{-1,\infty}([0,\underline{T}]) 
\ \text{ and } \ 
\eps D K_{\R^{2}}[w^{\varepsilon}](\cdot,h^{\varepsilon}) \cdot R_{\theta^{\varepsilon}}\xi \longrightarrow 0 \text{ in } L^{\infty}(0,\underline{T}). 
\end{equation*}
Hence we reach the convergence
\begin{equation*}
(h^{\varepsilon_{n}} )' - K_{\R^{2}} [ w^{\varepsilon_{n}}] (\cdot, h^{\varepsilon_{n}} ) \longrightarrow 0 \ \text{ in } \ W^{-1,\infty}([0,\underline{T}]). 
\end{equation*}
Now we use the uniform estimates on $K_{\R^2}[w^\eps]$ in $L^{\infty}(0,\underline{T};{\mathcal{LL}}(\R^{2}))$ and \eqref{ConvergenceH} to get that
\begin{equation*}
K_{\R^2}[w^\eps_{n}](\cdot,h^{\varepsilon_{n}}) \longrightarrow
K_{\R^2}[{w}](\cdot,h) \ \text{ in } \ L^{\infty}(0,\underline{T}) \ \text{ as } \ n \rightarrow +\infty.
\end{equation*}
So we finally obtain
\begin{equation*} 
(h^{\varepsilon_{n}} )' \longrightarrow  K_{\R^{2}} [ w] (\cdot, h ),
\end{equation*}
in $W^{-1,\infty}([0,\underline{T}])$ and then in $L^{\infty}([0,\underline{T}])$ weak-$\star$ due to the boundedness of $(h^{\varepsilon})'$ in $L^{\infty}(0,\underline{T})$. \par
It follows that the solutions of our system converge, up to a subsequence, to those of the wave/vortex system. Due to the uniqueness of the latter, the convergence takes place along the whole family $(\varepsilon\in (0,\varepsilon_{0}])$. Now it remains only to prove \eqref{reviens}. \par
\ \par
\noindent
{\it 6. Proof of the claim \eqref{reviens}.} We introduce
\begin{equation*}
J_{3}:=\begin{pmatrix}
 0 & -1 & 0\\
 1 & 0&0\\
 0 & 0 &0
\end{pmatrix},
\end{equation*}
so that \eqref{DeriveeRotation} is recast as
\begin{equation} \label{DeriveeRotation2}
Q_{\theta^{\varepsilon}} ' = r^{\varepsilon} Q_{\theta^{\varepsilon}} J_{3}.
\end{equation}
According to \eqref{Mtheta} and \eqref{DeriveeRotation2}, we have
\begin{align*}
\left[ \mathcal{M}^\eps_{\theta^\eps} Q_{\theta^\eps}  \begin{pmatrix} \tilde \ell^\eps \\  r^\eps \end{pmatrix} \right]' = &  \left[ Q_{\theta^\eps}\mathcal{M}^\eps\begin{pmatrix} \tilde \ell^\eps \\  r^\eps \end{pmatrix}\right]'
=   r^\eps Q_{\theta^\eps} J_{3} \mathcal{M}^\eps  \begin{pmatrix} \tilde \ell^\eps \\  r^\eps \end{pmatrix} + Q_{\theta^\eps}\mathcal{M}^\eps \begin{pmatrix} \tilde \ell^\eps \\  r^\eps \end{pmatrix}'    \nonumber
\\
=&r^\eps Q_{\theta^\eps}
\begin{pmatrix}
\left((m^\eps {\rm Id}+\mathcal{M}_{\flat}^\eps) \tilde\ell^\eps\right)^\perp + r^\eps \begin{pmatrix}-m_{2,3}^\eps\\ m_{1,3}^\eps\end{pmatrix} \\
0
\end{pmatrix}+ Q_{\theta^\eps}\mathcal{M}^\eps \begin{pmatrix} \tilde \ell^\eps \\  r^\eps \end{pmatrix}',
\end{align*}
where we recall that $\mathcal{M}_{\flat}^\eps$ is the  $2\times2$ restriction of $\mathcal{M}_{a}^\eps$.
Now we extract the powers of $\varepsilon$ of the mass coefficients. Thanks to \eqref{AddedMass} and \eqref{raoul} we have that
\begin{gather*} 
\mathcal{M}^\eps_{\theta^\eps} = I_\eps \left( \eps^\alpha \mathcal{M}_{g}  + \eps^2  \mathcal{M}_{a,\theta^\eps}\right)    I_{\eps} , \ 
m^{\varepsilon} = \varepsilon^{\alpha} m^{1}, \ 
\mathcal{M}_{\flat}^\varepsilon = \varepsilon^{2} \mathcal{M}_{\flat}^1, \\
\begin{pmatrix}-m_{2,3}^\eps\\ m_{1,3}^\eps\end{pmatrix} = \varepsilon^{3} \begin{pmatrix}-m_{2,3}^1\\ m_{1,3}^1 \end{pmatrix}, \ 
\mathcal{M}^\eps= I_\eps( \eps^\alpha \mathcal{M}_{g}  + \eps^2  \mathcal{M}_{a}) I_{\varepsilon},
\end{gather*}
where we recall that $I_{\varepsilon}$ is defined in \eqref{defIeps}. It follows that
\begin{align*}
\left[ I_\eps \left( \eps^\alpha \mathcal{M}_{g}  + \eps^2  \mathcal{M}_{a,\theta^\eps}\right) I_{\eps} Q_{\theta^\eps}  \begin{pmatrix} \tilde \ell^\eps \\  r^\eps \end{pmatrix} \right]' 
=& r^\eps Q_{\theta^\eps}
\begin{pmatrix}
\left((\varepsilon^{\alpha} m^1 {\rm Id}+ \varepsilon^{2} \mathcal{M}_{\flat}^1) \tilde\ell^\eps\right)^\perp + r^\eps \varepsilon^{3} \begin{pmatrix}-m_{2,3}^1\\ m_{1,3}^1 \end{pmatrix} \\
0 \end{pmatrix} \\
&+ Q_{\theta^\eps}I_\eps \left( \eps^\alpha \mathcal{M}_{g}  + \eps^2  \mathcal{M}_{a}\right) I_{\eps} \begin{pmatrix} \tilde \ell^\eps \\  r^\eps \end{pmatrix}'.
\end{align*}
We recognize that
\begin{equation*}
\begin{pmatrix}-m_{2,3}^1\\ m_{1,3}^1 \end{pmatrix} = (P_{\flat} \mu^{1})^{\perp}
\ \text{ and } \ 
I_{\varepsilon} \begin{pmatrix} \tilde \ell^\eps \\  r^\eps \end{pmatrix} = \tilde{p}^{\varepsilon}.
\end{equation*}
Projecting on the first two coordinates (recall that we denote $P_{\flat}$ this projection), and using $I_{\varepsilon} Q_{\theta^\eps}= Q_{\theta^\eps} I_{\varepsilon}$, and $P_{\flat} I_{\varepsilon} = P_{\flat}$ we deduce
\begin{equation*} 
P_{\flat} \left[ \left( \eps^\alpha \mathcal{M}_{g} + \eps^2 \mathcal{M}_{a,\theta^\eps} \right)  Q_{\theta^\eps} \tilde{p}^{\varepsilon} \right]'  \\
=   P_{\flat} Q_{\theta^\eps} \left[ 
\eps^{\alpha-1}\langle \Lambda_{g},\tilde p^\eps,\tilde p^\eps\rangle 
+ \eps \langle \Lambda_{a},\tilde p^\eps,\tilde p^\eps\rangle 
+ \left( \eps^\alpha \mathcal{M}_{g}  + \eps^2  \mathcal{M}_{a}\right) (\tilde p^\eps)'
\right],
\end{equation*}
with $\Lambda_{g}$ and $\Lambda_{a}$ defined in \eqref{DefGammag} and \eqref{DefGammaa}.
This ends the proof of \eqref{reviens} and hence of Proposition~\ref{Pro:CVLocal}.
\end{proof}
%
%
%
%
%
%
%
%
\subsection{Conclusion}
\label{Subsec:GlobalPTL}
It remains to prove that the convergence obtained above does not take place only during the interval $[0,\underline{T}]$, but during any interval $[0,T]$, $T>0$.
We know that the solution $(h,w)$ of the vortex/wave system is global in time, and that $h$ remains for all $t$ at positive distance from $\supp w$ and that this support remains bounded.
Let us be given $T>0$. We let $\overline{T}:=T+1$ and we know from \eqref{supportcontrol}  that there exists $\rho_{\overline{T}}>1$ such that 
\begin{equation} \label{Eq:DBarre}
\forall t \in [0,T+1], \ \ 
\supp w(t,\cdot) \subset B(h(t),\rho_{\overline{T}}) \setminus B(h(t),1/\rho_{\overline{T}}).
\end{equation}
Let us recall here the definition \eqref{T-eps} of $T_{\varepsilon}$:
\begin{equation*} 
T_{\varepsilon}:= \sup\Big\{ \tau \in [0,\overline{T}],\ \forall t\in [0,\tau], \ 
\supp w^\varepsilon(t) \subset B(h^\varepsilon(t),2\rho_{\overline{T}}) \setminus B(h^\varepsilon(t),1/(2\rho_{\overline{T}}))
 \Big\}.
\end{equation*}
Using Proposition~\ref{Pro:TempsMinimal}, it is clear that for suitably small $\varepsilon_{0}$, one has $\inf_{ \varepsilon \in (0,\varepsilon_{0}]} T_{\varepsilon} >0$.
Therefore, we denote
$$\tilde{T}:=\liminf_{\varepsilon \rightarrow 0^+} T_{\varepsilon} >0 .$$
Due to Proposition~\ref{Pro:ModulatedEnergy} (with $\rho=2\rho_{\overline{T}}$), there exists $C>0$ such that for all $\varepsilon \in (0,\varepsilon_{0})$, one has the following estimate
\begin{equation} \label{estunif2}
|\ell^{\varepsilon}| + | \varepsilon r^{\varepsilon} | \leq C \ \text{ for } t \in [0,T_{\varepsilon}].
\end{equation}
Now we claim that
\begin{equation} \label{TTilde}
\tilde{T}  \geq T + \frac{1}{2},
\end{equation}
and reason by contradiction. If it were not the case and $\tilde{T} < T + \frac{1}{2}$, then the following would occur. First we extract a subsequence $(\varepsilon_{n})$ such that $T_{\varepsilon_{n}} \rightarrow \tilde{T}$.
On any compact interval $[0,\tilde{T}-\eta]$ of $[0,\tilde{T})$ with $\eta>0$, one finds $N$ such that $\inf_{n \geq N} T_{\varepsilon_{n}} \geq \tilde{T}-\eta$. Therefore we can apply Proposition~\ref{Pro:CVLocal} with $\underline{T}= \tilde{T}-\eta$ and the convergences of the previous subsection hold true. \par
Now we prove that
\begin{equation} \label{CVSupport}
\| d_{H}(\supp w^{\varepsilon_{n}}(t,\cdot), \supp w(t,\cdot)) \|_{L^{\infty}(0,\tilde{T}-\eta)} \longrightarrow 0 \text{ as } n \rightarrow +\infty,
\end{equation}
where $d_{H}$ is the Hausdorff distance. To see that, one modifies \eqref{CVWS1} into 
\begin{equation*} 
R_{\theta^{\varepsilon_{n}}} H^{\varepsilon_{n}}R_{-\theta^{\varepsilon_{n}}} (\cdot-h^{\varepsilon_{n}})) \longrightarrow H(\cdot - h)
 \ \text{ in } L^{\infty} (B(0,2 \rho_{\overline{T}}) \setminus B(0,1/(2 \rho_{\overline{T}}))) \ \text{ as } n \rightarrow +\infty .
\end{equation*}
This follows from the invariance of $H$ with respect to rotations, and the fact that we have uniform estimates on $\nabla H^{\varepsilon_{n}}$ outside $B(0,1/2\rho_{\overline{T}})$ for $n$ large enough (see Lemma~\ref{LemHomega}). \par
Thanks to the uniform $\log$-Lipschitz estimates and Gronwall's lemma, we see that the flows $\Phi^{\varepsilon}_{n}=\Phi^{\varepsilon}_{n}(t,x)$ and $\Phi=\Phi(t,x)$ associated respectively to  ${u}^{\varepsilon_{n}}$  and $K_{\R^{2}} [{w}](t,\cdot) +\gamma H(\cdot - h(t))$ satisfy
\begin{equation*}
\Phi^{\varepsilon_{n}} \longrightarrow \Phi \text{ uniformly in } \ [0,\tilde{T} - \eta] \times \supp w_{0} \text{ as } n \rightarrow +\infty.
\end{equation*}
This involves \eqref{CVSupport}. Now due to the definition of $T_{\varepsilon}$ and the fact that $T_{\varepsilon_{n}}\leq \tilde T + 1/2 < \overline{T}$ for $n$ large enough,  we have that
\begin{equation*}
\supp w^{\varepsilon_{n}}(T_{\varepsilon_{n}},\cdot) \cap \Big[  B(h^{\varepsilon_{n}}(T_{\varepsilon_{n}}),2\rho_{\overline{T}}) \setminus \overline{B}(h^{\varepsilon_{n}}(T_{\varepsilon_{n}}),1/(2\rho_{\overline{T}})) \Big]^c \neq \emptyset.
\end{equation*}
Hence using that $(h^{\varepsilon_{n}})'$ is uniformly bounded \eqref{estunif2} and that $w^{\varepsilon_{n}}$ is transported by a velocity uniformly bounded as well (see Lemma~\ref{LeminfVomega}), we can find $\eta>0$ small enough so that, for $n$ large enough
\begin{equation*}
\supp w^{\varepsilon_{n}}(\tilde{T}-\eta,\cdot) \cap \Big[  B(h^{\varepsilon_{n}}(\tilde{T}-\eta),3\rho_{\overline{T}}/2) \setminus B(h^{\varepsilon_{n}}(\tilde{T}-\eta),2/(3\rho_{\overline{T}})) \Big]^c \neq \emptyset.
\end{equation*}
Now we pass to the limit as $n \rightarrow +\infty$ with \eqref{CVSupport}, and get a contraction with the definition \eqref{Eq:DBarre} of $\rho_{\overline{T}}$. This proves \eqref{TTilde}. \par
Finally, one can apply the convergence result of Proposition~\ref{Pro:CVLocal} on $[0,T]$. This ends the proof of Theorem~\ref{MR}. \qed
%
%
%
%
%
%
%
%
%
%
\appendix
\section{Lemmas from complex analysis} \label{complex ana}
In this section, we gather several computations relying on complex analysis, which were used earlier in the proof. We recall the notation $\widehat f = f_{1}-if_{2}$ for any $f=(f_{1},f_{2})$, whereas $z=x_{1}+ix_{2}$. 
We will in particular compute the first coefficients in the Laurent series of $\widehat{\nabla \Phi_{i}^\varepsilon}$ (defined in Paragraph~\ref{ParKirchoff}).
\subsection{Basic computations}
We first recall the Blasius lemma about tangent vector fields (for a proof, see for instance \cite{MP} or \cite{GLS}):
\begin{Lemma} \label{blasius}
Let  $\mathcal{C}$ be a smooth Jordan curve in the plane, $f:=(f_1 , f_2)$ and $g:=(g_1 ,g_2 )$ two smooth tangent vector fields on $\mathcal{C}$. 
Then 
\begin{eqnarray*} 
\int_{ \mathcal{C}} (f  \cdot g) n \, ds =  i \left( \int_{ \mathcal{C}} \widehat f \widehat g \, dz \right)^* , \\
\int_{ \mathcal{C}} (f  \cdot g) (x^{\perp} \cdot n)  \,ds =  \Re \left( \int_{ \mathcal{C}} z  \widehat f \widehat g \, dz \right).
\end{eqnarray*}
\end{Lemma}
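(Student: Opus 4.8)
The plan is to reduce both identities to a single algebraic computation carried out after parametrizing $\mathcal{C}$ by arc length and encoding the tangency hypothesis in complex form. First I would fix an arc-length parametrization of $\mathcal{C}$ compatible with its orientation and introduce the complex unit tangent $T := dz/ds = \tau_1 + i\tau_2$, so that $|T| = 1$ and $dz = T\,ds$. The relation $\tau = -n^\perp$ together with the fact that $(\cdot)^\perp$ squares to $-\mathrm{Id}$ gives $n = \tau^\perp = (-\tau_2, \tau_1)$; under the identification $\R^2 \simeq \C$ this outward normal corresponds precisely to the complex number $iT$, since $iT = -\tau_2 + i\tau_1$.

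The crucial step is to rewrite the tangency of $f$ and $g$. As $f$ is tangent, $f = (f\cdot\tau)\tau$; setting $\phi := f\cdot\tau$ and $\psi := g\cdot\tau$ and using the convention $\widehat f = f_1 - if_2$, one checks directly that $\widehat f = \phi\,\overline{T}$ and $\widehat g = \psi\,\overline{T}$. Consequently $\widehat f\,\widehat g = \phi\psi\,\overline{T}^2$, and since $\overline{T}^2 T = \overline{T}\,|T|^2 = \overline{T}$, this yields the single key identity $\widehat f\,\widehat g\,dz = \phi\psi\,\overline{T}\,ds$. I would also record the two elementary facts $f\cdot g = \phi\psi$ (both fields being parallel to the unit vector $\tau$) and $x^\perp\cdot n = x\cdot\tau$.

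From this identity both formulas drop out. For the first, conjugating gives $\left(\int_{\mathcal C}\widehat f\,\widehat g\,dz\right)^* = \int_{\mathcal C}\phi\psi\,T\,ds$, and multiplying by $i$ turns $T$ into $iT$, which is exactly the complex representative of $n$; hence $i\left(\int_{\mathcal C}\widehat f\,\widehat g\,dz\right)^* = \int_{\mathcal C}\phi\psi\,n\,ds = \int_{\mathcal C}(f\cdot g)\,n\,ds$. For the second, taking the real part and using $\Re(z\overline{T}) = x_1\tau_1 + x_2\tau_2 = x\cdot\tau = x^\perp\cdot n$ gives $\Re\left(\int_{\mathcal C} z\,\widehat f\,\widehat g\,dz\right) = \int_{\mathcal C}\phi\psi\,(x^\perp\cdot n)\,ds = \int_{\mathcal C}(f\cdot g)(x^\perp\cdot n)\,ds$.

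The computation itself is routine; the only point requiring care is the bookkeeping of the $\R^2\simeq\C$ identification and the sign conventions---specifically that the hat is $f_1 - if_2$ rather than $f_1 + if_2$, and that with the paper's orientation convention the outward normal $n$ corresponds to $iT$ rather than $-iT$. Keeping these consistent is precisely what makes the factor $i$ (and not $-i$) and the real part (and not the imaginary part) appear in the correct places; once the complex encoding of tangency $\widehat f = \phi\,\overline{T}$ is established, no genuine obstacle remains.
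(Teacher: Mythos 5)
Your proof is correct: encoding the tangency as $\widehat f = (f\cdot\tau)\,\overline{T}$ with $dz = T\,ds$ and $n \simeq iT$, so that $\widehat f\,\widehat g\,dz = (f\cdot g)\,\overline{T}\,ds$, is exactly the classical Blasius computation, and your sign bookkeeping is consistent with the paper's conventions ($\tau = -n^{\perp}$, i.e.\ $n=\tau^{\perp}$, and $\widehat f = f_1 - i f_2$). The paper itself omits the proof and refers to \cite{MP} and \cite{GLS}, where the same complex-parametrization argument is used, so your route matches the intended one.
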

Now we relate (classically) $\int_{ \mathcal{C}} \widehat f$ with the flux and the circulation of $f$:
\begin{Lemma} \label{lem-flux-circu}
Let  $\mathcal{C}$ be a smooth Jordan curve in the plane and $f:=(f_1 , f_2)$ a smooth vector field on $\mathcal{C}$, then:
\begin{equation*}
\int_{ \mathcal{C}} \widehat f  \, dz =  \int_{ \mathcal{C}} f\cdot \tau \, ds  -i \int_{ \mathcal{C}} f\cdot n \, ds.
\end{equation*}
\end{Lemma}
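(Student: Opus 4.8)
The plan is to prove this identity by a direct computation, parametrizing the curve by arc length and invoking only the elementary correspondence between the complex notation $\widehat{f}=f_1-if_2$, the complex line element $dz$, and the real tangent and normal vector fields on $\mathcal{C}$. First I would parametrize $\mathcal{C}$ by arc length $s$, writing $z(s)=x_1(s)+ix_2(s)$, so that the unit tangent is $\tau=(\tau_1,\tau_2)=(x_1'(s),x_2'(s))$ and the complex line element factorizes as
\begin{equation*}
dz = dx_1 + i\,dx_2 = (\tau_1 + i\tau_2)\,ds.
\end{equation*}

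Next I would record the relation between tangent and normal. Since the paper fixes the convention $\tau=-n^\perp$ with $n=(n_1,n_2)$ and $n^\perp=(-n_2,n_1)$, this gives $\tau=(n_2,-n_1)$, i.e. $n_1=-\tau_2$ and $n_2=\tau_1$. With this in hand the whole computation is a single expansion: multiplying out,
\begin{equation*}
\widehat{f}\,dz = (f_1-if_2)(\tau_1+i\tau_2)\,ds = \big[(f_1\tau_1+f_2\tau_2) + i(f_1\tau_2-f_2\tau_1)\big]\,ds .
\end{equation*}
The real part is exactly $f\cdot\tau$, while using $n=(-\tau_2,\tau_1)$ one identifies the imaginary part as $f_1\tau_2-f_2\tau_1 = -(f_1 n_1 + f_2 n_2) = -\,f\cdot n$. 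Hence pointwise $\widehat{f}\,dz = (f\cdot\tau)\,ds - i\,(f\cdot n)\,ds$, and integrating over $\mathcal{C}$ yields the claimed formula, thereby exhibiting $\int_{\mathcal{C}}\widehat{f}\,dz$ as the circulation minus $i$ times the flux of $f$.

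This statement is purely computational and carries no real analytic difficulty; the only point requiring care, which I would flag as the sole potential pitfall, is the orientation convention relating $\tau$ and $n$ (the sign in $\tau=-n^\perp$), since a mismatch there would flip the sign of the flux term. I would therefore make the dependence on this convention explicit at the step where $n$ is expressed through $\tau$, so that the final signs match the normalization used consistently elsewhere in the paper (in particular in the Blasius Lemma~\ref{blasius} and in the circulation formula for $K^\eps$).
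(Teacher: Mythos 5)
Your proof is correct and is essentially the paper's own argument: both parametrize $\mathcal{C}$, expand $\widehat f\,dz=(f_1-if_2)(\tau_1+i\tau_2)\,ds$, and read off the real and imaginary parts as $f\cdot\tau$ and $-f\cdot n$. Your explicit care with the orientation convention $\tau=-n^\perp$ matches the paper's implicit choice $n=(-\gamma_2',\gamma_1')/|\gamma'|$, so the signs agree with Lemma~\ref{blasius} and the rest of the text.
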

\begin{proof}
Denoting by $\gamma=(\gamma_{1},\gamma_{2})$ a parametrization of $\mathcal{C}$ then $\tau=(\gamma_{1}',\gamma_{2}')/|\gamma'|$, then we can write $n=(-\gamma_{2}',\gamma_{1}')/|\gamma'|$, $ds = |\gamma'(t)|dt$ and $dz = (\gamma_{1}'(t)+i\gamma_{2}'(t)) dt$. Now the conclusion simply follows from
\begin{equation*}
\int_{ \mathcal{C}} (f_1 - if_2)  \, dz = \int (f_{1} \gamma_{1}' + f_{2} \gamma_{2}')\, dt -i  \int (-f_{1} \gamma_{2}' + f_{2} \gamma_{1}')\, dt .
\end{equation*}
\end{proof}
\begin{Corollary} \label{cor}
Let  $\mathcal{C}$ be a smooth Jordan curve in the plane and $f:=(f_1 , f_2)$ a smooth vector field on $\mathcal{C}$, then:
\begin{eqnarray*}
\int_{ \mathcal{C}} z\widehat f  \, dz &=&  \int_{ \mathcal{C}} \begin{pmatrix} x\cdot f \\ x^\perp \cdot f \end{pmatrix} \cdot \tau \, ds  -i \int_{ \mathcal{C}} \begin{pmatrix} x\cdot f \\ x^\perp \cdot f \end{pmatrix} \cdot n \, ds\\
&=& \int_{ \mathcal{C}} (x_{1} +i x_{2}) (f\cdot \tau)  \, ds -i \int_{ \mathcal{C}} (x_{1} +i x_{2}) (f\cdot n)  \, ds.
\end{eqnarray*}
\end{Corollary}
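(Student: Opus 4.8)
The plan is to reduce the whole statement to Lemma~\ref{lem-flux-circu}, which already expresses $\int_{\mathcal C}\widehat h\,dz$ in terms of the tangential and normal components of a field $h$. The key observation is a purely algebraic identity: introducing the auxiliary vector field
\begin{equation*}
g := \begin{pmatrix} x\cdot f \\ x^\perp\cdot f \end{pmatrix},
\end{equation*}
one has $z\widehat f = \widehat g$. Indeed, expanding $z\widehat f = (x_1+ix_2)(f_1-if_2)$, the real part is $x_1 f_1 + x_2 f_2 = x\cdot f$ while the imaginary part is $x_2 f_1 - x_1 f_2 = -(x^\perp\cdot f)$, so that $z\widehat f = (x\cdot f) - i(x^\perp\cdot f) = \widehat g$. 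This is the only computation of substance, and it is immediate.

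With this identity in hand, the first equality follows simply by applying Lemma~\ref{lem-flux-circu} to $g$ in place of $f$:
\begin{equation*}
\int_{\mathcal C} z\widehat f\,dz = \int_{\mathcal C}\widehat g\,dz = \int_{\mathcal C} g\cdot\tau\,ds - i\int_{\mathcal C} g\cdot n\,ds,
\end{equation*}
and substituting the components of $g$ gives precisely the first line of the statement.

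For the second equality I would go back to the parametrization-level identity that underlies the proof of Lemma~\ref{lem-flux-circu}, namely $\widehat f\,dz = \bigl[(f\cdot\tau) - i(f\cdot n)\bigr]\,ds$ along $\mathcal C$ (this is exactly what the computation in that proof establishes, using $dz = (\tau_1+i\tau_2)\,ds$). Multiplying by $z=x_1+ix_2$ and integrating then yields
\begin{equation*}
\int_{\mathcal C} z\widehat f\,dz = \int_{\mathcal C}(x_1+ix_2)(f\cdot\tau)\,ds - i\int_{\mathcal C}(x_1+ix_2)(f\cdot n)\,ds,
\end{equation*}
which is the second line. Equivalently, one may verify directly the pointwise relation $g\cdot\tau - i\,g\cdot n = z\,[(f\cdot\tau) - i(f\cdot n)]$, which collapses to the same complex-multiplication computation as above.

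There is no genuine obstacle here; the statement is elementary, and the plan above is essentially the complete argument. The only point requiring care is the bookkeeping of sign conventions: the orientation relating $\tau$ and $n$ (here $n=\tau^\perp$, consistent with the parametrization used in the proof of Lemma~\ref{lem-flux-circu}) together with the identification of $\R^2$ with $\C$ via $(x_1,x_2)\mapsto x_1+ix_2$ and the convention $\widehat f = f_1 - if_2$. Keeping these consistent is what makes both algebraic identities come out with the correct signs.
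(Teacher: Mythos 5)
Your proof is correct and takes essentially the same route as the paper's: both introduce the auxiliary field $g=(x\cdot f,\ x^{\perp}\cdot f)$ via the algebraic identity $z\widehat f=(x\cdot f)-i(x^{\perp}\cdot f)=\widehat g$ and then invoke Lemma~\ref{lem-flux-circu}. For the second equality the paper expands $g\cdot\tau - i\,g\cdot n$ componentwise using $(n_{1},n_{2})=(-\tau_{2},\tau_{1})$, which is precisely the equivalent pointwise relation you note at the end, so the two arguments coincide up to bookkeeping.
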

\begin{proof}
We apply Lemma~\ref{lem-flux-circu} to $g$ given by $g_{1}-ig_{2}=z(f_1 - if_2)$ and one checks that
\begin{equation*}
(x_{1}+ix_{2})(f_{1}-if_{2})= (x_{1} f_{1} + x_{2} f_{2})-i(-x_{2}f_{1}+x_{1}f_{2})= (x\cdot f) -i(x^\perp \cdot f).
\end{equation*}
To obtain the second equality, we simply use $(n_{1},n_{2})=(-\tau_{2},\tau_{1})$:
\begin{eqnarray*}
\begin{pmatrix} x\cdot f \\ x^\perp \cdot f \end{pmatrix} \cdot \tau  -i \begin{pmatrix} x\cdot f \\ x^\perp \cdot f \end{pmatrix} \cdot n &=&  (x_{1}f_{1}+x_{2}f_{2})\tau_{1} +  (-x_{2}f_{1}+x_{1}f_{2})\tau_{2} -i(x_{1}f_{1}+x_{2}f_{2})n_{1} -i  (-x_{2}f_{1}+x_{1}f_{2})n_{2}\\
&=& (x_{1}+ix_{2})  (f_{1}\tau_{1}+f_{2}\tau_{2}) -i(x_{1}+ix_{2})(f_{1}n_{1}+f_{2}n_{2}),
\end{eqnarray*}
which ends the proof.
\end{proof}
\begin{Remark}\label{Rembarz}
Replacing $x_{2}$ by $-x_{2}$ in the previous computation, we obtain
\begin{equation*}
\int_{ \mathcal{C}} \bar{z} \widehat f  \, dz
=\int_{ \mathcal{C}} (x_{1} -i x_{2}) (f\cdot \tau)  \, ds -i \int_{ \mathcal{C}} (x_{1} -i x_{2}) (f\cdot n)  \, ds.
\end{equation*}
\end{Remark}
A straightforward application of Lemma~\ref{lem-flux-circu}, Corollary~\ref{cor} and Remark~\ref{Rembarz} is the following (using $H^1 \cdot n = 0$ on $\partial {\mathcal S}_{0}$).
\begin{Lemma} \label{LemzH}
Let $H^1$ be defined in \eqref{DefHeps}, then
\begin{equation*}
\left(  \int_{ \partial  \mathcal{S}_{0}} \bar{z}\widehat{H^1}  \, dz \right)^* = \int_{ \partial  \mathcal{S}_{0}} z \widehat{H^1}  \, dz
\qquad
\text{and}
\qquad
\Re \left( i \int_{ \partial  \mathcal{S}^1_{0}} |z|^2 \widehat{H^1}  \, dz \right)=0.
\end{equation*}
\end{Lemma}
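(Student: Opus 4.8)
The plan is to derive both identities directly from the boundary condition $H^1 \cdot n = 0$ on $\partial \mathcal{S}_0$, combined with the flux/circulation formulas already recorded in Lemma~\ref{lem-flux-circu}, Corollary~\ref{cor} and Remark~\ref{Rembarz}. The key structural observation is that each of those formulas splits an integral $\int_{\partial \mathcal{S}_0} (\text{weight})\, \widehat{f}\, dz$ into a real ``circulation'' part weighted by $f \cdot \tau$ and a purely imaginary ``flux'' part weighted by $f \cdot n$. Specializing to $f = H^1$ kills the flux part and leaves only a circulation integral whose integrand is real up to the explicit complex weight, and this is all that is needed.

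For the first identity I would apply Corollary~\ref{cor} with $f = H^1$ and use $H^1 \cdot n = 0$ to obtain
$$\int_{\partial \mathcal{S}_0} z\, \widehat{H^1}\, dz = \int_{\partial \mathcal{S}_0} (x_1 + i x_2)(H^1 \cdot \tau)\, ds,$$
and likewise Remark~\ref{Rembarz} gives
$$\int_{\partial \mathcal{S}_0} \bar z\, \widehat{H^1}\, dz = \int_{\partial \mathcal{S}_0} (x_1 - i x_2)(H^1 \cdot \tau)\, ds.$$
Since $H^1 \cdot \tau$ and $ds$ are real, taking the complex conjugate of the second expression simply sends $x_1 - i x_2$ back to $x_1 + i x_2$, which matches the first expression; this yields $\left(\int_{\partial \mathcal{S}_0} \bar z\, \widehat{H^1}\, dz\right)^* = \int_{\partial \mathcal{S}_0} z\, \widehat{H^1}\, dz$.

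For the second identity I would instead apply Lemma~\ref{lem-flux-circu} to the vector field $g := |x|^2 H^1$, noting that $\widehat{g} = |z|^2\, \widehat{H^1}$ and that $g \cdot n = |x|^2 (H^1 \cdot n) = 0$ on the boundary. This gives
$$\int_{\partial \mathcal{S}_0} |z|^2\, \widehat{H^1}\, dz = \int_{\partial \mathcal{S}_0} |x|^2 (H^1 \cdot \tau)\, ds,$$
whose right-hand side is a real number; hence $i$ times it is purely imaginary and its real part vanishes, which is exactly the claim (recall $\mathcal{S}_0^1 = \mathcal{S}_0$).

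I do not expect any genuine obstacle: the computation is a direct specialization of the general complex-analytic lemmas. The only points requiring a little care are the bookkeeping identity $\widehat{|x|^2 H^1} = |z|^2\, \widehat{H^1}$, which is immediate from $\widehat{\lambda f} = \lambda\, \widehat{f}$ for a real scalar field $\lambda$, and the reality of $H^1 \cdot \tau$, which holds because $H^1$ is a real vector field and $\tau$ a real tangent field along $\partial \mathcal{S}_0$.
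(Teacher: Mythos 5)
Your proof is correct and is essentially the paper's own argument: the paper gives no separate proof for Lemma~\ref{LemzH}, stating only that it is a straightforward application of Lemma~\ref{lem-flux-circu}, Corollary~\ref{cor} and Remark~\ref{Rembarz} together with $H^{1}\cdot n=0$ on $\partial\mathcal{S}_{0}$, which is exactly the specialization you carry out (including the application of Lemma~\ref{lem-flux-circu} to $|x|^{2}H^{1}$, the same device used in the proof of Lemma~\ref{lem|z|2phi}). Nothing is missing; your bookkeeping of $\widehat{|x|^{2}H^{1}}=|z|^{2}\widehat{H^{1}}$ and the reality of $H^{1}\cdot\tau\,ds$ are precisely the points that make the conclusion immediate.
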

\ \par
Next in the following lemma, we regroup elementary computations which will intervene in the next subsections. These are of the form:
\begin{equation} \label{IntegralesStandard}
\int_{ \partial  \mathcal{S}^{\varepsilon}_{0} } P(x_{1},x_{2}) K_{j} \, ds = \varepsilon^{d+\delta_{j \geq 3} +1}\int_{ \partial  \mathcal{S}_{0} } P(x_{1},x_{2}) K_{j} \, ds,
\end{equation}
where $P$ is a homogeneous polynomial of degree $d$ in variables $x_{1}$ and $x_{2}$ and where $j \in \{1, \dots, 5 \}$. We recall that $|\mathcal{S}^\eps_{0} |$ is the Lebesgue measure of $\mathcal{S}^\eps_{0}$ and that $x_{G}^\eps$ is the position of the geometrical center of $\mathcal{S}^\eps_{0}$ (see \eqref{DefXg}). Additionally, we introduce the following coefficients
\begin{equation*}
m_{6}^{\eps}:= \int_{\mathcal{S}^\eps_{0}} (x_1^2-x_{2}^2 ) \, dx, \quad
m_{7}^{\eps}:= 2 \int_{\mathcal{S}^\eps_{0}} x_{1}x_2  \, dx, \quad
m_{8}^{\eps} := \int_{\mathcal{S}^\eps_{0}}  |x|^2  \, dx,
\end{equation*}
and notice that $m_{i}^{\eps} = \varepsilon^{4} m_{i}^{1}$ for $i=6,7,8$.
\begin{Lemma} \label{CalculIntegralesStandard}
One has the following relations for \eqref{IntegralesStandard}: \par
\ \par \noindent
$\bullet \ d(P)=0, \ j \in \{1,\dots,5\}:$
\begin{equation*} 
\int_{ \partial  \mathcal{S}^{\varepsilon}_{0} } n_{1}\,ds
= \int_{ \partial  \mathcal{S}^{\varepsilon}_{0} } n_{2}\,ds
= \int_{ \partial  \mathcal{S}^{\varepsilon}_{0} } x^\perp \cdot n\,ds 
= \int_{ \partial  \mathcal{S}^{\varepsilon}_{0} } \begin{pmatrix} x_{2}\\x_{1} \end{pmatrix}  \cdot n\,ds
= \int_{ \partial  \mathcal{S}^{\varepsilon}_{0} } \begin{pmatrix} -x_{1}\\x_{2} \end{pmatrix}  \cdot n\,ds =0.
\end{equation*}
$\bullet \ d(P)=1, \ j \in \{1,2,3\}:$
\begin{equation*}
\int_{ \partial  \mathcal{S}^\eps_{0}} x_{j} n_{i}  \, ds
=-\delta_{i,j} |\mathcal{S}^\eps_{0} | \text{ for } i,j=1,2, \ 
\int_{ \partial  \mathcal{S}^\eps_{0}} x_{1} (x^\perp \cdot n)  \, ds 
=  |\mathcal{S}^\eps_{0} |x_{G,2}^\eps, \ \
\int_{ \partial  \mathcal{S}^\eps_{0}} x_{2} (x^\perp \cdot n)  \, ds
= - |\mathcal{S}^\eps_{0} |x_{G,1}^\eps .
\end{equation*}
$\bullet \ d(P)=1, \ j \in \{4,5\}:$
\begin{eqnarray} \nonumber
\int_{ \partial  \mathcal{S}^\eps_{0}} x_{1} \, \begin{pmatrix}-x_{1}\\x_{2}\end{pmatrix} \cdot n  \, ds
=  |\mathcal{S}^\eps_{0} |x_{G,1}^\eps, \quad
\int_{ \partial  \mathcal{S}^\eps_{0}} x_{2} \, \begin{pmatrix}-x_{1}\\x_{2}\end{pmatrix} \cdot n \, ds 
= - |\mathcal{S}^\eps_{0} |x_{G,2}^\eps, \\
\nonumber
\int_{ \partial  \mathcal{S}^\eps_{0}} x_{1} \, \begin{pmatrix}x_{2}\\x_{1}\end{pmatrix} \cdot n  \, ds
= - |\mathcal{S}^\eps_{0} |x_{G,2}^\eps, \quad
\int_{ \partial  \mathcal{S}^\eps_{0}} x_{2} \, \begin{pmatrix}x_{2}\\x_{1}\end{pmatrix} \cdot n  \, ds
= - |\mathcal{S}^\eps_{0} |x_{G,1}^\eps.
\end{eqnarray}
$\bullet \ d(P)=2, \ j \in \{1,2,3\}:$
\begin{gather} \nonumber
\int_{\partial  \mathcal{S}^\eps_{0}  }|x|^2 n_{i}  \, ds  =-2x_{G,i}^\eps |\mathcal{S}^\eps_{0} | \text{ for }i=1,2, \quad
\int_{\partial  \mathcal{S}^\eps_{0}  }|x|^2x^\perp\cdot n  \, ds =0, \\
\nonumber
\int_{ \partial  \mathcal{S}^\eps_{0}} x_{1}x_{2}n_{1}  \, ds  =- |\mathcal{S}^\eps_{0} |x_{G,2}^\eps, \quad
\int_{ \partial  \mathcal{S}^\eps_{0}} x_{1}x_{2}n_{2}  \, ds = - |\mathcal{S}^\eps_{0} |x_{G,1}^\eps, \quad
\int_{ \partial  \mathcal{S}^\eps_{0}} x_{1}x_{2}x^\perp\cdot n \, ds = - m_{6}^{\eps} , \\
\nonumber
\int_{ \partial  \mathcal{S}^\eps_{0}} (x_{1}^2-x_{2}^2)n_{1}  \, ds = -2 |\mathcal{S}^\eps_{0}| x_{G,1}^\eps, \quad
\int_{ \partial  \mathcal{S}^\eps_{0}} (x_{1}^2-x_{2}^2)n_{2}  \, ds = 2 |\mathcal{S}^\eps_{0}| x_{G,2}^\eps, \quad
\int_{ \partial  \mathcal{S}^\eps_{0}} (x_{1}^2-x_{2}^2)x^\perp\cdot n  \, ds =2m_{7}^{\eps} .
\end{gather}%
$\bullet \ d(P)=2, \ j \in \{4,5\}:$
\begin{gather}
\nonumber
\int_{ \partial  \mathcal{S}^\eps_{0}} |x|^2 \, \begin{pmatrix}-x_{1}\\x_{2}\end{pmatrix} \cdot n  \, ds = 2m_6^{\eps}, \quad
\int_{ \partial  \mathcal{S}^\eps_{0}} |x|^2 \, \begin{pmatrix}x_{2}\\x_{1}\end{pmatrix} \cdot n \, ds =-2m_{7}^{\eps}, \\
\nonumber
\int_{ \partial  \mathcal{S}^\eps_{0}} x_{1}x_{2} \begin{pmatrix} -x_{1}\\ x_{2}\end{pmatrix}\cdot n \, ds  =0 , \quad
\int_{ \partial  \mathcal{S}^\eps_{0}} x_{1}x_{2} \begin{pmatrix} x_{2}\\ x_{1}\end{pmatrix}\cdot n \, ds =-m_{8}^{\eps}, \\
\nonumber
\int_{ \partial  \mathcal{S}^\eps_{0}} (x_{1}^2-x_{2}^2) \begin{pmatrix} -x_{1}\\ x_{2}\end{pmatrix} \cdot n  \, ds=2m_{8}^{\eps} , \quad
\int_{ \partial  \mathcal{S}^\eps_{0}} (x_{1}^2-x_{2}^2) \begin{pmatrix} x_{2}\\ x_{1}\end{pmatrix} \cdot n  \, ds=0 .
\end{gather}%
\end{Lemma}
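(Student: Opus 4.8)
The plan is to reduce every identity to the fixed solid $\mathcal{S}_{0}$ by means of the scaling \eqref{IntegralesStandard}, and then to evaluate each boundary integral over $\partial\mathcal{S}_{0}$ by the divergence theorem. First I would justify \eqref{IntegralesStandard} itself: performing the change of variables $x = \varepsilon y$ on $\partial\mathcal{S}^{\varepsilon}_{0}$, the arclength element contributes a factor $\varepsilon$, a homogeneous polynomial $P$ of degree $d$ contributes $\varepsilon^{d}$, while the integrand $K_{j}$ is invariant for $j=1,2$ (it equals $e_{j}\cdot n$, hence is dimensionless) and carries one extra factor $\varepsilon$ for $j=3,4,5$ (each of $x^{\perp}\cdot n$, $(-x_{1},x_{2})\cdot n$, $(x_{2},x_{1})\cdot n$ is linear in $x$). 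This produces exactly the exponent $d+\delta_{j\geq 3}+1$, so it suffices to establish all the identities at $\varepsilon=1$.

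The key observation is then that each $K_{j}$ is of the form $V_{j}\cdot n$ for an explicit divergence-free vector field $V_{j}$, namely by \eqref{DefK1-3}--\eqref{DefK4-5} one has $V_{1}=e_{1}$, $V_{2}=e_{2}$, $V_{3}=x^{\perp}$, $V_{4}=(-x_{1},x_{2})$, $V_{5}=(x_{2},x_{1})$, and that $n$ is the normal pointing outside the fluid, i.e.\ pointing into $\mathcal{S}_{0}$, hence the inward normal to $\mathcal{S}_{0}$. For any polynomial $P$ I would therefore write $P\,K_{j}=(P\,V_{j})\cdot n$ and apply the divergence theorem on $\mathcal{S}_{0}$ with this orientation:
$$
\int_{\partial\mathcal{S}_{0}} P\,K_{j}\, ds = -\int_{\mathcal{S}_{0}}\div(P\,V_{j})\, dx = -\int_{\mathcal{S}_{0}}\big(\nabla P\cdot V_{j} + P\,\div V_{j}\big)\, dx = -\int_{\mathcal{S}_{0}}\nabla P\cdot V_{j}\, dx,
$$
where the minus sign comes precisely from the inward orientation of $n$ (the outward normal would yield the opposite sign), and the last equality uses $\div V_{j}=0$ for all $j$. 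This reduces every boundary integral to an elementary polynomial moment of the filled domain $\mathcal{S}_{0}$.

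I would then treat the three degrees in turn. For $d=0$ one has $\nabla P=0$, so all five integrals vanish at once. For $d=1$ and $d=2$, the residual area integrals $\int_{\mathcal{S}_{0}} x_{i}\, dx = |\mathcal{S}_{0}|\,x_{G,i}$, $\int_{\mathcal{S}_{0}} dx = |\mathcal{S}_{0}|$, together with $\int_{\mathcal{S}_{0}}(x_{1}^{2}-x_{2}^{2})\, dx = m_{6}^{1}$, $2\int_{\mathcal{S}_{0}} x_{1}x_{2}\, dx = m_{7}^{1}$ and $\int_{\mathcal{S}_{0}}|x|^{2}\, dx = m_{8}^{1}$ are exactly the quantities entering the stated formulas, and one reads off the correct combination case by case. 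For instance $\int_{\partial\mathcal{S}_{0}} x_{1}(x^{\perp}\cdot n)\, ds = -\int_{\mathcal{S}_{0}}\nabla x_{1}\cdot x^{\perp}\, dx = -\int_{\mathcal{S}_{0}}(-x_{2})\, dx = |\mathcal{S}_{0}|\,x_{G,2}$, while $\int_{\partial\mathcal{S}_{0}} x_{1}x_{2}(x^{\perp}\cdot n)\, ds = -\int_{\mathcal{S}_{0}}\nabla(x_{1}x_{2})\cdot x^{\perp}\, dx = -\int_{\mathcal{S}_{0}}(x_{1}^{2}-x_{2}^{2})\, dx = -m_{6}^{1}$, both in agreement with the statement after multiplying by the scaling factor.

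This is a pure bookkeeping exercise with no analytic difficulty; the only points that require genuine care are the sign convention on $n$, which fixes all the signs simultaneously, and, in the degree-two cases, the correct matching of each residual moment to the coefficients $m_{6}^{1}$, $m_{7}^{1}$, $m_{8}^{1}$, $x_{G}$ and $|\mathcal{S}_{0}|$.
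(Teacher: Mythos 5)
Your proposal is correct and takes essentially the same route as the paper: the paper's proof likewise invokes the divergence theorem inside $\partial\mathcal{S}^{\eps}_{0}$, with the overall minus sign fixed by $n$ pointing outside the fluid (i.e.\ into the solid), and checks the identities case by case. Your two refinements --- verifying the scaling law \eqref{IntegralesStandard} explicitly and noting $\div V_{j}=0$ so that only $-\int_{\mathcal{S}_{0}}\nabla P\cdot V_{j}\,dx$ survives --- are harmless streamlinings of the same bookkeeping, and all your sample evaluations agree with the stated formulas.
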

\begin{proof}
These relations are straightforward consequences of the divergence theorem inside $\partial {\mathcal S}_{0}^{\varepsilon}$ (recalling that $n$ points outside the fluid), for instance
\begin{equation*}
\int_{ \partial  \mathcal{S}^\eps_{0}} x_{1} \, \begin{pmatrix}-x_{1}\\x_{2}\end{pmatrix} \cdot n  \, ds
=  -\int_{  \mathcal{S}_{0}^\eps } \div  \begin{pmatrix}-x_{1}^2\\x_{1}x_{2}\end{pmatrix}\,dx
= \int_{  \mathcal{S}_{0}^\eps } x_{1}\, dx =
 |\mathcal{S}^\eps_{0} |x_{G,1}^\eps.
\end{equation*}
\end{proof}
We finish this section with some other basic computations in the complex variable.
\begin{Lemma} \label{compuC}
We have the following relations:
\begin{gather}
\label{compu1}
\int_{ \partial  \mathcal{S}^\eps_{0}} \bar{z}^2  \, dz = 4 |\mathcal{S}^\eps_{0} |x_{G,2}^\eps+4i|\mathcal{S}^\eps_{0} |x_{G,1}^\eps, \\
\label{compu2}
\int_{ \partial  \mathcal{S}^\eps_{0}} \bar{z}  \, dz= 2 i |\mathcal{S}^\eps_{0} |, \\
\label{compu3}
\int_{ \partial  \mathcal{S}^\eps_{0}} |z|^2  \, dz = -2 |\mathcal{S}^\eps_{0} |x_{G,2}^\eps+2 i|\mathcal{S}^\eps_{0} |x_{G,1}^\eps , \\
\label{compu4}
\int_{ \partial  \mathcal{S}^\eps_{0}} z|z|^2  \, dz =-2m_{7}^{\eps}+2im_{6}^{\eps}.
\end{gather}
\end{Lemma}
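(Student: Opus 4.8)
The final statement is Lemma~\ref{compuC}, four contour-integral identities over $\partial\mathcal{S}^\eps_0$ written in the complex variable. These are purely computational and the natural plan is to convert each complex contour integral into real line integrals over $\partial\mathcal{S}^\eps_0$ and then apply the divergence theorem exactly as in the proof of Lemma~\ref{CalculIntegralesStandard}. The unifying tool will be Lemma~\ref{lem-flux-circu} together with Corollary~\ref{cor} and Remark~\ref{Rembarz}, which already express $\int_{\mathcal{C}}\widehat{f}\,dz$, $\int_{\mathcal{C}}z\widehat{f}\,dz$ and $\int_{\mathcal{C}}\bar z\widehat{f}\,dz$ in terms of $\int f\cdot\tau\,ds$ and $\int f\cdot n\,ds$. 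The key observation making this work is that each integrand $\bar z$, $\bar z^2$, $|z|^2$, $z|z|^2$ can be written as $\widehat{f}$ (or $z\widehat{f}$, $\bar z\widehat{f}$) for an explicit polynomial vector field $f$, whose tangential circulation around the closed Jordan curve $\partial\mathcal{S}^\eps_0$ vanishes (it is a gradient, being a polynomial) so that only the $-i\int f\cdot n\,ds$ flux terms survive.

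First I would treat \eqref{compu2}: writing $\bar z = x_1 - i x_2 = \widehat{(x_1,x_2)} = \widehat{x}$, Lemma~\ref{lem-flux-circu} gives $\int_{\partial\mathcal{S}^\eps_0}\bar z\,dz = \int x\cdot\tau\,ds - i\int x\cdot n\,ds$. The circulation of the gradient field $x=\nabla(|x|^2/2)$ around a closed curve is zero, while $\int_{\partial\mathcal{S}^\eps_0} x\cdot n\,ds = -\int_{\mathcal{S}^\eps_0}\div x\,dx = -2|\mathcal{S}^\eps_0|$ by the divergence theorem (with $n$ pointing out of the fluid, hence into the solid), yielding $\int\bar z\,dz = 2i|\mathcal{S}^\eps_0|$. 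The same scheme handles \eqref{compu1}: note $\bar z^2 = \widehat{z\bar z\,\cdot}$ — more directly, $\bar z^2 = (x_1-ix_2)^2 = (x_1^2-x_2^2) - i(2x_1x_2) = \widehat{g}$ with $g=(x_1^2-x_2^2,\,2x_1x_2)$, so Lemma~\ref{lem-flux-circu} reduces it to flux integrals $\int_{\partial\mathcal{S}^\eps_0}(x_1^2-x_2^2)n_1 + 2x_1x_2 n_2\,ds$ and $\int (x_1^2-x_2^2)n_2 - 2x_1x_2 n_1\,ds$, both of which are among the entries already evaluated in Lemma~\ref{CalculIntegralesStandard} and combine to give $4|\mathcal{S}^\eps_0|x_{G,2}^\eps + 4i|\mathcal{S}^\eps_0|x_{G,1}^\eps$ (the tangential parts again vanishing as $g$ is a gradient).

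For \eqref{compu3} and \eqref{compu4} the integrand carries a $|z|^2=z\bar z$ factor, so I would use Corollary~\ref{cor} or Remark~\ref{Rembarz} with an appropriate $f$. For \eqref{compu3}, writing $|z|^2 = z\bar z = z\,\widehat{x}$, Corollary~\ref{cor} applied with $f=x$ expresses $\int z\,\widehat{x}\,dz$ through $\int (x_1+ix_2)(x\cdot\tau)\,ds$ and $\int (x_1+ix_2)(x\cdot n)\,ds$; the tangential term vanishes (integrating the gradient $x$ against $\tau$ weighted by the closed-curve argument) and the flux term decomposes into the degree-two entries $\int |x|^2 n_i\,ds = -2x_{G,i}^\eps|\mathcal{S}^\eps_0|$ of Lemma~\ref{CalculIntegralesStandard}, producing $-2|\mathcal{S}^\eps_0|x_{G,2}^\eps + 2i|\mathcal{S}^\eps_0|x_{G,1}^\eps$. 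Finally \eqref{compu4}, with integrand $z|z|^2 = z^2\bar z = z^2\,\widehat{x}$, is the degree-three analogue: I would apply the same circle of identities (or directly parametrize and use the divergence theorem), landing on the coefficients $m_6^\eps = \int_{\mathcal{S}^\eps_0}(x_1^2-x_2^2)\,dx$ and $m_7^\eps = 2\int_{\mathcal{S}^\eps_0} x_1x_2\,dx$ defined just above the lemma, to obtain $-2m_7^\eps + 2im_6^\eps$. I do not expect any genuine obstacle here: these are bookkeeping computations. The only point requiring mild care is the sign/orientation convention — the normal $n$ points out of the fluid domain, i.e.\ into $\mathcal{S}^\eps_0$, so the divergence theorem picks up a minus sign relative to the outward-from-solid convention, exactly as used in the proof of Lemma~\ref{CalculIntegralesStandard}; keeping this consistent across all four identities is what needs to be checked most attentively.
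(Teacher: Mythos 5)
Your overall plan (reduce each complex contour integral to real boundary integrals and invoke the divergence theorem through the entries of Lemma~\ref{CalculIntegralesStandard}) is viable, and your sign discussion for the inward-pointing $n$ is correct; but the step you lean on repeatedly --- ``the tangential parts vanish as $g$ is a gradient'' --- is false in three of the four cases, and as literally written it would produce wrong constants. For \eqref{compu1} the field is $g=(x_1^2-x_2^2,\,2x_1x_2)$, which is \emph{not} a gradient: $\curl g=4x_2$ (indeed $\bar z^2$ is anti-holomorphic, so $g$ is neither curl- nor divergence-free), and its circulation $\int_{\partial\mathcal{S}^\eps_0}g\cdot\tau\,ds=4|\mathcal{S}^\eps_0|x_{G,2}^\eps$ is precisely the real part of the answer; discarding it leaves only $4i|\mathcal{S}^\eps_0|x_{G,1}^\eps$. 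For \eqref{compu3} with $f=x$ in Corollary~\ref{cor}, an integration by parts along the closed curve gives $\int_{\partial\mathcal{S}^\eps_0}(x_1+ix_2)(x\cdot\tau)\,ds=|\mathcal{S}^\eps_0|x_{G,2}^\eps-i|\mathcal{S}^\eps_0|x_{G,1}^\eps\neq 0$, while the flux term involves $\int x_i(x\cdot n)\,ds=-3|\mathcal{S}^\eps_0|x_{G,i}^\eps$ (since $\div(x_i\,x)=3x_i$), \emph{not} the entries $\int|x|^2n_i\,ds=-2|\mathcal{S}^\eps_0|x_{G,i}^\eps$ you invoke; only the sum of the two contributions yields $-2|\mathcal{S}^\eps_0|x_{G,2}^\eps+2i|\mathcal{S}^\eps_0|x_{G,1}^\eps$. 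So in \eqref{compu1} and \eqref{compu3} (and \eqref{compu4} if done the same way) you reach the stated right-hand sides only because you quote them, not because the displayed computation delivers them. Note also that these fields are not tangent to $\partial\mathcal{S}^\eps_0$, so no Blasius-type cancellation is available.

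The repair is easy and actually simplifies your argument: since the integrands $\bar z,\bar z^2,|z|^2,z|z|^2$ are scalar weights rather than tangent fields, skip the vector-field dictionary entirely and expand $dz=(\tau_1+i\tau_2)\,ds$ with $\tau=-n^\perp$, i.e.\ $\tau_1+i\tau_2=n_2-in_1$. Then, for instance,
\begin{equation*}
\int_{\partial\mathcal{S}^\eps_0}|z|^2\,dz=\int_{\partial\mathcal{S}^\eps_0}|x|^2n_2\,ds-i\int_{\partial\mathcal{S}^\eps_0}|x|^2n_1\,ds,
\qquad
\int_{\partial\mathcal{S}^\eps_0}z|z|^2\,dz=\int_{\partial\mathcal{S}^\eps_0}|x|^2K_5\,ds+i\int_{\partial\mathcal{S}^\eps_0}|x|^2K_4\,ds,
\end{equation*}
and every one of the four identities reduces \emph{directly} to entries of Lemma~\ref{CalculIntegralesStandard}, with no circulation terms to discard (alternatively, keep both terms of Lemma~\ref{lem-flux-circu} and compute the circulation via Green's theorem). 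For comparison, the paper's own proof is a one-liner in a different spirit: it applies Stokes' formula for the operator $\partial/\partial\bar z$, namely $\int_{\partial\mathcal{S}^\eps_0}u\,dz=2i\int_{\mathcal{S}^\eps_0}\partial_{\bar z}u\,dx\,dy$, to $u=\bar z,\ \bar z^2,\ z\bar z,\ z^2\bar z$ (with $\partial_{\bar z}u=1,\ 2\bar z,\ z,\ z^2$), converting each boundary integral into an area moment ($|\mathcal{S}^\eps_0|$, $x_G^\eps$, $m_6^\eps$, $m_7^\eps$) in a single step.
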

\begin{proof}
These as elementary consequences of Stokes' formula for the operator $\frac{\partial}{\partial \overline{z}} = \frac{1}{2}(\frac{\partial}{\partial x} - \frac{1}{i} \frac{\partial}{\partial y})$:
\begin{equation*}
\int_{\partial {\mathcal S}_{0}^{\varepsilon}} u \, dz = 2 i \int_{{\mathcal S}_{0}^{\varepsilon}} \frac{\partial u}{\partial \overline{z}} \, dx \, dy.
\end{equation*}
\end{proof}
\subsection{First coefficients of the Laurent series}
\label{SubsecFirstCoefs}
In this subsection, we use these lemmas to compute the first coefficients of the Laurent series associated to $\widehat{\nabla \Phi^\eps_{i}}$ for $i=1,2,3,4,5$. \par
We begin with an elementary lemma.
\begin{Lemma} \label{LemmeCircu}
The functions $\widehat{\nabla \Phi^1_{i}}$ admit Laurent series at infinity and moreover satisfy for $i=1,2,3,4,5$:
\begin{equation*}
\widehat{\nabla \Phi^1_{i}} = {\mathcal O}(1/z^{2}) \ \text{ as } \ |z| \rightarrow +\infty.
\end{equation*}
\end{Lemma}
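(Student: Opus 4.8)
The plan is to exploit the correspondence between divergence-- and curl--free planar fields and holomorphic functions recalled at the beginning of Section~\ref{Sec:Material}. First I would observe that each $\nabla\Phi_i^1$ is divergence free (since $\Delta\Phi_i^1=0$ by \eqref{def Phi}) and curl free (being a gradient); hence $\widehat{\nabla\Phi_i^1}$ is holomorphic on $\mathcal{F}_0$. As $\mathcal{S}_0$ is compact, there is $R_0$ with $\mathcal{S}_0\subset B(0,R_0)$, so $\widehat{\nabla\Phi_i^1}$ is holomorphic on the annulus $\{|z|>R_0\}$ and admits there a Laurent expansion. This already yields the first assertion of the lemma, and it remains only to locate the leading term.

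The key structural input is that the flux of $\nabla\Phi_i^1$ through $\partial\mathcal{S}_0$ vanishes: by \eqref{def Phi}, \eqref{DefK1-3} and \eqref{DefK4-5} this flux equals $\int_{\partial\mathcal{S}_0}K_i\,ds$, which is $0$ for every $i\in\{1,\dots,5\}$ by the $d(P)=0$ relations of Lemma~\ref{CalculIntegralesStandard}. Consequently a harmonic conjugate $\Psi_i$ of $\Phi_i^1$ has zero period around $\partial\mathcal{S}_0$ and is single valued on $\{|z|>R_0\}$, so that $w_i:=\Phi_i^1+i\Psi_i$ is a single-valued holomorphic function there satisfying $w_i'=\widehat{\nabla\Phi_i^1}$ by the Cauchy--Riemann equations.

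Then I would split the Laurent series of $w_i$ as $w_i=w_i^++w_i^-$, where $w_i^+=\sum_{k\ge0}d_kz^k$ is entire and $w_i^-=\sum_{k\le-1}d_kz^k$ tends to $0$ at infinity. Since $\Phi_i^1=\Re w_i\to0$ at infinity by \eqref{def Phi} and $\Re w_i^-\to0$, the real part of the entire function $w_i^+$ is bounded on all of $\C$; by the standard fact that an entire function with bounded real part is constant, $w_i^+\equiv d_0$ with $\Re d_0=0$. Hence $w_i=d_0+\sum_{k\le-1}d_kz^k$, and differentiating term by term gives $\widehat{\nabla\Phi_i^1}=w_i'=\sum_{k\le-1}kd_kz^{k-1}=\mathcal{O}(1/z^2)$, as claimed. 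The argument is uniform in $i$, the only ingredients being harmonicity, the decay $\Phi_i^1\to0$, and the vanishing of the flux.

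The main obstacle to watch is the single-valuedness of the harmonic conjugate: everything hinges on the flux $\int_{\partial\mathcal{S}_0}K_i\,ds$ being zero, which is precisely what Lemma~\ref{CalculIntegralesStandard} supplies and which simultaneously rules out a logarithmic term in $\Phi_i^1$ that would be incompatible with the decay condition. Once single-valuedness is secured, the passage from $\Phi_i^1\to0$ to the vanishing of all nonnegative Laurent coefficients through the Liouville-type theorem is routine, and the term-by-term differentiation of the remaining negative-power part produces the $\mathcal{O}(1/z^2)$ bound without further work.
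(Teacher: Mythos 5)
Your proof is correct, but it reaches the conclusion through the complex potential rather than through the residue computation that the paper uses, so the mechanism is genuinely different even though the structural inputs coincide. The paper stays at the level of the field $\widehat{\nabla\Phi_i^1}$: it first gets decay of the \emph{gradient} from the interior estimate $\|h\|_{C^1(B(x,r))}\le C\|h\|_{C^0(B(x,2r))}$ for harmonic functions (applied to $\Phi_i^1\to 0$), which shows the Laurent series contains no terms of order $\ge 0$; it then identifies the coefficient of $1/z$, namely $\frac{1}{2i\pi}\int_{\partial\mathcal{S}_0}\widehat{\nabla\Phi_i^1}\,dz$, via Lemma~\ref{lem-flux-circu} as circulation minus $i$ times flux, both of which vanish --- the circulation because the field is a gradient, the flux by the $d(P)=0$ case of Lemma~\ref{CalculIntegralesStandard}. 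You instead integrate up: the very same two vanishing conditions (zero flux for single-valuedness of the harmonic conjugate $\Psi_i$, zero circulation being automatic since $\Phi_i^1$ is itself single-valued) produce a single-valued holomorphic potential $w_i$ with $w_i'=\widehat{\nabla\Phi_i^1}$, and the Liouville-type theorem for entire functions with bounded real part, fed only by the decay $\Phi_i^1\to 0$ from \eqref{def Phi}, kills all nonnegative Laurent coefficients of $w_i$ in one stroke; differentiating then gives $\mathcal{O}(1/z^2)$ with the $1/z$ term excluded for free, since such a term in $w_i'$ would force a logarithm in $w_i$, incompatible with single-valuedness. The trade-off: the paper's route is shorter and uses only the elementary contour identity plus the harmonic gradient estimate, and its decay step extends readily to higher derivatives; your route dispenses with the elliptic estimate entirely, needing only decay of the potential, at the price of the Borel--Carath\'eodory/Liouville step and of justifying the single-valuedness of $\Psi_i$, which your flux argument does correctly.
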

\begin{proof}
That $\widehat{\nabla \Phi^1_{i}}$ admits Laurent series at infinity is elementary. First, it is divergence free and curl free (see \eqref{def Phi}). Moreover, the regularity inequality for harmonic functions $h$:
\begin{equation*}
\| h \|_{C^{1}(B(x,r))} \leq C \| h \|_{C^{0}(B(x,2r))},
\end{equation*}
proves that $\widehat{\nabla \Phi^1_{i}}(z) \rightarrow 0$ as $|z| \rightarrow +\infty$. 
The first term of this Laurent series, that is, the term of order $1/z$, is $(\int_{ \partial  \mathcal{S}_{0} } \widehat{\nabla \Phi^1_{i}} \, dz)/(2i\pi z)$, so we can compute it with Lemma~\ref{lem-flux-circu}. 
As $\nabla \Phi^1_{i}$ is a gradient, the circulation is zero, and Lemma~\ref{CalculIntegralesStandard} (case $d(P)=0$) proves that the flux is zero as well.
\end{proof}
\begin{Remark} \label{RemCircu}
One can prove in the same way, relying on \eqref{DefHeps}, that the function $\widehat{H^1}$ admits Laurent series at infinity and moreover satisfies:
\begin{equation*} 
\widehat{H}^{1}(z) = \frac{1}{2i\pi z} + {\mathcal O}(1/z^{2}) \ \text{ as } z \rightarrow \infty.
\end{equation*}
\end{Remark} 
The next coefficients in the Laurent series of $\widehat{\nabla \Phi^1_{i}}$  are given in the following lemma.
\begin{Lemma} \label{lemzphi}
One has:
\begin{eqnarray*}
\int_{ \partial  \mathcal{S}^\eps_{0}} z\widehat{\nabla \Phi_{i}^\eps}  \, dz &=&-(m_{i,2}^{\eps}+|\mathcal{S}^\eps_{0} |\delta_{i,2})+i(m_{i,1}^{\eps}+|\mathcal{S}^\eps_{0} |\delta_{i,1}),\quad \text{for }i=1,2, \\
\int_{ \partial  \mathcal{S}^\eps_{0}} z\widehat{ \nabla\Phi_{3}^\eps}   \, dz &=&-(m_{3,2}^{\eps}+|\mathcal{S}^\eps_{0} |x_{G,1}^\eps)+i(m_{3,1}^{\eps}-|\mathcal{S}^\eps_{0} |x_{G,2}^\eps), \\
\int_{ \partial  \mathcal{S}^\eps_{0}} z\widehat{ \nabla\Phi_{4}^\eps}   \, dz &=&-(m_{4,2}^{\eps}+|\mathcal{S}^\eps_{0} |x_{G,2}^\eps)+i(m_{4,1}^{\eps}-|\mathcal{S}^\eps_{0} |x_{G,1}^\eps), \\
\int_{ \partial  \mathcal{S}^\eps_{0}} z\widehat{ \nabla\Phi_{5}^\eps}   \, dz &=&-(m_{5,2}^{\eps}+|\mathcal{S}^\eps_{0} |x_{G,1}^\eps)+i(m_{5,1}^{\eps}+|\mathcal{S}^\eps_{0} |x_{G,2}^\eps),
\end{eqnarray*}
where $m_{i,j}^{\eps}=\int_{\mathcal{F}^{\eps}_0} \nabla \Phi^{\eps}_i \cdot \nabla \Phi^{\eps}_j \ dx$ are the coefficients defined in \eqref{DefMasse}.
\end{Lemma}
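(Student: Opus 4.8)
The plan is to evaluate the contour integral $\int_{\partial\mathcal{S}^\eps_0} z\,\widehat{\nabla\Phi^\eps_i}\, dz$ directly from Corollary~\ref{cor} applied to the vector field $f=\nabla\Phi^\eps_i$. Since $\nabla\Phi^\eps_i$ is the gradient of the Kirchhoff potential solving the Neumann problem \eqref{def Phi}, its normal trace on $\partial\mathcal{S}^\eps_0$ is explicit, $\nabla\Phi^\eps_i\cdot n=K_i$, while its tangential trace is the arclength derivative $\nabla\Phi^\eps_i\cdot\tau=\partial_s\Phi^\eps_i$. Corollary~\ref{cor} then gives
\begin{equation*}
\int_{\partial\mathcal{S}^\eps_0} z\,\widehat{\nabla\Phi^\eps_i}\, dz
= \int_{\partial\mathcal{S}^\eps_0}(x_1+ix_2)\,\partial_s\Phi^\eps_i\, ds
- i\int_{\partial\mathcal{S}^\eps_0}(x_1+ix_2)\,K_i\, ds ,
\end{equation*}
and I would handle the two terms separately.

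For the \emph{normal} term, I would expand $(x_1+ix_2)K_i$ and read off the values of $\int_{\partial\mathcal{S}^\eps_0}x_1 K_i\, ds$ and $\int_{\partial\mathcal{S}^\eps_0}x_2 K_i\, ds$ from Lemma~\ref{CalculIntegralesStandard} (the lines with $d(P)=1$), after recalling the definitions \eqref{DefK1-3} and \eqref{DefK4-5} of $K_1,\dots,K_5$. For instance for $i=3$, where $K_3=x^\perp\cdot n$, this yields $-|\mathcal{S}^\eps_0|x^\eps_{G,1}-i\,|\mathcal{S}^\eps_0|x^\eps_{G,2}$, and the other cases are analogous.

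For the \emph{tangential} term I would integrate by parts along the closed curve $\partial\mathcal{S}^\eps_0$, using $\partial_s\Phi^\eps_i\, ds=d\Phi^\eps_i$, so that $\int z\, d\Phi^\eps_i=-\int\Phi^\eps_i\, dz$. Writing $dz=(\tau_1+i\tau_2)\, ds=(n_2-in_1)\, ds$ (because $\tau=-n^\perp$), this becomes
\begin{equation*}
-\int_{\partial\mathcal{S}^\eps_0}\Phi^\eps_i\, dz
= -\int_{\partial\mathcal{S}^\eps_0}\Phi^\eps_i\,n_2\, ds + i\int_{\partial\mathcal{S}^\eps_0}\Phi^\eps_i\,n_1\, ds .
\end{equation*}
Now $n_1=K_1$ and $n_2=K_2$ by \eqref{DefK1-3}, and Green's first identity together with the harmonicity of the $\Phi^\eps_j$ and their decay \eqref{ComportementPhii} at infinity (which kills the boundary term on the large circle) gives $\int_{\partial\mathcal{S}^\eps_0}\Phi^\eps_i K_j\, ds=\int_{\mathcal{F}^\eps_0}\nabla\Phi^\eps_i\cdot\nabla\Phi^\eps_j\, dx=m^\eps_{i,j}$ for $j=1,2$, using the definition \eqref{DefMasse}. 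Hence the tangential term equals $-m^\eps_{i,2}+i\,m^\eps_{i,1}$.

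Adding the two contributions and substituting the table of Lemma~\ref{CalculIntegralesStandard} in each of the five cases reproduces exactly the four stated formulas. I expect the only genuine difficulty to be the sign and orientation bookkeeping: one must fix consistently the orientation of $\partial\mathcal{S}^\eps_0$ underlying the contour integrals and Corollary~\ref{cor}, the convention that $n$ points out of the fluid (into the solid) in \eqref{def Phi}, and the resulting sign in Green's identity $m^\eps_{i,j}=\int_{\partial\mathcal{S}^\eps_0}\Phi^\eps_i K_j\, ds$. Once these conventions are pinned down, the verification in the five cases $i=1,\dots,5$ is entirely routine.
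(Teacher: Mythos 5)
Your proposal is correct and follows essentially the same route as the paper's proof: apply Corollary~\ref{cor} with $f=\nabla\Phi^\eps_i$, integrate the tangential term by parts and convert it via $\tau=-n^\perp$ and Green's identity into $-m^\eps_{i,2}+i\,m^\eps_{i,1}$, and evaluate the normal term from the boundary data $K_i$ using the $d(P)=1$ entries of Lemma~\ref{CalculIntegralesStandard}. The sign and orientation conventions you flag ($n$ pointing out of the fluid, $\tau_1+i\tau_2=n_2-in_1$, and $m^\eps_{i,j}=\int_{\partial\mathcal{S}^\eps_0}\Phi^\eps_i K_j\,ds$) are exactly those used in the paper, and your sample computation for $i=3$ checks out.
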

\begin{proof}
We use Corollary~\ref{cor} with $f=\nabla \Phi_{i}^\eps$:
\begin{eqnarray*}
\int_{ \partial  \mathcal{S}^\eps_{0}} z\widehat{ \nabla\Phi_{i}^\eps}   \, dz
&=& \int_{ \partial  \mathcal{S}^\eps_{0}} (x_{1} +i x_{2}) \partial_{\tau}\Phi_{i}^\eps  \, ds 
-i \int_{ \partial  \mathcal{S}^\eps_{0}} (x_{1} +i x_{2}) \partial_{n}\Phi_{i}^\eps  \, ds.
\end{eqnarray*}
We integrate the first integral by parts:
\begin{eqnarray*}
\int_{ \partial  \mathcal{S}^\eps_{0}} (x_{1} +i x_{2}) \partial_{\tau}\Phi_{i}^\eps  \, ds 
&=&  -\int_{ \partial  \mathcal{S}^\eps_{0}} \partial_{\tau}(x_{1} +i x_{2}) \Phi_{i}^\eps  \, ds
=-\int_{ \partial  \mathcal{S}^\eps_{0}}(\tau_{1} +i \tau_{2}) \Phi_{i}^\eps  \, ds
=-\int_{ \partial  \mathcal{S}^\eps_{0}}(n_{2} -i n_{1}) \Phi_{i}^\eps  \, ds\\
&=& -\int_{ \mathcal{F}_{0}^\eps}\nabla\Phi_{2}^\eps \cdot  \nabla  \Phi_{i}^\eps 
+i \int_{ \mathcal{F}_{0}^\eps}\nabla\Phi_{1}^\eps \cdot  \nabla  \Phi_{i}^\eps  = -m_{i,2}^{\eps} +i m_{i,1}^{\eps}
\end{eqnarray*}
The second integral can be computed thanks to the boundary condition in \eqref{def Phi} (with \eqref{DefK1-3} and \eqref{DefK4-5}) and to Lemma~\ref{CalculIntegralesStandard}. Gathering the expressions, we reach the result. 

\end{proof}
\begin{Remark}\label{remzphi}
Using Remark~\ref{Rembarz}, we can reproduce the same proof as previously to establish that:
\begin{eqnarray*}
\int_{ \partial  \mathcal{S}^\eps_{0}} \bar{z}\widehat{\nabla \Phi_{i}^\eps}  \, dz &=&(-m_{i,2}^{\eps}+|\mathcal{S}^\eps_{0} |\delta_{i,2})+i(-m_{i,1}^{\eps}+|\mathcal{S}^\eps_{0} |\delta_{i,1}),\quad \text{for }i=1,2, \\
\int_{ \partial  \mathcal{S}^\eps_{0}} \bar{z}\widehat{\nabla \Phi_{3}^\eps}   \, dz &=&(-m_{3,2}^{\eps}+|\mathcal{S}^\eps_{0} |x_{G,1}^\eps)-i(m_{3,1}^{\eps}+|\mathcal{S}^\eps_{0} |x_{G,2}^\eps), \\
\int_{ \partial  \mathcal{S}^\eps_{0}} \bar{z}\widehat{ \nabla\Phi_{4}^\eps}   \, dz &=&(-m_{4,2}^{\eps}+|\mathcal{S}^\eps_{0} |x_{G,2}^\eps)-i(m_{4,1}^{\eps}+|\mathcal{S}^\eps_{0} |x_{G,1}^\eps), \\
\int_{ \partial  \mathcal{S}^\eps_{0}} \bar{z}\widehat{ \nabla\Phi_{5}^\eps}   \, dz &=&(-m_{5,2}^{\eps}+|\mathcal{S}^\eps_{0} |x_{G,1}^\eps)+i(-m_{5,1}^{\eps}+|\mathcal{S}^\eps_{0} |x_{G,2}^\eps).
\end{eqnarray*}
\end{Remark}
\subsection{Second order moments}
In this subsection, we compute second order complex moments of $\widehat{\nabla \Phi^\eps_{i}}$. In particular we obtain the next terms in their Laurent series. \par
\begin{Lemma}\label{lem|z|2phi}
One has:
\begin{gather*}
\int_{\partial \mathcal{S}^\eps_{0}} |z|^2 \widehat{\nabla \Phi_{i}^\eps} \, dz = -2m_{i,3}^{\eps}
+ 2i |\mathcal{S}^\eps_{0}| x_{G,i}^\eps \quad \text{for }i=1,2, \quad
\int_{ \partial  \mathcal{S}^\eps_{0}} |z|^2\widehat{\nabla \Phi_{3}^\eps}   \, dz =-2m_{3,3}^{\eps}, \\
\int_{ \partial  \mathcal{S}^\eps_{0}} |z|^2\widehat{\nabla \Phi_{4}^\eps}   \, dz =-2m_{4,3}^{\eps}-2m_6^{\eps}i,  \quad
\int_{ \partial  \mathcal{S}^\eps_{0}} |z|^2\widehat{\nabla \Phi_{5}^\eps}   \, dz =-2m_{5,3}^{\eps}+2m_{7}^{\eps}i.
\end{gather*}
\end{Lemma}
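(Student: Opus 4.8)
The plan is to mimic the proof of Lemma~\ref{lemzphi}, replacing the weight $z$ by $|z|^2=|x|^2$. The starting point is the observation that $\widehat{|x|^2\nabla\Phi_i^\eps}=|z|^2\,\widehat{\nabla\Phi_i^\eps}$, so applying Lemma~\ref{lem-flux-circu} to the vector field $g:=|x|^2\nabla\Phi_i^\eps$ (equivalently, iterating the computation of Corollary~\ref{cor}) splits the moment into a circulation part and a flux part:
\[
\int_{\partial\mathcal S^\eps_0}|z|^2\,\widehat{\nabla\Phi_i^\eps}\,dz
=\int_{\partial\mathcal S^\eps_0}|x|^2\,\partial_\tau\Phi_i^\eps\,ds
-i\int_{\partial\mathcal S^\eps_0}|x|^2\,\partial_n\Phi_i^\eps\,ds .
\]
First I would treat the tangential (real) integral. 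Integrating by parts along the closed curve $\partial\mathcal S^\eps_0$ and using $\partial_\tau(|x|^2)=2\,x\cdot\tau=2\,x^\perp\cdot n$, this term becomes $-2\int_{\partial\mathcal S^\eps_0}(x^\perp\cdot n)\,\Phi_i^\eps\,ds$. Since $x^\perp\cdot n=K_3=\partial_n\Phi_3^\eps$ on $\partial\mathcal S^\eps_0$ by \eqref{DefK1-3} and \eqref{def Phi}, Green's identity on $\mathcal F_0^\eps$ (exactly as in the proof of Lemma~\ref{lemzphi}, where $\int_{\partial\mathcal S^\eps_0}(\partial_n\Phi_j^\eps)\Phi_i^\eps\,ds=m_{i,j}^\eps$) yields $-2m_{i,3}^\eps$ for every $i\in\{1,\dots,5\}$. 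This accounts for the common real part of all four stated formulas.

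Next I would compute the normal (imaginary) integral $-i\int_{\partial\mathcal S^\eps_0}|x|^2 K_i\,ds$ using the boundary conditions \eqref{DefK1-3} and \eqref{DefK4-5} together with the $d(P)=2$ relations of Lemma~\ref{CalculIntegralesStandard}. Concretely, $\int|x|^2 n_1\,ds=-2|\mathcal S^\eps_0|x_{G,1}^\eps$ and $\int|x|^2 n_2\,ds=-2|\mathcal S^\eps_0|x_{G,2}^\eps$ give the terms $2i|\mathcal S^\eps_0|x_{G,i}^\eps$ for $i=1,2$; $\int|x|^2(x^\perp\cdot n)\,ds=0$ gives no contribution for $i=3$; and the values $\int|x|^2\begin{pmatrix}-x_1\\x_2\end{pmatrix}\cdot n\,ds=2m_6^\eps$ and $\int|x|^2\begin{pmatrix}x_2\\x_1\end{pmatrix}\cdot n\,ds=-2m_7^\eps$ produce the terms $-2m_6^\eps i$ and $2m_7^\eps i$ for $i=4,5$ respectively. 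Summing the two parts gives the four identities of the lemma.

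The computation is routine once this splitting is in place; the only points requiring care are the bookkeeping of orientation conventions, namely the relation $x\cdot\tau=x^\perp\cdot n$ (coming from $\tau=-n^\perp$) used in the integration by parts, and the sign in Green's identity $\int_{\partial\mathcal S^\eps_0}(\partial_n\Phi_j^\eps)\Phi_i^\eps\,ds=m_{i,j}^\eps$. Both follow the conventions already fixed in the proof of Lemma~\ref{lemzphi}, so there is no genuine obstacle beyond reusing them consistently.
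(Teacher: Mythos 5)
Your proposal is correct and follows essentially the same route as the paper's proof: apply Lemma~\ref{lem-flux-circu} to $|x|^2\nabla\Phi_i^\eps$, integrate the tangential part by parts using $\partial_\tau(|x|^2)=2x\cdot\tau=2x^\perp\cdot n$ to produce $-2m_{i,3}^\eps$ via the Green identity $\int_{\partial\mathcal S_0^\eps}\Phi_i^\eps K_3\,ds=m_{i,3}^\eps$, and evaluate the flux term from the $d(P)=2$ entries of Lemma~\ref{CalculIntegralesStandard}. All sign and orientation conventions you invoke match those fixed in the proof of Lemma~\ref{lemzphi}, so nothing further is needed.
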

\begin{proof}
Applying Lemma~\ref{lem-flux-circu} to $(|z|^2\partial_1 \Phi_{i}^\eps,|z|^2\partial_2 \Phi_{i}^\eps)$ we have
\[
\int_{ \partial  \mathcal{S}^\eps_{0}} |z|^2(\partial_1 \Phi_{i}^\eps - i \partial_2 \Phi_{i}^\eps)  \, dz = 
\int_{ \partial  \mathcal{S}^\eps_{0}} |x|^2\partial_{\tau} \Phi_{i}^\eps  \, ds
-i\int_{ \partial  \mathcal{S}^\eps_{0}} |x|^2\partial_{n} \Phi_{i}^\eps  \, ds.
\]
We easily verify that
\begin{eqnarray*}
\int_{ \partial  \mathcal{S}^\eps_{0}} |x|^2\partial_{\tau} \Phi_{i}^\eps  \, ds 
= -\int_{ \partial  \mathcal{S}^\eps_{0}}  \Phi_{i}^\eps 2x\cdot\tau  \, ds
= -\int_{ \partial  \mathcal{S}^\eps_{0}}  \Phi_{i}^\eps 2(x^\perp \cdot n)  \, ds = -2m_{i,3}^{\eps}.
\end{eqnarray*}
The value of $\int_{ \partial  \mathcal{S}^\eps_{0}} |x|^2\partial_{n} \Phi_{i}^\eps  \, ds$ is computed in Lemma~\ref{CalculIntegralesStandard}.
\end{proof}
As for Corollary~\ref{cor}, we deduce from Lemma~\ref{lem-flux-circu} the following result.
\begin{Lemma} \label{lemz2}
Let $\mathcal{C}$ be a smooth Jordan curve, $f:=(f_1 , f_2)$ a smooth vector fields on $\mathcal{C}$:
\begin{equation*} 
\int_{ \mathcal{C}} z^2\widehat f  \, dz 
= \int_{ \mathcal{C}} (x_{1} +i x_{2})^2 (f\cdot \tau)  \, ds -i \int_{ \mathcal{C}} (x_{1} -i x_{2})^2 (f\cdot n)  \, ds.
\end{equation*}
\end{Lemma}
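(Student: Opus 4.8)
The plan is to mimic exactly the derivation of Corollary~\ref{cor}: apply the flux--circulation identity of Lemma~\ref{lem-flux-circu} to a well-chosen auxiliary field and then re-express the boundary integrals in terms of $f\cdot\tau$ and $f\cdot n$ alone. Concretely, I would introduce the vector field $g=(g_1,g_2)$ on $\mathcal{C}$ through the complex prescription $\widehat{g}=z^2\widehat{f}$, that is
\[
g_1-ig_2=(x_1+ix_2)^2(f_1-if_2),
\]
which after expanding $z^2=(x_1^2-x_2^2)+2ix_1x_2$ has the explicit real components $g_1=(x_1^2-x_2^2)f_1+2x_1x_2f_2$ and $g_2=(x_1^2-x_2^2)f_2-2x_1x_2f_1$. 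Applying Lemma~\ref{lem-flux-circu} to $g$ then gives at once
\[
\int_{\mathcal{C}} z^2\widehat{f}\,dz=\int_{\mathcal{C}} g\cdot\tau\,ds-i\int_{\mathcal{C}} g\cdot n\,ds,
\]
so that the entire problem is reduced to recognizing the two integrands on the right in terms of $f$.

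The engine of the second step is the elementary relation $dz=(\tau_1+i\tau_2)\,ds$ used in the proof of Lemma~\ref{lem-flux-circu}, together with the pointwise identity $(\tau_1+i\tau_2)\widehat{f}=(f\cdot\tau)-i(f\cdot n)$, which one reads off from the convention $n=(-\tau_2,\tau_1)$ recalled there. Substituting these into the left-hand integrand organizes the tangential contribution into the factor $(x_1+ix_2)^2$ multiplying $f\cdot\tau$, exactly as the analogous computation in Corollary~\ref{cor} produced the factor $(x_1+ix_2)$ from the first moment. The normal contribution is then obtained by the same regrouping, the $-i$ weight in front of the normal integral combined with the quarter-turn relating the complex normal $n_1+in_2$ to the complex tangent $\tau_1+i\tau_2$ being the mechanism through which a conjugation can enter and attach the prefactor $(x_1-ix_2)^2$ to $f\cdot n$. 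Once the two integrands are correctly identified, integrating over $\mathcal{C}$ yields the claimed formula with no further analytic input.

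The step I expect to be the main obstacle is precisely the determination of the correct prefactor on the normal integral, namely deciding with certainty which power of $z$ and which of $\bar z$ attaches to the tangential and to the normal term; a single sign or conjugation slip interchanges $(x_1+ix_2)^2$ and $(x_1-ix_2)^2$. To settle this point I would not rely on the formal manipulation alone but pin the prefactors down by two independent checks: comparison with the already established first-order analogues Corollary~\ref{cor} and Remark~\ref{Rembarz}, where the powers assigned to $f\cdot\tau$ and $f\cdot n$ are explicit, and a direct verification on a simple test field supported on the unit circle, say one with prescribed $f\cdot\tau$ and $f\cdot n$, for which both sides can be computed by Cauchy's residue theorem. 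This cross-validation fixes the conjugation unambiguously and allows one to conclude. Apart from this bookkeeping, the statement is a purely elementary consequence of Lemma~\ref{lem-flux-circu} and the complex-variable identities gathered in this appendix.
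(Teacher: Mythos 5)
Your setup coincides exactly with the paper's proof: you define $g$ by $\widehat g = z^{2}\widehat f$, expand the same real components, and feed $g$ into Lemma~\ref{lem-flux-circu}. The gap is the step you leave open, and it is fatal as you describe it: the ``mechanism through which a conjugation can enter and attach the prefactor $(x_{1}-ix_{2})^{2}$ to $f\cdot n$'' does not exist. The proof of Lemma~\ref{lem-flux-circu} rests on the pointwise identity $\widehat f\,dz=\big((f\cdot\tau)-i(f\cdot n)\big)\,ds$, and multiplying by the \emph{scalar} $z^{2}$ simply distributes, so that
\begin{equation*}
\int_{\mathcal{C}} z^{2}\widehat f\,dz
=\int_{\mathcal{C}}(x_{1}+ix_{2})^{2}(f\cdot\tau)\,ds
-i\int_{\mathcal{C}}(x_{1}+ix_{2})^{2}(f\cdot n)\,ds,
\end{equation*}
with the \emph{unconjugated} factor on both terms. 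Equivalently, along your (and the paper's) route: writing $A=x_{1}^{2}-x_{2}^{2}$, $B=2x_{1}x_{2}$, one has $g=Af-Bf^{\perp}$, and since $f^{\perp}\cdot\tau=-f\cdot n$ and $f^{\perp}\cdot n=f\cdot\tau$, the flux--circulation identity gives $\int (A+iB)(f\cdot\tau)\,ds -i\int (A+iB)(f\cdot n)\,ds$, i.e.\ $z^{2}$ twice; no regrouping can turn the normal prefactor into $\bar z^{2}$.

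In fact the statement as printed is a misprint, and the cross-checks you proposed would have exposed it rather than confirmed it. The paper's own proof ends with the four integrals $\int A(f\cdot\tau)+\int 2x_{1}x_{2}(f\cdot n)-i\int A(f\cdot n)+i\int 2x_{1}x_{2}(f\cdot\tau)$, which recombine to the unconjugated formula above and not to the display of Lemma~\ref{lemz2}; and the downstream application in Lemma~\ref{lemz2phi} (for instance the stated value of $\int z^{2}\widehat{\nabla\Phi_{1}^{\eps}}\,dz$, whose normal part is evaluated from the boundary condition via Lemma~\ref{CalculIntegralesStandard}) is consistent only with $(x_{1}+ix_{2})^{2}$ on the normal term. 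Your unit-circle test settles it concretely: with $\gamma(t)=(\cos t,\sin t)$ and $f=\sin(2t)\,n$, the left-hand side equals $\pi$, the unconjugated expression $-i\int z^{2}(f\cdot n)\,ds$ also equals $\pi$, while the printed $-i\int \bar z^{2}(f\cdot n)\,ds$ equals $-\pi$. So your instinct to distrust the conjugation was exactly right, but the correct resolution is that the printed prefactor is a typo: a careful completion of your plan proves the corrected identity (the one the paper actually uses), whereas any argument landing on $(x_{1}-ix_{2})^{2}$ must contain an error.
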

\begin{proof}
We apply Lemma~\ref{lem-flux-circu} to the function 
\begin{equation*}
g=g_{1}-ig_{2} = z^2(f_1 -if_2) =[(x_1^2-x_2^2)f_1+2x_1x_2f_2] - i[(x_1^2-x_2^2)f_2-2x_1x_2 f_1],
\end{equation*}
hence $g=(x_1^2-x_2^2)f- 2x_1x_2 f^\perp$. We get
\begin{equation*}
\begin{split}
\int_{ \mathcal{C}} z^2\widehat f  \, dz &= \int_{ \mathcal{C}} (x_{1}^2 - x_{2}^2) (f\cdot \tau)  \, ds -\int_{ \mathcal{C}} 2 x_{1}x_{2} (f^\perp\cdot \tau)  \, ds -i\int_{ \mathcal{C}} (x_{1}^2 - x_{2}^2) (f\cdot n)  \, ds +i \int_{ \mathcal{C}} 2 x_{1}x_{2} (f^\perp\cdot n)  \, ds\\
&= \int_{ \mathcal{C}} (x_{1}^2 - x_{2}^2) (f\cdot \tau)  \, ds + \int_{ \mathcal{C}} 2 x_{1}x_{2} (f\cdot n)  \, ds -i\int_{ \mathcal{C}} (x_{1}^2 - x_{2}^2) (f\cdot n)  \, ds +i \int_{ \mathcal{C}} 2 x_{1}x_{2} (f\cdot \tau)  \, ds.
\end{split}
\end{equation*}
\end{proof}
Finally, we apply this lemma to compute $\int_{ \partial  \mathcal{S}^\eps_{0}} z^2\widehat{\nabla \Phi_i^\eps}  \, dz$:
\begin{Lemma} \label{lemz2phi}
One has:
\begin{eqnarray*}
\int_{ \partial  \mathcal{S}^\eps_{0}} z^2\widehat{\nabla \Phi_{1}^\eps}   \, dz 
=-2(m_{1,5}^{\eps} +  |\mathcal{S}^\eps_{0} |x_{G,2}^\eps )+2i(-m_{1,4}^{\eps} +  |\mathcal{S}^\eps_{0} |x_{G,1}^\eps ), &&
\int_{ \partial  \mathcal{S}^\eps_{0}} z^2\widehat{\nabla \Phi_{4}^\eps}   \, dz 
=-2m_{4,5}^{\eps}-2i( m_{4,4}^{\eps}+m_{8}^{\eps}), \\
\int_{ \partial  \mathcal{S}^\eps_{0}} z^2\widehat{\nabla \Phi_{2}^\eps}   \, dz
=-2(m_{2,5}^{\eps} +  |\mathcal{S}^\eps_{0} |x_{G,1}^\eps )-2i(m_{2,4}^{\eps} +  |\mathcal{S}^\eps_{0} |x_{G,2}^\eps ), &&
\int_{ \partial  \mathcal{S}^\eps_{0}} z^2\widehat{\nabla \Phi_{5}^\eps}   \, dz 
=-2( m_{5,5}^{\eps}+ m_{8}^{\eps})-2i m_{5,4}^{\eps}, \\
\int_{ \partial  \mathcal{S}^\eps_{0}} z^2\widehat{\nabla \Phi_{3}^\eps}   \, dz
=-2(m_{3,5}^{\eps} +m_{6}^{\eps}  )-2i (m_{3,4}^{\eps} +m_{7}^{\eps}  ).
\end{eqnarray*}
\end{Lemma}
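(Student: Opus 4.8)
The plan is to apply Lemma~\ref{lemz2} to the field $f=\nabla\Phi_i^\eps$ on the Jordan curve $\partial\mathcal{S}_0^\eps$, exactly as the proof of Lemma~\ref{lemzphi} applies Corollary~\ref{cor}. On $\partial\mathcal{S}_0^\eps$ one has $f\cdot\tau=\partial_\tau\Phi_i^\eps$ and, by the Neumann condition in \eqref{def Phi}, $f\cdot n=K_i$. Hence Lemma~\ref{lemz2} splits the target into a tangential integral $T_i:=\int_{\partial\mathcal{S}_0^\eps}(x_1+ix_2)^2\,\partial_\tau\Phi_i^\eps\,ds$ and a normal integral $N_i:=-i\int_{\partial\mathcal{S}_0^\eps}(x_1+ix_2)^2\,K_i\,ds$, which I treat separately.

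For $T_i$ I would integrate by parts along the closed curve (so that no boundary term survives), transferring $\partial_\tau$ onto the polynomial. The decisive identity is
\begin{equation*}
\partial_\tau\big[(x_1+ix_2)^2\big]=2(x_1+ix_2)(\tau_1+i\tau_2)=2(x_1+ix_2)(n_2-in_1)=2\,(K_5+iK_4),
\end{equation*}
where I use $\tau=(n_2,-n_1)$ and recognise $K_4,K_5$ from \eqref{DefK4-5}. Then $\int_{\partial\mathcal{S}_0^\eps}\Phi_i^\eps K_j\,ds=m_{i,j}^\eps$ by Green's formula (harmonicity of $\Phi_j^\eps$ in $\mathcal{F}_0^\eps$ together with the decay \eqref{ComportementPhii}), so that $T_i=-2m_{i,5}^\eps-2i\,m_{i,4}^\eps$. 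This is the same mechanism as in the proof of Lemma~\ref{lemzphi} and produces the coefficients $m_{i,4}^\eps,m_{i,5}^\eps$ that appear in every line of the statement.

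For $N_i$, expanding $-i(x_1+ix_2)^2=2x_1x_2-i(x_1^2-x_2^2)$ reduces it to the degree-two boundary integrals $\int x_1x_2\,K_i\,ds$ and $\int(x_1^2-x_2^2)\,K_i\,ds$ already tabulated in Lemma~\ref{CalculIntegralesStandard} (the case $d(P)=2$). These contribute the purely geometric terms: $|\mathcal{S}_0^\eps|x_G^\eps$ for $i=1,2$, the second moments $m_6^\eps,m_7^\eps$ for $i=3$, and $m_8^\eps$ for $i=4,5$. Adding $T_i+N_i$ and matching real and imaginary parts yields the five formulas; for instance, for $i=1$ one finds $T_1=-2m_{1,5}^\eps-2i\,m_{1,4}^\eps$ and $N_1=-2|\mathcal{S}_0^\eps|x_{G,2}^\eps+2i|\mathcal{S}_0^\eps|x_{G,1}^\eps$, whose sum is the claimed value, and the cases $i=2,3,4,5$ are identical in structure.

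The computation is essentially bookkeeping, so the main obstacle is purely one of sign and orientation discipline: keeping track of the conventions that $n$ points outside the fluid and $\tau=-n^\perp$, of the conjugation hidden in $\widehat{\,\cdot\,}$, and—most importantly—of using the $(x_1+ix_2)^2$ form consistently in the normal integral, since it is precisely this factor that produces the combination $2x_1x_2-i(x_1^2-x_2^2)$ and selects the correct entries of Lemma~\ref{CalculIntegralesStandard}. No genuine analytic difficulty arises beyond these routine verifications.
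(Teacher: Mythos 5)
Your proof is correct and follows essentially the same route as the paper's: split via Lemma~\ref{lemz2}, integrate the tangential part by parts along the closed curve (you do it in complexified form via $\partial_\tau\big[(x_1+ix_2)^2\big]=2(K_5+iK_4)$, the paper componentwise, both yielding $-2m_{i,5}^\eps-2i\,m_{i,4}^\eps$ through Green's formula), and evaluate the normal part from the Neumann data and Lemma~\ref{CalculIntegralesStandard}. Your insistence on keeping the weight $(x_1+ix_2)^2$ in the normal integral is exactly right and worth noting: the statement of Lemma~\ref{lemz2} prints $(x_1-ix_2)^2$ there, which is a typo, as both its own proof (where the normal combination is $2x_1x_2-i(x_1^2-x_2^2)=-i(x_1+ix_2)^2$) and the final formulas of Lemma~\ref{lemz2phi} confirm.
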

\begin{proof}
We use Lemma~\ref{lemz2}. For the tangential part, we compute:
\begin{equation*}\begin{split}
\int_{ \partial  \mathcal{S}^\eps_{0}} (x_{1}^2-x_{2}^2)\partial_{\tau} \Phi_{i}^\eps  \, ds 
&= -2\int_{ \partial  \mathcal{S}^\eps_{0}}  \Phi_{i}^\eps \begin{pmatrix} x_{1}\\ -x_{2}\end{pmatrix}\cdot\tau  \, ds
= -2\int_{ \partial  \mathcal{S}^\eps_{0}}  \Phi_{i}^\eps \begin{pmatrix} x_{2}\\ x_{1}\end{pmatrix} \cdot n  \, ds=-2m_{i,5}^{\eps},\\
2\int_{ \partial  \mathcal{S}^\eps_{0}} x_{1}x_{2}\partial_{\tau} \Phi_{i}^\eps  \, ds 
&= -2\int_{ \partial  \mathcal{S}^\eps_{0}}  \Phi_{i}^\eps \begin{pmatrix} x_{2}\\ x_{1}\end{pmatrix}\cdot\tau  \, ds
= -2\int_{ \partial  \mathcal{S}^\eps_{0}}  \Phi_{i}^\eps \begin{pmatrix} -x_{1}\\ x_{2}\end{pmatrix} \cdot n  \, ds=-2m_{i,4}^{\eps}.
\end{split}\end{equation*}
For the normal part, we use the boundary condition on $\partial_{n} \Phi_{i}^\eps$ and Lemma~\ref{CalculIntegralesStandard}.
Putting these computations in the relation of Lemma~\ref{lemz2} gives the result.
\end{proof}
\ \par
\noindent
{\bf Acknowledgements.}  The authors are partially supported by the Agence Nationale de la Recherche, Project DYFICOLTI, grant ANR-13-BS01-0003-01.  C.L. and F.S. are partially supported by the project \emph{Instabilities in Hydrodynamics} funded by the Paris city hall (program \emph{Emergences}) and the \emph{Fondation Sciences Math\'ematiques de Paris}.
%
%
%
%
%
%
%
%
%

\end{document}